\documentclass{amsart}

\usepackage[utf8]{inputenc}
\usepackage[margin=1in]{geometry}
\usepackage{amsmath, amssymb, amsthm}
\usepackage{dsfont}
\usepackage{xcolor}
\usepackage{mathtools}
\usepackage{enumitem}
\usepackage{hyperref}

\usepackage{tikz}
\usetikzlibrary{decorations.pathmorphing}
\usetikzlibrary{calc}

\makeatletter
\newenvironment{customproof}[1]{%
  \par\pushQED{\qed}%
  \normalfont\topsep6pt \trivlist
  \item[\hskip\labelsep\itshape
    Proof of #1\@addpunct{.}]\ignorespaces
}{%
  \popQED\endtrivlist\@endpefalse
}
\makeatother

\theoremstyle{plain}
\newtheorem{theorem}{Theorem}[section]
\newtheorem{lemma}[theorem]{Lemma}
\newtheorem{proposition}[theorem]{Proposition}
\newtheorem{corollary}[theorem]{Corollary}

\newtheorem{preremark}[theorem]{Remark}
\newenvironment{remark}{\begin{preremark}\rm}{\medskip \end{preremark}}

\theoremstyle{definition}
\newtheorem{definition}[theorem]{Definition}

\numberwithin{equation}{section}

\newcommand{\R}{\mathbb R}
\newcommand{\LL}{\mathcal L}
\newcommand{\A}{\mathcal A}
\newcommand{\dd}{\, \mathrm{d}}
\newcommand{\eps}{\varepsilon}
\newcommand{\one}{\mathds{1}}
\DeclareMathOperator{\supp}{supp}
\DeclareMathOperator{\dist}{dist}
\newcommand{\p}{\partial}
\DeclareMathOperator*{\Tail}{{Tail}}
\newcommand{\dsp}{{(-\Delta_p)^s}}

\newcommand{\gs}{\gamma}
\DeclareMathOperator{\osc}{osc}
\newcommand{\kmax}{{N^*}}

\title{H\"older gradient estimates for fractional $p$-caloric functions}
\author{Davide Giovagnoli, David Jesus, Luis Silvestre}

\newcommand{\Addresses}{{
  \bigskip
  \footnotesize

    Davide Giovagnoli, \textsc{Dipartimento di Matematica,
Universit\`a di Bologna,
 Bologna, 40126, Italy}\par\nopagebreak
  \textit{E-mail address:} {\tt d.giovagnoli@unibo.it}\\

  \medskip

    David Jesus, \textsc{Dipartimento SMFI,
Universit\'a di Parma,
Parco Area delle Scienze 53/a, Parma, 43124, Italy}\par\nopagebreak
  \textit{E-mail address:} {\tt djbj1993@gmail.com}\\

\medskip

  Luis Silvestre, \textsc{Mathematics Department,
 University of Chicago,
 Chicago, IL 60637, USA}\par\nopagebreak
  \textit{E-mail address:} {\tt luis@math.uchicago.edu}\\

 }

}

\begin{document}

\begin{abstract}

We establish interior $C^{1,\alpha}$ regularity estimates, for some $\alpha > 0$, for fractional $p$-caloric functions when $p$ is in the range $p \in (2,2/(1-s))$.  As a consequence, we deduce improved regularity in time, which yields interior continuous differentiability in time for $p \in [1/(1-s), 2/(1-s))$.

\end{abstract}

\maketitle

\section{Introduction}\label{s:introduction}

The parabolic fractional $p$-Laplace equation
\begin{align}\label{eq:fracplap}
    \p_tu+(-\Delta_p)^su=0 \quad
\end{align}
is, up to a constant, the $L^2$ gradient flow of the Gagliardo $W^{s,p}$ seminorm for $s \in (0,1)$ and $p \in [1,\infty)$, which is defined by
\begin{align*}
    [u]_{W^{s,p}(\R^d)} := \left( \iint_{\R^d \times \R^d} \frac{|u(x)-u(y)|^p}{|x-y|^{d+sp}} \dd x \dd y \right)^{1/p}.
\end{align*}
The local counterpart of \eqref{eq:fracplap} is the parabolic $p$-Laplace equation for which H\"older gradient estimates were obtained in \cite{DiBenedettoFriedman1985}, see also \cite{wiegner1986385,Imbert-Jin-Silvestre}. 

The fractional $p$-Laplace operator that appears in \eqref{eq:fracplap} is given by the principal-value integral
\[
(-\Delta_p)^s u(t,x) := \mathrm{P.V.} \int_{\R^d} \frac{|u(t,x)-u(t,y)|^{p-2}}{|x-y|^{d+sp}} (u(t,x)-u(t,y)) \dd y,
\]
acting on the spatial variable. It is the parabolic counterpart of the (elliptic) fractional $p$-Laplace equation $(-\Delta_p)^s u =0$, which has been the subject of intense study during the last decade.

For the elliptic equation, $C^{1,\alpha}$ regularity in the range $p \in [2,2/(1-s))$ was recently established in \cite{GJS} after Lipschitz continuity had been obtained in \cite{biswas2025lipschitz,biswas2025improved}. An overview of the regularity results for the stationary case, along with the references therein, can be found in \cite[Section 1.1]{GJS}. We provide here a non-exhaustive list of related works \cite{ishii2010class,di2014nonlocal,kuusi2015nonlocal,di2016local,Iannizzotto2016global,cozzi2017regularity,brasco2017higher,brasco2018higher,korvenpaa2019equivalence,bogelein2024regularity,iannizzotto2024fine,bogelein2025gradient,Diening2025higher,Diening2025calderon,iannizzotto2026boundary}. 
By contrast, the regularity theory for the parabolic equation \eqref{eq:fracplap} has been more limited. Local boundedness of weak solutions was obtained in \cite{stromqvist2019local}. 
The H\"older continuity was proved in a series of works \cite{BLS_Para,LiaoParabolic,Garain_Lindgren_Tavakoli}. Recently, in \cite{JSU}, Jesus, Sobral and Urbano established Lipschitz continuity in the spatial variable for solutions of \eqref{eq:fracplap} in the range $p \in (2,2/(1-s))$.  In the present paper we use that result as a starting point and prove that solutions of \eqref{eq:fracplap} are $C^{1,\alpha}$ regular in the range of parameters $p \in [2,2/(1-s))$.

The main result of the paper is the following. 
Here and throughout the work, we use the notation $Q_{r_1,r_2}(t_0,x_0)$ to denote the following cylinder in space-time
\[ Q_{r_1,r_2}(t_0,x_0) := (t_0 - r_1, t_0] \times B_{r_2}(x_0). \]
When we write $Q_{r_1,r_2}$ we mean $Q_{r_1,r_2}(0,0)$. 

\begin{theorem} \label{t:main}
    Assume that $s \in (0,1)$ and $2 < p <2/(1-s)$. Let $u \in C(-2,0;L_{sp}^{p-1}(\R^d))$ be a weak solution of \eqref{eq:fracplap} in $Q_{2,2}$.
     Then $\nabla u$ exists pointwise in $Q_{1,1}$ and the following estimate holds, for some $\alpha > 0$. Let
\begin{equation} \label{def:data}
   M := 1 + \|u\|_{L^\infty(Q_{2,2})} + \sup_{t\in(-2,0]} {\Tail}_{p-1,sp}(u(t,\cdot);2)<\infty.
\end{equation} 
Then, for all $(t,x),(\tau,y) \in Q_{1,1}$,
  \[ |\nabla u(t,x) - \nabla u(\tau,y)| \leq C M\left( M^{(p-2)\alpha} |t-\tau|^\alpha + |x-y|^\alpha  \right). \]
Here $\alpha$ and $C$ are positive constants depending only on the dimension $d$, $s$ and  $p$. The quantity $ {\Tail}_{p-1,sp}(u(t,\cdot);2)$ accounts for the values of $u(t,\cdot)$ outside the ball $B_2$. It is defined in Section \ref{s:tails}.
\end{theorem}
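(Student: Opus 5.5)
The plan follows the strategy of the elliptic case \cite{GJS}, adapted to the parabolic setting through the intrinsic scaling dictated by the Lipschitz bound of \cite{JSU}. First normalize. By the scaling $u \mapsto u/M$ together with the time rescaling $t \mapsto M^{p-2}t$, which leaves \eqref{eq:fracplap} invariant up to these factors, we may assume $M=1$; this produces the factor $M^{(p-2)\alpha}$ in front of $|t-\tau|^\alpha$. The result of \cite{JSU} then gives $|u(t,x)-u(t,y)|\le C|x-y|$ in $Q_{3/2,3/2}$. Combining this spatial Lipschitz bound with the equation should also give a time regularity estimate of the form $|u(t,x)-u(\tau,x)|\le C|t-\tau|^{\beta}$.

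The core of the argument is an improvement-of-flatness iteration. The aim is to show that there exist $\rho\in(0,1/2)$ and $\alpha>0$ such that, at each dyadic scale $r_k=\rho^k$, there is a vector $\ell_k$ with
\[
\sup_{Q_{\theta_k r_k^{sp-(p-2)}\cdots}} |u - u(0,0) - \ell_k\cdot x| \le r_k^{1+\alpha}.
\]
The cylinders are intrinsic. Since the gradient is of size $|\ell_k|\sim 1$, the natural time scale is $r^{sp}|\ell|^{2-p}$ when $|\ell|$ is large. When $\ell_k$ is small compared to $r_k^\alpha$, one is in a degenerate regime and uses the time scale $r^{sp}(r^{\alpha})^{2-p}$. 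This leads to the usual dichotomy.

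In the \emph{degenerate alternative}, $\osc$ of $u$ on the current cylinder is at most a small multiple of $r_k^{1+\alpha}$ beyond the linear part. Here one rescales $v(t,x)=r^{-1-\alpha}(u(r^{sp-(1+\alpha)(p-2)+\ldots}t, rx)-\ldots)$. One then uses the uniform Lipschitz/H\"older estimates to obtain compactness and shows the gradient decays geometrically. The tails of the rescaled functions must be controlled: growth at most $|x|^{1+\alpha}$ at infinity is integrable against $|x|^{-d-sp}$ with power $p-1$ precisely when $(1+\alpha)(p-1)<sp+\ldots$. The restriction $p<2/(1-s)$, that is $p-2<sp - \ldots$, enters here and in the Lipschitz estimate.

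In the \emph{nondegenerate alternative}, $|\ell_k|\gg r_k^\alpha$. The linearization $w=u-\ell_k\cdot x$ then solves an equation whose kernel $|(\ell\cdot(x-y)) + w(x)-w(y)|^{p-2}|x-y|^{-d-sp}$ is comparable to the uniformly elliptic, though non-symmetric in general, kernel $|\ell\cdot(x-y)|^{p-2}|x-y|^{-d-sp}$. This kernel is of order $s'=s-(p-2)/2+\ldots$; more precisely, it has homogeneity $d+sp-(p-2)=d+2\sigma$ with $\sigma=(sp-p+2)/2>0$ exactly when $p<2/(1-s)$, after rescaling by $|\ell|^{p-2}$ in time. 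It is degenerate in directions orthogonal to $\ell$ but still satisfies a coercivity condition on a cone. Then one invokes $C^{1,\alpha}$ estimates for parabolic linear nonlocal equations with such kernels (e.g.\ Krylov--Safonov/Schauder-type results for translation-invariant-in-cones kernels) via a perturbation argument. This gives $C^{1,\alpha}$ from the current scale all the way down, and the iteration stops.

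Finally, combining both cases at every base point yields the pointwise expansions $u(t,x)=u(t_0,x_0)+\nabla u(t_0,x_0)\cdot(x-x_0)+O(r^{1+\alpha})$ on intrinsic cylinders. A standard argument then converts these into the stated H\"older modulus for $\nabla u$ in both $x$ and $t$.

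The main obstacle I expect is the degenerate step. The compactness/limit argument there requires a Liouville-type or small-perturbation statement for the limiting problem. It also requires that the nonlocal tails of the rescaled functions remain uniformly controlled along the iteration, given the growth $|x|^{1+\alpha}$ and the weight $|x|^{-d-sp}$. Balancing the exponents there, which is where $p<2/(1-s)$ is used, is the delicate point. I would first try to reproduce the elliptic tail bookkeeping of \cite{GJS}, with time intervals of length $r^{sp}(r^\alpha)^{2-p}$.
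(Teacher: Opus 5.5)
\textbf{The degenerate alternative has a genuine gap, and it is where essentially all the difficulty lies.} Compactness cannot give geometric decay of the gradient there. Uniform Lipschitz/H\"older bounds give only uniform convergence of the rescalings and weak-$*$ convergence of their gradients, so a statement like ``$e\cdot\nabla u\geq 1-\delta$ somewhere'' does not pass to the limit. Moreover, after your rescaling by $r^{1+\alpha}$ the limit solves the same degenerate equation $\p_t u+\dsp u=0$, not a linear or better-understood one. The ``Liouville-type or small-perturbation statement for the limiting problem'' you need is therefore essentially the $C^{1,\alpha}$ estimate itself, and the argument is circular. This contrasts with, e.g., $|\nabla u|^\gamma F(D^2u)=0$, whose degenerate limit $F(D^2u)=0$ is uniformly elliptic. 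This is also not the elliptic strategy of \cite{GJS}.

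The missing idea is a quantitative argument on the differentiated equation. The localized directional derivatives $v_e=e\cdot\nabla u$ satisfy $|\p_t v_e+\LL_u v_e|\leq\eps_0$ (Lemma~\ref{l:ve-eq-small-rhs}). A De Giorgi iteration on $(v_e-\varphi_k)_+$ (Lemma~\ref{l:iteration}) then shows: if for every $e$ one has $\nabla u\notin B_r(e)$ on a set of measure $\geq\mu$ in the past, then $|\nabla u|\leq 1-\delta$ on the next intrinsic cylinder. Since $K_u$ has no uniform lower bound, coercivity comes only from cones at points of the level sets, where $\nabla u\approx e$ (Lemma~\ref{l:cone-coercivity}), and the gain is extracted from the gap term charged to $\A_r(t)$. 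In the parabolic case one also needs time-dependent heights driven by the visible mass, plus a mass-transfer dichotomy (Proposition~\ref{p:mass-transfer}), to carry information forward in time. The dichotomy that closes the scheme is ``gradient far from every direction on a fixed fraction of the cylinder'' versus ``gradient close to one $e$ on most of it.'' It is not $|\ell_k|\lesssim r_k^\alpha$ versus $|\ell_k|\gg r_k^\alpha$.

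\textbf{Your nondegenerate branch is also incomplete as stated.} Smallness of $\osc(u-\ell\cdot x)$ at scale $r$ says nothing about $u(t,x)-u(t,y)$ for $|x-y|$ below the flatness error. So $K_u$ is not comparable to $|\ell\cdot(x-y)|^{p-2}|x-y|^{-d-sp}$ at small scales. Also, $K_u$ is merely measurable in $x$, so no Schauder theory applies to $\LL_u$. You would have to run a Savin-type improvement of flatness at every scale, with a stability lemma against the frozen translation-invariant kernel, and you do not supply this. The paper instead upgrades ``$\nabla u\in B_r(e)$ on most of $Q_{1,1}$'' to the pointwise bound $|\nabla u-e|\leq 1/8$ on a smaller cylinder, via an Ishii--Lions argument on $u-e\cdot x$ (Lemmas~\ref{l:eps_osc} and~\ref{l:IL}). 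Then $K_u\geq c|x-y|^{-d-\sigma}$ on a cone at every scale, coercivity follows from Theorem~\ref{Thm:ChakerLuis}, and parabolic De Giorgi--Nash--Moser theory for nonlocal equations gives $\nabla u\in C^{\alpha_1}$ (Lemma~\ref{l:stage3}).

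Two bookkeeping corrections:
\begin{itemize}
\item For gradient size $|\ell|$ at spatial scale $r$, the intrinsic time scale is $r^{\sigma}|\ell|^{2-p}$ with $\sigma=2-p(1-s)$, not $r^{sp}|\ell|^{2-p}$.
\item For $p\geq 1/(1-s)$, a function of linear growth is not in $L^{p-1}_{sp}$, so the relevant tail is $\Tail_{p-1,sp+1}$ of $u(t,\cdot)-u(t,0)$. Growth $|x|^{1+\alpha}$ is admissible there iff $\alpha(p-1)<\sigma$, and this is where $p<2/(1-s)$ enters.
\end{itemize}
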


We also prove that solutions are locally continuously differentiable in time in the range $1/(1-s) \leq p <2/(1-s)$, $p>2$.
\begin{theorem}\label{t:time_reg}
    Assume that $s \in (0,1)$ and $1/(1-s) \leq p <2/(1-s)$, $p>2$. Let $u \in C(-2,0;L_{sp}^{p-1}(\R^d))$ be a weak solution of \eqref{eq:fracplap} in $Q_{2,2}$.
     Then $u$ is locally $C^1$ in time.
\end{theorem}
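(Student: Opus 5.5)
The plan is to deduce Theorem~\ref{t:time_reg} from Theorem~\ref{t:main} by showing that $\partial_t u = -(-\Delta_p)^s u$ is a continuous function in $Q_{1,1}$. By Theorem~\ref{t:main}, $u(t,\cdot)\in C^{1,\alpha}(B_1)$ uniformly in $t$, and $\nabla u$ is H\"older continuous in $t$. We first show that $(-\Delta_p)^s u(t,x)$ is defined pointwise as a convergent principal-value integral. Then we show that it is continuous in $(t,x)$. Finally we identify it with $-\partial_t u$.

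\textbf{Pointwise convergence.} Fix $(t,x)\in Q_{1,1}$ and write $u(t,x)-u(t,x+z)=-\nabla u(t,x)\cdot z + R(t,x,z)$ with $|R|\le C|z|^{1+\alpha}$ for $|z|<1/2$.
\begin{itemize}
\item If $\nabla u(t,x)\neq 0$, we symmetrize: $\frac12\left(\phi(a)+\phi(b)\right)$ with $\phi(\tau)=|\tau|^{p-2}\tau$. Here $a=\delta u(z)$ and $b=\delta u(-z)$ satisfy $a+b=O(|z|^{1+\alpha})$ and $|a|,|b|\lesssim |z|$. Since $p>2$, $|\phi(a)+\phi(b)|\lesssim |z|^{p-2}|z|^{1+\alpha}$. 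This is integrable against $|z|^{-d-sp}$ near $0$ provided $p-1+\alpha>sp$, i.e.\ $p(1-s)>1-\alpha$.
\item If $\nabla u(t,x)=0$, the integrand is bounded by $|z|^{(1+\alpha)(p-1)-d-sp}$, which is integrable when $(1+\alpha)(p-1)>sp$. This holds under the same type of condition.
\end{itemize}
So the hypothesis $p\ge 1/(1-s)$, i.e.\ $p(1-s)\ge 1$, gives exactly the margin needed, with $\alpha>0$ providing strictness. The tail part $|z|>1/2$ converges by the bound on $\Tail_{p-1,sp}$ in $M$.

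\textbf{Continuity (main obstacle).} The symmetrized integrand is a continuous function of $(t,x,z)$ for each $z\neq 0$, because $u$ and $\nabla u$ are continuous. We would like to dominate it uniformly by an integrable function of $z$, namely $C|z|^{p-1+\alpha-d-sp}$. The domination holds because the bound $|\phi(a)+\phi(b)|\le C(|a|+|b|)^{p-2}|a+b|$ is uniform, with the constants coming from the uniform $C^{1,\alpha}$ bound. Dominated convergence then gives continuity of the near part. For the far part, continuity requires $u\in C(-2,0;L^{p-1}_{sp})$ together with the local uniform bounds; we handle it by splitting $B_2\setminus B_{1/2}$ from $\R^d\setminus B_2$. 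Since $p-1>1$, the map $\phi$ is locally Lipschitz, and the weighted $L^{p-1}$ continuity gives convergence of the far part by H\"older.

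\textbf{Identification with $\partial_t u$.} Finally we check that the weak formulation yields $u(t_2,x)-u(t_1,x)=-\int_{t_1}^{t_2}(-\Delta_p)^s u(\tau,x)\,d\tau$ pointwise. To do this, we test with $\eta(x)\chi_{[t_1,t_2]}(t)$, using the continuity in time of $u$. Because $u(t,\cdot)$ is $C^{1,\alpha}$ with $p$ in this range, the weak fractional $p$-Laplacian coincides with the pointwise one, obtained by splitting the double integral symmetrically. This gives
\[
\int \eta\,\bigl(u(t_2)-u(t_1)\bigr)\,dx=-\int_{t_1}^{t_2}\!\int\eta\,(-\Delta_p)^s u\,dx\,d\tau.
\]
Since the right-hand integrand is continuous, letting $\eta$ concentrate at $x$ and applying the fundamental theorem of calculus shows that $\partial_t u$ exists and is continuous. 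Hence $u$ is locally $C^1$ in time.
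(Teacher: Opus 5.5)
Your proposal is correct and follows essentially the same route as the paper, which obtains the theorem from Proposition~\ref{p:time-regularity}. There, the spatial $C^{1,\alpha}$ bound from Theorem~\ref{t:main} makes the symmetrized local integral converge locally uniformly because $\sigma=2-p(1-s)\le 1<1+\alpha$ (your condition $p(1-s)>1-\alpha$), continuity in $L^{p-1}_{sp}$ controls the exterior part, and the weak formulation identifies $\partial_t u=-(-\Delta_p)^s u$. Two minor simplifications: your case split on whether $\nabla u(t,x)=0$ is unnecessary, since $|\phi(a)+\phi(b)|\le C(|a|+|b|)^{p-2}|a+b|$ covers both cases, and continuity of the integrand for fixed $z$ needs only joint continuity of $u$, not of $\nabla u$.
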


Note that our result does not contradict the counterexample in \cite[Remark 1.3]{BLS_Para}. Since we are assuming the tail is continuous and there the authors obtain the discontinuity of $\p_t u$ using exactly the discontinuity in time of the tail.

In fact, Proposition~\ref{p:time-regularity} provides a finer picture of the time-regularity regimes, improving the known regularity in time from \cite{JSU} (see Theorem \ref{t:JSU-Lip} below) in the following ways.
Let $\alpha$ be the exponent from Theorem \ref{t:main}. For  $ \frac{1-\alpha}{1-s} < p < \frac{2}{1-s}$, $p>2$  the solution is $C^1$ in time, if $p=\frac{1-\alpha}{1-s}$ the solution is $\log$-Lipschitz and for $2<p<\frac{1-\alpha}{1-s}$ the solution is H\"older with exponent $\frac{1+\alpha}{2-p(1-s)}$.

Theorems~\ref{t:main} and \ref{t:time_reg} apply to weak solutions of \eqref{eq:fracplap} which, as proven in \cite{JSU}, are also viscosity solutions. The reverse implication was only proven under the stronger assumption $u\in L^\infty((-1,0]\times \R^d)$. The definitions of weak and viscosity solutions of \eqref{eq:fracplap} are given in Section \ref{s:weak/viscosity}.

We use the following result as the starting point of our analysis.

\begin{theorem}[\protect{\cite[Theorem 1.1]{JSU}}] \label{t:JSU-Lip}
Let $s \in (0,1)$ and $2< p < 2/(1-s)$. Let $u \in C(-2,0;L_{sp}^{p-1}(\R^d))$ be a weak solution of \eqref{eq:fracplap} in $Q_{2,2}$
 and set $M$ as in \eqref{def:data}.
 Then $u$ is $C^{0,1}$ in the spatial variables and $C^{0,\beta}$ in the time variable in $Q_{1,1}$ where $\beta:=\min\{ 1,\left(\frac{1}{2-p(1-s)}\right)^-\}$. Moreover, the following estimate holds
\[
|u(t,x)-u(\tau,y)| \leq CM \left(M^{(p-2)\beta}|t-\tau|^{\beta} +|x-y| \right)
\]
for all $(t,x),(\tau,y) \in Q_{1,1}$.
\end{theorem}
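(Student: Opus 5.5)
The plan is in two stages. First, establish spatial Lipschitz regularity at unit scale. Then transfer it to Hölder regularity in time through a barrier argument whose exponent comes from balancing a spatial error against a time error.

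\textbf{Normalization.} If $u$ solves \eqref{eq:fracplap}, then $v(t,x):=u(M^{2-p}t,x)/M$ satisfies $\p_t v=M^{1-p}\p_t u$ and $(-\Delta_p)^s v=M^{1-p}(-\Delta_p)^s u$, so $v$ is again a solution. Moreover, $v$ has $\|v\|_{L^\infty}$ and tail bounded by $1$. Any estimate for $v$ with constant $C$ therefore gives the stated estimate for $u$, where the time increment is scaled by $M^{p-2}$. So I assume $M\le 1$ from now on. Since weak solutions are viscosity solutions (as recalled above from \cite{JSU}), I can work with viscosity arguments at each step.

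\textbf{Spatial Lipschitz bound.} I would use the Ishii--Lions doubling of variables. Consider
\[
\sup_{t,\,x,\,y}\ \Big(u(t,x)-u(t,y)-L\,\varphi(|x-y|)-\psi(t,x)\Big),
\]
where $\varphi(r)=r-\kappa r^{1+\gamma}$ is a concave modulus, $\psi$ is a smooth localization in space and time, and $L$ is large. Suppose the supremum is positive and attained at $(t_0,x_0,y_0)$. Subtracting the two viscosity inequalities cancels the time derivatives, up to $\p_t\psi$. It then remains to show that the difference of the two fractional $p$-Laplacians is strictly negative, which gives a contradiction.

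The gain comes from integrating along the direction $e=(x_0-y_0)/|x_0-y_0|$ in a narrow cone around it. There the concavity of $\varphi$ yields a negative contribution of order $L^{p-1}\,\delta^{\,p-1-sp}\,(\cdots)$, where $\delta=|x_0-y_0|$. The remaining terms, from the far region and the tails, are bounded by $C(1+L^{p-2})$. The hypothesis $p<2/(1-s)$ is what makes the cone contribution dominate these errors in the degenerate regime $p>2$: in this range the $(p-2)$-power of the gradient scales better than the kernel loses. I expect this balancing to be the main obstacle, and I would first carry it out with a preliminary Hölder bound $u\in C^{0,\theta}$ for $\theta<1$. Such a bound comes from the known Hölder theory \cite{BLS_Para,LiaoParabolic}, or from running the same doubling argument with $\varphi(r)=r^\theta$. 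Having this bound controls the region where $|x_0-y_0|$ is not small.

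\textbf{Time regularity.} Fix $(t_0,x_0)\in Q_{1,1}$ and $r\in(0,1)$. I would use the barrier
\[
w(t,x)=u(t_0,x_0)+C_1\,\eta_r(x-x_0)+A\,(t-t_0),
\]
where $\eta_r$ is a smooth radial function that behaves like $|x|$ outside $B_r$ and like $|x|^2/r$ inside $B_r$. Using $|u(t_0,x)-u(t_0,x_0)|\le C|x-x_0|$, we get $w\ge u$ at time $t_0$. For a Lipschitz-type profile with curvature $1/r$ at scale $r$, the operator satisfies
\[
|(-\Delta_p)^s w|\lesssim 1+r^{\,p-1-sp}\quad\text{when } p(1-s)<1,
\]
and it is bounded when $p(1-s)>1$. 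In the equality case $p(1-s)=1$ a logarithmic loss appears, which is the source of the $^-$ in $\beta$.

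Choosing $A\simeq 1+r^{p-1-sp}$ makes $w$ a supersolution, with the exterior and parabolic boundary data handled by enlarging $C_1$. Comparison then gives, for $t\ge t_0$,
\[
u(t,x_0)-u(t_0,x_0)\le C\big(r+r^{p-1-sp}|t-t_0|\big),
\]
and the symmetric argument gives the lower bound. Optimizing $r=|t-t_0|^{1/(2-p(1-s))}$ yields the time exponent $\frac{1}{2-p(1-s)}$ when $p<1/(1-s)$, and exponent $1$ when $p>1/(1-s)$. Undoing the normalization gives the factor $M^{(p-2)\beta}$ in the estimate.
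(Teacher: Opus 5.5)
The genuine gap is in the balancing you flag as the main obstacle of the Ishii--Lions step. Take $\varphi(r)=r-\kappa r^{1+\gamma}$ and $\sigma=2-p(1-s)$. Then the cone contribution at the maximum point is at most of order $\kappa L^{p-1}\delta^{p(1-s)-1+\gamma}=\kappa L^{p-1}\delta^{1-\sigma+\gamma}$. The only information you have on $\delta=|x_0-y_0|$ is the upper bound $\delta\lesssim 1/L$ from $L\varphi(\delta)\le\osc u$, and nothing prevents $\delta$ from being much smaller. Any comparison of $L^{p-1}\delta^{p(1-s)-1}$ with $L^{p-2}$ must substitute $\delta\lesssim 1/L$. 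That gives a lower bound on $\delta^{p(1-s)-1}$ only when $p(1-s)<1$, and in that range $p<2/(1-s)$ holds automatically, so the hypothesis is never really used. For $1/(1-s)\le p<2/(1-s)$ the gain tends to $0$ as $\delta\to0$. Your errors, bounded by $C(1+L^{p-2})$ uniformly in $\delta$, do not, so no contradiction follows. The preliminary H\"older bound, which you only use for large $\delta$, does not change this.

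The paper quotes the theorem from \cite{JSU} without proof. However, its Section~\ref{s:stage2}, which follows \cite[Section 4]{JSU}, shows what is missing: every contribution outside the cone must vanish with $\delta$, faster than the gain, and this comes from translation invariance.

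\emph{Far field.} At a positive maximum $u(x_0)>u(y_0)$, and $J_p$ is increasing. Writing both integrals in the same variable $y$, the difference is therefore bounded below by the term carrying $\bigl||x_0-y|^{-d-sp}-|y_0-y|^{-d-sp}\bigr|\lesssim\delta|y|^{-d-sp-1}$. Against ${\Tail}_{p-1,sp+1}$ this costs at most $C\delta$ (compare Lemma~\ref{l:I4}).

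\emph{Intermediate region.} Shift both points, $(x_0,y_0)\mapsto(x_0+z,y_0+z)$. Maximality then leaves only increments of the localization $\psi$, which are made small using positivity of the maximum (compare $I_3$ in Lemma~\ref{l:I123}).

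The contradiction then compares $L^{p-1}\delta^{1-\sigma}$, up to a small power of $\delta$ or a logarithm from the modulus and the cone aperture, with $C\delta$. This is where $\sigma>0$, i.e.\ $p<2/(1-s)$, actually enters.

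Two smaller repairs are also needed.
\begin{enumerate}
\item The cancellation of time derivatives must be realized by doubling time with a penalty $(t-\tau)^2/(2\varepsilon)$. Passing $\varepsilon\to0$ in the nonlocal terms is where $u\in C(-2,0;L^{p-1}_{sp}(\R^d))$ is used, and your sketch never uses it.
\item In the barrier step, $u$ is only in $L^{p-1}_{sp}$ outside $B_2$, so no $C_1$ puts a linearly growing barrier above $u$ there. Set $w=u$ outside a ball and absorb the resulting tail term into $A$.
\end{enumerate}
With these fixes your time argument is sound. It reproduces the balance $r+r^{1-\sigma}|t-t_0|$ of Lemma~\ref{l:centered-time}, which the paper obtains by testing against differences of mollifiers rather than by comparison. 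It even gives exponent $1/\sigma$ itself when $\sigma>1$.
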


\subsection{Strategy of the proof} \label{s:strategy}

Our strategy to prove Theorem~\ref{t:main} follows the same steps as that of the elliptic paper \cite{GJS}, with several adaptations to handle the time variable. The main difference is in the proof of the \emph{first stage}, which is essentially different to the proof in the elliptic case. This is the most challenging part of the proof. We explain the ideas and difficulties below, for every part of the proof.

By Theorem~\ref{t:JSU-Lip}, the spatial gradient $\nabla u$ is bounded in any cylinder strictly inside $Q_{2,2}$. Up to a rescaling, we suppose $\|\nabla u\|_{L^\infty(Q_{1,1})} \leq 1$. Our goal is to prove that $\nabla u$ is H\"older continuous at the origin. The proof proceeds in three stages. 

The \textbf{first stage} establishes an iterative improvement of the
gradient bound at geometrically decreasing two-parameter scales
\[
\|\nabla u\|_{L^\infty(Q_{\rho_1^k,\rho_2^k})}
\leq C_0(1-\delta)^k,
\qquad k=0,1,\dots,\kmax.
\]
The necessity of using two different scaling parameters is dictated by
the natural scaling of the equation; we refer to
Subsection~\ref{ss:scaling}. The iteration stops when, at one of these
scales, the gradient becomes close, in most of the corresponding
cylinder, to a fixed direction \(e\in S^{d-1}\).

The proof of the one-step improvement is carried out in
Section~\ref{s:reduce_norm}. It is not merely a parabolic
adaptation of the elliptic counterpart in \cite{GJS}. The time variable introduces two genuine difficulties.
First, the information that the gradient stays away from \(e\) is
distributed over time and must be transferred to the later time
interval on which the improvement is required. To accomplish this, we
introduce time-dependent heights governed by an ordinary differential
equation whose forcing records the visible spatial mass of the set
where the gradient is away from \(e\). Second, the elliptic one-step
measure reduction cannot be applied independently on each time slice.
We replace it with a space--time mass-transfer argument of
Caffarelli--Chan--Vasseur type \cite{chihinchan}. Since the linearized kernel may
degenerate, the usual uniform coercivity assumption is replaced by a
quantitative lower bound for the gap term in the linearized energy
inequality, obtained from coercivity along suitable cones of
nondegenerate directions.

A further difficulty is controlling the oscillation in time under rescaling. In Section~\ref{s:oscillation_time}, we estimate the time oscillation after subtracting $u(t,0)$. In Section~\ref{s:iteration}, we iterate the improvement in the cylinders $Q_{\rho_1^k,\rho_2^k}$ and propagate the bound on the tail. 

The \textbf{second stage} uses the conclusion of the first stage: in most of a small cylinder, the gradient $\nabla u$ is close to a fixed vector $e$. We deduce that $\nabla u$ is close to $e$ at every point of a smaller cylinder. As in the elliptic case \cite{GJS,biswas2025lipschitz,biswas2025improved}, this is done with the Ishii-Lions method applied to the function $u(t,x) - e \cdot x$. The implementation is similar to the one in the parabolic paper \cite{JSU}, with the additional control coming from the small oscillation of $u(t,x) - e \cdot x$ in the cylinder.

In the \textbf{third stage} we use the linearized equation again, but this time in a regime in which $\nabla u$ stays close to a fixed nonzero vector $e$. The kernel of the linearized operator is then mildly degenerate and we are essentially in the realm of integro-differential operators with positive bounds on the kernel, in the spirit of \cite{Kassman-Schwab,FelsingerKassmann}. This last stage produces the H\"older estimate for $\nabla u$ in space and time.

The three stages combine to give a $C^{1,\alpha}$ estimate at the origin, in the parabolic sense. The conclusion of Theorem~\ref{t:main} follows by translating this pointwise estimate.

 The bulk of the work is in the first stage, which is where the genuinely new ideas of \cite{GJS} need to be reworked in the parabolic setting.

\subsection{Notation}

We collect here the basic notation used throughout the paper. We write $B_R(x_0)$ for the open ball in $\R^d$ of radius $R$ centered at $x_0$, and $B_R = B_R(0)$.

As mentioned before, we use the notation $Q_{r_1,r_2}(t_0,x_0)$ to denote the following cylinder in space-time
\[ Q_{r_1,r_2}(t_0,x_0) := (t_0 - r_1, t_0] \times B_{r_2}(x_0). \]
When we write $Q_{r_1,r_2}$ we mean $Q_{r_1,r_2}(0,0)$. Note that we are specifying the scale in time and space separately. The two-parameter scaling of our equation (described in Section \ref{ss:scaling}) forces us to choose the time-scaling in terms of the spatial scale in a way that depends on the size of the gradient at the center.
$S^{d-1}$ denotes the unit sphere in $\R^d$. 

We write $C, c$ for positive constants that depend only on $d$, $s$ and $p$ unless otherwise specified. The values may change from line to line.
In particular, they may depend on $\sigma:=2-p(1-s) \in (0,2)$. No uniformity as $\sigma\to0$ or $\sigma\to2$ is asserted.  We sometimes write $C(\delta)$, $\eps(\delta)$, etc., for constants that depend additionally on the indicated parameter.

We use the notation $C(-1,0;L^{p-1}_{sp}(\R^d))$ to denote $C((-1,0];L^{p-1}_{sp}(\R^d))$, that is, continuity is assumed at the end point $t=0$.

\section{Preliminaries}\label{s:preliminaries}

Throughout the paper, the fractional $p$-Laplacian and all such integrals are understood in the principal value sense.

The linearization of $(-\Delta_p)^s$ at $u$ is the linear (in $v$) operator
\begin{equation} \label{eq:linearized}
    \LL_u v(t,x) := (p-1) \int_{\R^d} \frac{|u(t,x)-u(t,y)|^{p-2}}{|x-y|^{d+sp}} (v(t,x)-v(t,y))\dd y
    = \int_{\R^d} (v(t,x)-v(t,y))\, K_u(t;x,y) \dd y.
\end{equation}
The kernel $K_u(\cdot,\cdot;t)$ is symmetric in $x$ and $y$ at each fixed time and is given by
\begin{equation*} 
    K_u(t;x,y) := (p-1) \frac{|u(t,x)-u(t,y)|^{p-2}}{|x-y|^{d+sp}}.
\end{equation*}

When $u$ is differentiable in $x$, the kernel $K_u(t;\cdot,\cdot)$ is of homogeneity $-d-2+p(1-s)$ near the diagonal. Thus, $\LL_u$ is, at each time, a nonlocal operator of order $\sigma = 2-p(1-s) \in (0,2)$ for $p \in [2,2/(1-s))$. The condition $p<2/(1-s)$ ensures that the operator is generically of positive fractional order. 

The pointwise meaning of $\LL_u v(t,x)$ requires some care. As discussed in detail in \cite{GJS}, when $p > 1/(1-s)$, Lipschitz continuity of $u$ and $v$ in the spatial variable suffices for the integral in \eqref{eq:linearized} to be absolutely convergent. When $p \in [2, 1/(1-s)]$, the integral is no longer absolutely convergent at the diagonal in general, but $\LL_u v$ can be understood in the sense of an associated bilinear form, defined for $v$ in a suitable fractional Sobolev space. We use $\LL_u$ in this paper only through quadratic expressions of the form
\[
\int_{B} \LL_u v(t,x)\, \varphi(x) \dd x,
\]
for compactly supported test functions $\varphi$. This expression makes sense under mild regularity conditions, as analyzed in \cite[Section 2]{GJS}. To keep the exposition focused on the new ideas, we assume in the proofs below that $u$ is sufficiently regular in space for all our manipulations to be classical. The case of weak solutions is treated by an approximation argument, exactly as in \cite[Section 9]{GJS}.

\subsection{Weak and viscosity solutions}\label{s:weak/viscosity}

We introduce the notions of weak and viscosity solution to equation \eqref{eq:fracplap}.

\begin{definition}\label{def:weak-sol-definition}
We say that $u\colon(t_1,t_2] \times \mathbb{R}^d   \to \mathbb{R}$ is a weak subsolution to \eqref{eq:fracplap} in $(t_1,t_2] \times \Omega $ if 
\[
    u \in C\left(t_1,t_2;L^2_{\mathrm{loc}}(\Omega)\right) \cap L^p_{\mathrm{loc}}\left(t_1,t_2;W^{s,p}_{\mathrm{loc}}(\Omega) \right)\cap L_{\mathrm{loc}}^\infty(t_1,t_2;L_{sp}^{p-1}(\R^d))
\]
and for any compact set $\mathcal{K} \subset \Omega$ and any sub-interval $[\tau_1,\tau_2] \subset (t_1,t_2]$ 
\[
\begin{aligned}
&\int_{\mathcal{K}} u(t,x)\varphi(t,x)\,\dd x \bigg|_{\tau_1}^{\tau_2}
 - \int_{\tau_1}^{\tau_2}\int_{\mathcal{K}} u(t,x)\,\partial_t\varphi(t,x)\,\dd x \dd t \\
&\qquad
 + \frac12 \int_{\tau_1}^{\tau_2} \iint_{\R^d \times \R^d} \frac{|u(t,x) - u(t,y)|^{p-2}(u(t,x) - u(t,y))(\varphi(t,x) - \varphi(t,y))}{|x-y|^{d+sp}}\dd y\dd x \dd t
\le 0,
\end{aligned}
\]
for any nonnegative $\varphi$ such that
\[
   \varphi \in L^p(\tau_1,\tau_2;W^{s,p}_0(\mathcal{K})) \cap H^1(\tau_1,\tau_2;L^2(\mathcal{K})). 
   \]
   
A weak supersolution is defined similarly. Furthermore, a weak solution is both a weak subsolution and a supersolution.
\end{definition}

Now we define the notion of viscosity solution.

\begin{definition}\label{def:viscosity-solution}
We say that $u$ is a viscosity subsolution to \eqref{eq:fracplap} in $(t_1,t_2] \times \Omega$ if 
\[
    u\in \mathrm{USC}((t_1,t_2]\times\Omega)\cap C(t_1,t_2;L_{sp}^{p-1}(\R^d))
\]
and whenever there is a function $\phi\in C^2(Q_{r,r}(t_0,x_0))$ for some $Q_{r,r}(t_0,x_0) \subset (t_1,t_2] \times \Omega$, with $\phi$ touching $u$ from above at $(t_0,x_0)$ in $Q_{r,r}(t_0,x_0)$, we have 
\[
    \partial_t \phi(t_0,x_0) + (-\Delta_p)^s \phi_r (t_0,x_0)\leq 0,
\] 
where
\[
\phi_r(t,x)=\left\{\begin{array}{ll}
\phi(t,x) & \text{for}\; (t,x)\in Q_{r,r}(t_0,x_0),
\\[2mm]
u(t,x) & \text{otherwise}.
\end{array}
\right.
\]
A viscosity supersolution is defined similarly. Furthermore, a viscosity solution is both a viscosity subsolution and a supersolution.
\end{definition}

We shall now state some elementary estimates for the function $J_p(t)=|t|^{p-2}t$.

\begin{lemma}\label{l:appendix}
Suppose $p\geq2$ and $a, b\in \R$. Then 
\begin{align*}
&J_p(a) -J_p(b)= (p-1) \int_{0}^{1} |b+t(a-b)|^{p-2} (a-b)  \dd  t, \\
\intertext{ and}
&\frac{1}{C_p} (|b|+|a-b|)^{p-2}\leq \int_0^1 |b+t(a-b)|^{p-2} \dd t\leq C_p (|b|+|a-b|)^{p-2}.
\end{align*}
Moreover
\[
|J_p(a+b)-J_p(a)|
\leq
C_p\left(
|a|^{p-2}|b|+|b|^{p-1}
\right).
\]
Since $J_p$ is odd, this implies
\[
\begin{aligned}
&|J_p(a+b_+)+J_p(-a+b_-)|
\\
&\qquad\leq
C_p\left[
|a|^{p-2}\bigl(|b_+|+|b_-|\bigr)
+|b_+|^{p-1}
+|b_-|^{p-1}
\right],
\end{aligned}
\]
for some constant $C_p$, depending only on $p$. 
\end{lemma}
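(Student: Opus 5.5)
The statement to prove is Lemma~\ref{l:appendix}, the elementary estimates for $J_p(t)=|t|^{p-2}t$ with $p\ge 2$. The plan is to derive everything from the fundamental theorem of calculus. For $p\ge2$ the function $J_p$ is $C^1$ on $\R$ with $J_p'(\tau)=(p-1)|\tau|^{p-2}$ (at $\tau=0$ when $p=2$ it is the identity, and for $p>2$ the derivative vanishes continuously). Writing $J_p(a)-J_p(b)=\int_0^1 \frac{d}{dt}J_p(b+t(a-b))\dd t$ gives the first identity immediately.

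For the two-sided bound on $I:=\int_0^1|b+t(a-b)|^{p-2}\dd t$, set $h=a-b$.
\begin{itemize}
\item The upper bound is trivial: $|b+th|\le |b|+|h|$, so $I\le(|b|+|h|)^{p-2}$.
\item For the lower bound, the segment $\{b+th: t\in[0,1]\}$ has length $|h|$, and $|b+th|$ is a $1$-Lipschitz (in fact piecewise affine, V-shaped) function of $t$ with slope $|h|$. Split into two cases.
\begin{itemize}
\item If $|b|\ge |h|/2$, then for $t\le |b|/(2|h|)$ (or all $t\in[0,1]$ when this exceeds $1$) we have $|b+th|\ge |b|/2$, on a $t$-set of measure at least $\min(1,|b|/(2|h|))\ge 1/4$. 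Since $|b|\ge(|b|+|h|)/3$, this gives $I\ge c(|b|+|h|)^{p-2}$.
\item If $|b|<|h|/2$, the set of $t\in[0,1]$ with $|b+th|\ge |h|/4$ has measure at least $1/2$, because the set where $|b+th|<|h|/4$ is an interval of length at most $1/2$. Since $|h|\ge (|b|+|h|)/2$, this again gives $I\ge c(|b|+|h|)^{p-2}$ with $c$ depending only on $p$.
\end{itemize}
\end{itemize}
This case split is the only step with any content, and I expect no real obstacle there.

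For the third estimate, apply the identity with $a\mapsto a+b$ and $b\mapsto a$:
\[
|J_p(a+b)-J_p(a)|\le (p-1)\,|b|\,C_p(|a|+|b|)^{p-2}.
\]
Then use $(|a|+|b|)^{p-2}\le 2^{p-2}\bigl(|a|^{p-2}+|b|^{p-2}\bigr)$, valid since $p-2\ge0$ (by convexity when $p\ge3$, and by subadditivity of $\tau\mapsto\tau^{p-2}$ when $p-2\in[0,1]$).

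Finally, since $J_p$ is odd, $J_p(-a+b_-)=-J_p(a-b_-)$, so
\[
J_p(a+b_+)+J_p(-a+b_-)=\bigl[J_p(a+b_+)-J_p(a)\bigr]-\bigl[J_p(a-b_-)-J_p(a)\bigr].
\]
Applying the previous bound to each bracket and using the triangle inequality gives the last inequality.
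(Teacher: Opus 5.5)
Your argument is correct. The paper states this lemma without proof as an elementary fact, and your route is exactly the standard verification it relies on: the fundamental theorem of calculus for the identity, a two-case measure estimate for the lower bound on $\int_0^1|b+t(a-b)|^{p-2}\dd t$, then $(|a|+|b|)^{p-2}\le 2^{p-2}(|a|^{p-2}+|b|^{p-2})$ together with the oddness of $J_p$.

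One slip to fix: $t\mapsto|b+t(a-b)|$ is $|a-b|$-Lipschitz, not $1$-Lipschitz. Your case analysis only uses the correct slope $|a-b|$, so nothing else changes.
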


\subsection{Tail functions}\label{s:tails}

For $q>0$ and $\alpha>0$, the tail of a function $w$ is
\[
{\Tail}_{q,\alpha}(w;x,R) := \left( \int_{B_R^c(x)} \frac{|w(y)|^{q}}{|y-x|^{d+\alpha}} \dd y \right)^{\frac 1q},
\qquad {\Tail}_{q,\alpha}(w;R):={\Tail}_{q,\alpha}(w;0,R).
\]

By $L_{\alpha}^{q}(\R^d)$ we denote the $(q,\alpha)$--\textit{tail space} defined by
\[
L_{\alpha}^{q}(\R^d)\coloneqq \left\{ u \in L_{loc}^{q}(\R^d) : \int_{\R^d} \frac{|u(y)|^{q}}{(1+|y|)^{d+\alpha}  }\dd y < \infty \right\}.
\]

The following bounds are proved in \cite[Section 2]{GJS}.

\begin{lemma}\label{l:Tail_inequalities}
For $2< p<2/(1-s)$ and $R>0$,
\begin{gather*}
{\Tail}_{p-2,sp}(w;R)^{p-2}\leq C R^{\frac{p(1-s)-2}{p-1}}\, {\Tail}_{p-1,sp+1}(w;R)^{p-2},\\
{\Tail}_{p-1,sp+1}(w;R)^{p-1} \leq R^{-1}\,{\Tail}_{p-1,sp}(w;R)^{p-1},
\end{gather*}
with $C>0$ depending only on $d, \, s$ and $p$. More generally, for $p>2$ and any $0\leq\vartheta<\sigma/(p-1)$, we have
\[
{\Tail}_{p-2,\,sp-\vartheta}(w;R)^{p-2} \leq C(d,s,p,\vartheta)\, R^{\frac{\vartheta(p-1)-\sigma}{p-1}}\, {\Tail}_{p-1,sp+1}(w;R)^{p-2}.
\]
Moreover, tails centered at different points $x,x'$ with $|x-x'|\leq R/2$ are comparable up to a constant and a doubling of the radius.
\end{lemma}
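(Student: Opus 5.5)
The first inequality follows from Hölder's inequality with exponents $\frac{p-1}{p-2}$ and $p-1$, after splitting the weight $|y|^{-d-sp}$ so that one factor carries exactly the weight of ${\Tail}_{p-1,sp+1}$. The other three claims are then either the same computation with one parameter changed or elementary comparisons of $|y|$ and $|y-x|$.

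\emph{First inequality.} On $B_R^c$ write
\[
\frac{|w(y)|^{p-2}}{|y|^{d+sp}} = \left(\frac{|w(y)|^{p-1}}{|y|^{d+sp+1}}\right)^{\frac{p-2}{p-1}} |y|^{-\gamma}, \qquad \gamma := d+sp-\frac{(d+sp+1)(p-2)}{p-1}.
\]
Hölder's inequality with exponents $\frac{p-1}{p-2}$ and $p-1$ gives
\[
{\Tail}_{p-2,sp}(w;R)^{p-2} \leq {\Tail}_{p-1,sp+1}(w;R)^{p-2}\left(\int_{B_R^c}|y|^{-\gamma(p-1)}\dd y\right)^{\frac{1}{p-1}}.
\]
A direct computation gives
\[
\gamma(p-1) = (d+sp)(p-1)-(d+sp+1)(p-2) = d+sp-p+2 = d+\sigma,
\]
where $\sigma=2-p(1-s)>0$ exactly because $p<2/(1-s)$. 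Hence the remaining integral equals $C(d,\sigma)R^{-\sigma}$. Raising it to the power $\frac{1}{p-1}$ produces $C R^{-\sigma/(p-1)} = C R^{\frac{p(1-s)-2}{p-1}}$, as claimed.

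\emph{General version with $\vartheta$.} Repeat the computation with $sp$ replaced by $sp-\vartheta$ in the tail being estimated; the other factor is still ${\Tail}_{p-1,sp+1}$. The exponent of the leftover power becomes
\[
(d+sp-\vartheta)(p-1)-(d+sp+1)(p-2) = d+\sigma-\vartheta(p-1).
\]
This is integrable outside $B_R$ precisely when $\vartheta(p-1)<\sigma$. The integral gives $C R^{\vartheta(p-1)-\sigma}$, and after the power $\frac{1}{p-1}$ we obtain the stated factor $R^{\frac{\vartheta(p-1)-\sigma}{p-1}}$. The constant now depends on $\vartheta$ and blows up as $\vartheta\to\sigma/(p-1)$.

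\emph{Second inequality.} This is immediate: on $B_R^c$ we have $|y|^{-d-sp-1}\leq R^{-1}|y|^{-d-sp}$. Multiply by $|w(y)|^{p-1}$ and integrate over $B_R^c$.

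\emph{Comparability of tails at nearby centers.} Take $|x-x'|\leq R/2$. First, $B_{2R}^c(x')\subset B_R^c(x)$. Second, for $y\in B_{2R}^c(x')$,
\[
|y-x| \geq |y-x'|-\tfrac{R}{2} \geq \tfrac34|y-x'|, \qquad\text{and also}\qquad |y-x|\leq \tfrac54|y-x'|.
\]
It follows that
\[
{\Tail}_{q,\alpha}(w;x',2R)\leq C\,{\Tail}_{q,\alpha}(w;x,R),
\]
with $C$ depending only on $d,q,\alpha$. The same argument with $x$ and $x'$ exchanged gives the symmetric bound.

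No step is a genuine obstacle. The only points needing care are the exponent arithmetic, which must come out to exactly $d+\sigma$ (respectively $d+\sigma-\vartheta(p-1)$), and the observation that the hypothesis $p<2/(1-s)$ (respectively $\vartheta<\sigma/(p-1)$) is exactly the integrability condition at infinity.
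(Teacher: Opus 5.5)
Your proof is correct. The Hölder splitting with exponents $\frac{p-1}{p-2}$ and $p-1$ leaves the weight $|y|^{-d-\sigma+\vartheta(p-1)}$, which is integrable outside $B_R$ exactly when $\vartheta<\sigma/(p-1)$. Together with the trivial bound $|y|\geq R$ and the comparison $\tfrac34|y-x'|\leq|y-x|\leq\tfrac54|y-x'|$ on $B_{2R}^c(x')$, this gives all four claims with the stated exponents and constants. The paper does not prove this lemma itself but cites Section 2 of \cite{GJS}; your argument is the standard elementary Hölder and triangle-inequality computation one would expect there.
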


\subsection{Scaling} \label{ss:scaling}~
The equation has a two-parameter scaling: one parameter fixes the spatial scale and the other fixes the size of the function. Given $\rho>0$ and $\lambda>0$, define
\[
u_{\rho,\lambda}(t,x) := \lambda^{-1} u(\lambda^{2-p}\rho^{sp}t,\rho x).
\]
The time scale is then forced by the homogeneity of the equation. Indeed,
\[
\partial_t u_{\rho,\lambda}(t,x)
= \lambda^{1-p}\rho^{sp}\, \partial_t u(\lambda^{2-p}\rho^{sp}t,\rho x),
\]
while
\[
(-\Delta_p)^s u_{\rho,\lambda}(t,x)
= \lambda^{1-p}\rho^{sp}\, (-\Delta_p)^s u(\lambda^{2-p}\rho^{sp}t,\rho x).
\]
Thus, if $u$ solves \eqref{eq:fracplap}, then ${u_{\rho,\lambda}}$ also solves \eqref{eq:fracplap} in the rescaled domain. The spatial gradient and tails scale as
\[
\nabla u_{\rho,\lambda}(t,x)
= \frac{\rho}{\lambda}\nabla u(\lambda^{2-p}\rho^{sp}t,\rho x),
\]
and, for any $q,\alpha>0$ and any $r>0$,
\begin{equation}\label{scale_tails}
{\Tail}_{q,\alpha}(u_{\rho,\lambda}(t,\cdot);r)^q
= \lambda^{-q}\rho^\alpha
{\Tail}_{q,\alpha}(u(\lambda^{2-p}\rho^{sp}t,\cdot); \rho r)^q.
\end{equation}
More generally, choosing $\lambda$ independently is useful when we want to normalize the size of $u$ or of $\nabla u$ while zooming to a spatial scale $\rho$.

In the following, it will be convenient to consider during the iteration the function
\begin{equation}\label{eq:spatial-centering}
w(t,x):=u(t,x)-u(t,0).
\end{equation}
In particular, note that a $L^\infty$ bound on $\nabla u$ implies a $L^\infty$ bound on $w$. Moreover $\nabla w=\nabla u$ and $K_w=K_u$. We use $w$ only in the spatial estimates: subtracting $u(t,0)$ adds the forcing $-\p_tu(t,0)$ to the equation. At a rescaled solution $u_n$, the same notation is $w_n(t,x)=u_n(t,x)-u_n(t,0)$.

\subsection{Smoothening of the solution}

As in the elliptic problem, we assume throughout the paper that the solution is smooth in $x$. Time derivatives are understood in the weak or viscosity sense, as appropriate. All estimates are independent of these regularity norms.

This smoothness assumption can be rigorously justified, as in \cite{GJS}, by perturbing the original equation and considering
\[
\p_t u_\varepsilon+(-\Delta_p)^su_\varepsilon-\varepsilon\Delta u_\varepsilon=0.
\]
By the argument in \cite[Lemma 9.4]{GJS}, we know that, for any $\varepsilon>0$, $u_\varepsilon$ is $C^{2,\alpha}$ in $x$ for every $\alpha< p(1-s)$ if $p \in [2,1/(1-s))$ and $u_\varepsilon$ is $C^{2,\alpha}$ in $x$ for all $\alpha \in(0,1)$ if $p \in [1/(1-s),2/(1-s))$. We can then follow the approximation procedure in \cite[Section 9]{GJS} to make the argument fully rigorous.

\section{The linearized equation}\label{s:linearized}

In this section we localize the directional derivatives of $u$ and compute the equation that they satisfy in terms of the linearized operator $\LL_u$ defined in \eqref{eq:linearized}. Throughout this section, we will always assume $u$ is a weak solution of \eqref{eq:fracplap} which is smooth in $x$. 

Let $\eta : \R^d \to [0,1]$ be a smooth radially symmetric cutoff function such that $\eta(x) = 1$ for $|x| \leq 3/2$ and $\eta(x) = 0$ for $|x| \geq 7/4$. For any unit vector $e \in S^{d-1}$ and any $R>0$, we define
\begin{equation} \label{eq:def_ve}
     v_e(t,x) := \eta(x/R) \, (e \cdot \nabla u(t,x)).
\end{equation}
The function $v_e$ coincides with the directional derivative $e \cdot \nabla u$ in $\R \times B_R$ (at every fixed time). It is globally bounded in space, with compact spatial support contained in $B_{2R}$. The dependence on $R$ is suppressed in the notation.

The equation satisfied by $v_e$ in $B_R$ is given by the next lemma. The result is the parabolic version of \cite[Lemma 3.3]{GJS}.

\begin{lemma}[The equation for localized directional derivatives]\label{l:eqforv_e_scaled}
Let $T,R>0$, and let $u$ be a solution of \eqref{eq:fracplap} in $Q_{T,2R}$. Then $v_e$ satisfies, for $(t,x)\in Q_{T,R}$,
\begin{equation} \label{eq:linearized_est}
       |\p_t v_e(t,x) + \LL_{u} v_e(t,x)| \leq  C \left( R^{-1-sp}\, \| w\|^{p-1}_{L^{\infty}(Q_{T,2R})} +   {\Tail}_{p-1,sp+1}(w(t,\cdot);R)^{p-1}\right),
\end{equation}
where $C$ depends only on $d$, $s$, and $p$.
\end{lemma}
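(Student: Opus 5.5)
Differentiating \eqref{eq:fracplap} in the direction $e$ and then localizing with the cutoff is the plan; this follows \cite[Lemma 3.3]{GJS} at each fixed time. Since $u$ is assumed smooth in $x$, the first step is to differentiate the equation $\p_t u + \dsp u = 0$ in the direction $e$ at points $(t,x)\in Q_{T,R}$. Writing $\dsp u(t,x) = \int J_p(u(t,x)-u(t,x+h))|h|^{-d-sp}\dd h$ and differentiating under the integral gives
\[
\p_t (e\cdot\nabla u)(t,x) + \LL_u (e\cdot \nabla u)(t,x) = 0
\]
in the principal value/bilinear sense. The time derivative commutes with $e\cdot\nabla$ in the weak sense, and the ambiguity near the diagonal is handled exactly as in \cite[Section 2]{GJS}. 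Because $\eta(x/R)$ does not depend on $t$ and equals $1$ on $B_{3R/2}$, we have $\p_t v_e = \p_t(e\cdot \nabla u)$ in $Q_{T,R}$. Hence
\[
\p_t v_e + \LL_u v_e = \LL_u\big(v_e - e\cdot\nabla u\big) = -\int_{\R^d} (1-\eta(y/R))\,(e\cdot\nabla u(t,y))\,K_u(t;x,y)\dd y ,
\]
where we used that $v_e - e\cdot \nabla u$ vanishes at $x\in B_R$. The task therefore reduces to bounding this error term, which only involves $|y|\ge 3R/2$, so $|x-y|\ge R/2$ and there is no singularity.

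The difficulty is that $\nabla u$ is not controlled outside $B_{2R}$, so the integral cannot be estimated directly. I would instead rewrite it so that the derivative falls on the kernel, as in \cite{GJS}. Observe that $K_u(t;x,y)\,\nabla_y u(t,y)$ equals, up to sign, $\nabla_y$ of $J_p(u(t,x)-u(t,y))|x-y|^{-d-sp}$ plus a term in which $\nabla_y$ hits $|x-y|^{-d-sp}$. Concretely, $\nabla_y[J_p(u(x)-u(y))]=-(p-1)|u(x)-u(y)|^{p-2}\nabla u(y)$. Integrating by parts on $\{|y|\ge 3R/2\}$ with the factor $(1-\eta(y/R))$, the error becomes
\[
\int J_p(u(t,x)-u(t,y))\, e\cdot\nabla_y\!\Big[\frac{1-\eta(y/R)}{|x-y|^{d+sp}}\Big]\dd y ,
\]
with no boundary terms, since the cutoff vanishes near $B_{3R/2}$ and there is decay at infinity. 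The same computation works for $w=u-u(t,0)$, because $u(x)-u(y)=w(x)-w(y)$; this is the step that requires care.

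It remains to estimate the resulting integrand. The bracket is bounded by $C R^{-1}|x-y|^{-d-sp}$ on the annulus $B_{7R/4}\setminus B_{3R/2}$ (where $\nabla\eta$ lives) and by $C|x-y|^{-d-sp-1}\le C|y|^{-d-sp-1}$ elsewhere, using $|x|\le R$ and $|y|\ge 3R/2$. For the numerator we use $|J_p(w(x)-w(y))|\le C(|w(x)|^{p-1}+|w(y)|^{p-1})$.
\begin{itemize}[label={}]
\item On the annulus, both $|w(t,x)|$ and $|w(t,y)|$ are at most $\|w\|_{L^\infty(Q_{T,2R})}$, and the annulus has volume comparable to $R^d$ with $|x-y|\ge R/2$. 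This part contributes $C R^{-1-sp}\|w\|_{L^\infty}^{p-1}$.
\item On the rest, the $|w(t,x)|^{p-1}$ piece gives $\|w\|_{L^\infty}^{p-1}\int_{|y|>3R/2}|y|^{-d-sp-1}\dd y \le C R^{-1-sp}\|w\|_{L^\infty}^{p-1}$.
\item The $|w(t,y)|^{p-1}$ piece gives exactly $C\,{\Tail}_{p-1,sp+1}(w(t,\cdot);R)^{p-1}$.
\end{itemize}
Taking absolute values yields \eqref{eq:linearized_est}.

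The main obstacle is making the integration by parts and the differentiation of the principal value rigorous. I would rely on the smoothness in $x$ assumed in this section and on the bilinear-form interpretation from \cite{GJS}. The only parabolic input is that the cutoff is time-independent, so no new time terms appear.
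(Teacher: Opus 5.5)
Your final representation
\[
\p_t v_e+\LL_u v_e=\int_{\R^d}J_p(u(t,x)-u(t,y))\,e\cdot\nabla_y\bigl((1-\eta(y/R))|x-y|^{-d-sp}\bigr)\dd y
\]
is exactly the paper's, and your three-piece estimate of it is correct. Your intermediate error term actually carries a $+$ sign, not $-$, which is harmless. The problem is the route to this formula. You first differentiate the whole equation to get $\p_t(e\cdot\nabla u)+\LL_u(e\cdot\nabla u)=0$. You then write the error as $\int(1-\eta(y/R))\,e\cdot\nabla u(t,y)\,K_u(t;x,y)\dd y$ and integrate by parts on the exterior. All three steps need $\nabla u(t,y)$ at every $|y|\geq 3R/2$ and convergence of
\[
\int_{|y|\geq 3R/2}|\nabla u(t,y)|\,|u(t,x)-u(t,y)|^{p-2}|x-y|^{-d-sp}\dd y .
\]
The same integrability is what differentiating $\int J_p(u(t,x)-u(t,x+h))|h|^{-d-sp}\dd h$ under the integral sign requires. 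The hypotheses give nothing of the sort: $u$ solves the equation only in $Q_{T,2R}$, and outside you only know $u(t,\cdot)\in L^{p-1}_{sp}(\R^d)$. Even smoothness in $x$ would not make $\nabla u$ integrable against the kernel at infinity. Likewise, the exterior integration by parts ``with decay at infinity'' is unjustified, since there is only weighted integrability, not pointwise decay. You correctly flagged that $\nabla u$ is uncontrolled outside $B_{2R}$, but your derivation still passes through objects that need it. The bilinear-form framework of \cite{GJS} does not rescue this, because it addresses the diagonal, not the far field.

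The fix is to localize before differentiating, which is what the paper does. Split
\[
\dsp u(t,x)=\int J_p(u(t,x)-u(t,y))\,\eta(y/R)\,|x-y|^{-d-sp}\dd y+\int J_p(u(t,x)-u(t,y))\,(1-\eta(y/R))\,|x-y|^{-d-sp}\dd y ,
\]
and only then differentiate in $x$ along $e$. In the far piece, kept in the $y$-variable, the derivative falls only on $u(t,x)$ and on $|x-y|^{-d-sp}$. In the near piece, written with $h=y-x$, it falls on the cutoff and on $u(t,x+h)$, but only where $\eta((x+h)/R)\neq0$, i.e.\ inside $B_{2R}$. Collecting terms gives, for $x\in B_R$,
\[
e\cdot\nabla \dsp u(t,x)=\LL_u v_e(t,x)-\int J_p(u(t,x)-u(t,y))\,e\cdot\nabla_y\bigl((1-\eta(y/R))|x-y|^{-d-sp}\bigr)\dd y .
\]
This yields the displayed formula directly, with no $\nabla u$ outside $B_{2R}$ and no integration by parts at infinity. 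From there your estimate of the three pieces goes through unchanged.
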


The proof of this result follows the lines of the elliptic argument. The idea is to split the domain of integration of $(-\Delta_p)^{s}u$ using $\eta(x/R)$ and then differentiate along the direction $e$. A remainder term is obtained by integrating by parts and differentiating the cutoff $\eta$ and the kernel $|x-y|^{-d-sp}$. In the same way as in \cite[Lemma 3.3]{GJS}, one obtains \eqref{eq:linearized_est}.

Indeed, the localization gives, in $B_R$,
\[
\p_t v_e+\LL_u v_e
=\int_{\R^d}J_p(u(t,x)-u(t,y))\,
e\cdot\nabla_y\bigl((1-\eta(y/R))|x-y|^{-d-sp}\bigr)\dd y.
\]
Only spatial increments occur on the right, so the bounds use $w$ without changing the equation for $v_e$.

\section{Reducing the norm of the gradient}\label{s:reduce_norm}

This section contains the most technical part of the paper. We prove the parabolic counterpart of \cite[Lemma~4.1]{GJS}. The lemma is a type of improvement-of-oscillation result for the directional derivatives of $u$, applied through the localized function $v_e$. As in \cite{GJS}, the proof is a modified De Giorgi iteration, where the level sets are estimated through their measures (rather than through the $L^2$ norm of truncations) and there is no global coercivity estimate. Throughout this section, we will always assume $u$ is a weak solution of \eqref{eq:fracplap} which is smooth in $x$.

The lemma asserts that,  if $\nabla u$ deviates from a fixed unit vector $e$ on a set of positive space-time measure (in the past), then the directional derivative $e \cdot \nabla u$ has a quantifiable improvement in the future.

 We first state the main result, which is the parabolic version of the elliptic Lemma~4.1 of \cite{GJS}.

\begin{lemma}[Improvement of oscillation] \label{l:iteration}
For any $r,\mu,\tau \in (0,1/2)$, there exist $\delta_1, \delta_2, \eps_1 \in (0, 1/2)$, and $K_0 \in \mathbb{N}$, depending only on $r,\,\mu,\,\tau, \, d,\, s,$ and $p$, such that the following statement holds.

Assume that, for some unit vector $e \in S^{d-1}$, the function $u$ satisfies:
    \begin{enumerate}
    \item $u$ is a solution of \eqref{eq:fracplap} in $Q_{1,2^{K_0+1}}$,
    \item $\|\nabla u\|_{L^{\infty}(Q_{1,2^{n}})} \leq (1-\delta_1)^{-n}$ for $n=0,1,\dots,K_0$,
    \item $\sup_{t \in (-1,0]} {\Tail}_{p-1,sp+1}(u(t,\cdot)-u(t,0);2^{K_0-1}) \leq \eps_1$,
    \item $|\{ (t,x) \in (-1,-2\tau) \times B_1 : \nabla u(t,x) \notin B_r(e) \}| \geq \mu$.
    \end{enumerate}
Then
\[
e \cdot \nabla u(t,x) \leq 1-\delta_2 \quad \text{ for } (t,x) \in (-\tau/16,0] \times B_{1/2}.
\]
\end{lemma}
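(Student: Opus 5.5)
We plan to work with the localized directional derivative $v_e$ from \eqref{eq:def_ve} with $R=2^{K_0-2}$ (or a comparable dyadic radius), and to prove an upper bound $v_e\le 1-\delta_2$ in $(-\tau/16,0]\times B_{1/2}$ by a De Giorgi-type argument on the level sets $\{v_e>1-\theta\}$, measured by mass rather than by $L^2$ norms of truncations. First, by Lemma~\ref{l:eqforv_e_scaled}, with $w=u-u(t,0)$, hypothesis (2) gives $\|w\|_{L^\infty(Q_{1,2R})}\lesssim R(1-\delta_1)^{-K_0}$, and hypothesis (3) controls the tail. Hence $|\p_t v_e+\LL_u v_e|\le C(R^{-sp}(1-\delta_1)^{-K_0(p-1)}+\eps_1^{p-1})=:\eps_0$ in $Q_{1,R}$. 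We choose $K_0$ large and then $\delta_1,\eps_1$ small (with $(1-\delta_1)^{-K_0}\le 2$) so that $\eps_0$ is as small as needed. Hypothesis (2) also gives $v_e\le (1-\delta_1)^{-n}$ on the dyadic shells, so the positive part of $v_e-1$ is small near the origin and grows at most geometrically at infinity. Combined with $\sigma>0$, this makes its contribution to the nonlocal terms an admissible error.

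Second, we build a time-dependent barrier. At each time $t$ let $m(t)=|\{x\in B_1:\nabla u(t,x)\notin B_r(e)\}|$, so $\int_{-1}^{-2\tau}m\ge\mu$. At such points either $|\nabla u|$ is small or the direction differs from $e$; in both cases, using $|\nabla u|\le 1+$small, we get $e\cdot\nabla u\le 1-c(r)$. We then define a height $h(t)$ solving $h'=-c_1 m(t) h+C\eps_0$, $h(-1)=\,$small, with the intended comparison $v_e\le 1+h(t)-\text{(gain)}$. The mechanism is that when $v_e$ is near its supremum at $x$, the nonlocal term $\LL_u v_e(x)$ is at least $\int_{\{v_e\le 1-c(r)\}}K_u(x,y)(v_e(x)-v_e(y))\dd y$. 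This requires a lower bound on $K_u(t;x,y)$, that is, $|u(x)-u(y)|\gtrsim|x-y|$. That bound holds when $y-x$ lies in a cone around $e$, at points where $e\cdot\nabla u$ is near $1$, since then $u$ increases essentially linearly along $e$. This gives coercivity along cones of nondegenerate directions instead of uniform ellipticity. Integrating the ODE yields, by time $-2\tau$, a definite quantitative gap $\sup_{B_1}(v_e-1)_+\le -c(\mu,r)$ on a positive fraction of points. More precisely, the measure of $\{v_e>1-\theta\}$ in the later cylinder $(-2\tau,0]\times B_1$ is at most a fraction $1-\gamma$.

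Third, we convert the measure information into a pointwise bound. Using the energy (Caccioppoli) inequality for $(v_e-\ell)_+$ with the linearized operator, the gap term $\iint (v_e-\ell)_+(x)(\ell-v_e)_+(y)K_u\dd x\dd y$ is bounded below via the same cone coercivity, since near level $\ell\approx1$ the kernel is nondegenerate. With this, a Caffarelli--Chan--Vasseur intermediate-value (mass transfer) lemma in space-time shows that the measure of $\{v_e>1-\theta_k\}$ shrinks by a fixed amount at each dyadic level $\theta_k=2^{-k}\theta$ over shrinking time windows down to $(-\tau/16,0]$. After finitely many steps the measure is below the De Giorgi threshold, and the standard measure-based De Giorgi iteration gives $v_e\le1-\delta_2$ on $(-\tau/16,0]\times B_{1/2}$, with $\eps_0$ absorbed.

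We expect the main obstacle to be the lower bound on the gap term, since the kernel $K_u$ degenerates wherever $\nabla u$ is small. We would first try to prove that, at points with $e\cdot\nabla u\ge 1-\theta$, the kernel satisfies $K_u(x,y)\gtrsim|x-y|^{-d-\sigma}$ for $y\in x+\{z:z\cdot e\ge(1-c)|z|\}$ and $|z|$ small. This would follow from the Lipschitz bound and a Taylor argument along segments, using $\nabla u\in B_{Cr'}(e)$ on most of such segments. The remaining difficulty is transporting the mass deficit of the set where the gradient is away from $e$, which lies in the past, across time through the ODE for $h$.
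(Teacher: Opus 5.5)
Your outline names the right ingredients (cone coercivity, a height driven by the bad mass, a Caffarelli--Chan--Vasseur mass transfer, then De Giorgi). Step~2 does not work, however, and the reason is the central geometric point of the actual proof.

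\textbf{Visibility through cones.} The kernel lower bound holds only along the cone $C(x)$ of directions with $|\nu\cdot e|>1/6$. Even there it holds only in the integrated form of Lemma~\ref{l:cone-coercivity}, $\int_{S\cap C(x)}K_u(t;x,y)\dd y\geq c_A|S\cap C(x)|$. So at a point $x$ where $v_e$ is near its maximum, the gain is proportional to $|\A_r(t)\cap C(x)|$, not to $m(t)=|\A_r(t)|$. The bad set may lie entirely in $\{y:|(y-x)\cdot e|\leq|y-x|/6\}$, which $x$ does not see. Since you do not control where the maximum sits, the ODE $h'=-c_1m(t)h+C\eps_0$ is unjustified, and no bound on all of $B_1$ follows. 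As written, Step~2 also claims $\sup_{B_1}(v_e-1)_+\leq-c$ ``on a positive fraction of points'', which is not meaningful, and a multiplicative ODE for an excess $h\geq0$ cannot create a drop below $1$.

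\textbf{The missing idea: localize, then propagate in time.} The paper picks $x_1\in\{\pm e/2\}$ (Lemma~\ref{l:pickaball}) so that $C^*(x_1)=\bigcap_{x\in B_{1/8}(x_1)}C(x)$ captures half of the bad mass. It then uses barriers $\varphi_k=1-a_k(t)\eta_k(x)+\eps_2(t+1)$ supported near $x_1$, with a height $a_1'=\kappa g(t)$ forced by the visible mass $g(t)=|\A_r(t)\cap C^*(x_1)|$. This gives the gain only on $(t_b/2,0]\times B_{1/16}(x_1)$. The paper then uses $B_{1/16}(x_1)\subset\A_{\delta_2}(t)$ as a new reservoir, visible first from $-x_1$ and then from every $z\in\overline B_{1/2}$. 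It reapplies the argument on the windows $(-\tau,-\tau/2)$ and $(-\tau/4,-\tau/8)$, which is why the conclusion holds only on $(-\tau/16,0]$. Your Step~3 has the same visibility issue when you bound the gap term between levels. The paper handles it by restricting to $B_{r_1}(x_1)$ and using the small balls $B_{r_1/16}(x_1\pm\frac{7r_1}{16}e)$, one of which lies in $C(x)$ for each $x\in\overline B_{r_1}(x_1)$.

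\textbf{Two further steps need a different mechanism.} First, a pointwise maximum-principle argument at a localized maximum is not available. $K_u$ is symmetric in $(x,y)$ but not under $z\mapsto -z$, so with only Lipschitz control of $u$ the term $\LL_u\eta(x)$ for a smooth bump is not uniformly bounded when $\sigma\geq1$. The paper therefore works only with the energy inequality for $v_k=(v_e-\varphi_k)_+$. It absorbs $J_{\varphi,k}$ into $J_{r,k}$, and the forcing $a_k'\int v_k\eta_k$ into half of $J_{gap,k}\geq\frac{c_A}{4}r^2g(t)\int v_k$, a bound linear in $g$.

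Second, a space-time bound on $\{v_e>1-\theta\}$ in the later cylinder does not directly feed an intermediate-value lemma. The gap term acts at times where the lower set is present, whereas smallness is needed later. The paper resolves this with the normalized energy $F_k=E_k/a_k^2$ and the dichotomy of Proposition~\ref{p:mass-transfer}: either $\int_{t_b}^0|A_{k+1}|\dd t\leq\omega_\star$ or $\omega_{k+1}\leq\omega_{k-1}-\gamma_\star$. This requires a contraction ratio $\lambda$ far smaller than your $1/2$.

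Minor: in Step~1 the power is $R^{-\sigma}$, not $R^{-sp}$; this is harmless since $\sigma>0$.
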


\begin{remark}\label{r:statement-comments}
Notice that Assumption (2) for $n=0$ implies $\|w\|_{L^\infty(Q_{1,1})} = \|u(t,\cdot) -u(t,0)\|_{L^\infty(Q_{1,1})}\leq 1$.

We use two separate small parameters: $\delta_1$ quantifies the dyadic growth of the gradient in assumption (2), while $\delta_2$ quantifies the improvement in the conclusion. They play different roles in the proof and, crucially, $\delta_1$ will be chosen small depending on a quantity that determines $\delta_2$ but does not depend on $\delta_1$. The lemma will later be applied with the single parameter $\delta := \min(\delta_1,\delta_2)$, for which both assumption (2) and the conclusion hold. In fact the result stays true if we decrease $\delta_1$, $\delta_2$ or $\eps_1$, or increase $K_0$.
\end{remark}
For a unit vector $e$ and $r>0$, define
\begin{equation*} 
    \A_r(t) := \{ x \in B_1 : \nabla u(t,x) \notin B_r(e) \}.
\end{equation*}

Assumption (4) is equivalent to $\int_{-1}^{-2\tau}|\A_r(t)|\dd t \geq \mu$. The first step is to select a region in space from which the set $\A_r(t)$ is seen through a fixed cone of directions. For $x\in\R^d$ let
\begin{equation}\label{def:cone}
C(x) := \{ x + \rho \nu \ :\ \rho \in (0,2),\ \nu \in S^{d-1},\ |\nu \cdot e| > 1/6 \}
\end{equation}
be the cone with vertex $x$, axis $e$, opening $1/6$ and radius $2$. 

The following lemma is in the same spirit of \cite[Lemma 4.4]{GJS}. We omit the proof.

\begin{lemma}[Choice of the center] \label{l:pickaball}
Assume $\int_{-1}^{-2\tau} |\A_r(t)| \dd t \geq \mu$. There is a point $x_1 \in \partial B_{1/2}$, with $x_1 = e/2$ or $x_1=-e/2$, such that, defining
\begin{equation} \label{def:hatg}
    g(t) := |\A_r(t) \cap C^*(x_1)|,
    \qquad C^*(x_1) := \bigcap_{x\in B_{1/8}(x_1)} C(x),
\end{equation}
we have
\begin{equation*} 
\int_{-1}^{-2\tau} g(t) \dd t \geq \mu/2 .
\end{equation*}
In particular $|\A_r(t) \cap C(x)| \geq g(t)$ for every $x \in B_{1/8}(x_1)$ and every $t$.
\end{lemma}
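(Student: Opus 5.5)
The plan is to reduce the statement to a purely geometric covering fact:
\[
B_1 \subset C^*(e/2) \cup C^*(-e/2).
\]
Once this holds, the rest is a pigeonhole argument. Set $g_\pm(t) := |\A_r(t)\cap C^*(\pm e/2)|$. Since $\A_r(t)\subset B_1$, the covering gives $g_+(t)+g_-(t)\ge |\A_r(t)|$ for every $t$. Integrating over $(-1,-2\tau)$ yields $\int g_+ + \int g_- \ge \mu$, so one of the two integrals is at least $\mu/2$. We take $x_1$ to be the corresponding point $\pm e/2$. Measurability of $g$ in $t$ is immediate from the smoothness of $u$ in $x$ (or from Fubini on the space-time set). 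The final claim holds by definition: $C^*(x_1)\subset C(x)$ for every $x\in B_{1/8}(x_1)$, so $|\A_r(t)\cap C(x)|\ge g(t)$.

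To prove the covering, fix $z\in B_1$ and write $a := z\cdot e\in(-1,1)$. The key observation is that $(z+e/2)\cdot e = a+1/2$ and $(z-e/2)\cdot e = a-1/2$ differ by exactly $1$, so at least one of them has absolute value at least $1/2$.

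Suppose $a\ge 0$; the case $a<0$ is symmetric, with $e/2$ in place of $-e/2$. We claim $z\in C^*(-e/2)$. Take any $x\in B_{1/8}(-e/2)$. Then
\[
(z-x)\cdot e \ge a + \tfrac12 - \tfrac18 \ge \tfrac38 ,
\]
so $z\neq x$. Also $|z-x| \le 1+\tfrac12+\tfrac18 = \tfrac{13}{8} < 2$. Writing $z - x = \rho\nu$ with $\rho=|z-x|\in(0,2)$ and $\nu\in S^{d-1}$, we get
\[
\nu\cdot e \ge \tfrac{3/8}{13/8} = \tfrac{3}{13} > \tfrac16 .
\]
Hence $z\in C(x)$ for every such $x$, that is, $z\in C^*(-e/2)$.

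The only step needing care is checking that the constants ($1/8$ radius, opening $1/6$, radius $2$) are compatible; the computation above confirms they are, with room to spare. No analytic input about $u$ is used beyond the definition of $\A_r(t)$ as a subset of $B_1$.
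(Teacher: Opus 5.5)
Your proof is correct, and it is essentially the argument the paper intends. The paper omits the proof, pointing to the elliptic analogue \cite[Lemma 4.4]{GJS}; your inclusion $B_1\subset C^*(e/2)\cup C^*(-e/2)$ (via $(z-x)\cdot e\ge 3/8$ and $|z-x|<13/8$, so $|\nu\cdot e|>3/13>1/6$), followed by pigeonhole on the time-integrated masses, is exactly the natural time-integrated version of that argument. The one unstated detail, measurability of $C^*(\pm e/2)$, is harmless, since its complement $B_{1/8}(\pm e/2)+\bigl(\R^d\setminus(C(0))\bigr)$ is open, so $C^*(\pm e/2)$ is closed.
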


The core of the section is the following localized statement. Since we will need to re-apply it on later time windows during the proof of Lemma~\ref{l:iteration}, we state it with the time window and the center as parameters.

\begin{lemma}[The De Giorgi engine] \label{l:DGengine}
Let $r,\mu \in (0,1/2)$, and $-1 \leq t_a < t_b < 0$. There exist $\delta_1,\delta_2,\eps_1 \in (0,1/2)$ and $K_0 \in \mathbb N$, depending only on $r$, $\mu$, $|t_b|$, $d$, $s$, and $p$ such that the following holds.

Assume $u$ satisfies assumptions (1)--(3) of Lemma~\ref{l:iteration} and, for some $x_1 \in \overline B_{1/2}$ and some unit vector $e$,
\begin{equation}\label{eq:engine-reservoir}
\int_{t_a}^{t_b} |\A_r(t) \cap C^*(x_1)| \dd t \geq \mu,
\qquad\text{where } C^*(x_1) := \bigcap_{x\in B_{1/8}(x_1)} C(x).
\end{equation}
Then
\[
e\cdot \nabla u(t,x) \leq 1-\delta_2 \quad \text{ for } (t,x) \in (t_b/2,0] \times B_{1/16}(x_1).
\]
\end{lemma}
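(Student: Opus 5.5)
We prove the engine by a De Giorgi-type iteration on the superlevel sets of $v := v_e$ (with $R = 1$), as in \cite[Lemma~4.1]{GJS}, with the heights made time dependent. By assumption (2) with $n=0$ we have $v \le 1$ in $Q_{1,1}$, and we aim to push $v$ below $1-\delta_2$. By Lemma~\ref{l:eqforv_e_scaled}, assumption (3) and Remark~\ref{r:statement-comments}, $v$ satisfies $\p_t v + \LL_u v \le C$ in $Q_{1,1}$. The constant $C$ is small once the cutoff radius is taken as $2^{K_0-1}$ and $\eps_1$ is small; alternatively we absorb it into the heights below.

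\textbf{Step 1: a time-dependent barrier height.} We define $h(t)$ for $t\in[t_a,0]$ by the ODE
\[
h'(t) = -c_0\, g(t) + c_1 (1-h(t)) + C\eps_1, \qquad h(t_a)=1, \qquad g(t):=|\A_r(t)\cap C^*(x_1)|.
\]
The gain $c_0 g(t)$ comes from the observation that, for $x\in B_{1/8}(x_1)$ with $v(t,x)$ close to $1$, the linearized operator sees the set $\A_r(t)\cap C(x)$. On that set either $e\cdot\nabla u \le 1 - c r^2$ (using $|\nabla u|\le 1+o(1)$ from (2)), so that $v(x)-v(y)\gtrsim r^2$. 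In addition the kernel $K_u(t;x,y)$ is nondegenerate for $y\in C(x)$, because along the cone direction $u(t,x)-u(t,y)\approx \pm\rho\, e\cdot\nu$ with $|e\cdot\nu|>1/6$ whenever $\nabla u\approx e$ on most of the segment. This last point is exactly the coercivity along cones. The relaxation term $c_1(1-h)$ accounts for the fact that outside regions where $v$ stays near $1$ the bound deteriorates. Integrating over $[t_a,t_b]$ and using \eqref{eq:engine-reservoir} gives $h(t)\le 1-2\delta_2'$ for $t\in[t_b,0]$, with $\delta_2'$ depending on $\mu$, $|t_b|$ and $r$ (the exponential decay over time $|t_b|$ is controlled).

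\textbf{Step 2: measure reduction and De Giorgi iteration.} We consider the truncations $(v - h(t) - \ell_k)_+$ localized in $B_{1/8}(x_1)$ over shrinking time intervals inside $(t_b/2,0]$. We test the equation with these truncations and use the Caffarelli--Chan--Vasseur energy inequality, where the nonlocal term contributes a \emph{gap term} $\iint (v-\ell)_+(t,x)(v-\ell)_-(t,y)K_u$. Since $v$ itself may lack global coercivity, this gap term is bounded below via the cone nondegeneracy on $\A_r$, and the time derivative of $h$ is designed to absorb the resulting gain. Following \cite{GJS}, the iteration is run on measures: we show that $|\{v > h + \ell_k\}|$ in the $k$-th cylinder decays superlinearly. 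The contributions from far away, controlled through assumptions (2)--(3) with dyadic growth $(1-\delta_1)^{-n}$, are summable provided $\delta_1 \ll \delta_2'$ and $K_0$ is large. This is why $\delta_1$ is chosen after $\delta_2'$.

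\textbf{Main obstacle.} The hardest step is the quantitative lower bound for the gap term, i.e.\ the nondegeneracy of $K_u$ on the relevant pairs when $v$ is near $1$ only on part of the set. I would first prove it pointwise at almost-maximum points: if $v(t,x)\ge 1-\eta$, then by Lipschitz control the gradient is close to $e$ near $x$, and in the cone $|u(x)-u(y)|\gtrsim |x-y|$ except on a set whose measure is controlled by $|\{v<1-\eta\}|$. The De Giorgi bookkeeping is then run with this degenerate but quantified coercivity. The conclusion follows with $\delta_2=\delta_2'$ on $(t_b/2,0]\times B_{1/16}(x_1)$.
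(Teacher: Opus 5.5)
Your Step 1 matches the paper's idea. The paper also lowers the barrier through an ODE driven by the visible mass, $a_1'=\kappa g\,\one_{(t_a,t_b)}$, and the extra term $a_1'\int v_k\eta_k$ this creates in the energy inequality is absorbed by the gap term built on $\A_r(t)$ (Lemma~\ref{l:Bound_Jgap1}). That absorption only works while $g$ is active, i.e.\ during $(t_a,t_b)$.

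The gap is in Step 2. The nonlinear recursion $Y_{j+1}\le Cb^jY_j^{1+\beta}$ only drives the level sets to zero if the starting quantity $\int|\{v>\text{barrier}\}|\dd t$ is already below a threshold $\omega_\star$. Nothing in your argument provides this.

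\begin{itemize}
\item On the window $(t_b/2,0]$ where you run the iteration, $g(t)=|\A_r(t)\cap C^*(x_1)|$ may vanish identically, since the hypothesis only controls $\int_{t_a}^{t_b}g$. So the gap term via $\A_r$ contributes nothing there.
\item The fact that $h(t)\le1-2\delta_2'$ for $t\ge t_b$ does not make $\{v>h\}$ small. Integrating the energy inequality for a single level only gives $E(t)/a(t)^2\le C\bar\eps^{-2}\int|A(s)|\dd s$, which is of order one.
\item In the elliptic case \cite{GJS} the initial smallness comes from a one-step measure drop. As the paper points out, in the parabolic setting the gap coercivity acts at past times while smallness is needed at future times, so that step does not carry over.
\end{itemize}

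What is missing is the paper's preliminary phase, which has four ingredients.
\begin{itemize}
\item It uses $N$ levels $a_k=\lambda^{k-1}a_1$ with a fixed cutoff, and the normalized energy $F_k=E_k/a_k^2$, which starts at $0$ and has derivative bounded above.
\item It uses a second gap bound whose reservoir is $B_{r_1}(x_1)\setminus A_{k-1}(t)$, the set below the previous level (Lemma~\ref{l:gap-below}). This works because $\varphi_k-\varphi_{k-1}=(1-\lambda)a_{k-1}$ where $\eta_0=1$.
\item It proves the Caffarelli--Chan--Vasseur type dichotomy of Proposition~\ref{p:mass-transfer}. If $|A_{k+1}(t_0)|\ge\omega_\star$ for some $t_0>t_b$, then $F_k$ must cross the band $(\omega_\star/16,\omega_\star/8)$ on a set of times of definite length. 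On that band $|A_{k+1}|\le\omega_\star$, and on most of it $|A_{k-1}|>\mu_0$ (otherwise the second gap bound would exceed the gap budget). This forces $\omega_{k+1}\le\omega_{k-1}-\gamma_\star$.
\item By nesting of the level sets, the mass drop can occur only boundedly many times. This yields $\int_{t_b}^0|A_{k_\star+1}|\dd t\le\omega_\star$, and only then can the ramped nonlinear heights be started.
\end{itemize}

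Separately, your plan for the ``main obstacle'' contains a false step. A Lipschitz bound on $u$ gives no continuity of $\nabla u$, so $v(t,x)\ge1-\eta$ does not make $\nabla u$ close to $e$ near $x$. Lemma~\ref{l:cone-coercivity} avoids this with a stopping-time argument along each ray of $C(x)$, which needs alignment only off the set $S$. With $S=\A_r(t)$ that alignment holds automatically, so this is actually the easy part.
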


Like in the elliptic case, we prove Lemma \ref{l:iteration} by applying Lemma \ref{l:DGengine} more than once. We first apply Lemma \ref{l:DGengine} with $x_1$ as the one given by Lemma \ref{l:pickaball}. Then, we use the deduced upper bound for $(e \cdot \nabla u)$ around $x_1$, at a later time, to apply Lemma \ref{l:DGengine} again and enlarge the radius of the ball where we have the lower bound. Hence we focus on the proof of Lemma \ref{l:DGengine}.

From now on, we place ourselves in the setting of Lemma~\ref{l:DGengine}: the center $x_1\in \overline B_{1/2}$, the  time interval $(t_a,t_b)$ and the mass $\mu$ are fixed, assumptions (1)--(3) of Lemma~\ref{l:iteration} and \eqref{eq:engine-reservoir} hold, and we write
\begin{equation}\label{def:hatg-engine}
g(t) := |\A_r(t) \cap C^*(x_1)| , \qquad \int_{t_a}^{t_b}g(t)\dd t \geq \mu .
\end{equation}
All constants below may depend on $d,s,p$ and on $|t_b| \in (0,1]$ without further mention. We may and do assume $r\leq 1/15$ throughout. We keep a record of the choices and dependencies of constants in Remark~\ref{r:constants-order}.

As in Lemma \ref{l:eqforv_e_scaled}, we define $v_e$ to be a properly localized version of the directional derivative of $u$ with $R=2^{K_0-1}$,
\begin{equation} \label{eq:def_ve_dyadic}
v_e(t,x) := \eta(2^{1-K_0}x)\, (e\cdot \nabla u(t,x)),
\end{equation}
where $\eta$ is the fixed cutoff appearing in \eqref{eq:def_ve}. Thus $v_e = e\cdot\nabla u$ in $B_{2^{K_0-1}}$ and $\supp v_e(t,\cdot) \subset B_{2^{K_0}}$. Assumption (2) of Lemma~\ref{l:iteration} gives
\begin{equation}\label{eq:ve-growth}
|v_e(t,x)| \leq (1-\delta_1)^{-n} \ \text{ in } Q_{1,2^n} \ (0\leq n\leq K_0),
\qquad
|v_e(t,x)| \leq (2|x|)^{\alpha_0} \ \text{ for } (t,x) \in (-1,0]\times B_1^c,
\end{equation}
where $2^{-\alpha_0} = 1-\delta_1$. In particular $|v_e|\leq 1$ in $(-1,0]\times B_1$.

Condition (2) gives, setting $w(t,x)=u(t,x)-u(t,0)$, for $k = 0,1,\dots,K_0$,
\begin{equation}\label{eq:dyadic_gradient_w}
\|w\|_{L^\infty(Q_{1,2^{k}})} \leq \|w\|_{L^\infty(Q_{1,1})} + 2^{k}\,\|\nabla u\|_{L^\infty(Q_{1,2^{k}})} \leq  1+ 2^{k}(1-\delta_1)^{-k}.
\end{equation}

\begin{lemma} \label{l:ve-eq-small-rhs}
Let $u$ be a function satisfying the assumptions of Lemma \ref{l:iteration}. Let $v_e$ be defined as in \eqref{eq:def_ve_dyadic}. Then, it solves the equation
\begin{equation} \label{eq:equation_ve}
    |\p_t v_e(t,x) +\LL_u v_e(t,x)| \leq \eps_0 \quad \text{ for } (t,x)\in Q_{1,1},
\end{equation}
where $\eps_0$ is arbitrarily small if we choose $\eps_1$ and $\delta_1$ small and $K_0$ large in Lemma \ref{l:iteration}.
\end{lemma}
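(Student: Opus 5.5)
We apply Lemma~\ref{l:eqforv_e_scaled} with $T=1$ and $R=2^{K_0-1}$. Assumption (1) of Lemma~\ref{l:iteration} says that $u$ solves \eqref{eq:fracplap} in $Q_{1,2^{K_0+1}}=Q_{1,2R}$. Moreover, with the choice $R=2^{K_0-1}$, the function $v_e$ in \eqref{eq:def_ve_dyadic} coincides with the one in \eqref{eq:def_ve}. Since $Q_{1,1}\subset Q_{1,R}$ once $K_0\geq 1$, estimate \eqref{eq:linearized_est} gives, for $(t,x)\in Q_{1,1}$,
\[
|\p_t v_e+\LL_u v_e|\leq C\Bigl(2^{(1-K_0)(1+sp)}\|w\|^{p-1}_{L^\infty(Q_{1,2^{K_0}})}+{\Tail}_{p-1,sp+1}(w(t,\cdot);2^{K_0-1})^{p-1}\Bigr).
\]
It then suffices to make each of the two terms on the right smaller than $\eps_0/2$.

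\textbf{The tail term.} By assumption (3), this term is at most $C\eps_1^{p-1}$. It is therefore small once $\eps_1$ is chosen small depending on $\eps_0$.

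\textbf{The local term.} Here we use \eqref{eq:dyadic_gradient_w} with $k=K_0$, which gives $\|w\|_{L^\infty(Q_{1,2^{K_0}})}\leq 1+2^{K_0}(1-\delta_1)^{-K_0}\leq 2\cdot 2^{K_0}(1-\delta_1)^{-K_0}$. The local term is then bounded by
\[
C\,2^{-K_0(1+sp)}\,2^{K_0(p-1)}(1-\delta_1)^{-K_0(p-1)} = C\,\bigl(2^{p(1-s)-2}(1-\delta_1)^{1-p}\bigr)^{K_0}.
\]
Because $p<2/(1-s)$, we have $p(1-s)-2=-\sigma<0$. So we choose $\delta_1$ small enough that $(1-\delta_1)^{1-p}\leq 2^{\sigma/2}$, which makes the base at most $2^{-\sigma/2}<1$. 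With this choice the local term is at most $C2^{-\sigma K_0/2}$, and this is below $\eps_0/2$ once $K_0$ is large.

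The only delicate point is the order in which the constants are fixed. First, $\delta_1$ is fixed small depending only on $\sigma$ and $p$; smallness in $\eps_0$ is not needed here, only the geometric decay. Next, $K_0$ is taken large depending on $\eps_0$, and $\eps_1$ is taken small. Neither of these choices interferes with the others. This compatibility is exactly where the restriction $p<2/(1-s)$ enters: it makes the growth of $\|w\|$ from the dyadic gradient bound beaten by the decay factor $R^{-1-sp}$.
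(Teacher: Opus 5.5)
Your proof is correct and follows the paper's argument. You apply Lemma~\ref{l:eqforv_e_scaled} with $R=2^{K_0-1}$, bound the local term through \eqref{eq:dyadic_gradient_w} by $C\bigl(2^{p(1-s)-2}(1-\delta_1)^{1-p}\bigr)^{K_0}$, which decays geometrically for small $\delta_1$ because $p<2/(1-s)$, and control the tail term by assumption (3). One minor slip is harmless: $Q_{1,2^{K_0+1}}$ is $Q_{1,4R}$ rather than $Q_{1,2R}$, but it contains $Q_{1,2R}$, and that is all the lemma requires.
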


\begin{proof}
We apply Lemma~\ref{l:eqforv_e_scaled} with $R=2^{K_0-1}$ and use Assumptions \textit{(2)}--\textit{(3)} to obtain for each $(t,x) \in Q_{1,1}$ that
\begin{equation}\label{def:eps_0}
\begin{aligned}
|\p_t v_e + \LL_u v_e| &\leq C \left( 2^{-K_0(1+sp)} \|w\|^{p-1}_{L^\infty(Q_{1,2^{K_0}})} + \sup_{t \in (-1,0]}{\Tail}_{p-1,sp+1}(w(t,\cdot);2^{K_0-1})^{p-1} \right)  \\
 & \leq C \left( 2^{-K_0(1+sp)}  + \left((1-\delta_1)^{-(p-1)} 2^{-2+p(1-s)} \right)^{K_0}+ \eps_1  \right) =: \eps_0. 
\end{aligned} 
\end{equation}
We point out that $\eps_0$ can be made arbitrarily small by taking $K_0$ large enough, since $(1-\delta_1)^{-(p-1)} 2^{p(1-s)-2}  <1$ for $\delta_1$ small and $\eps_1$ is small.    
\end{proof}

\subsection{The De Giorgi setup}\label{s:DGsetup}

We now outline the De Giorgi iteration in the parabolic setting. Let us recall the constants $r,\mu>0$ from Lemma~\ref{l:iteration}, the point $x_1$ provided by Lemma~\ref{l:pickaball}, and the function $g$ defined in \eqref{def:hatg}. The iterative process is divided into two distinct phases. The first is a finite preliminary phase consisting of $N \geq 2$ iterations (where the integer $N$ will be chosen later), during which the height decreases geometrically while the spatial scale remains fixed. The role of this preliminary phase is to ensure the smallness assumption to start the recurrence law is satisfied.
The second is the nonlinear phase, where we employ the standard shrinking spatial cut-offs and execute the nonlinear De Giorgi recursion.

\medskip
 
\noindent\textbf{Bump functions in space.}

Fix the radius $r_1 := \tfrac{5}{64}$ and let
$\bar\eps \in (0,1/8]$ be a small parameter which will be fixed in Proposition \ref{p:mass-transfer}  (it will depend only on $d,s,p$, and $|t_b|$). 
Note $(1+\bar \eps)r_1 \leq \tfrac98 r_1 < \tfrac18$, so that $B_{(1+\bar \eps)r_1}(x_1) \subset B_{1/8}(x_1) \subset B_{3/4}$ since $x_1\in \overline B_{1/2}$.
Let $\eta_0$ be a smooth cutoff satisfying $\eta_0 = 1$ on $\overline B_{r_1}(x_1)$, $\supp\eta_0 = \overline B_{(1+\bar \eps)r_1}(x_1)$, and $\|\eta_0\|_{Lip} \leq C/\bar\eps $.

The cut-off $\eta_0$ serves as the fixed spatial cutoff during the preliminary phase. After $N$ iterations, we transition to the nonlinear phase, introducing a family of bump functions with shrinking spatial supports.
To this end, we define a sequence of cutoff functions $\eta_k$ with $\eta_k := \eta_0$ for $1 \leq k \leq N$.

For $k \geq N + 1$, we define $\eta_k$ as a smooth radial bump function satisfying the following properties:
\begin{itemize} 
\item $\eta_k = 1$ on $B_{1/16 + 4^{N-k-3}}(x_1)$;
\item $\eta_k = 0$ outside $B_{1/16 + 4^{N-k-2}}(x_1)$;
\item $|\nabla \eta_k| \leq C2^{2(k-N)}$ and $|\nabla^2 \eta_k| \leq C2^{4(k-N)}$.
\end{itemize}

Thus the first $N$ levels use the same spatial scale, and the shrinking of the spatial supports starts only at level $N+1$.

\noindent\textbf{The time-dependent heights.} We define $a_1(t)$ as the solution, in the a.e. sense, of the ODE
\begin{equation} \label{eq:ODEa}
\begin{cases}
a_1'(t) = \kappa\, g(t)\, \one_{(t_a,t_b)}(t) , \\
a_1(-1) = 0,
\end{cases}
\end{equation}
where $\kappa>0$ is fixed in \eqref{eq:kappa-heights-cap} below. The forcing is proportional to the visible mass $g(t)$. The solution is explicit
\begin{equation*}
a_1(t) = \kappa \int_{\min(t,t_a)}^{\min(t,t_b)} g(\rho) \dd\rho \geq 0 ,
\end{equation*}
so $a_1$ is Lipschitz and \emph{nondecreasing}, $a_1 \equiv 0$ on $[-1,t_a]$, and $a_1$ is constant on $[t_b,0]$. We record two elementary consequences
\begin{align}
& a_1(t) \leq \bar a := \kappa\, |B_1| \qquad \text{for all } t,\nonumber \\
& a_1(t) = \kappa \int_{t_a}^{t_b} g(\rho) \dd\rho \geq \kappa\, \mu \qquad \text{for } t \in [t_b, 0], \label{eq:a1-lower}
\end{align}
using $g \leq |B_1|$ and $t_b - t_a \leq 1$ for the first, and \eqref{def:hatg-engine} for the second.

For the preliminary phase, with $\lambda\in(0,1/4]$ the contraction ratio fixed in \eqref{def:lambda} below, define
\begin{equation*}
    a_k(t):=\lambda^{k-1}\,a_1(t),\qquad 1\leq k\leq N,
\end{equation*}
and set
\begin{equation}\label{def:delta-from-aN}
    \delta_0:= \tfrac12\, \lambda^{N-1}\, \kappa\, \mu ,
\end{equation}
so that, by \eqref{eq:a1-lower},
\begin{equation*}
    a_N(t)\geq 2\delta_0\qquad\text{for }t\in[t_b,0].
\end{equation*}
The number $\delta_0$ depends only on $r,\mu,|t_b|,d,s,p$ (through $\kappa$, $\lambda$, $N$), and not on the solution $u$; it determines the final gain, $\delta_2 = \delta_0/2$. Note also that $\delta_0 \leq \tfrac12\kappa\mu \leq \bar a/2$.

For the nonlinear phase, define the times
\[
    t_j := \frac{t_b}{2}\big(1+2^{-j+1}\big), \qquad j\geq1,
\]
so that $t_1 = t_b$, $t_j \nearrow t_b/2$, and $t_{j+1}-t_j = |t_b|\,2^{-j-1}$. With $\theta_j := 1+2^{-j} \in (1,\tfrac32]$, set
\begin{equation}\label{def:nonlinear-heights}
    a_{N+j}(t) := \begin{cases}
    0 & \text{ for } t \in [-1,t_j], \\[2pt]
    \theta_j\, \delta_0\, \dfrac{t-t_j}{t_{j+1}-t_j} & \text{ for } t \in [t_j,t_{j+1}], \\[4pt]
    \theta_j\, \delta_0 & \text{ for } t \in [t_{j+1},0].
    \end{cases}
\end{equation}
Thus every nonlinear height vanishes before $t_b$ and sits at its plateau $\theta_j\delta_0$ from time $t_{j+1} < t_b/2$ on. Since $\delta_0 \leq \bar a/2$, all heights, in both phases, are bounded by $\bar a$. See Figure \ref{fig:two-phase-heights} for a direct comparison between the different heights in both phases.

In the next lemma we collect some elementary properties of the heights. The proof is straightforward and omitted.

\begin{lemma} \label{l:a-bounds}
The heights defined above satisfy the following properties:
\begin{enumerate}
\item $a_{k+1}(t) \leq a_k(t)$ for every $t \in [-1,0]$ and every $k\geq1$;
\item $0\leq a_{N+j}(t)\leq \tfrac32\delta_0$ and $(a_{N+j}')_+(t)\, a_{N+j}(t) \leq C\,2^{j}\,\delta_0^2/|t_b|$ for every $j\geq1$ and a.e.\ $t$;
\item $a_{N+j}(t)\geq\delta_0$ for every $j\geq1$ and every $t\in[t_b/2,0]$.
\end{enumerate}
\end{lemma}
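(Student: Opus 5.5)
The plan is to verify the three properties directly from the definitions, treating the preliminary indices $1\le k\le N$ and the nonlinear indices $k=N+j$ separately, together with the transition $k=N$ to $k=N+1$.

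\emph{Property (1).} For $1\le k\le N-1$ we have $a_{k+1}=\lambda a_k\le a_k$, because $\lambda\le 1/4$ and $a_1\ge 0$. For the transition we compare $a_{N+1}$ with $a_N$. On $[-1,t_1]=[-1,t_b]$ we have $a_{N+1}=0\le a_N$. On $[t_b,0]$ the monotonicity of $a_1$ together with \eqref{eq:a1-lower} gives $a_N(t)\ge 2\delta_0$, while $a_{N+1}\le\theta_1\delta_0=\tfrac32\delta_0$. For $j\ge1$ we compare $a_{N+j+1}$ with $a_{N+j}$:
\begin{itemize}
\item on $[-1,t_{j+1}]$, $a_{N+j+1}=0$;
\item on $[t_{j+1},0]$, $a_{N+j}=\theta_j\delta_0\ge\theta_{j+1}\delta_0\ge a_{N+j+1}$, since $\theta_j$ is decreasing in $j$.
\end{itemize}

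\emph{Property (2).} The bound $0\le a_{N+j}\le\theta_j\delta_0\le\tfrac32\delta_0$ is immediate from \eqref{def:nonlinear-heights}. The function $a_{N+j}$ is piecewise linear, with derivative zero outside $[t_j,t_{j+1}]$ and equal to $\theta_j\delta_0/(t_{j+1}-t_j)=\theta_j\delta_0\,2^{j+1}/|t_b|$ inside. Hence
\[
(a_{N+j}')_+\,a_{N+j}\le \theta_j^2\,\delta_0^2\,2^{j+1}/|t_b|\le C\,2^j\delta_0^2/|t_b|.
\]

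\emph{Property (3).} Since $t_{j+1}<t_b/2$, on $[t_b/2,0]$ the height sits at its plateau, so $a_{N+j}=\theta_j\delta_0\ge\delta_0$.

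There is no real obstacle. The only point needing care is the transition step in (1), which relies on the lower bound $a_N\ge2\delta_0$ on $[t_b,0]$. This bound is exactly what the choice \eqref{def:delta-from-aN} of $\delta_0$ provides, together with the fact that $a_{N+1}$ vanishes before $t_b$.
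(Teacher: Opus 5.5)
Your proposal is correct and matches the direct verification from the definitions that the paper has in mind when it omits the proof as straightforward. You also correctly identify the transition $a_{N+1}\le a_N$ as the only step that uses the choice of $\delta_0$, namely $a_N\ge 2\delta_0>\tfrac32\delta_0\ge a_{N+1}$ on $[t_b,0]$, with $a_{N+1}\equiv 0$ on $[-1,t_b]$.
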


\noindent\textbf{Barriers, level sets, truncations.} Define
\begin{equation*} 
\varphi_k(t,x) := 1 - a_k(t)\, \eta_k(x) + \eps_2\, (t+1), \qquad k \geq 1,
\end{equation*}
where $\eps_2 := \eps_0 + \eps(\delta_1)$, with $\eps_0$ from Lemma~\ref{l:ve-eq-small-rhs} and $\eps(\delta_1)$ from Lemma~\ref{l:bound_J_Tail} below. The constant $\delta_0$ of \eqref{def:delta-from-aN} does not depend on $\delta_1,\eps_1,K_0$; we may therefore first fix $\delta_0$ and then choose $\delta_1$ small and $K_0$ large so that
\begin{equation}\label{eq:eps2-small}
    \eps_2 \leq \frac{\delta_0}{2} .
\end{equation}
Define the level sets and truncations
\begin{equation} \label{def_levelsetsAk}
A_k(t):= \left\{ x \in B_1 : v_e(t,x) > \varphi_k(t,x) \right\},
\qquad
v_k(t,x):= \begin{cases}
 \big( v_e - \varphi_k \big)_+(t,x) & \text{ for } x \in B_1, \\
0 & \text{ for } x \notin B_1.
\end{cases}
\end{equation}

Since $|v_e|\leq 1$ in $B_1$ and $\eps_2(t+1)\geq0$, $x \in A_k(t)$ forces $a_k(t)\eta_k(x) > \eps_2(t+1) \geq 0$. In particular
\begin{equation}\label{eq:vk-pointwise}
A_k(t) \subset \supp\eta_k \subset \overline B_{(1+\bar \eps)r_1}(x_1), \qquad 0\leq v_k(t,x) \leq a_k(t)\,\eta_k(x),
\qquad A_{k+1}(t)\subset A_k(t),
\end{equation}
Moreover $A_k(t) = \emptyset$ whenever $a_k(t)=0$; in particular $v_k(-1,\cdot)=0$ and, for the nonlinear levels, $A_{N+j}(t)=\emptyset$ for $t\leq t_j$. We also know that, for $j\geq 1$,
\begin{align}\label{eq:prop-AN}
     A_{N+j+1}(t)\neq \emptyset \quad \text{ implies } \quad a_{N+j+1}(t)>0\, \text{ and } a_{N+j}(t)=\theta_j \delta_0>\delta_0.
\end{align}

Finally, since all heights are bounded by $\bar a$, $x \in A_k(t)$ implies $e\cdot\nabla u(t,x) = v_e(t,x) > 1-\bar a$, and $|\nabla u(t,x)|\leq1$ in $B_1$. Thus, if $x \in A_k(t)$,
\begin{equation}\label{eq:alignment}
|\nabla u(t,x) - e| \leq 2\sqrt{\bar a} .
\end{equation}

The following lemma collects some elementary bounds that will be used in the nonlinear phase. The proof is a straightforward application of Chebyshev's inequality.

\begin{lemma}[Chebyshev bounds, nonlinear phase] \label{l:chebi}
For every $t \in (-1,0)$ and every $j \geq 1$:
\begin{align*}
c2^{-j} a_{N+j}(t)\, |A_{N+j+1}(t)|
\leq \int_{B_1} v_{N+j}(t,x)\dd x
&\leq a_{N+j}(t)\, |A_{N+j}(t)|,
\\
c2^{-2j} a_{N+j}(t)^2 |A_{N+j+1}(t)|
&\leq \int_{B_1} v_{N+j}(t,x)^2 \dd x  .
\end{align*}
\end{lemma}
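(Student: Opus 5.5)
The plan is to reduce both estimates to the pointwise bounds \eqref{eq:vk-pointwise} on the truncations together with the concrete geometry of the cutoffs $\eta_k$ for $k\geq N+1$, and then integrate in $x$ at each fixed time. Fix $t\in(-1,0)$ and $j\geq1$. The upper bound is immediate. By \eqref{eq:vk-pointwise} we have $0\leq v_{N+j}\leq a_{N+j}(t)\eta_{N+j}\leq a_{N+j}(t)$, and $v_{N+j}(t,\cdot)$ is supported in $A_{N+j}(t)$. Integrating over $B_1$ gives $\int_{B_1}v_{N+j}\leq a_{N+j}(t)|A_{N+j}(t)|$.

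For the lower bounds we may assume $A_{N+j+1}(t)\neq\emptyset$; otherwise there is nothing to prove. By \eqref{eq:prop-AN} this gives $a_{N+j+1}(t)>0$ and $a_{N+j}(t)=\theta_j\delta_0$. The key observation is a pointwise gap on $A_{N+j+1}(t)$. For $x\in A_{N+j+1}(t)$,
\[
v_{N+j}(t,x)=v_e-\varphi_{N+j}>\varphi_{N+j+1}-\varphi_{N+j}=a_{N+j}(t)\eta_{N+j}(x)-a_{N+j+1}(t)\eta_{N+j+1}(x).
\]
Since $x\in A_{N+j+1}(t)\subset\supp\eta_{N+j+1}\subset B_{1/16+4^{-j-3}}(x_1)$, and $\eta_{N+j}=1$ there, we get $\eta_{N+j}(x)=1$. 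Using $\eta_{N+j+1}\leq1$, the gap is at least $a_{N+j}(t)-a_{N+j+1}(t)$.

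It remains to bound this difference from below by a multiple of $2^{-j}a_{N+j}(t)$. By \eqref{def:nonlinear-heights}, $a_{N+j+1}(t)\leq\theta_{j+1}\delta_0$ for all $t$, while $a_{N+j}(t)=\theta_j\delta_0$. Hence
\[
a_{N+j}-a_{N+j+1}\geq(\theta_j-\theta_{j+1})\delta_0=2^{-j-1}\delta_0\geq\tfrac13 2^{-j}a_{N+j}(t),
\]
where the last step uses $\theta_j\leq3/2$. So $v_{N+j}(t,\cdot)\geq c2^{-j}a_{N+j}(t)$ on $A_{N+j+1}(t)$. Chebyshev's inequality then gives $\int_{B_1}v_{N+j}\geq c2^{-j}a_{N+j}|A_{N+j+1}|$. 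Squaring the pointwise bound before integrating gives $\int_{B_1}v_{N+j}^2\geq c2^{-2j}a_{N+j}^2|A_{N+j+1}|$.

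The only delicate point is the inclusion $\supp\eta_{N+j+1}\subset\{\eta_{N+j}=1\}$. This is where the choice of radii $1/16+4^{N-k-2}$ versus $1/16+4^{N-k-3}$ enters: the support radius of level $k+1$ coincides with the radius of the plateau of level $k$, so no $\eta$-dependent loss occurs. The other subtlety is that the gap must use the plateau structure of the heights in \eqref{eq:prop-AN}. Away from the plateau the heights $a_{N+j}$ and $a_{N+j+1}$ need not be separated proportionally, so the restriction $A_{N+j+1}(t)\neq\emptyset$ is essential.
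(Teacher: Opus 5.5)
Your proof is correct and is essentially the argument the paper has in mind; the paper omits it, calling it a straightforward application of Chebyshev's inequality. Your pointwise gap $v_{N+j}\geq a_{N+j}-a_{N+j+1}\geq 2^{-j-1}\delta_0\geq \tfrac13 2^{-j}a_{N+j}$ on $A_{N+j+1}(t)$ rests on two facts, both correctly checked: the nesting $\supp\eta_{N+j+1}\subset\{\eta_{N+j}=1\}$, and the plateau property of the heights whenever $A_{N+j+1}(t)\neq\emptyset$.
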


%%%%%%% Begin Tikz

\begin{figure}[ht]
\centering
\begin{tikzpicture}[
    x=0.82cm,
    y=0.88cm,
    >=stealth,
    line cap=round,
    line join=round
]

% -------------------------------------------------------------------------
% Styles
% -------------------------------------------------------------------------
\tikzset{
    axis/.style={black, thick},
    guide/.style={dashed, line width=0.5pt},
    timeguide/.style={black!40, dashed, line width=0.5pt},
    curve/.style={line width=1.5pt},
    levelnode/.style={anchor=east, font=\small}
}

% -------------------------------------------------------------------------
% Time coordinates
% -------------------------------------------------------------------------
\coordinate (xminusone) at (0.8,0);   % -1
\coordinate (xta)       at (1.4,0);   % t_a
\coordinate (xtone)     at (4.5,0);   % t_1=t_b=-2 tau
\coordinate (xttwo)     at (6.0,0);   % t_2
\coordinate (xtthree)   at (7.3,0);   % t_3
\coordinate (xtfour)    at (8.3,0);   % t_4
\coordinate (xminustau) at (9.2,0);   % -tau=t_b/2
\coordinate (xzero)     at (12.3,0);  % 0
\coordinate (xright)    at (13.0,0);

% -------------------------------------------------------------------------
% Schematic heights (the vertical spacing is not to scale)
% -------------------------------------------------------------------------
\def\yaone{7.15}
\def\yatwo{5.55}
\def\yaN{3.75}
\def\ythetaone{2.75}
\def\ythetatwo{2.15}
\def\ythetathree{1.78}
\def\ythetainfty{1.45}

% -------------------------------------------------------------------------
% Axes and time labels
% -------------------------------------------------------------------------
\draw[axis] (0.65,0) -- (xright);
\draw[axis] (0.65,0) -- (0.65,7.65);

\node[below] at (xminusone) {$-1$};
\node[below] at (xta) {$t_a$};
\node[below] at (xtone) {$t_1=t_b=-2\tau$};
\node[below, red!90!black] at (xttwo) {$t_2$};
\node[below, red!70!black] at (xtthree) {$t_3$};
\node[below, red!50!black] at (xtfour) {$t_4$};
\node[below] at (xminustau) {$-\tau$};
\node[below] at (xzero) {$0$};

% -------------------------------------------------------------------------
% Horizontal guide levels and their values
% -------------------------------------------------------------------------
\draw[guide, green!80!black] (0.70,\yaone) -- (12.15,\yaone);
\draw[guide, green!65!black] (0.70,\yatwo) -- (12.15,\yatwo);
\draw[guide, green!45!black] (0.70,\yaN) -- (12.15,\yaN);
\draw[guide, red!85!black]   (0.70,\ythetaone) -- (12.15,\ythetaone);
\draw[guide, red!65!black]   (0.70,\ythetatwo) -- (12.15,\ythetatwo);
\draw[guide, red!45!black]   (0.70,\ythetathree) -- (12.15,\ythetathree);
\draw[guide, black!65]       (0.70,\ythetainfty) -- (12.15,\ythetainfty);

\node[levelnode, green!80!black] at (0.55,\yaone)
    {$a_1(t_b)=\kappa\displaystyle\int_{t_a}^{t_b}g(\rho)\,\dd\rho$};
\node[levelnode, green!65!black] at (0.55,\yatwo)
    {$a_2(t_b)=\lambda a_1(t_b)$};
\node[levelnode, green!45!black] at (0.55,\yaN)
    {$a_N(t_b)=\lambda^{N-1}a_1(t_b)\geq 2\delta_0$};
\node[levelnode, red!85!black] at (0.55,\ythetaone)
    {$\theta_1\delta_0$};
\node[levelnode, red!65!black] at (0.55,\ythetatwo)
    {$\theta_2\delta_0$};
\node[levelnode, red!45!black] at (0.55,\ythetathree)
    {$\theta_3\delta_0$};
\node[levelnode, black] at (0.55,\ythetainfty) {$\delta_0$};

% -------------------------------------------------------------------------
% Vertical guide times
% -------------------------------------------------------------------------
\draw[timeguide] (xta) -- (1.4,7.45);
\draw[timeguide] (xtone) -- (4.5,7.45);
\draw[timeguide, red!50!black] (xttwo) -- (6.0,\ythetaone);
\draw[timeguide, red!70!black] (xtthree) -- (7.3,\ythetatwo);
\draw[timeguide, red!90!black] (xtfour) -- (8.3,\ythetathree);
\draw[timeguide] (xminustau) -- (9.2,\ythetainfty);

% -------------------------------------------------------------------------
% Preliminary heights: a_2 and a_N are vertical rescalings of a_1
% -------------------------------------------------------------------------
\draw[curve, green!80!black]
    (xminusone) -- (xta)
    -- (1.95,{0.20*\yaone})
    -- (2.35,{0.20*\yaone})
    -- (2.90,{0.47*\yaone})
    -- (3.25,{0.47*\yaone})
    -- (3.80,{0.76*\yaone})
    -- (4.05,{0.76*\yaone})
    -- (4.50,\yaone)
    -- (12.30,\yaone);

\draw[curve, green!65!black]
    (xminusone) -- (xta)
    -- (1.95,{0.20*\yatwo})
    -- (2.35,{0.20*\yatwo})
    -- (2.90,{0.47*\yatwo})
    -- (3.25,{0.47*\yatwo})
    -- (3.80,{0.76*\yatwo})
    -- (4.05,{0.76*\yatwo})
    -- (4.50,\yatwo)
    -- (12.30,\yatwo);

\draw[curve, green!45!black]
    (xminusone) -- (xta)
    -- (1.95,{0.20*\yaN})
    -- (2.35,{0.20*\yaN})
    -- (2.90,{0.47*\yaN})
    -- (3.25,{0.47*\yaN})
    -- (3.80,{0.76*\yaN})
    -- (4.05,{0.76*\yaN})
    -- (4.50,\yaN)
    -- (12.30,\yaN);

\node[green!80!black, right] at (12.45,\yaone) {$a_1$};
\node[green!65!black, right] at (12.45,\yatwo) {$a_2=\lambda a_1$};
\node[green!45!black, right] at (12.45,\yaN) {$a_N=\lambda^{N-1}a_1$};

% -------------------------------------------------------------------------
% Nonlinear heights
% -------------------------------------------------------------------------
\draw[curve, red!85!black]
    (xminusone) -- (xtone)
    -- (6.00,\ythetaone)
    -- (12.30,\ythetaone);

\draw[curve, red!65!black]
    (xminusone) -- (xttwo)
    -- (7.30,\ythetatwo)
    -- (12.30,\ythetatwo);

\draw[curve, red!45!black]
    (xminusone) -- (xtthree)
    -- (8.30,\ythetathree)
    -- (12.30,\ythetathree);

\node[red!85!black, right] at (12.45,\ythetaone) {$a_{N+1}$};
\node[red!65!black, right] at (12.45,\ythetatwo) {$a_{N+2}$};
\node[red!45!black, right] at (12.45,\ythetathree) {$a_{N+3}$};

% -------------------------------------------------------------------------
% Limiting height
% -------------------------------------------------------------------------
\draw[curve, black]
    (xminusone) -- (xminustau)
    -- (9.20,\ythetainfty)
    -- (12.30,\ythetainfty);

\node[black, right] at (12.45,\ythetainfty) {$a_\infty$};

\end{tikzpicture}

\caption{
Schematic time-dependent heights $a_k$, both in the preliminary phase in green, and in the nonlinear phase in red. Figure not up to scale.
}
\label{fig:two-phase-heights}
\end{figure}

%%%%%%% End Tikz

\begin{remark}(Order of the choice of constants)\label{r:constants-order}

For the reader's convenience we list the order in which the constants are fixed; each depends only on the previous ones (besides $d,s,p$, as well as on $|t_b|$, and $r,\mu,\tau$, on which everything is allowed to depend). This ordering is what makes the scheme consistent.
\begin{enumerate}
\item The radius $r_1$, the cone opening $1/6$, the constant $c_0=1/42$ of Lemma~\ref{l:elementary-1D}, the cone-coercivity constant $c_A$ of Lemma~\ref{l:cone-coercivity} and $\mu_0$ of Proposition~\ref{p:mass-transfer} are all universal constants.

\item The tail constant $C_T$ and the function $\eps(\cdot)$ of Lemma~\ref{l:tails_of_kernel}, as well as the constant $C_0$ in \eqref{eq:master-preliminary}, depend only on $d,s,p$. These bounds are uniform for $\delta_1,\eps_1$ sufficiently small.

\item The nonlinear threshold $\omega_\star$ appearing in Proposition~\ref{p:linear-preliminary-contraction} depends only on $d,s,p,|t_b|$ and is fixed in the proof of Lemma~\ref{l:DGengine}. The constants $\bar\eps,\lambda,\gamma_\star$ and $N$ below therefore depend only on $d,s,p,|t_b|$. Notice that this threshold can be taken even smaller if needed. For the proof of Proposition~\ref{p:mass-transfer} we need $\omega_\star\leq\mu_0/2$.

\item The annulus width $\bar\eps$ in \eqref{eq:pick_eps} depends on $\omega_\star$ and is fixed in Lemma~\ref{l:cube}.

\item The contraction ratio $\lambda$ in \eqref{def:lambda} depends on $C_0,c_A,\mu_0,\omega_\star,\bar\eps$.

\item The mass-transfer constant $\gamma_\star$ in Proposition~\ref{p:mass-transfer} depends on $\omega_\star$ and $C_0$; then the number of preliminary levels $N$ is fixed in the proof of Proposition~\ref{p:linear-preliminary-contraction} depending on $\gamma_\star$.

\item The forcing size $\kappa$ in \eqref{eq:kappa-heights-cap} depends on $r$ and $c_A$.

\item The height gain $\delta_0$ in \eqref{def:delta-from-aN} and $\delta_2:=\delta_0/2$ also depend on $r$ and $\mu$.

\item Finally, we choose $\delta_1$ small enough that
\[
(1-\delta_1)^{-(p-1)}2^{p(1-s)-2}<1,
\qquad
\eps(\delta_1)\leq\frac{\delta_0}{4}.
\]
Then we choose $\varepsilon_1$ small and $K_0$ large enough that $\eps_0\leq\delta_0/4$, with $\eps_0$ defined in \eqref{def:eps_0}. Thus $\eps_2=\eps_0+\eps(\delta_1)\leq\delta_0/2$.
\end{enumerate}
\end{remark}

\subsection{The energy inequality}\label{s:energy}

Testing \eqref{eq:equation_ve} against $v_k$ we end up with the following energy inequality. The argument to obtain the $J$ terms is identical to the one in \cite{GJS} so we skip it.

\begin{lemma} \label{l:energy_inequality}
For almost every $t \in (-1,0]$ and every $k \geq 1$,
\begin{equation} \label{eq:main_ineq}
\begin{aligned}
    \frac{1}{2}\frac{\dd}{\dd t} \int_{B_1} v_k(t,x)^2\dd x  & +  J_{Tail,k}(t)
+ J_{\varphi,k}(t) + J_{gap,k}(t) + J_{r,k}(t) \\ &- a_k'(t) \int_{B_1} v_k\, \eta_k\, \dd x + \eps_2 \int_{B_1} v_k \dd x \leq \eps_0 \int_{B_1} v_k \dd x,
\end{aligned}
\end{equation}
where
\begin{equation} \label{eq:defJgapk}
\begin{aligned}
J_{Tail,k}(t) &= \int_{\R^d \setminus B_1}\int_{B_1} (\varphi_k(t,x)-v_e(t,y))\,v_k(t,x)\, K_u(t;x,y)\,\dd x\dd y, \\
J_{\varphi,k}(t) &= \iint_{B_1\times B_1}(\varphi_k(t,x)-\varphi_k(t,y))\,v_k(t,x)\, K_u(t;x,y)\,\dd x\dd y, \\
J_{gap,k}(t) &= \iint_{B_1\times B_1}[\varphi_k(t,y)-v_e(t,y)]_+\,v_k(t,x)\, K_u(t;x,y)\,\dd x\dd y, \\
J_{r,k}(t) &= \iint _{B_1\times B_1} (v_k(t,x)-v_k(t,y))\,v_k(t,x)\, K_u(t;x,y)\,\dd x\dd y .
\end{aligned}
\end{equation}
The terms $J_{gap,k}$ and $J_{r,k}$ are nonnegative.
\end{lemma}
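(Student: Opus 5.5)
The plan is to multiply the inequality \eqref{eq:equation_ve} by $v_k(t,\cdot)\geq 0$, integrate over $B_1$, and treat the time derivative and the nonlocal term separately. Since $v_k$ is supported in $A_k(t)\subset B_1$ and $v_e-\varphi_k=v_k$ there, we get
\[
\int_{B_1}(\p_t v_e)\,v_k\dd x=\int_{B_1}\p_t(v_e-\varphi_k)\,v_k\dd x+\int_{B_1}(\p_t\varphi_k)\,v_k\dd x .
\]
The first integral equals $\tfrac12\frac{\dd}{\dd t}\int_{B_1}v_k^2$, by the standard chain rule for positive parts, which holds for a.e.\ $t$ since $v_e-\varphi_k$ is Lipschitz/absolutely continuous in time in the smooth setting. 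The second integral equals $-a_k'(t)\int_{B_1} v_k\eta_k+\eps_2\int_{B_1} v_k$, because $\p_t\varphi_k=-a_k'\eta_k+\eps_2$. The right-hand side of \eqref{eq:equation_ve} is bounded by $\eps_0$, which gives the term $\eps_0\int v_k$.

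Next we split the nonlocal term $\int_{B_1}\LL_u v_e(t,x)\,v_k(t,x)\dd x$ into $y\in B_1^c$ and $y\in B_1$. The part with $y\in B_1^c$ is
\[
\iint (v_e(x)-v_e(y))\,v_k(x)\,K_u .
\]
On $\supp v_k$ we have $v_e(x)=\varphi_k(x)+v_k(x)$, so this part becomes $J_{Tail,k}$ plus $\iint_{B_1^c\times B_1} v_k(x)^2K_u\geq0$. We drop that nonnegative piece, or absorb it into $J_{r,k}$ by extending the integration. For $y\in B_1$ we write, on $\supp v_k(\cdot)$,
\[
v_e(x)-v_e(y)=\bigl(\varphi_k(x)-\varphi_k(y)\bigr)+\bigl(v_k(x)-v_k(y)\bigr)+\bigl(\varphi_k(y)+v_k(y)-v_e(y)\bigr),
\]
using the identity $\varphi_k(y)+v_k(y)-v_e(y)=[\varphi_k(y)-v_e(y)]_+$, valid for $y\in B_1$. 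Multiplying by $v_k(x)K_u$ and integrating yields exactly $J_{\varphi,k}+J_{r,k}+J_{gap,k}$.

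The sign claims are then checked directly. $J_{gap,k}\geq0$ because its integrand is a product of nonnegative factors. For $J_{r,k}$ we symmetrize, using the symmetry of $K_u$ in $x,y$:
\[
J_{r,k}=\tfrac12\iint_{B_1\times B_1}(v_k(x)-v_k(y))^2K_u\geq0 .
\]
The only delicate point is justifying these manipulations when $p\le 1/(1-s)$, where $\LL_u v_e$ need not converge absolutely. In that range we would interpret $\int\LL_u v_e\,v_k$ as the bilinear form and perform the splitting at the level of the double integral, as in \cite[Section 2]{GJS}. With the standing smoothness assumption (via the $\eps\Delta$ regularization) the integrals are classical, so the main obstacle is just bookkeeping of the time derivative of the truncation, which is handled by absolute continuity in $t$.
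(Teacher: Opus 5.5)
Your proposal is correct and matches the paper's argument, which defers to the elliptic computation of \cite{GJS}. You test $\p_t v_e+\LL_u v_e\le\eps_0$ against $v_k$ and use $\p_t\varphi_k=-a_k'\eta_k+\eps_2$. You split $y\in B_1^c$, dropping $\iint_{B_1^c\times B_1}v_k^2K_u\ge0$, from $y\in B_1$, where you use $v_e-\varphi_k=v_k-[\varphi_k-v_e]_+$, and then symmetrize $J_{r,k}$. The one formal point is the chain rule for $\frac{\dd}{\dd t}\int v_k^2$: it needs time regularity of $v_e$ that the standing assumption (smoothness in $x$ only) does not literally give, and would be justified by Steklov averaging or the regularized approximation; the paper also leaves this step formal.
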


In the following sections, we estimate the four $J$ terms in \eqref{eq:defJgapk} pointwise in time. The arguments are in most cases identical to those in the elliptic case. In some cases, we are able to provide a refined inequality compared with the elliptic ones.

\subsection{\texorpdfstring{Estimating $J_{\mathrm{Tail},k}$}{Estimating J(Tail,k)}}

\begin{lemma} \label{l:tails_of_kernel}
Under assumptions (2)--(3) of Lemma~\ref{l:iteration}, for $\delta_1$ sufficiently small, we have
\begin{align}
\sup_{t \in (-1, 0]}\ \sup_{x \in B_{3/4}}\ \int_{B_1^c} K_u(t;x,y) \dd y &\leq C_T, \label{eq:kernel-tail-1}\\
\sup_{t \in (-1, 0]}\ \sup_{x \in B_{3/4}}\ \int_{B_1^c} \big( (2|y|)^{\alpha_0}-1\big)\, K_u(t;x,y) \dd y &\leq \eps(\delta_1), \label{eq:kernel-tail-2}
\end{align}
where $\alpha_0 = -\log(1-\delta_1)/\log2$, the constant $C_T$ depends only on $d,s$ and $p$, and $\eps(\delta_1) \to 0$ as $\delta_1 \to 0$. 
\end{lemma}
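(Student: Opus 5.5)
The plan is to bound $|u(t,x)-u(t,y)|$ for $x\in B_{3/4}$, $y\in B_1^c$, using the dyadic gradient bounds in assumption (2) near $B_1$ and the tail bound in assumption (3) far away. Since $K_u$ only involves spatial increments, we may replace $u$ by $w(t,\cdot)=u(t,\cdot)-u(t,0)$:
\[
|u(t,x)-u(t,y)|\leq |w(t,x)|+|w(t,y)|\leq 1+|w(t,y)|,
\]
using $\|w\|_{L^\infty(Q_{1,1})}\le 1$ from Remark~\ref{r:statement-comments}. Because $p>2$, we have
\[
|u(t,x)-u(t,y)|^{p-2}\le C\bigl(1+|w(t,y)|^{p-2}\bigr).
\]
Also $|x-y|\geq |y|/4$ for $|y|\ge1$ and $|x|\le 3/4$. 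Hence
\[
\int_{B_1^c}K_u\,\dd y\le C\int_{B_1^c}\frac{1+|w(t,y)|^{p-2}}{|y|^{d+sp}}\dd y.
\]

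We split $B_1^c$ into $B_{2^{K_0-1}}\setminus B_1$ and its complement.

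\textbf{Near region.} On the dyadic annulus $B_{2^{k}}\setminus B_{2^{k-1}}$ with $k\le K_0-1$, estimate \eqref{eq:dyadic_gradient_w} gives
\[
|w|\le 1+2^k(1-\delta_1)^{-k}\le C\,|y|^{1+\alpha_0}.
\]
The contribution is therefore bounded by
\[
C\int_{B_1^c}\frac{1+|y|^{(1+\alpha_0)(p-2)}}{|y|^{d+sp}}\dd y.
\]
This integral is finite and uniform in small $\delta_1$ provided $(1+\alpha_0)(p-2)<sp$. This holds for $\alpha_0$ small, since $p-2<sp$ is equivalent to $p(1-s)<2$, i.e.\ $p<2/(1-s)$.

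\textbf{Far region.} Here we use Lemma~\ref{l:Tail_inequalities}, with Hölder or directly its first inequality:
\[
\int_{B^c_{R}}\frac{|w|^{p-2}}{|y|^{d+sp}}\dd y=\Tail_{p-2,sp}(w;R)^{p-2}\le C R^{\frac{p(1-s)-2}{p-1}}\Tail_{p-1,sp+1}(w;R)^{p-2}\le C\eps_1^{p-2},
\]
with $R=2^{K_0-1}$ and assumption (3). Together with the near region this yields \eqref{eq:kernel-tail-1} with $C_T$ depending only on $d,s,p$.

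\textbf{Second estimate.} For \eqref{eq:kernel-tail-2}, repeat the same splitting with the extra weight $(2|y|)^{\alpha_0}-1$.

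In the far region we need
\[
\int_{B_R^c}\frac{|w|^{p-2}|y|^{\alpha_0}}{|y|^{d+sp}}\dd y=\Tail_{p-2,sp-\alpha_0}(w;R)^{p-2}.
\]
The general form of Lemma~\ref{l:Tail_inequalities}, with $\vartheta=\alpha_0<\sigma/(p-1)$, bounds this by $C\eps_1^{p-2}$ times a nonpositive power of $R$. This piece is small, but it is controlled by $\eps_1$ rather than $\delta_1$. This is acceptable because $\eps_1$ is chosen after $\delta_1$, so we may absorb it into $\eps(\delta_1)$ by requiring $\eps_1\le\eps(\delta_1)$. Alternatively, we bound it by $C\alpha_0$ after using $\eps_1\le1$.

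In the near region, and for the constant part $1$ of the kernel bound, use the integrand
\[
\bigl((2|y|)^{\alpha_0}-1\bigr)\bigl(1+|y|^{(1+\alpha_0)(p-2)}\bigr)|y|^{-d-sp}.
\]
It tends to $0$ pointwise as $\alpha_0\to0$ and is dominated by the integrable function obtained with $\alpha_0=\alpha_0^{\max}$, since $(1+2\alpha_0)(p-2)<sp$ for small $\alpha_0$. Dominated convergence then gives $\eps(\delta_1)\to0$.

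The main obstacle is the far-region weighted tail: checking that the exponent condition $\alpha_0<\sigma/(p-1)$ holds, and that the prefactors stay uniform in $K_0$. This is why $\delta_1$ must be taken small.
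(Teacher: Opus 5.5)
Your argument is correct and uses the same ingredients as the paper. These are the pointwise bound $K_u(t;x,y)\le C(1+|w(t,y)|)^{p-2}|y|^{-d-sp}$, the growth $|w(t,y)|\le C|y|^{1+\alpha_0}$ from \eqref{eq:dyadic_gradient_w} on $1\le|y|<2^{K_0-1}$, assumption (3) beyond that radius, Lemma~\ref{l:Tail_inequalities}, and dominated convergence as $\alpha_0\to0$.

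The difference is where H\"older is applied. The paper applies it once on all of $B_1^c$, which reduces \eqref{eq:kernel-tail-1} to a uniform bound on $\Tail_{p-1,sp+1}(w(t,\cdot);1)$. For \eqref{eq:kernel-tail-2} it keeps the weight inside H\"older, obtaining $\bigl(\int_{B_1^c}((2|y|)^{\alpha_0}-1)^{p-1}|y|^{-d-\sigma}\dd y\bigr)^{1/(p-1)}$ times a bounded factor. This makes $\eps(\delta_1)$ uniform in $K_0$ and in every $\eps_1\le 1$.

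You use the pointwise growth directly in the near region and H\"older only in the far region. As a result, your far weighted piece is merely $O(\eps_1^{p-2})$, and you must tie $\eps_1$ to $\delta_1$. That is legitimate given the order of constants in Remark~\ref{r:constants-order}, so the proof stands, but it proves a slightly weaker statement than the paper's.

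Your alternative ``bound it by $C\alpha_0$'' does not follow from what you wrote. After replacing the weight by $2^{\alpha_0}|y|^{\alpha_0}$, no factor $\alpha_0$ survives. Moreover, on $B^c_{2^{K_0-1}}$ the weight is at least $2^{K_0\alpha_0}-1$, which need not be small because $K_0$ is chosen after $\delta_1$. The claim becomes true only through the paper's weighted H\"older.

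Finally, the domination condition in the near region should be $\alpha_0+(1+\alpha_0)(p-2)<sp$, i.e.\ $\alpha_0(p-1)<\sigma$, not $(1+2\alpha_0)(p-2)<sp$, which does not suffice when $p<3$. Both hold for small $\alpha_0$, so this slip is harmless.
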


\begin{proof}
Recall $w(t,x)=u(t,x)-u(t,0)$.
For $x \in B_{3/4}$ and $t \in (-1, 0]$, using that $K_u = K_w$, $|w(t,x)|\leq 2$ (from \eqref{eq:dyadic_gradient_w} with $k=0$), and $|x-y| \ge |y|/4$ for $y \in B_1^c$, we obtain
\begin{align*}
    \int_{B_1^c} K_u(t;x,y) \dd y &\leq C \int_{B_1^c} \frac{(|w(t,y)|+1)^{p-2}}{|y|^{d+sp}} \dd y  \\
    & \leq C \left( {\Tail}_{p-2,sp}(w(t,\cdot),1)^{p-2} +1 \right) \\
    & \leq C\left( {\Tail}_{p-1,sp+1}(w(t,\cdot),1)^{p-2} +1 \right),
\end{align*}
for a constant $C>0$ depending only on $d, s$, and $p$, where in the last inequality we used Lemma~\ref{l:Tail_inequalities}.

The tail of $w$ is then bounded as follows. We choose $\delta_1$ small enough to ensure that $\alpha_0\leq\sigma/(2(p-1))$. Making use of \eqref{eq:dyadic_gradient_w} we obtain $|w(t,y)|\leq 1+C|y|^{1+\alpha_0}$ for $1<|y|<2^{K_0-1}$. We now bound the tail of $w$ splitting it at $2^{K_0-1}$ and using $\sup_{t \in (-1,0]} {\Tail}_{p-1,sp+1}(w(t,\cdot);2^{K_0-1}) \leq \eps_1\leq 1$ we obtain
\[
\int_{B_1^c}\frac{|w(t,y)|^{p-1}}{|y|^{d+sp+1}}\dd y
\leq C\left(\int_1^\infty r^{-1-\sigma+(p-1)\alpha_0}\dd r+\eps_1^{p-1}\right)
\leq C,
\]
for some $C>0$ depending only on $d, \, s$ and $p$. Combining this with the previous bound yields \eqref{eq:kernel-tail-1}.

To establish \eqref{eq:kernel-tail-2}, we multiply the pointwise estimate $K_u(t;x,y) \leq C(|w(t,y)|+1)^{p-2}|y|^{-d-sp}$ by the weight $\bigl((2|y|)^{\alpha_0}-1\bigr)$. Applying H\"older's inequality with $p-1$ and $(p-1)/(p-2)$, we obtain
\begin{align*}
    &\int_{B_1^c} \bigl((2|y|)^{\alpha_0}-1\bigr) |w(t,y)|^{p-2} |y|^{-d-sp} \dd y \\
    &\quad \leq \left( \int_{B_1^c} \bigl((2|y|)^{\alpha_0}-1\bigr)^{p-1} |y|^{-d-\sigma} \dd y \right)^{\frac{1}{p-1}} 
    \left( \int_{B_1^c} \frac{|w(t,y)|^{p-1}}{|y|^{d+sp+1}} \dd y \right)^{\frac{p-2}{p-1}}.
\end{align*}
By the same arguments used to bound the tail of $w$ above, the second factor on the right-hand side is bounded by a constant depending only on $d, s$, and $p$.

The first integral tends to zero as $\alpha_0\to0$, that is as $\delta_1 \to 0$, by domination with $C|y|^{-d-\sigma/2}$. The other term is treated in the same way, since $sp\geq\sigma$. The bounds are uniform in $K_0$ and $\eps_1\leq1$.

\end{proof}

\begin{lemma}\label{l:bound_J_Tail}
For every $t \in (-1,0]$ and $k \geq 1$,
\[
J_{Tail,k}(t) \geq -\eps(\delta_1) \int_{B_1} v_k(t,x)\, \dd x - C_T\, a_k(t) \int_{B_1} v_k(t,x)\,\eta_k(x)\, \dd x ,
\]
with $\eps(\delta_1)$ and $C_T$ as in Lemma~\ref{l:tails_of_kernel}.
\end{lemma}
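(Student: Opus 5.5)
The plan is to write $\varphi_k(t,x)-v_e(t,y)=(\varphi_k(t,x)-1)+(1-v_e(t,y))$ in the integrand of $J_{Tail,k}$ and treat the two pieces separately. Since $v_k\geq 0$ and $K_u\geq 0$, it suffices to bound each piece from below pointwise in $x\in A_k(t)$, where $v_k(t,x)>0$. By \eqref{eq:vk-pointwise}, $A_k(t)\subset \overline B_{(1+\bar\eps)r_1}(x_1)\subset B_{3/4}$, so the kernel bounds of Lemma~\ref{l:tails_of_kernel} apply at every such $x$.

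For the first piece, use $\varphi_k(t,x)-1=-a_k(t)\eta_k(x)+\eps_2(t+1)\geq -a_k(t)\eta_k(x)$. This gives
\[
\int_{B_1^c}\int_{B_1}(\varphi_k(t,x)-1)v_k(t,x)K_u(t;x,y)\dd x\dd y\geq -\int_{B_1} a_k(t)\eta_k(x)v_k(t,x)\Big(\int_{B_1^c}K_u(t;x,y)\dd y\Big)\dd x .
\]
By \eqref{eq:kernel-tail-1}, the right-hand side is at least $-C_T\,a_k(t)\int_{B_1}v_k\eta_k\dd x$, which is exactly the second term in the claim.

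For the second piece, use the growth bound \eqref{eq:ve-growth}: for $y\in B_1^c$ we have $v_e(t,y)\leq (2|y|)^{\alpha_0}$. Hence
\[
1-v_e(t,y)\geq 1-(2|y|)^{\alpha_0}=-\big((2|y|)^{\alpha_0}-1\big),
\]
and note that $(2|y|)^{\alpha_0}-1\geq 0$ for $|y|\geq 1$. Multiplying by $v_kK_u\geq0$, integrating, and applying \eqref{eq:kernel-tail-2} at each $x\in B_{3/4}$ gives
\[
\int_{B_1^c}\int_{B_1}(1-v_e(t,y))v_kK_u\dd x\dd y\geq -\eps(\delta_1)\int_{B_1}v_k\dd x .
\]
Adding the two bounds yields the lemma.

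I do not expect a real obstacle. The only points needing care are that the support of $v_k$ lies inside $B_{3/4}$, so the suprema over $x\in B_{3/4}$ in Lemma~\ref{l:tails_of_kernel} apply, and that the discarded term $\eps_2(t+1)v_kK_u$ is nonnegative. For the case $p\le 1/(1-s)$, Fubini is justified because $x$ stays at distance at least $1/4$ from $y\in B_1^c$, so the integrand is absolutely integrable.
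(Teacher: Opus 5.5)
Your proposal is correct and follows the paper's proof: the paper also combines $\varphi_k(t,x)\geq 1-a_k(t)\eta_k(x)$ and $v_e(t,y)\leq(2|y|)^{\alpha_0}$ on $B_1^c$ into $\varphi_k(t,x)-v_e(t,y)\geq -\big((2|y|)^{\alpha_0}-1\big)-a_k(t)\eta_k(x)$. It then multiplies by $v_kK_u\geq0$ and applies \eqref{eq:kernel-tail-2} and \eqref{eq:kernel-tail-1}, using that $v_k(t,\cdot)$ is supported in $B_{3/4}$.
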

\begin{proof}
For $y \in B_1^c$ we have $v_e(t,y) \leq (2|y|)^{\alpha_0}$ by \eqref{eq:ve-growth}, while for $x \in B_1$, $\varphi_k(t,x) \geq 1 - a_k(t)\eta_k(x)$. Hence
\[
\varphi_k(t,x)-v_e(t,y) \geq -\big( (2|y|)^{\alpha_0} - 1 \big) - a_k(t)\,\eta_k(x) .
\]
Multiply by $v_k(t,x) K_u(t;x,y) \geq 0$ and integrate. Since $\supp v_k(t,\cdot) \subset B_{3/4}$ by \eqref{eq:vk-pointwise}, the first term contributes at least $-\eps(\delta_1)\int v_k$ by \eqref{eq:kernel-tail-2}, and the second at least $-C_T a_k(t)\int v_k\eta_k$ by \eqref{eq:kernel-tail-1}.
\end{proof}

With the choice $\eps_2 = \eps_0+\eps(\delta_1)$, plugging Lemma~\ref{l:bound_J_Tail} into \eqref{eq:main_ineq} gives the simplified energy inequality
\begin{equation} \label{eq:main_ineq_simplified}
    \frac{1}{2}\frac{\dd}{\dd t} \int_{B_1} v_k^2\,\dd x  + J_{\varphi,k}(t) + J_{gap,k}(t) + J_{r,k}(t) \leq \big(a_k'(t) + C_T\, a_k(t)\big) \int_{B_1} v_k\, \eta_k\, \dd x .
\end{equation}

\subsection{\texorpdfstring{Estimating $J_{\mathrm{gap},k}$}{Estimating J(gap,k)}}

The lower bounds for $J_{gap,k}$ and $J_{r,k}$ rest on the following mechanism, introduced in \cite{GJS}. If $x$ is a point where $v_k>0$, then $\nabla u(t,x)$ is close to $e$ by \eqref{eq:alignment}. Along a ray $x+\rho\nu$ with $|\nu\cdot e|>1/6$, as long as $\nabla u(t,\cdot)$ stays close to $e$ on most of the ray, the function $\rho \mapsto u(t,x+\rho\nu)$ moves at speed at least $1/10$ in a fixed direction, so $|u(t,x)-u(t,y)| \gtrsim |x-y|$ and the kernel is nondegenerate
\begin{equation*}
K_u(t;x,y) = (p-1)\,\frac{|u(t,x)-u(t,y)|^{p-2}}{|x-y|^{d+sp}} \geq c\,|x-y|^{-d-\sigma},
\end{equation*}
since $p-2-sp = -\sigma$. Hence, the kernel bound is exactly that of a uniformly elliptic operator of order $\sigma$. The exceptional radii, where closeness to $e$ fails, are controlled by a stopping-time construction. Throughout this section, $\Gamma := \{\nu\in S^{d-1} : |\nu\cdot e|>1/6\}$ denotes the set of directions of the cones \eqref{def:cone}. The following elementary lemma quantifies the mechanism. We state it with the bound $|f'|\leq2$, since all the rays used below stay inside $B_2$, where $|\nabla u|\leq(1-\delta_1)^{-1}\leq2$ by Assumption (2) of Lemma \ref{l:iteration} with $n=1$.

\begin{lemma} \label{l:elementary-1D}
Let $f : [0,\ell] \to \R$ be Lipschitz with
$|f'| \leq 2$ a.e., and
$|\{\rho \in (0,\ell) : f'(\rho) < 1/10\}| \leq c_0\, \ell$ where $c_0 := 1/42$.
Then $f(\ell) - f(0) \geq \ell/20$.
\end{lemma}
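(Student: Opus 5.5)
Write $E := \{\rho \in (0,\ell) : f'(\rho) < 1/10\}$ for the bad set and $G := (0,\ell)\setminus E$ for its complement. The argument is a direct splitting of the fundamental theorem of calculus for the Lipschitz function $f$:
\[
f(\ell)-f(0) = \int_0^\ell f'(\rho)\dd\rho = \int_{G} f'(\rho)\dd\rho + \int_{E} f'(\rho)\dd\rho .
\]
This identity holds because $f$ is Lipschitz, hence absolutely continuous and differentiable almost everywhere.

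On $G$ we use the lower bound $f'\geq 1/10$. Since $|E|\leq c_0\ell$, we have $|G|\geq (1-c_0)\ell$, so the first integral is at least $(1-c_0)\ell/10$. On $E$ we only have the crude bound $f'\geq -|f'|\geq -2$, so the second integral is at least $-2|E|\geq -2c_0\ell$. Adding the two estimates,
\[
f(\ell)-f(0)\geq \ell\Bigl(\frac{1-c_0}{10}-2c_0\Bigr)=\ell\,\frac{1-21c_0}{10}.
\]

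With $c_0=1/42$ we get $21c_0 = 1/2$, so the right-hand side equals $\ell/20$. This is exactly the constant chosen in the statement, and the proof is complete.

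The only step needing care is the bookkeeping of constants; the choice $c_0=1/42$ is precisely calibrated so that the loss $2c_0\ell$ on the bad set, plus the reduced measure of $G$, leaves half of the ideal gain $\ell/10$.
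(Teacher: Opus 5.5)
Your proof is correct and is exactly the argument the paper intends: the paper proves this lemma by the fundamental theorem of calculus alone (referring to the elliptic paper for details). Your split of $\int_0^\ell f'$ into the good set and the bad set, with $(1-c_0)/10-2c_0=(1-21c_0)/10=1/20$ for $c_0=1/42$, is that argument written out in full.
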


\begin{proof}
It is just the fundamental theorem of calculus. See \cite[Lemma 4.9]{GJS}.
\end{proof}

The following result is a lower bound for the degenerate linearized kernel and plays the role of a coercivity bound. It is also an improvement on the bound obtained in the elliptic case in \cite[Lemma 4.10]{GJS}.

\begin{lemma}[Cone coercivity]\label{l:cone-coercivity}
Let $t\in(-1,0]$ and assume $\|\nabla u(t,\cdot)\|_{L^\infty(B_1)}\leq 1$. Let $D \subset B_1$ be convex, $x \in \overline D$, and let $S \subset D$ be measurable with $\dist(x,S)>0$ and
\begin{equation}\label{eq:cone-alignment-hyp}
\nabla u(t,y) \in B_{1/15}(e) \qquad \text{for every } y \in D\setminus S .
\end{equation}
Then
\[
\int_{S \cap C(x)} K_u(t;x,y) \dd y \;\geq\; c_A\, |S\cap C(x)| ,
\]
where $0<c_A\leq 1$ is a constant depending only on $d, \, s$ and $p$.
\end{lemma}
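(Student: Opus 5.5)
The plan is to bound $K_u(t;x,y)$ from below pointwise by a constant for most $y\in S\cap C(x)$, and to handle the remaining $y$ with a stopping-time argument along rays. Since $S\subset D\subset B_1$ and $x\in\overline D$, every $y\in S\cap C(x)$ satisfies $|x-y|\leq 2$. Because $p-2-sp=-\sigma<0$, we get $|x-y|^{-d-sp}\geq 2^{-d-sp}$. Hence it suffices to show that $|u(t,x)-u(t,y)|\geq c|x-y|$ for the $y$ we keep, since then
\[
K_u(t;x,y)\geq c|x-y|^{-d-\sigma}\geq c\,2^{-d-\sigma}.
\]

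Write $y=x+\rho\nu$ with $\nu\in\Gamma$, and switch to polar coordinates centered at $x$. For a fixed direction $\nu$, the segment $\{x+\rho\nu:0<\rho<\rho_{\max}(\nu)\}$ lies in $D$ by convexity, with the endpoint $x$ in $\overline D$. Set $f(\rho)=u(t,x+\rho\nu)$, with the sign of $\nu\cdot e$ absorbed, so that $f'(\rho)=\pm\nu\cdot\nabla u$. Off $S$ we have $\nabla u\in B_{1/15}(e)$, so there $|f'|\geq |\nu\cdot e|-1/15>1/6-1/15=1/10$, with a fixed sign. Also $|f'|\leq 1$ in $B_1$. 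Let $E_\nu=\{\rho: x+\rho\nu\in S\}$ be the bad radii.

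The key step is a one-dimensional covering or maximal-function estimate. Call $\rho\in E_\nu$ \emph{good} if for the interval $(0,\rho)$ we have $|E_\nu\cap(0,\rho)|\leq c_0\rho$. By Lemma~\ref{l:elementary-1D}, applied on $(0,\rho)$, every good $\rho$ gives $|f(\rho)-f(0)|\geq \rho/20$. Here the hypothesis $\dist(x,S)>0$ ensures that the set of radii is bounded away from $0$, so the function $\rho\mapsto |E_\nu\cap(0,\rho)|/\rho$ is well behaved near $0$. Now consider the bad set $B=\{\rho\in E_\nu : |E_\nu\cap(0,\rho)|>c_0\rho\}$. I expect to show that $|B|$ is not too large relative to $|E_\nu|$, but this direction of the estimate is not automatic. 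If almost all of $E_\nu$ is bad, the lower bound fails.

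The fix I would try first is a stopping-time argument. Let $\rho_0=\inf E_\nu>0$. Since $E_\nu$ has density near $1$ just after $\rho_0$ relative to $(0,\rho)$, we need some care. It is better to take the density relative to intervals $(\rho_*,\rho)$ where the last good point $\rho_*$ serves as a new base. Then chain the estimates: $f$ increases by at least $\rho_*/20$ up to $\rho_*$, and on the bad stretch $f$ can decrease by at most $|\rho-\rho_*|$. I would aim for a weighted bound of the form
\[
\int_{E_\nu}\frac{|f(\rho)-f(0)|^{p-2}}{\rho^{d+sp}}\rho^{d-1}\,d\rho\geq c\,|E_\nu|_{\rho^{d-1}d\rho},
\]
and then integrate over $\nu\in\Gamma$. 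This stopping-time step, which controls the $\rho$ for which $S$ dominates $(0,\rho)$, is the main obstacle. For it I would use the Riesz rising-sun lemma. It shows that the set $B=\{\rho:\sup_{\rho'<\rho}|E_\nu\cap(\rho',\rho)|/(\rho-\rho')>c_0\}$ has measure at most $c_0^{-1}|E_\nu|$, but this bounds the wrong quantity. So instead I would compare $f(\rho)-f(0)$ on bad radii with its value at the first preceding good radius, where the increment is at least comparable to $\rho$ once $S$ has density bounded below. That gives $|f(\rho)-f(0)|\gtrsim\rho$ on a fixed fraction of $E_\nu$, and hence the constant $c_A$.
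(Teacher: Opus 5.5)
Your ray-by-ray setup is right: polar coordinates at $x$, convexity so that each ray meets $D$ in an interval, and Lemma~\ref{l:elementary-1D} on radii $\rho$ with $|E_\nu\cap(0,\rho)|\le c_0\rho$. Your weighted target inequality along each ray is also exactly what must be proved. The gap is in the route to it. In your first paragraph you replace $|x-y|^{-d-sp}$ by $2^{-d-sp}$ and aim to show $K_u\ge c$ on a fixed fraction of $S\cap C(x)$, i.e.\ $|f(\rho)-f(0)|\gtrsim\rho$ on a fixed fraction of each $E_\nu$. That claim is false.

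Take $x=0$, $D=B_1$, $S=\{y\in B_1:|y\cdot e|\ge\rho_0\}$ and $u(t,y)=g(y\cdot e)$. Here $g$ is odd, with $g(s)=s$ on $[0,\rho_0]$, $g(s)=2\rho_0-s$ on $[\rho_0,2\rho_0]$, and $g=0$ on $[2\rho_0,\infty)$. All hypotheses hold: $|\nabla u|\le1$, $\nabla u=e$ on $D\setminus S$, and $\dist(0,S)=\rho_0>0$. But $u(t,y)=u(t,0)$, hence $K_u(t;0,y)=0$, whenever $|y\cdot e|\ge2\rho_0$. That is all of $S\cap C(0)$ except a set of measure $O(\rho_0)$, while $|S\cap C(0)|$ stays bounded below. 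Along every ray, $f$ climbs until $\inf E_\nu$, then falls back to $f(0)$ and stays there.

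This is also why comparing with the preceding good radius $\rho_*$ cannot work. It only banks an increment $\rho_*/20$, while on the bad stretch $f$ may lose up to $\rho-\rho_*\gg\rho_*$. No covering or rising-sun argument can rescue a pointwise statement that fails here. In this example the lemma holds only because $K_u\gtrsim\rho_0^{-d-\sigma}$ on a piece of $S\cap C(0)$ of volume $\sim\rho_0^d$ next to the vertex, which contributes $\gtrsim\rho_0^{-\sigma}$.

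The missing idea is to keep the singularity of the kernel and never look past a stopping radius. Let $\ell_\nu:=\inf\{\ell\le\rho_\nu:|E_\nu\cap(0,\ell)|\ge c_0\ell\}$, which is positive since $\dist(x,S)>0$. For every $\ell\le\ell_\nu$, Lemma~\ref{l:elementary-1D} gives $|f(\ell)-f(0)|\ge\ell/20$, so $K_u(t;x,x+\ell\nu)\,\ell^{d-1}\ge c\,\ell^{p(1-s)-3}=c\,\ell^{-1-\sigma}$. If the stopping set is nonempty, continuity gives $|E_\nu\cap(0,\ell_\nu)|=c_0\ell_\nu$, and hence
\[
\int_{E_\nu\cap(0,\ell_\nu)}K_u(t;x,x+\ell\nu)\,\ell^{d-1}\dd\ell\ \ge\ c\,\ell_\nu^{-1-\sigma}\,c_0\ell_\nu\ =\ c\,c_0\,\ell_\nu^{-\sigma}\ \ge\ c\,c_0\,2^{-\sigma}\ \ge\ c\,c_0\,2^{-1-\sigma}\,|E_\nu| ,
\]
because $\ell_\nu\le2$ and $|E_\nu|\le2$. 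If the stopping set is empty, every radius in $E_\nu$ is good, and the bound $\ge c\,2^{-1-\sigma}|E_\nu|$ is immediate. The radii of $E_\nu$ beyond $\ell_\nu$ are simply discarded. The blow-up $\ell^{-1-\sigma}$ acting on the mass $c_0\ell_\nu$ collected before stopping produces a universal constant, and that constant dominates the bounded length $|E_\nu|$. Integrating over $\nu\in\Gamma$ and using $\ell^{d-1}\le2^{d-1}$ to compare with $|S\cap C(x)|$ finishes the proof.
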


\begin{proof}
Since $S \subset D \subset B_1$ and $x\in\overline B_1$, every point of $S$ lies at distance less than $2$ from $x$. So we can write
\[
S\cap C(x) = \{ x+\rho\nu \ :\ \nu\in\Gamma,\ \rho\in S_\nu \}, \qquad S_\nu := \{\rho>0 : x+\rho\nu \in S\} .
\]
Since $D$ is convex and $x\in\overline D$, the radii $\rho$ for which $x+\rho\nu\in D$ form an interval $(0,\rho_\nu)$ (possibly empty), and $S_\nu \subset (0,\rho_\nu)$ with $\rho_\nu\leq2$.

Fix $\nu \in \Gamma$ with $S_\nu \neq \emptyset$, write $h_\nu(\ell) := |S_\nu \cap (0,\ell)|$, and define the stopping radius
\[
\ell_\nu := \inf\big\{ \ell \in (0,\rho_\nu] : h_\nu(\ell) \geq c_0\,\ell \big\},
\qquad \ell_\nu := \rho_\nu \ \text{ if the set is empty}.
\]
Since $h_\nu(\ell)=0$ for $\ell \leq \dist(x,S)$, we have $\ell_\nu>0$. The function $h_\nu$ is $1$-Lipschitz, and $h_\nu(\ell)<c_0\ell$ for $\ell<\ell_\nu$. Moreover, by continuity,
\begin{equation}\label{eq:stopping-mass}
h_\nu(\ell) \leq c_0\,\ell \quad \text{for every } \ell\leq\ell_\nu,
\qquad\text{and}\qquad
h_\nu(\ell_\nu) = c_0\,\ell_\nu \ \text{ if the infimum is over a nonempty set.}
\end{equation}

We can now bound the kernel along the ray. Let $\ell \in (0,\ell_\nu]$ and consider $f(\rho) := u(t,x+\rho\nu)$ on $[0,\ell]$. The segment lies in $\overline D \subset \overline B_1$, so $|f'|\leq1$ a.e. For $\rho\in(0,\ell)\setminus S_\nu$ the point $x+\rho\nu$ belongs to $D\setminus S$, so $\nabla u(t,x+\rho\nu) \in B_{1/15}(e)$ by \eqref{eq:cone-alignment-hyp}; if $\nu\cdot e>1/6$ this gives $f'(\rho) = \nabla u\cdot\nu \geq \nu\cdot e - \tfrac1{15} \geq \tfrac16-\tfrac1{15} = \tfrac1{10}$, and if $\nu\cdot e<-1/6$ the same holds for $-f$. The exceptional set $\{\rho\in(0,\ell): |f'(\rho)| <1/10\}$ is thus contained in $S_\nu\cap(0,\ell)$, of measure at most $c_0\ell$ by \eqref{eq:stopping-mass}. Lemma~\ref{l:elementary-1D}, applied to $\pm f$, gives $|u(t,x+\ell\nu)-u(t,x)|\geq\ell/20$, whence
\begin{equation}\label{eq:kernel-lower-on-ray}
K_u(t;x,x+\ell\nu) \geq c\,\ell^{\,p-2-(d+sp)} \qquad \text{for every } \ell \in (0,\ell_\nu] .
\end{equation}

We now integrate the kernel over the intersection of the ray with $S$. Set
\[
I_\nu := \int_{S_\nu\cap(0,\ell_\nu)} K_u(t;x,x+\ell\nu)\,\ell^{d-1}\dd\ell
\ \geq\ c\int_{S_\nu\cap(0,\ell_\nu)} \ell^{\,p(1-s)-3}\dd\ell ,
\]
by \eqref{eq:kernel-lower-on-ray}. We claim that for some $c_1>0$ depending on $d,\, s$ and $p$ we have 
\begin{equation}\label{eq:direction-contribution}
I_\nu \geq c_1\, |S_\nu|.
\end{equation}
If the infimum defining $\ell_\nu$ is over a nonempty set, then, using \eqref{eq:stopping-mass} and that the exponent $p(1-s)-3<-1$,
\[
I_\nu \geq c\, h_\nu(\ell_\nu)\, \ell_\nu^{\,p(1-s)-3} = c\,c_0\, \ell_\nu^{\,p(1-s)-2} \geq c\,c_0\, 2^{\,p(1-s)-2} \geq \tfrac{c\,c_0}{4} \geq \tfrac{c\,c_0}{8}\,|S_\nu| ,
\]
since $\ell_\nu\leq2$, the exponent $p(1-s)-2$ is negative, and $|S_\nu|\leq\rho_\nu\leq2$. If instead the set is empty, then $\ell_\nu = \rho_\nu$, so $S_\nu\cap(0,\ell_\nu) = S_\nu$ and
\[
I_\nu \geq c\int_{S_\nu} \ell^{\,p(1-s)-3}\dd\ell \geq c\,|S_\nu|\, 2^{\,p(1-s)-3} \geq \tfrac{c}{8}\,|S_\nu| .
\]
This proves \eqref{eq:direction-contribution}.

We can now integrate this inequality over all directions in $\Gamma$. In fact, integrating in polar coordinates and using $\ell^{d-1}\leq 2^{d-1}$,
\[
\int_{S\cap C(x)} K_u(t;x,y)\dd y = \int_\Gamma \int_{S_\nu} K_u(t;x,x+\ell\nu)\,\ell^{d-1}\dd\ell\dd\nu
\geq \int_\Gamma I_\nu \dd\nu \geq c_1 \int_\Gamma |S_\nu| \dd\nu ,
\]
while
\[
|S\cap C(x)| = \int_\Gamma\int_{S_\nu} \ell^{d-1}\dd\ell\dd\nu \leq 2^{d-1}\int_\Gamma |S_\nu|\dd\nu .
\]
The lemma follows with $c_A:=\min\{2^{1-d}c_1,1,|B_1|^{-1}\}$.
\end{proof}

We now derive the two forms of the gap bound. Both assume the bound on the heights 
\begin{equation}\label{eq:kappa-heights-cap} 
\bar a = \kappa|B_1|, \quad \text{with } \kappa \leq \frac{c_A r^2}{16}. 
\end{equation}
Recall from \eqref{eq:alignment} and $c_A\leq1$ that then $|\nabla u(t,x)-e| \leq 2\sqrt{\bar a} \leq r$ at every point of every level set. 

The following is a refinement of \cite[Lemma 4.10]{GJS}. Note that the right-hand side depends on $g(t)$ linearly rather than quadratically.

\begin{lemma}[Gap bound, using $\A_r(t)$]\label{l:Bound_Jgap1}
Assume $r \leq 1/15$ and \eqref{eq:kappa-heights-cap}. For every $t \in (-1,0]$ and every $k \geq 1$,
\[
J_{gap,k}(t) \geq \frac{c_A}{4}\, r^2\, g(t) \int_{B_1} v_k(t,x)\, \dd x .
\]
\end{lemma}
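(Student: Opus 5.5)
Fix $t$ and $k$, and a point $x$ with $v_k(t,x)>0$; the other points contribute nothing to $J_{gap,k}(t)$. By \eqref{eq:vk-pointwise}, such an $x$ lies in $\supp\eta_k\subset \overline B_{(1+\bar\eps)r_1}(x_1)\subset B_{1/8}(x_1)$. By Lemma~\ref{l:pickaball}/\eqref{def:hatg-engine}, $\A_r(t)\cap C^*(x_1)\subset \A_r(t)\cap C(x)$, so it will suffice to show
\[
\int_{B_1} [\varphi_k(t,y)-v_e(t,y)]_+\,K_u(t;x,y)\dd y \;\geq\; \frac{c_A}{4}\,r^2\,|\A_r(t)\cap C(x)|
\]
and then multiply by $v_k(t,x)$ and integrate in $x$. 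The integrand is nonnegative, so I may restrict to $y\in S:=\A_r(t)\cap C(x)$.

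\textbf{Step 1: the gap is at least $r^2/2$ on $\A_r(t)$.} For $y\in B_1$ we have $|\nabla u(t,y)|\leq 1$. Hence if $\nabla u(t,y)\notin B_r(e)$, then
\[
r^2\leq |\nabla u-e|^2 = |\nabla u|^2+1-2e\cdot\nabla u\leq 2(1-e\cdot\nabla u),
\]
so $v_e(t,y)=e\cdot\nabla u(t,y)\leq 1-r^2/2$. Since $\varphi_k\geq 1-a_k\eta_k\geq 1-\bar a$ and $\bar a=\kappa|B_1|\leq c_A r^2|B_1|/16\leq r^2/4$ (using $c_A\leq|B_1|^{-1}$), the gap satisfies $[\varphi_k-v_e]_+\geq r^2/2-r^2/4=r^2/4$ on $\A_r(t)$.

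\textbf{Step 2: kernel mass on $S$ via cone coercivity.} I apply Lemma~\ref{l:cone-coercivity} with $D=B_1$, which is convex and contains $x$, and with $S=\A_r(t)\cap C(x)$. The alignment hypothesis \eqref{eq:cone-alignment-hyp} is required on $D\setminus S$, but points of $B_1\setminus\A_r(t)$ only satisfy $\nabla u\in B_r(e)\subset B_{1/15}(e)$ because $r\leq1/15$. Points of $\A_r(t)\setminus C(x)$ are the obstacle.

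To handle this, I would take $S':=\A_r(t)$ and note that the proof of Lemma~\ref{l:cone-coercivity} only uses rays with directions in $\Gamma$, so only $S'\cap C(x)$ enters. Alternatively, I would apply the lemma directly with $S'=\A_r(t)$ and read off the conclusion on $S'\cap C(x)=S$. Either way this gives $\int_S K_u(t;x,y)\dd y\geq c_A|S|$.

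The hypothesis $\dist(x,S')>0$ is the main technical point. At $x$ we have $v_k>0$, so \eqref{eq:alignment} gives $|\nabla u(t,x)-e|\leq 2\sqrt{\bar a}\leq r/2$. Since $u$ is smooth in $x$ by the standing regularity assumption, a neighborhood of $x$ avoids $\A_r(t)$, which gives positive distance. The constants do not depend on this distance, so no quantitative control of it is needed.

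\textbf{Step 3: conclusion.} Combining Steps 1 and 2,
\[
\int [\varphi_k-v_e]_+K_u\dd y\geq \frac{r^2}{4}\,c_A\,|\A_r(t)\cap C(x)|\geq \frac{c_A}{4}\,r^2\,g(t).
\]
Multiplying by $v_k(t,x)$ and integrating over $x\in B_1$ yields the claim.
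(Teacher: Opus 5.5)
Your proposal is correct and matches the paper's proof step for step. The shared steps are the pointwise bound $[\varphi_k-v_e]_+\ge r^2/4$ on $\A_r(t)$, then Lemma~\ref{l:cone-coercivity} applied with $D=B_1$ and $S=\A_r(t)$ (your second option, which is exactly what the paper does, so the detour through $S=\A_r(t)\cap C(x)$ is unnecessary), then $\dist(x,\A_r(t))>0$ from continuity of $\nabla u(t,\cdot)$, and finally $|\A_r(t)\cap C(x)|\ge g(t)$ from $C(x)\supset C^*(x_1)$; your use of $c_A\le|B_1|^{-1}$ to get the strict bound $|\nabla u(t,x)-e|\le r/2<r$ is, if anything, slightly more careful than the paper's stated ``$\le r$'', since strictness is what the distance argument needs.
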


\begin{proof}
Fix $(t,x)$ with $v_k(t,x)>0$, so that $x\in A_k(t) \subset \overline B_{(1+\bar \eps)r_1}(x_1) \subset B_{1/8}(x_1)$ and $\nabla u(t,x) \in B_r(e)$, by \eqref{eq:alignment} and \eqref{eq:kappa-heights-cap}.
For $y \in \A_r(t)$ since $|\nabla u(t,y)|\leq1$ and $|\nabla u(t,y)-e|\geq r$, we have
$v_e(t,y)  \leq 1-\frac{r^2}{2},$
while $\varphi_k(t,y) \geq 1-a_k(t) \geq 1 - r^2/8$ by \eqref{eq:kappa-heights-cap}. Hence
\begin{equation}\label{eq:reservoir-separation}
[\varphi_k(t,y)-v_e(t,y)]_+ \geq \frac{r^2}{4} \qquad \text{for } y \in \A_r(t).
\end{equation}

Apply Lemma~\ref{l:cone-coercivity} with $D = B_1$ and $S = \A_r(t)$. On $B_1\setminus\A_r(t)$, the gradient lies in $B_r(e)\subset B_{1/15}(e)$, and $\dist(x,\A_r(t))>0$ because $\nabla u(t,\cdot)$ is continuous and $\nabla u(t,x)\in B_r(e)$. Since $x\in B_{1/8}(x_1)$ we have $C(x)\supset C^*(x_1)$, so $|\A_r(t)\cap C(x)| \geq g(t)$ (recall Lemma \ref{l:pickaball}) and
\[
\int_{\A_r(t)\, \cap\, C(x)} K_u(t;x,y) \dd y \geq c_A\, g(t) .
\]
Restricting the $y$-integral in $J_{gap,k}$ to $\A_r(t)\cap C(x)$ and using \eqref{eq:reservoir-separation},
\[
J_{gap,k}(t) \geq \frac{r^2}4 \int_{B_1} v_k(t,x) \left( \int_{\A_r(t)\, \cap\, C(x)} K_u(t;x,y) \dd y \right) \dd x
\geq \frac{c_A\, r^2}4\, g(t) \int_{B_1} v_k \dd x . \qedhere
\]
\end{proof}

The second form uses, instead of $\A_r(t)$, the set where $v_e$ is \emph{below the previous level}. It has no elliptic counterpart in \cite{GJS}; it plays here the role of the ``good term'' in the parabolic De Giorgi iteration of \cite{chihinchan}, adapted to the degenerate kernel $K_u$. Since level-set information is only useful inside the set $\{\eta_0=1\}$, where the barriers are separated by $(1-\lambda)a_{k-1}$, the lemma is localized there. The convex domain in Lemma~\ref{l:cone-coercivity} is $\{\eta_0=1\}=\overline B_{r_1}(x_1)$ itself.

The following is another estimate for $J_{gap,k}$ depending on the level sets $A_{k-1}(t)$ instead of the set $\A_r(t)$. It is a straightforward variant of Lemma~\ref{l:Bound_Jgap1}.

\begin{lemma}[Gap bound, using $A_{k-1}(t)$]\label{l:gap-below}
Let $2\leq k\leq N$, $t\in(-1,0]$, $\mu_0>0$, and assume \eqref{eq:kappa-heights-cap}. Suppose that 
\begin{equation}\label{eq:below-cone-hypothesis}
\big| C(x) \cap \big( B_{r_1/2}(x_1)\setminus A_{k-1}(t)\big) \big| \geq \mu_0 \qquad \text{for every } x \in \overline B_{r_1}(x_1).
\end{equation}
Then,
\[
J_{gap,k}(t) \geq c_A\, (1-\lambda)\, a_{k-1}(t)\, \mu_0 \int_{B_{r_1}(x_1)} v_k(t,x)\, \dd x .
\]
\end{lemma}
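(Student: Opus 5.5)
The proof follows that of Lemma~\ref{l:Bound_Jgap1}: restrict the $y$-integral in $J_{gap,k}$ to a set where the gap $[\varphi_k-v_e]_+$ is bounded below, then apply Lemma~\ref{l:cone-coercivity} to bound the kernel mass there.

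\textbf{Step 1: the gap is quantitatively positive on the reservoir.} Since $2\le k\le N$, we have $\eta_k=\eta_{k-1}=\eta_0$, and $\eta_0=1$ on $\overline B_{r_1}(x_1)$. For $y\in B_{r_1}(x_1)\setminus A_{k-1}(t)$ we have $v_e(t,y)\le \varphi_{k-1}(t,y)=1-a_{k-1}(t)+\eps_2(t+1)$. Also $\varphi_k(t,y)=1-\lambda a_{k-1}(t)+\eps_2(t+1)$. Hence
\[
[\varphi_k(t,y)-v_e(t,y)]_+\ \ge\ (1-\lambda)\,a_{k-1}(t).
\]

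\textbf{Step 2: apply cone coercivity at each relevant point.} Fix $x\in B_{r_1}(x_1)$ with $v_k(t,x)>0$, and apply Lemma~\ref{l:cone-coercivity} with
\[
D=\overline B_{r_1}(x_1)\ \text{(or its interior)},\qquad S=B_{r_1/2}(x_1)\setminus A_{k-1}(t),
\]
restricted to the part of $S$ we need. The hypotheses must be checked as follows.
\begin{itemize}
\item $D$ is convex and contains $x$, and $\|\nabla u(t,\cdot)\|_{L^\infty(B_1)}\le 1$ by assumption (2) with $n=0$.
\item On $D\setminus S$ the gradient must lie in $B_{1/15}(e)$. This is the delicate point. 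Points of $D\setminus S$ lying in $A_{k-1}(t)$ satisfy $|\nabla u-e|\le 2\sqrt{\bar a}\le r\le 1/15$ by \eqref{eq:alignment} and \eqref{eq:kappa-heights-cap}. However, $D\setminus S$ also contains the annulus $B_{r_1}(x_1)\setminus B_{r_1/2}(x_1)$, where we have no information.
\end{itemize}
I would therefore take $S:=\overline B_{r_1}(x_1)\setminus A_{k-1}(t)$, which is a superset of the reservoir $B_{r_1/2}(x_1)\setminus A_{k-1}(t)$. Then $D\setminus S\subset A_{k-1}(t)$, where the alignment holds.

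Two further conditions need care.
\begin{itemize}
\item Positive distance: $x\in A_k(t)\subset A_{k-1}(t)$, and $A_{k-1}(t)$ is open by smoothness of $u$ in $x$, so $\dist(x,S)>0$.
\item Restriction to the reservoir: Lemma~\ref{l:cone-coercivity} controls $\int_{S\cap C(x)}K_u$, whereas we need the integral only over the smaller set. The ray-wise bound $I_\nu\ge c_1|S_\nu|$ in its proof is really a lower bound for the integral over the subset of $S_\nu$ below the stopping radius. So I would either redo that argument, or simply use the lower bound of Step 1 on all of $S\cap C(x)$. The second option works because every $y\in \overline B_{r_1}(x_1)\setminus A_{k-1}(t)$ satisfies the gap bound of Step 1, not only those in $B_{r_1/2}(x_1)$.
\end{itemize}

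\textbf{Step 3: combine.} Using hypothesis \eqref{eq:below-cone-hypothesis}, and that the kernel integrand is nonnegative,
\[
\int_{S\cap C(x)} [\varphi_k-v_e]_+\,K_u(t;x,y)\,\dd y\ \ge\ (1-\lambda)\,a_{k-1}(t)\,c_A\,|S\cap C(x)|\ \ge\ c_A(1-\lambda)\,a_{k-1}(t)\,\mu_0 .
\]
Multiplying by $v_k(t,x)$, integrating over $x\in B_{r_1}(x_1)$, and dropping the remaining nonnegative parts of $J_{gap,k}$ gives the claim.

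The main obstacle is verifying the alignment hypothesis \eqref{eq:cone-alignment-hyp} on $D\setminus S$. Enlarging $S$ to the full complement of $A_{k-1}(t)$ in $D$ resolves it, provided the gap bound of Step 1 holds on that whole complement. It does, precisely because $\eta_0\equiv1$ on all of $\overline B_{r_1}(x_1)$.
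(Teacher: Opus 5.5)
Your proof is correct and is essentially the paper's argument. The paper also takes $D=B_{r_1}(x_1)$ and $S$ to be the full complement $B_{r_1}(x_1)\setminus A_{k-1}(t)$, not just the reservoir. It then uses $\eta_0\equiv1$ on $S$ to get the gap $(1-\lambda)a_{k-1}(t)$, and the inclusion $B_{r_1/2}(x_1)\setminus A_{k-1}(t)\subset S$ to get $|S\cap C(x)|\geq\mu_0$, before applying Lemma~\ref{l:cone-coercivity}.
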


\begin{proof}
Fix $t$ and $x \in \overline B_{r_1}(x_1)$ with $v_k(t,x)>0$, and set
\[
D := B_{r_1}(x_1), \qquad S := B_{r_1}(x_1)\setminus A_{k-1}(t) .
\]
We check the hypotheses of Lemma~\ref{l:cone-coercivity}. In $D\setminus S = B_{r_1}(x_1)\cap A_{k-1}(t)$, we have $v_e > \varphi_{k-1} \geq 1-\bar a$, so \eqref{eq:alignment} and \eqref{eq:kappa-heights-cap} give $|\nabla u - e| \leq 2\sqrt{\bar a} < 1/15$. Moreover $x \in A_k(t) \subset A_{k-1}(t)$, and $A_{k-1}(t)$ is relatively open in $B_1$ (since $v_e(t,\cdot)$ and $\varphi_{k-1}(t,\cdot)$ are continuous), so a neighborhood of $x$ is disjoint from $S$, implying $\dist(x,S)>0$.

For $y\in S$ we have $\eta(y)=1$ and $v_e(t,y) \leq \varphi_{k-1}(t,y)$, hence
\[
[\varphi_k(t,y)-v_e(t,y)]_+ \geq \varphi_k(t,y)-\varphi_{k-1}(t,y) = \big(a_{k-1}(t)-a_k(t)\big)\,\eta(y) = (1-\lambda)\, a_{k-1}(t) .
\]

Since $B_{r_1/2}(x_1)\setminus A_{k-1}(t) \subset S$, hypothesis \eqref{eq:below-cone-hypothesis} gives $|S\cap C(x)| \geq \mu_0$, and Lemma~\ref{l:cone-coercivity} yields
\[
\int_{S\,\cap\, C(x)} K_u(t;x,y)\dd y \geq c_A\,\mu_0 .
\]
Restricting the $y$-integral in $J_{gap,k}$ to $S\cap C(x)$ and the $x$-integral to $B_{r_1}(x_1)$, we conclude
\[
J_{gap,k}(t) \geq (1-\lambda)\,a_{k-1}(t)\, c_A\,\mu_0 \int_{B_{r_1}(x_1)} v_k(t,x) \dd x . \qedhere
\]
\end{proof}

\subsection{\texorpdfstring{Estimating $J_{\varphi,k}$}{Estimating J(phi,k)}}

By the symmetry of $K_u$ in $(x,y)$,
\begin{equation*}
J_{\varphi,k}(t) = \frac{a_k(t)}{2} \iint_{B_1 \times B_1}(\eta_k(y)-\eta_k(x))(v_k(t,x)-v_k(t,y)) K_u(t;x,y)\,\dd x\dd y.
\end{equation*}

Our bound for $J_{\varphi,k}$ is the same one that we used in the elliptic case \cite[Lemma 4.11]{GJS} with $\delta_k$ replaced by $a_k(t)$. We state it in the next lemma.

\begin{lemma} \label{l:bound_Jphi}
For every $t \in (-1,0]$ and every $k \geq 1$,
\[
|J_{\varphi,k}(t)| \leq C\, a_k(t)\, \|\eta_k\|_{Lip}\, |A_k(t)|^{1/2}\, \sqrt{J_{r,k}(t)} .
\]
Consequently, by Young's inequality, for the preliminary levels $1\leq k\leq N$,
\begin{equation}\label{eq:fixed-cutoff-Jphi}
|J_{\varphi,k}(t)|\leq \frac12 J_{r,k}(t)+ C \bar \eps^{-2}\, a_k(t)^2\,|A_k(t)| ,
\end{equation}
 and for the nonlinear levels $k=N+j$, $j\geq1$,
\begin{equation}\label{eq:shrinking-cutoff-Jphi}
|J_{\varphi,N+j}(t)| \leq \frac12 J_{r,N+j}(t)+C\,2^{4j}\,a_{N+j}(t)^2\,|A_{N+j}(t)|,
\end{equation}
where $C$ denotes a universal constant.
\end{lemma}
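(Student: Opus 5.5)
The plan is to follow the elliptic argument of \cite[Lemma 4.11]{GJS} at each fixed time. Time plays no role, since $J_{\varphi,k}(t)$ is a purely spatial double integral at the frozen time $t$.

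We start from the symmetrized form
\[
J_{\varphi,k}(t) = \frac{a_k(t)}{2} \iint_{B_1 \times B_1}(\eta_k(y)-\eta_k(x))(v_k(t,x)-v_k(t,y)) K_u(t;x,y)\,\dd x\dd y.
\]
The integrand vanishes unless $x$ or $y$ lies in $A_k(t)$, because otherwise $v_k(t,x)=v_k(t,y)=0$. By symmetry it therefore suffices to bound twice the integral over $\{x\in A_k(t),\ y\in B_1\}$. Applying Cauchy--Schwarz with respect to the measure $K_u\,\dd x\,\dd y$ gives
\[
|J_{\varphi,k}| \le C a_k(t) \Big(\iint_{A_k\times B_1} (\eta_k(x)-\eta_k(y))^2 K_u\Big)^{1/2}\Big(\iint_{B_1\times B_1} (v_k(x)-v_k(y))^2K_u\Big)^{1/2}.
\]
The second factor is controlled by $J_{r,k}$. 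Writing $(v_k(x)-v_k(y))v_k(x)$ and symmetrizing, we have $2J_{r,k}=\iint (v_k(x)-v_k(y))^2K_u$ over $B_1\times B_1$.

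For the first factor, fix $x\in A_k(t)\subset B_{1/8}(x_1)\subset B_{3/4}$. We bound $(\eta_k(x)-\eta_k(y))^2\le \|\eta_k\|_{Lip}^2|x-y|^2$, and the kernel by $K_u\le C|u(t,x)-u(t,y)|^{p-2}|x-y|^{-d-sp}\le C|x-y|^{p-2-d-sp}$, using $|\nabla u|\le 2$ on $B_2$ from Assumption (2). The $y$-integral is then at most $C\|\eta_k\|_{Lip}^2\int_{B_2}|x-y|^{2-d-\sigma}\dd y\le C\|\eta_k\|_{Lip}^2$, which is finite because $\sigma<2$. Integrating over $x\in A_k(t)$ makes the first factor at most $C\|\eta_k\|_{Lip}|A_k(t)|^{1/2}$, and this proves the main inequality.

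For the consequences, we apply Young's inequality in the form $C a_k L|A_k|^{1/2}\sqrt{J_{r,k}}\le \tfrac12 J_{r,k}+C a_k^2L^2|A_k|$. We use $\|\eta_0\|_{Lip}\le C/\bar\eps$ for $k\le N$ and $\|\eta_{N+j}\|_{Lip}\le C2^{2j}$ for the nonlinear levels. The only step needing care is the kernel integrability near the diagonal, which relies on $\sigma<2$ and on the Lipschitz bound.
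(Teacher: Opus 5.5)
Your proof is correct and takes essentially the paper's approach. The paper just invokes the elliptic bound \cite[Lemma 4.11]{GJS} at each time slice with $a_k(t)$ in place of $\delta_k$, and that is exactly your symmetrization, Cauchy--Schwarz against $K_u\,\dd x\,\dd y$, kernel bound $K_u\le C|x-y|^{-d-\sigma}$ with $\sigma<2$, and Young's inequality with $\|\eta_0\|_{Lip}\le C/\bar\eps$ and $\|\eta_{N+j}\|_{Lip}\le C2^{2j}$. One minor point: for $\sigma\ge1$ the unsymmetrized $J_{\varphi,k}$ is only a principal value, so you should symmetrize on $\{|x-y|>\epsilon\}$ and then let $\epsilon\to0$; your symmetrized integrand is absolutely integrable, so the limit is harmless.
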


\subsection{\texorpdfstring{Estimating $J_{r,k}$}{Estimating J(r,k)}}

Using again the symmetry of $K_u$,
\begin{equation*} 
J_{r,k}(t) = \frac{1}{2}\iint_{B_1 \times B_1}(v_k(t,x)-v_k(t,y))^2 K_u(t;x,y)\,\dd x\dd y .
\end{equation*}

The lower bound for $J_{r,k}$ is used in the nonlinear stage of the iteration only. The proof of the following result is as in \cite[Lemma 4.12]{GJS}, with $a_{N+j}(t)$ in place of the fixed height $\delta$. 

\begin{lemma} \label{l:bound_Jr}
Assume \eqref{eq:kappa-heights-cap}. For every $t \in (-1,0]$ and every $j\geq 1$,
\begin{align*}
J_{r,N+j}(t) &\geq c\, a_{N+j}(t)^2\, 2^{-2j}\, |A_{N+j+1}(t)|\; |A_{N+j}(t)|^{-\gs},
\end{align*}
for some universal constant $c>0$ depending only on $d, \, s, \, p$ and where $\gs := \frac{2-p(1-s)}{d} > 0$. Here we use the convention that the right-hand side is $0$ when $|A_{N+j+1}(t)|=0$.
\end{lemma}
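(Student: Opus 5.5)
The statement is a fractional Sobolev (Poincaré-type) inequality for the truncation $v_{N+j}(t,\cdot)$ at each fixed time. My plan is to reproduce the elliptic argument of \cite[Lemma 4.12]{GJS} slice by slice. It has two ingredients: a kernel lower bound on the relevant set, and a measure estimate.

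\textbf{Kernel lower bound.} I would show that for $x,y$ in a suitable set, $K_u(t;x,y)\geq c|x-y|^{-d-\sigma}$. First, every point of $A_{N+j}(t)$ lies in $B_{1/8}(x_1)$ by \eqref{eq:vk-pointwise}. Second, at such points $|\nabla u(t,\cdot)-e|\leq 2\sqrt{\bar a}<1/15$ by \eqref{eq:alignment} and \eqref{eq:kappa-heights-cap}. Now apply Lemma~\ref{l:cone-coercivity} with $D=B_1$ and $S=B_1\setminus A_{N+j}(t)$. This controls the kernel from $x\in A_{N+j+1}(t)$ toward points where $v_{N+j}$ vanishes. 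I would rather use a cleaner route, as in GJS: for $x\in A_{N+j+1}(t)$ and $y$ in the cone $C(x)$ with $y\notin A_{N+j}(t)$, we have
\[
v_{N+j}(t,x)-v_{N+j}(t,y)=v_{N+j}(t,x).
\]
Also $v_{N+j}(t,x)\geq c2^{-2j}a_{N+j}(t)$ on $A_{N+j+1}(t)$. This follows because $\varphi_{N+j}-\varphi_{N+j+1}=a_{N+j+1}\eta_{N+j+1}-a_{N+j}\eta_{N+j}$. On the support of $\eta_{N+j+1}$, $\eta_{N+j}=1$, and the plateau heights differ by $(\theta_j-\theta_{j+1})\delta_0\sim 2^{-j}\delta_0$, which is the same mechanism as in Lemma~\ref{l:chebi}.

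\textbf{Lower bound on $J_{r,N+j}$.} Restricting $J_{r,N+j}$ to pairs $x\in A_{N+j+1}(t)$, $y\in C(x)\setminus A_{N+j}(t)$ gives
\[
J_{r,N+j}(t)\gtrsim 2^{-2j}a_{N+j}(t)^2\int_{A_{N+j+1}(t)}\int_{C(x)\setminus A_{N+j}(t)}K_u(t;x,y)\dd y\dd x .
\]
For the inner integral, I would apply the stopping-ray estimate behind Lemma~\ref{l:cone-coercivity}, but exploit the singularity of the kernel rather than just its mass. Along a ray in $\Gamma$ from $x$, the set $S_\nu$ of radii with $y\notin A_{N+j}(t)$ has measure $\gtrsim 2-\ell_\nu$ outside a short initial segment. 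The key point is that the complement $A_{N+j}(t)$ has small volume $|A_{N+j}(t)|$. Hence the set of radii below $\rho_0:=c|A_{N+j}(t)|^{1/d}$ that lie in $A_{N+j}(t)$ cannot fill a fraction $1-c_0$ of many rays. On the non-aligned part $S$ there is no direct control of the gradient. However, the stopping radius $\ell_\nu$ is reached at most at scale $\sim\rho_0$ along most directions, because $h_\nu(\ell)\geq c_0\ell$ once $\ell\gtrsim\rho_0$ for most $\nu$. This holds since $|C(x)\cap B_\ell\cap A_{N+j}^c|\gtrsim \ell^d$ when $\ell^d\gg|A_{N+j}|$. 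Then \eqref{eq:kernel-lower-on-ray} gives a kernel bound $\ell^{-d-\sigma}$ on a portion of $S_\nu$ of size $\sim\ell_\nu$. Integrating yields
\[
\int_{C(x)\setminus A_{N+j}(t)}K_u(t;x,y)\dd y\gtrsim \rho_0^{-\sigma}=c|A_{N+j}(t)|^{-\sigma/d}=c|A_{N+j}(t)|^{-\gs}.
\]
Integrating over $x\in A_{N+j+1}(t)$ then gives the claim.

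\textbf{Expected obstacle.} The delicate step is the per-ray stopping-time estimate with a scale $\rho_0$ tied to $|A_{N+j}(t)|$. Unlike in Lemma~\ref{l:cone-coercivity}, we cannot assume the gradient is aligned off $S$. In this setting, though, alignment holds precisely on $A_{N+j}(t)$, and $S$ is its complement. So along each ray the exceptional set is $S_\nu$ itself, and the stopping argument is self-consistent. What must be handled carefully is that directions $\nu$ whose rays see almost no $S$ up to distance $2$ have small total measure, which follows from the small volume of $A_{N+j}(t)$ via polar coordinates. I expect this to go through exactly as in \cite[Lemma 4.12]{GJS}, time serving only as a parameter.
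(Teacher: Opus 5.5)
Your proposal is correct and follows the same route as the argument the paper imports from \cite[Lemma 4.12]{GJS}. You restrict $J_{r,N+j}$ to pairs $x\in A_{N+j+1}(t)$, $y\in B_1\setminus A_{N+j}(t)$, run the stopping-ray argument of Lemma~\ref{l:cone-coercivity} with $D=B_1$, $S=B_1\setminus A_{N+j}(t)$ to get $I_\nu\gtrsim \ell_\nu^{-\sigma}$, and average over $\Gamma$ using the per-ray polar count $\int_\Gamma \ell_\nu^{d}\dd\nu\lesssim |A_{N+j}(t)|$; this count, rather than the bulk bound on $|C(x)\cap B_\ell\setminus A_{N+j}(t)|$, is what makes ``most directions'' precise.

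One slip to fix: on $A_{N+j+1}(t)$ one has $v_{N+j}\geq a_{N+j}-a_{N+j+1}\geq 2^{-j-1}\delta_0\geq \tfrac13 2^{-j}a_{N+j}(t)$. That is a factor $2^{-j}$, not the $2^{-2j}$ you wrote, and it is this $2^{-j}$ bound that your subsequent display actually uses.
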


\subsection{The preliminary phase}\label{s:preliminary_contraction}

In this section we run the preliminary phase. Here we consider $1\leq k\leq N$, $\eta_k = \eta_0$, the cutoff defined in Section \ref{s:DGsetup}, and the heights $a_k = \lambda^{k-1}a_1$ with $a_1$ defined by \eqref{eq:ODEa}. The goal is to prove that the space-time measure of the superlevel set, after $N$ iterations, is sufficiently small. The main result is  Proposition~\ref{p:linear-preliminary-contraction}. We keep assuming the setting of Lemma~\ref{l:DGengine}.

\begin{proposition}[Small measure]\label{p:linear-preliminary-contraction}
 For any  $\omega_\star> 0$ small, with the choices of Remark~\ref{r:constants-order}, there exists an integer $N$ such that 
\[
 \int_{-1}^0 |A_{N+1}(t)|\dd t \leq \omega_\star .
\]

\end{proposition}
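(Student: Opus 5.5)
The plan is a dichotomy argument over the preliminary levels $2\le k\le N$. At each level, either the level-set measure at the next level is already below $\omega_\star$, or a fixed amount $\gamma_\star$ of space-time measure is lost between consecutive level sets. Since the sets $A_k(t)$ are nested inside $B_{1/8}(x_1)$, the second alternative can happen at most $\sim|B_1|/\gamma_\star$ times, and $N$ is chosen larger than that.

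\textbf{Step 1: uniform energy bound for preliminary levels.} For $1\le k\le N$ I start from \eqref{eq:main_ineq_simplified}. The forcing $a_k'\int v_k\eta_k$ is handled as follows: $a_k'=\lambda^{k-1}\kappa g\le \kappa g$ and $v_k\le a_k\eta_k\le v_k$-compatible, so $a_k'\int v_k\eta_k\le \kappa g\int v_k$. By Lemma~\ref{l:Bound_Jgap1} and the choice \eqref{eq:kappa-heights-cap}, this is at most $\tfrac14 J_{gap,k}$, hence absorbed. This is exactly why the ODE forcing is chosen proportional to $g$. Using \eqref{eq:fixed-cutoff-Jphi} to absorb half of $J_{r,k}$ and $v_k\le a_k$, I get
\[
\tfrac12\tfrac{d}{dt}\int v_k^2+\tfrac12 J_{r,k}+\tfrac34 J_{gap,k}\le C_0\,\bar\eps^{-2}\,a_k(t)^2\,|A_k(t)|.
\]
Since $v_k(-1,\cdot)=0$, integration in time gives the master bound
\[
\int_{-1}^0 J_{gap,k}\,dt\le C_0\bar\eps^{-2}\int_{-1}^0 a_k^2|A_k|\,dt.
\]
I expect this to be the \eqref{eq:master-preliminary} with constant $C_0$.

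\textbf{Step 2: the good alternative.} Fix a time $t$ at which hypothesis \eqref{eq:below-cone-hypothesis} of Lemma~\ref{l:gap-below} holds with the universal $\mu_0$. Then $J_{gap,k}(t)\ge c_A(1-\lambda)a_{k-1}\mu_0\int_{B_{r_1}(x_1)}v_k$. By Chebyshev with the gap $a_k-a_{k+1}=(1-\lambda)a_k$ on $\{\eta_0=1\}$, we have $\int_{B_{r_1}} v_k\ge (1-\lambda)a_k|A_{k+1}(t)\cap B_{r_1}|$. Moreover $a_{k-1}a_k=\lambda^{-1}a_k^2$. Plugging into the master bound, the contribution of such good times to $\int|A_{k+1}\cap B_{r_1}(x_1)|$ is at most
\[
C\lambda\,(c_A\mu_0)^{-1}\bar\eps^{-2}\,|B_1|.
\]
This is below $\omega_\star/3$ once $\lambda$ is chosen small depending on $C_0,c_A,\mu_0,\omega_\star,\bar\eps$, matching \eqref{def:lambda}. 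The annulus part $A_{k+1}\setminus B_{r_1}(x_1)$ lies in $B_{(1+\bar\eps)r_1}\setminus B_{r_1}$. Its measure is $\le C\bar\eps$, and $\bar\eps$ is chosen so that this is at most $\omega_\star/3$; this is Lemma~\ref{l:cube} and \eqref{eq:pick_eps}. Here I must make sure the weight $a_k^2$ is nonzero, but $A_k=\emptyset$ where $a_k=0$.

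\textbf{Step 3: the bad alternative, via mass transfer (main obstacle).} Let $T_k^{bad}$ be the set of times where \eqref{eq:below-cone-hypothesis} fails. If $|T_k^{bad}|$ carries more than $\omega_\star/3$ of the mass of $A_{k+1}$, then on those times $A_{k-1}(t)$ fills all but $\mu_0$ of some cone inside $B_{r_1/2}(x_1)$. At the same time, $A_{k+1}$ has substantial space-time mass, and $v_k$ has bounded energy by Step~1, including $\int J_{r,k}$ through the cone coercivity of Lemma~\ref{l:cone-coercivity}. A Caffarelli--Chan--Vasseur type intermediate-value/mass-transfer statement (Proposition~\ref{p:mass-transfer}, which needs $\omega_\star\le\mu_0/2$) should then force
\[
\int_{-1}^0 |A_{k-1}(t)\setminus A_{k+1}(t)|\,dt\ge\gamma_\star(\omega_\star,C_0).
\]
The reasoning is that a function with controlled nonlocal energy cannot jump from below $\varphi_{k-1}$ to above $\varphi_{k+1}$ without occupying intermediate levels. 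The difficulty is doing this with a degenerate kernel. I would use Lemma~\ref{l:cone-coercivity} on rays from points of $A_{k+1}$, where alignment \eqref{eq:alignment} holds, to get nondegenerate kernel bounds along cones, and run the CCV argument in that restricted geometry. I would also handle the time dependence by selecting a subset of times where both sets are large, using that $\int v_k^2$ is continuous in time.

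\textbf{Step 4: counting.} The sets $A_{k-1}\setminus A_{k+1}$ for $k$ of fixed parity are disjoint subsets of $(-1,0)\times B_1$, so the bad alternative occurs for at most $2|B_1|/\gamma_\star$ values of $k$. Choosing $N>2|B_1|/\gamma_\star+2$, some $k\le N$ is good, giving $\int|A_{k+1}|\le\omega_\star$. Monotonicity $A_{N+1}\subset A_{k+1}$ from \eqref{eq:vk-pointwise} then concludes the proof.
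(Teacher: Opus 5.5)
Your overall architecture matches the paper: a per-level dichotomy between smallness of $A_{k+1}$ and a mass drop between levels $k-1$ and $k+1$, then counting over levels of fixed parity, then the nesting $A_{N+1}\subset A_{k+1}$. The gap is in Step 3, which is the only place where the mass drop has to be produced, and the mechanism you indicate there cannot work. At your bad times you only know that, for one cone $C(x)$, the set $C(x)\cap\bigl(B_{r_1/2}(x_1)\setminus A_{k-1}(t)\bigr)$ has measure less than $\mu_0$. In particular, the region below $\varphi_{k-1}$ may be empty in $B_{r_1}(x_1)$. You are also conditioning on $A_{k+1}(t)$ carrying mass at exactly those times. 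So on those time slices there is no spatial transition from below $\varphi_{k-1}$ to above $\varphi_{k+1}$ for a CCV-type isoperimetric lemma to exploit. Nothing you use at those times excludes, say, $A_{k-1}(t)=A_k(t)=A_{k+1}(t)=B_{r_1}(x_1)$ on a whole time interval, which has no mass drop at all. Such a configuration is ruled out only by the time evolution, and the drop must be found at times other than the bad ones. Mere continuity of $\int v_k^2$ in time, which you mention, does not give this.

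The missing idea is the temporal intermediate-value argument in the proof of Proposition~\ref{p:mass-transfer}, applied to the normalized energy $F_k=E_k/a_k^2$.
\begin{itemize}
\item Since $a_k'\geq 0$, the master inequality gives $F_k'\leq 2C_0\bar\eps^{-2}|A_k|-a_k^{-2}J_{gap,k}\leq 2C_0\bar\eps^{-2}|B_1|$, together with $F_k(\tilde t_a^+)=0$.
\item A time $t_0$ with $|A_{k+1}(t_0)|\geq\omega_\star$ forces $F_k(t_0)\geq\omega_\star/4$, by Lemma~\ref{l:cube} with $\bar\eps=\bar c\,\omega_\star$.
\item Hence $F_k$ spends a set $D$ of times, of measure $\gtrsim\bar c^{\,2}\omega_\star^3$, in the band $(\omega_\star/16,\omega_\star/8)$.
\item On $D$ one automatically has $|A_{k+1}(t)|\leq\omega_\star$, yet $\int_{B_{r_1}(x_1)}v_k^2\geq\frac{\omega_\star}{32}a_k^2$.
\item At times of $D$ where $|A_{k-1}(t)\cap B_{r_1}(x_1)|\leq\mu_0$, Lemma~\ref{l:gap-below} then gives $a_k^{-2}J_{gap,k}\gtrsim\mu_0\omega_\star/\lambda$. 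The weighted budget \eqref{eq:weighted-gap-budget}, with $\lambda$ small, confines these times to at most half of $D$.
\item On the remaining half of $D$, $|A_{k-1}(t)|-|A_{k+1}(t)|\geq\mu_0-\omega_\star\geq\mu_0/2$ (using $\omega_\star\leq\mu_0/2$). This is the mass drop.
\end{itemize}

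A secondary point: your Step 1 only yields the unweighted budget $\int J_{gap,k}\leq C_0\bar\eps^{-2}\int a_k^2|A_k|$. That does not give the unweighted bound on $\int|A_{k+1}\cap B_{r_1}(x_1)|$ over good times claimed in Step 2, because $a_k(t)$ can be arbitrarily small at early times. You need the weighted budget $\int a_k^{-2}J_{gap,k}\leq 2C_0\bar\eps^{-2}|B_1|$, which again relies on the monotonicity of $a_k$.
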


We begin by establishing the master inequality we use during the preliminary phase.
Hereafter we use the notation
\begin{equation*}
E_k(t) := \int_{B_1} v_k(t,x)^2 \dd x .
\end{equation*}

\begin{lemma}[Master inequality]\label{l:master-preliminary}
For every $1\leq k\leq N$ and almost every $t\in(-1,0]$,
\begin{equation}\label{eq:master-preliminary}
\frac12 E_k'(t) + \frac12 J_{gap,k}(t) + \frac12 J_{r,k}(t) \leq C_0 \bar \varepsilon^{-2}\, a_k(t)^2\, |A_k(t)|,
\end{equation}
for some $C_0>0$ depending on $d,\, s,$ and $p$.

\end{lemma}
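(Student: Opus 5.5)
We start from the simplified energy inequality \eqref{eq:main_ineq_simplified}, valid for every $k\geq1$ and a.e.\ $t$:
\[
\tfrac12 E_k'(t) + J_{\varphi,k}(t) + J_{gap,k}(t) + J_{r,k}(t) \leq \big(a_k'(t) + C_T a_k(t)\big)\int_{B_1} v_k\,\eta_k \dd x .
\]
For $1\leq k\leq N$ we have $\eta_k=\eta_0$, and we bound $J_{\varphi,k}$ from below using \eqref{eq:fixed-cutoff-Jphi}, which gives $J_{\varphi,k}\geq -\tfrac12 J_{r,k} - C\bar\eps^{-2}a_k^2|A_k(t)|$. Absorbing $-\tfrac12 J_{r,k}$ into $J_{r,k}$, and using $J_{gap,k}\geq \tfrac12 J_{gap,k}$ since $J_{gap,k}\geq0$, the left-hand side becomes the one in \eqref{eq:master-preliminary}. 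It then remains to control the right-hand side.

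The term $C_T a_k\int v_k\eta_k$ is harmless. By \eqref{eq:vk-pointwise}, $0\leq v_k\leq a_k\eta_k$ and $v_k$ vanishes off $A_k(t)$, so $C_T a_k\int v_k\eta_k \leq C_T a_k^2|A_k(t)|\leq C_T\bar\eps^{-2}a_k^2|A_k(t)|$, because $\bar\eps\leq1/8$.

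The main obstacle is the term $a_k'(t)\int v_k\eta_k$. It has no sign, since $a_k'=\lambda^{k-1}\kappa g(t)\one_{(t_a,t_b)}\geq0$, and it is not of the form $a_k^2|A_k|$. We will absorb it into $\tfrac12 J_{gap,k}$ using Lemma~\ref{l:Bound_Jgap1}, which holds here since $r\leq1/15$ and \eqref{eq:kappa-heights-cap} is in force. That lemma gives
\[
J_{gap,k}(t)\geq \tfrac{c_A}{4}r^2 g(t)\int_{B_1} v_k\dd x .
\]
Since $\eta_k\leq1$ and $\lambda\leq1$,
\[
a_k'(t)\int v_k\eta_k \leq \kappa\, g(t)\int v_k ,
\]
and \eqref{eq:kappa-heights-cap} gives $\kappa\leq c_Ar^2/16\leq \tfrac12\cdot\tfrac{c_A}{4}r^2$. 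Hence $a_k'\int v_k\eta_k\leq \tfrac12 J_{gap,k}(t)$. Moving this to the left-hand side leaves exactly $\tfrac12 J_{gap,k}$. This is precisely why the forcing $\kappa g$ in the ODE \eqref{eq:ODEa} is matched to the linear-in-$g$ gap bound. The only point to double-check is that the constant in Lemma~\ref{l:Bound_Jgap1} is compatible with the cap $c_Ar^2/16$, and it is, with a factor $2$ to spare.

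Collecting the estimates yields \eqref{eq:master-preliminary} with $C_0:=C+C_T$, which depends only on $d,s,p$ and is uniform in $\delta_1,\eps_1$ small, as recorded in Remark~\ref{r:constants-order}.
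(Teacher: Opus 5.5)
Your proof is correct and follows the paper's argument exactly: you bound $J_{\varphi,k}$ via \eqref{eq:fixed-cutoff-Jphi}, bound the tail term by $C_T a_k^2|A_k(t)|$ via \eqref{eq:vk-pointwise}, and absorb the forcing term $a_k'\int v_k\eta_k$ into half of $J_{gap,k}$ using Lemma~\ref{l:Bound_Jgap1} and the cap \eqref{eq:kappa-heights-cap}. One expository point should be fixed: in your first paragraph you discard half of $J_{gap,k}$ via $J_{gap,k}\geq\frac12 J_{gap,k}$, but that same half is what later absorbs $a_k'\int v_k\eta_k$, so drop that earlier remark (your final accounting, which keeps $\frac12 J_{gap,k}$ after absorption, is the correct one).
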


\begin{proof}

The right-hand side of \eqref{eq:main_ineq_simplified} consists of two terms. The first term can be estimated by half of $J_{gap,k}$ using 
Lemma~\ref{l:Bound_Jgap1} and recalling \eqref{eq:ODEa} and \eqref{eq:kappa-heights-cap}. This gives 
\begin{align*}
    a_k'(t)\int_{B_1} v_k\,\eta_k \dd x \leq \lambda^{k-1} \kappa\, g(t) \int_{B_1} v_k\,\eta_k \dd x \leq \frac{c_A r^2}{8}\, g(t) \int_{B_1} v_k \dd x \leq \frac12\, J_{gap,k}(t).
\end{align*}
The second term comes from the tail term and using \eqref{eq:vk-pointwise} we can bound it as 
\[
C_T\, a_k(t) \int_{B_1} v_k\,\eta_k \dd x \leq C_T\, a_k(t)^2\, |A_k(t)| .
\]
Hence \eqref{eq:main_ineq_simplified} reduces to $\tfrac12 E_k' + J_{\varphi,k} + \tfrac12 J_{gap,k} + J_{r,k} \leq C_T a_k^2 |A_k|$, and \eqref{eq:fixed-cutoff-Jphi} bounds $-J_{\varphi,k} \leq \tfrac12 J_{r,k} + C \bar \eps^{-2} a_k^2|A_k|$, which yields \eqref{eq:master-preliminary}.
\end{proof}

The natural quantity in the preliminary phase is the normalized energy, defined by
\begin{equation*}
F_k(t) := \frac{E_k(t)}{a_k(t)^2} \qquad \text{for } t \in (\tilde{t}_a,0], \qquad \tilde{t}_a := \inf\{ t : a_1(t)>0 \} \in [t_a, t_b) .
\end{equation*}
Note that $\tilde{t}_a<t_b$ because $\int_{t_a}^{t_b}g \dd t>0$, and that $A_k(t)=\emptyset$, $E_k(t)=0$ for $t\leq \tilde{t}_a$. We rewrite the master inequality \eqref{eq:master-preliminary} in terms of $F_k$.

\begin{lemma}[Bounds for the normalized energy]\label{l:F-basic}
For every $1\leq k\leq N$, the function $F_k$ is well-defined on $(\tilde{t}_a,0]$, $F_k(t)\to0$ as $t\downarrow \tilde{t}_a$, and for almost every $t\in(\tilde{t}_a,0]$,
\begin{equation}\label{eq:Fk-ode}
F_k'(t) \leq 2\,C_0 \bar \varepsilon^{-2}\, |A_k(t)| - a_k(t)^{-2} J_{gap,k}(t) \leq 2\,C_0 \bar \varepsilon^{-2}\,|B_1| .
\end{equation}
Consequently,
\begin{equation}
\int_{\tilde{t}_a}^0 a_k(t)^{-2}\, J_{gap,k}(t) \dd t \leq 2\,C_0\bar \varepsilon^{-2}\,|B_1| . \label{eq:weighted-gap-budget}
\end{equation}
\end{lemma}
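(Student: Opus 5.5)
The plan is to differentiate the quotient $F_k=E_k/a_k^2$ and insert the master inequality \eqref{eq:master-preliminary}. The key point is that the term produced by differentiating $a_k^{-2}$ has a favourable sign, because $a_k$ is nondecreasing.

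\textbf{Well-definedness and behaviour at $\tilde t_a$.} Since $a_k=\lambda^{k-1}a_1$ and $a_1$ is nondecreasing with $a_1(t)>0$ for $t>\tilde t_a$, the function $F_k$ is well defined on $(\tilde t_a,0]$. By \eqref{eq:vk-pointwise}, $0\le v_k\le a_k\eta_k\le a_k$ and $v_k$ vanishes off $A_k(t)$. Hence $E_k(t)\le a_k(t)^2|A_k(t)|$, and so $F_k(t)\le |A_k(t)|\le|B_1|$.

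For the limit $F_k(t)\to0$ as $t\downarrow\tilde t_a$, I need $|A_k(t)|\to0$. If $x\in A_k(t)$, then $v_e(t,x)>1+\eps_2(t+1)-a_k(t)$. Since $|v_e|\le 1$ in $B_1$, this forces $a_k(t)>\eps_2(t+1)$. As $a_k(t)\to0$ when $t\downarrow\tilde t_a$, while $\eps_2(\tilde t_a+1)>0$ (assuming $\eps_2>0$ and $\tilde t_a>-1$; in the degenerate case use $\tilde t_a\ge t_a\ge -1$ together with $\eps_2>0$), the set $A_k(t)$ is empty for $t$ close to $\tilde t_a$. So $E_k=0$ there and $F_k\to 0$.

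If the parameters degenerate, I fall back on continuity in time of $v_e$ and dominated convergence. Near any point $x$ we have $v_e-\varphi_k\to v_e(\tilde t_a,x)-1-\eps_2(\tilde t_a+1)<0$, which again gives $|A_k(t)|\to0$.

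\textbf{The differential inequality.} For a.e. $t$, both $E_k$ and $a_k$ are differentiable: $a_k$ is Lipschitz, and $E_k$ is absolutely continuous, as used in deriving \eqref{eq:main_ineq}. Differentiating the quotient gives
\[
F_k'=\frac{E_k'}{a_k^2}-\frac{2a_k'E_k}{a_k^3}\le \frac{E_k'}{a_k^2},
\]
because $a_k'=\lambda^{k-1}\kappa g\one_{(t_a,t_b)}\ge0$ and $E_k\ge0$. The master inequality \eqref{eq:master-preliminary}, after discarding $J_{r,k}\ge0$, gives
\[
E_k'\le 2C_0\bar\eps^{-2}a_k^2|A_k|-J_{gap,k}.
\]
Dividing by $a_k^2$ yields the first inequality in \eqref{eq:Fk-ode}. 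The second follows from $J_{gap,k}\ge0$ (Lemma~\ref{l:energy_inequality}) and $|A_k|\le|B_1|$.

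\textbf{The budget \eqref{eq:weighted-gap-budget}.} I integrate the first inequality in \eqref{eq:Fk-ode} over $(s,0)$ and let $s\downarrow\tilde t_a$. Using $F_k(s)\to0$ and $F_k(0)\ge0$, this gives
\[
\int_{\tilde t_a}^0 a_k^{-2}J_{gap,k}\dd t\le 2C_0\bar\eps^{-2}\int_{\tilde t_a}^0|A_k|\dd t\le 2C_0\bar\eps^{-2}|B_1|,
\]
since the time interval has length at most $1$. The integral on the left converges by monotone convergence, because its integrand is nonnegative.

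\textbf{Main obstacle.} The delicate step is justifying the fundamental theorem of calculus for $F_k$ near $\tilde t_a$, where $a_k^{-2}$ blows up. I handle this by working on $[s,0]$ with $s>\tilde t_a$: there $a_k\ge a_k(s)>0$, so $F_k$ is absolutely continuous on $[s,0]$. The inequality is then passed to the limit $s\downarrow\tilde t_a$ using $F_k(s)\to0$ and monotone convergence, as above.
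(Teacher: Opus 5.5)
Your derivation of the differential inequality, and your integration on $[s,0]$ followed by $s\downarrow\tilde t_a$, are the same as the paper's argument. Your proof that $A_k(t)=\emptyset$ near $\tilde t_a$ is also correct when $\tilde t_a>-1$.

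\textbf{The gap} is the limit $F_k(t)\to0$ in the remaining case $\tilde t_a=-1$. This case is not degenerate. It is the typical situation in the first application of Lemma~\ref{l:DGengine} in the proof of Lemma~\ref{l:iteration}, where $t_a=-1$.

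Your fallback asserts $v_e(\tilde t_a,x)-1-\eps_2(\tilde t_a+1)<0$. For $\tilde t_a=-1$ this reads $v_e(-1,x)<1$, but only $v_e\le 1$ is available: equality holds wherever $\nabla u=e$. Moreover, $t=-1$ is not even in the time domain $(-1,0]$ of the solution.

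In fact no argument using only the definitions can work. Suppose $v_e\equiv 1$ on a set $R\subset B_{r_1}(x_1)$ of positive measure for $t$ near $-1$, and $g\ge g_0$ there with $\lambda^{k-1}\kappa g_0>2\eps_2$. Then $a_k(t)>2\eps_2(t+1)$, so $v_k=a_k-\eps_2(t+1)\ge a_k/2$ on $R$ and $F_k\ge |R|/4$ for all such $t$. This configuration is compatible with $|v_e|\le1$, continuity, and the definitions of $\varphi_k$ and $a_k$. It is excluded only through the equation.

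\textbf{The fix} is to use the master inequality itself, as the paper does. Drop $J_{gap,k},J_{r,k}\ge0$ in \eqref{eq:master-preliminary} and integrate over $(s,t)$ to get $E_k(t)\le E_k(s)+2C_0\bar\eps^{-2}\int_s^t a_k(h)^2|A_k(h)|\dd h$. Since $E_k(s)\le a_k(s)^2|B_1|\to0$ as $s\downarrow\tilde t_a$, and $a_k$ is nondecreasing, you obtain $F_k(t)\le 2C_0\bar\eps^{-2}\int_{\tilde t_a}^t|A_k(h)|\dd h$. The right-hand side tends to $0$ as $t\downarrow\tilde t_a$, which also rules out the configuration above.

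This limit is not only part of the statement. It is used again in Proposition~\ref{p:mass-transfer}, where $F_k(\tilde t_a^+)=0$ is what forces $F_k$ to cross the band $D$.
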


\begin{proof}
On $(\tilde{t}_a,0]$ we have $a_1>0$, since $a_1$ is nondecreasing. Since moreover $a_k'\geq0$ and $F_k\geq0$, after dropping $\tfrac12 J_{r,k}\geq0$ from the master inequality \eqref{eq:master-preliminary}, we obtain, for $t \in (\tilde{t}_a,0]$,
\[
F_k' = \frac{E_k'}{a_k^2} - 2\,\frac{a_k'}{a_k}\, F_k
\leq \frac{E_k'}{a_k^2}
\leq \frac{2C_0\, \bar \varepsilon^{-2} a_k^2 |A_k| - J_{gap,k}}{a_k^2} ,
\]
which is \eqref{eq:Fk-ode}. The final bound in \eqref{eq:Fk-ode} uses $|A_k|\leq|B_1|$ and $J_{gap,k}\geq0$.

For the limit at $\tilde{t}_a$, integrate $E_k' \leq 2C_0 \bar\eps^{-2} a_k^2|A_k|$ from $\tilde{t}_a$ (where $E_k=0$) to $t$, using that $a_k$ is nondecreasing
\[
E_k(t) \leq 2C_0 \bar \varepsilon^{-2} \int_{\tilde{t}_a}^t a_k(h)^2\,|A_k(h)|\dd h
\leq 2C_0\bar \varepsilon^{-2} \, a_k(t)^2\, \int_{\tilde{t}_a}^{t}|A_k(h)|\dd h ,
\]
so $F_k(t) \leq 2C_0\bar \varepsilon^{-2}  \int_{\tilde{t}_a}^t|A_k(h)| \dd h \to 0$ as $t \downarrow \tilde{t}_a$.

Finally, integrating \eqref{eq:Fk-ode} over $(\tilde{t}_a,0]$ and using $F_k\geq0$, $F_k(\tilde{t}_a^+)=0$, we obtain
\[
\int_{\tilde{t}_a}^0 a_k(t)^{-2} J_{gap,k}(t) \dd t \leq 2C_0\bar \varepsilon^{-2} \int_{\tilde{t}_a}^0 |A_k(t)| \dd t + F_k(\tilde{t}_a^+) - F_k(0) \leq 2\,C_0\bar \varepsilon^{-2}\,|B_1| . \qedhere
\]
\end{proof}

We can now state and prove the parabolic counterpart of the elliptic one-step measure drop \cite[Lemma 4.13]{GJS}. Here we use a mass drop dichotomy in the spirit of the second De Giorgi lemma of \cite{chihinchan}. The obstruction is that the gap coercivity acts only at past times, while the smallness of the level sets is needed at future times. The resolution is a dichotomy: \emph{either} the level sets are small in the future, \emph{or} a fixed amount of space-time measure sits strictly between two consecutive levels, and, the level sets being nested, this can only happen a bounded number of times.
We start with an elementary bound that fixes the choice of $\bar \eps$ in terms of the smallness threshold $\omega_\star$.

\begin{lemma}\label{l:cube}

For any $\omega_\star>0$ small there exists $\bar \varepsilon>0$ such that if
\begin{equation} \label{eq:pick_eps}
    |B_{(1+\bar \eps)r_1}\setminus B_{r_1}|\leq \frac{\omega_\star}{32},
\end{equation}
and $A\subset \supp \eta_0$ is a measurable set with $|A|\geq \omega_\star$, 
then
\[
\int_A \eta_0^2 \dd x\geq \frac{\omega_\star}{2}. 
\]

\end{lemma}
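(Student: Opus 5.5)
The plan is to split $A$ into the part inside $\overline B_{r_1}(x_1)$, where $\eta_0\equiv 1$, and the part in the thin annulus, and to use the annulus smallness \eqref{eq:pick_eps} to show the first part carries most of the measure.

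\textbf{Choice of $\bar\eps$.} Since $|B_{(1+\bar\eps)r_1}\setminus B_{r_1}| = |B_1|\,r_1^d\big((1+\bar\eps)^d-1\big)$ is continuous in $\bar\eps$ and tends to $0$ as $\bar\eps\to 0$, there is $\bar\eps\in(0,1/8]$ for which \eqref{eq:pick_eps} holds. This $\bar\eps$ depends only on $\omega_\star$, $d$ and the fixed radius $r_1$. The annulus measure is translation invariant, so the same bound holds for the annulus centered at $x_1$.

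\textbf{Estimate.} Recall $\supp\eta_0=\overline B_{(1+\bar\eps)r_1}(x_1)$ and $\eta_0=1$ on $\overline B_{r_1}(x_1)$. For $A\subset\supp\eta_0$ we write
\[
A = \big(A\cap \overline B_{r_1}(x_1)\big)\cup \big(A\setminus \overline B_{r_1}(x_1)\big).
\]
The second set lies in the annulus, whose boundary spheres are null sets, so by \eqref{eq:pick_eps} it has measure at most $\omega_\star/32$. Since $\eta_0^2\ge 0$ everywhere and $\eta_0^2=1$ on the first set,
\[
\int_A\eta_0^2\dd x \;\ge\; |A\cap \overline B_{r_1}(x_1)| \;\ge\; |A|-\frac{\omega_\star}{32} \;\ge\; \omega_\star-\frac{\omega_\star}{32} \;\ge\; \frac{\omega_\star}{2}.
\]

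There is no real obstacle here. The only point needing care is the order of quantifiers: $\bar\eps$ must be chosen depending on $\omega_\star$ alone, and in particular independently of $A$ and of the solution $u$. This is exactly what Remark~\ref{r:constants-order}, item (4), requires.
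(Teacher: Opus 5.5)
Your proof is correct and follows the paper's argument: both bound $\int_A\eta_0^2$ below by $|A\cap B_{r_1}(x_1)|\ge |A|-|B_{(1+\bar\eps)r_1}\setminus B_{r_1}|\ge \omega_\star/2$. The only difference is that the paper chooses $\bar\eps=\bar c\,\omega_\star$ explicitly, via $|B_{(1+\bar\eps)r_1}\setminus B_{r_1}|\le 2d\bar\eps$, and uses this form later in Proposition~\ref{p:mass-transfer}; your continuity argument gives an unquantified $\bar\eps(\omega_\star)$, which also suffices there because $\lambda$ and $\gamma_\star$ are chosen after $\bar\eps$.
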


\begin{proof}
Note that, by Taylor expansion, there is $\bar \varepsilon>0$ sufficiently small, depending only on the dimension, such that
\[
|B_{(1+\bar \eps)r_1}\setminus B_{r_1}|\leq 2d \bar \varepsilon.
\]
For a dimensional constant $\bar c\leq 1/(64d) $, we further choose $\bar\varepsilon:= \bar c\omega_\star$ so that
\[
|B_{(1+\bar \eps)r_1}\setminus B_{r_1}|\leq \frac{\omega_\star}{32}.
\]

Since $A\subset \supp \eta_0$, we can write
\begin{align*}
    \int_A \eta_0^2\dd x\geq |A\cap B_{r_1}(x_1)| \geq |A|-|A\setminus B_{r_1}(x_1)|\geq |A|-|B_{(1+\bar \eps)r_1}\setminus B_{r_1}| \geq \frac{\omega_\star}{2}.
\end{align*}

\end{proof}

For $k\geq 1$, define the quantities
\begin{equation*}
\omega_k := \int_{-1}^0 |A_k(t)| \dd t.
\end{equation*}

\begin{proposition}[De Giorgi Lemma]\label{p:mass-transfer}

Let $2\leq k\leq N-1$. For any $\omega_\star>0$ small there exist $\gamma_\star$ and $\bar\eps,\lambda<1/4$ depending on $\omega_\star$ such that at least one of the following holds:
\begin{enumerate}[label=(\Alph*)]
\item \emph{(smallness in the future)}
$\displaystyle \int_{t_b}^0 |A_{k+1}(t)|\dd t \leq \omega_\star$;
\item \emph{(mass drop)}
$\displaystyle \omega_{k+1} \leq \omega_{k-1} - \gamma_\star$.
\end{enumerate}
\end{proposition}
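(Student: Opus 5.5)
We argue by contradiction with (A). Suppose $\int_{t_b}^0 |A_{k+1}(t)|\dd t > \omega_\star$; we want to produce a fixed amount $\gamma_\star$ of space-time mass in $A_{k-1}\setminus A_{k+1}$, which gives (B) since $A_{k+1}\subset A_k\subset A_{k-1}$ and so $\omega_{k-1}-\omega_{k+1} = \int_{-1}^0 |A_{k-1}(t)\setminus A_{k+1}(t)|\dd t$. The mechanism is the parabolic second De Giorgi lemma: the gap term with respect to the level $k$ is large whenever $A_k(t)$ misses a fixed fraction of the cones, and this gap is paid for by the budget \eqref{eq:weighted-gap-budget}. The transition from $A_{k+1}$ to $A_{k-1}$ (two levels) is what allows us to trade the energy of $v_{k+1}$ against the intermediate set.

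\textbf{Step 1: lower bound on the energy in the future.} For $t\in[t_b,0]$ the heights are constant, $a_{k+1}=\lambda a_k$. On $A_{k+2}(t)$ we have $v_{k+1}\geq (a_{k+1}-a_{k+2})\eta_0=(1-\lambda)a_{k+1}\eta_0$. Since we only know $|A_{k+1}|$ is large, I would instead use level $k$: on $A_{k+1}(t)$, $v_k\geq (1-\lambda)a_k\eta_0$. Combining Lemma~\ref{l:cube} with the time-averaged largeness of $|A_{k+1}|$, we obtain that the set of $t\in[t_b,0]$ with $|A_{k+1}(t)|\geq \omega_\star/2$ has measure at least $\omega_\star/2$. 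At such times $F_k(t)\geq c(1-\lambda)^2\omega_\star$.

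\textbf{Step 2: the dichotomy at a single time.} At each time either the cone condition \eqref{eq:below-cone-hypothesis} holds for level $k$ (that is, $B_{r_1/2}(x_1)\setminus A_k(t)$ has mass at least $\mu_0$ in every cone $C(x)$), or it fails. If it fails, then $|B_{r_1/2}(x_1)\setminus A_k(t)|$ is smaller than roughly $C\mu_0$, so $|A_k(t)|\geq |B_{r_1/2}|-C\mu_0$. Taking $\mu_0$ universal and small, $A_k(t)$ fills most of $B_{r_1/2}(x_1)$. When the condition holds, Lemma~\ref{l:gap-below} (applied with $k+1$ in place of $k$) gives
\[
a_{k+1}^{-2}J_{gap,k+1}\geq c_A(1-\lambda)\lambda^{-1}\mu_0\, a_{k+1}^{-1}\int_{B_{r_1}(x_1)} v_{k+1},
\]
which is large because $\lambda$ is small. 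By the budget \eqref{eq:weighted-gap-budget}, the normalized $L^1$ mass of $v_{k+1}$ at such times integrates to at most $C\lambda\bar\eps^{-2}/(c_A\mu_0)$, which is small once $\lambda\ll\bar\eps^2\mu_0\omega_\star$.

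\textbf{Step 3: bridging bad and good times.} This is the main obstacle. The bad times (cone condition fails, so $A_k$ is nearly full) and the future times with $|A_{k+1}|$ large have to be connected. Integrating \eqref{eq:Fk-ode} for $F_{k+1}$ shows that $F_{k+1}$ increases at rate at most $C\bar\eps^{-2}|A_{k+1}|$. Hence at a future time where $F_{k+1}\gtrsim\omega_\star$, the quantity $\int|A_{k+1}|$ over a preceding window of length of order $\bar\eps^2\omega_\star$ must be non-negligible. Meanwhile Step 2 forces the times in that window to be mostly bad times, where $A_k$ fills $B_{r_1/2}$. Symmetrically, if $A_{k+1}(t)$ stays a fixed fraction below $A_{k-1}(t)$, the intermediate set $A_{k-1}\setminus A_{k+1}$ already has measure $\gtrsim \mu_0$ on a time set of measure $\gtrsim\bar\eps^2\omega_\star$; this gives (B) with $\gamma_\star\approx \mu_0\bar\eps^2\omega_\star$. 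Otherwise $A_{k+1}\approx A_k$ on that window, and we rerun the gap argument at level $k+1$ to reach a contradiction with the energy budget.

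The delicate point is ensuring that "nearly full" in the sense of cones, combined with continuity in time of the energy (via the one-sided bound \eqref{eq:Fk-ode}), really yields a fixed amount of intermediate mass. I expect to need the ordering $\omega_\star\leq\mu_0/2$ recorded in Remark~\ref{r:constants-order} exactly here.
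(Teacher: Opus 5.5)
There is a genuine gap, and it sits where you flag it: Step~3 is not an argument, and Steps~1--2 do not connect because of an index shift.

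\textbf{The index shift.} Step~1 gives a lower bound on $F_k$, since $v_k\geq(1-\lambda)a_k\eta_0$ on $A_{k+1}$. Steps~2--3 then switch to $J_{gap,k+1}$ and $v_{k+1}$. Lemma~\ref{l:gap-below} at level $k+1$ only gives $a_{k+1}^{-2}J_{gap,k+1}\gtrsim \lambda^{-1}\mu_0\,a_{k+1}^{-1}\int_{B_{r_1}(x_1)}v_{k+1}$. So the budget \eqref{eq:weighted-gap-budget} only tells you that $\int v_{k+1}$ is small at most good times.

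That says nothing about $|A_{k+1}(t)|$, since $v_{k+1}$ may be positive but tiny there. You also have no lower bound on $F_{k+1}$ at any time; that would require $|A_{k+2}|$ large. Hence ``Step~2 forces the times in that window to be mostly bad times'' does not follow. The branch ``otherwise $A_{k+1}\approx A_k$, rerun the gap argument at level $k+1$'' has no mechanism either: at bad times the cone condition fails, so there is no gap to use.

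Nothing in your sketch excludes the scenario $A_{k-1}\approx A_k\approx A_{k+1}$, all of size $\geq\mu_0$, for $t\geq t_b$. Ruling this out is the whole content of the proposition.

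A smaller point: failure of \eqref{eq:below-cone-hypothesis} at one vertex does not make $A_k$ fill most of $B_{r_1/2}(x_1)$. The correct and sufficient statement is the contrapositive of the small-ball construction $B^*\subset C(x)$: if $|A_{k-1}(t)\cap B_{r_1}(x_1)|\leq\mu_0$, then the cone hypothesis holds.

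\textbf{The missing idea: stay at level $k$ and use a band of $F_k$ values.} Take $t_0\in(t_b,0]$ with $|A_{k+1}(t_0)|\geq\omega_\star$, and set
\[
D=\{t\in(\tilde t_a,t_0]:\ \omega_\star/16<F_k(t)<\omega_\star/8\}.
\]
Since $F_k(\tilde t_a^+)=0$, $F_k(t_0)\geq\omega_\star/4$, and $F_k'\leq 2C_0\bar\eps^{-2}|B_1|$, $F_k$ must cross the band slowly, so $|D|\gtrsim\bar\eps^2\omega_\star$. On $D$ two facts hold at once:
\begin{itemize}
\item $|A_{k+1}(t)|\leq\omega_\star$, since otherwise Lemma~\ref{l:cube} would force $F_k(t)\geq\omega_\star/4$.
\item $\int_{B_{r_1}(x_1)}v_k^2\geq\frac{\omega_\star}{32}a_k^2$, because the annulus contributes at most $\frac{\omega_\star}{32}a_k^2$ by \eqref{eq:pick_eps}.
\end{itemize}
Combine the second fact with Lemma~\ref{l:gap-below} at level $k$ (cone condition for $A_{k-1}$), $a_{k-1}=a_k/\lambda$, and $\int v_k\geq a_k^{-1}\int v_k^2$. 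This gives $a_k^{-2}J_{gap,k}\gtrsim\mu_0\omega_\star/\lambda$ at every band time with $|A_{k-1}\cap B_{r_1}(x_1)|\leq\mu_0$. The budget \eqref{eq:weighted-gap-budget} at level $k$ then makes these times at most $|D|/2$ once $\lambda$ is small.

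On the rest of $D$ you have $|A_{k-1}|>\mu_0$ and $|A_{k+1}|\leq\omega_\star\leq\mu_0/2$. This is exactly where $\omega_\star\leq\mu_0/2$ enters, and it gives (B) with $\gamma_\star\sim\mu_0|D|$. This is why the levels $k\pm1$ straddle $k$: it is the energy of $v_k$, not of $v_{k+1}$, that is traded against the intermediate set $A_{k-1}\setminus A_{k+1}$.
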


Since $A_{k+1}(t)\subset A_{k-1}(t)$ for every $t$, alternative (B) states that a definite amount of space-time measure lies strictly between the levels $k-1$ and $k+1$.

\begin{proof}
Assume (A) fails; we prove (B). Since $t_b>-1$, there is $t_0 \in (t_b,0]$ with
\begin{equation*}
|A_{k+1}(t_0)| \geq \omega_\star .
\end{equation*}

We now show that  also the normalized energy is large at $t_0$. By \eqref{def_levelsetsAk}, for $x\in A_{k+1}(t)$ we have $v_k(t,x) \geq \varphi_{k+1}(t,x)-\varphi_k(t,x) = (1-\lambda)\,a_k(t)\,\eta_0(x)$. Hence, choosing $\bar \eps=\bar c \omega_\star$ depending on $\omega_\star$ and applying Lemma~\ref{l:cube}, we obtain a lower bound for any $t$ such that $|A_{k+1}(t)|\geq \omega_\star$. In particular, we obtain
\begin{equation}\label{eq:cube-chebyshev}
F_k(t_0) \geq (1-\lambda)^2  \int_{A_{k+1}(t_0)}\eta_0^2 \dd x \geq (1-\lambda)^2\, \frac{\omega_\star}{2}
\geq \frac{\omega_\star  }{4}\, ,
\end{equation}
using $\lambda\leq1/4$.

Recall $\tilde{t}_a := \inf\{ t : a_1(t)>0 \} \in [t_a, t_b)$
 and define the band

\[
D := \left\{ t \in (\tilde{t}_a, t_0] : \frac{\omega_\star}{16} < F_k(t) < \frac{\omega_\star}{8} \right\}.
\]

Since $F_k$ is absolutely continuous, $F_k(\tilde t_a^+)=0$ and \eqref{eq:cube-chebyshev} holds, the function $F_k$ crosses the band from below to above at least once. Also, since $\bar \eps=\bar c \omega_\star$, \eqref{eq:Fk-ode} gives 
\[
F_k'(t) \leq 2\,C_0\, {\bar c}^{\,- 2}\omega_\star^{-2}|B_1|.
\]
 Since the band has width $\omega_\star/16$ and $F_k$ must traverse it while its derivative is at most $2\,C_0\, {\bar c}^{\,- 2}\omega_\star^{-2}|B_1|$, we conclude
\begin{equation}\label{eq:band-length}
|D| \geq {\bar c}^{\, 2}\frac{\omega_\star^3}{32 C_0 |B_1|} .
\end{equation}

Notice that for $t\in D$ we have
\begin{equation}\label{eq:Ak+1-small-on-band}
|A_{k+1}(t)| \leq \omega_\star.
\end{equation}
Indeed, if this were not the case, then \eqref{eq:cube-chebyshev} would imply that $F_k(t) \geq \omega_\star/4$ contradicting that $t \in D$.

Define now the exceptional times
\[
\mathcal F := \big\{ t \in D \ :\ |A_{k-1}(t)\cap B_{r_1}(x_1)| \leq \mu_0 \big\},
\]
for some dimensional constant $\mu_0$ which will be later specified.
We claim
\begin{equation}\label{eq:F-small}
|\mathcal F| \leq \frac{|D|}{2} .
\end{equation}
To see this, we first check that, for $t\in\mathcal F$, the hypothesis \eqref{eq:below-cone-hypothesis} of Lemma~\ref{l:gap-below} holds. Fix $x\in\overline B_{r_1}(x_1)$ and suppose that $(x_1 - x)\cdot e \geq 0$ (otherwise, replace $e$ by $-e$ below). Consider the ball
\[
B^* := B_{r_1/16}\big(x_1 +\tfrac{7r_1}{16}\, e\big) \subset B_{r_1/2}(x_1) .
\]
Every $y \in B^*$ belongs to $C(x)$. Indeed $|y-x| \leq r_1 + r_1/2=3r_1/2 \leq 2$, and
\[
|(y-x)\cdot e| \geq (y-x_1)\cdot e + (x_1 - x)\cdot e \geq \tfrac{3r_1}8,
\]
which gives $|(y-x)\cdot e|>\frac{1}{6}|y-x|.
$
Hence we can write
\begin{align*}
    |C(x)\cap &\,(B_{r_1/2}(x_1)\setminus A_{k-1}(t))| \geq |B^*| - |A_{k-1}(t)\cap B_{r_1}(x_1)| \geq  16^{-d}|B_{r_1}| - \mu_0 \geq \mu_0 ,
\end{align*}
where we used $|B^*| = 16^{-d}|B_{r_1}|$, the definition of $\mathcal F$ and chose $\mu_0$ so that the last inequality holds. Lemma~\ref{l:gap-below} then gives, for $t\in\mathcal F$,
\[
J_{gap,k}(t) \geq c_A\,(1-\lambda)\,a_{k-1}(t)\,\mu_0 \int_{B_{r_1}(x_1)} v_k(t,x)\dd x
\geq \frac{c_A}{2}\,\frac{a_k(t)}{\lambda}\,\mu_0\, \frac{1}{a_k(t)} \int_{B_{r_1}(x_1)}v_k^2\dd x ,
\]
where we used $a_{k-1} = a_k/\lambda$, $1-\lambda\geq\tfrac12$, and $\int_{B_{r_1}} v_k \geq \int_{B_{r_1}}v_k^2 / \sup v_k \geq \int_{B_{r_1}}v_k^2/a_k$, by \eqref{eq:vk-pointwise}. Now we bound the energy on $B_{r_1}(x_1)$ from below at times on the band. By \eqref{eq:vk-pointwise}, the ring contribution satisfies
\begin{equation*}
E_k(t) - \int_{B_{r_1}(x_1)}v_k^2\dd x = \int_{\supp\eta_0\setminus B_{r_1}(x_1)} v_k^2 \dd x \leq a_k(t)^2\, |B_{(1+\bar \eps)r_1}\setminus B_{r_1}| \leq \frac{\omega_\star}{32}\,a_k(t)^2,
\end{equation*}
where in the last inequality we used the choice of $\bar \eps$ in \eqref{eq:pick_eps}.

Hence, for $t\in D$,
\[
\frac{1}{a_k(t)^2} \int_{B_{r_1}(x_1)}v_k^2\dd x \geq F_k(t) - \frac{\omega_\star}{32} \geq \frac{\omega_\star}{16} - \frac{\omega_\star}{32} = \frac{\omega_\star}{32} ,
\]
and therefore, for $t \in \mathcal F$,
\[
a_k(t)^{-2}\, J_{gap,k}(t) \geq \frac{c_A\,\mu_0}{2\lambda}\, \frac{1}{a_k(t)^{2}} \int_{B_{r_1}(x_1)}v_k^2\dd x\geq \frac{c_A\,\mu_0\, \omega_\star}{64\,\lambda} .
\]
Integrating over $\mathcal F$ and using \eqref{eq:weighted-gap-budget} we get
\[
|\mathcal F|\cdot \frac{c_A\,\mu_0\,\omega_\star}{64\,\lambda} \leq 2\,C_0\,{\bar c}^{\,- 2}\omega_\star^{-2}|B_1| ,
\qquad\text{so}\qquad
|\mathcal F| \leq \frac{64\,\lambda\, 2\,C_0\,|B_1| }{c_A\,\mu_0\, {\bar c}^{\, 2} \,\omega_\star^3} \leq {\bar c}^{\, 2}\frac{\omega_\star^3}{64 C_0 |B_1|} ,
\]
provided we make the choice 
\begin{equation}\label{def:lambda}
    \lambda :=  \frac{c_A\,{\bar c}^{\, 4} \omega_\star^6\, \mu_0\, }{256 \cdot 32\,C_0^2\,|B_1|^2 }<\frac{1}{4}. 
\end{equation} 
This, combined with \eqref{eq:band-length}, proves \eqref{eq:F-small}.

For $t \in D\setminus\mathcal F$ we have $|A_{k-1}(t)| \geq |A_{k-1}(t)\cap B_{r_1}(x_1)| > \mu_0$ and, taking  $\omega_\star \leq \mu_0/2$, we deduce from \eqref{eq:Ak+1-small-on-band} that $|A_{k+1}(t)| \leq \mu_0/2$. This implies
\[
|A_{k-1}(t)| - |A_{k+1}(t)| \geq \frac{\mu_0}{2} .
\]
Since the level sets are nested, $|A_{k-1}(t)| - |A_{k+1}(t)| \geq 0$ for every $t$, and therefore, using \eqref{eq:band-length} and \eqref{eq:F-small},
\[
\omega_{k-1} - \omega_{k+1} = \int_{-1}^0 \big( |A_{k-1}(t)| - |A_{k+1}(t)| \big) \dd t \geq \int_{D\setminus\mathcal F} \big( |A_{k-1}(t)| - |A_{k+1}(t)| \big) \dd t \geq \frac{|D|}{2}\cdot\frac{\mu_0}{2} \geq  \gamma_\star ,
\]
by choosing $\gamma_\star$ such that
\[
 \gamma_\star\leq  \frac{{\bar c}^{\, 2}\omega_\star^3}{64 C_0 |B_1|}\cdot\frac{\mu_0}{2}.
\]
This implies alternative (B) and concludes the proof.

\end{proof}

Now we present the proof of Proposition \ref{p:linear-preliminary-contraction}.

\begin{customproof}{Proposition \ref{p:linear-preliminary-contraction}}
 The choice of $\omega_\star$ defines $\gamma_\star$ through Proposition  ~\ref{p:mass-transfer}. Let 
\begin{equation} \label{def:N}
     N := 2\left\lceil \frac{|B_1|}{\gamma_\star} \right\rceil + 4,
\end{equation}
and set $M := \lceil |B_1|/\gamma_\star\rceil + 1$ and apply Proposition~\ref{p:mass-transfer} at the even levels $k = 2, 4, \dots, 2M$; note $2M \leq N-1$. If alternative (B) held at every such level, then summing and telescoping,
\[
|B_1| \geq \omega_1 \geq \omega_1 - \omega_{2M+1} = \sum_{j=1}^{M} \big( \omega_{2j-1} - \omega_{2j+1} \big) \geq M\,\gamma_\star > |B_1| ,
\]
a contradiction. Therefore alternative (A) holds at some even level $k_\star \leq N-1$, that is,
\[
\int_{t_b}^0 |A_{k_\star+1}(t)|\dd t \leq \omega_\star .
\]
By the nesting \eqref{eq:vk-pointwise}, $A_{N+1}(t) \subset A_{k_\star+1}(t)$ for every $t$ (note $k_\star+1\leq N$ and the nesting holds across the transition by Lemma~\ref{l:a-bounds}(1)). Moreover $A_{N+1}(t) = \emptyset$ for $t \leq t_1 = t_b$, since $a_{N+1}$ vanishes there. Hence
\[
\omega_{N+1} = \int_{t_b}^0 |A_{N+1}(t)|\dd t \leq \int_{t_b}^0 |A_{k_\star+1}(t)|\dd t \leq \omega_\star . \qedhere
\]
\end{customproof}

\subsection{The nonlinear recursion}\label{s:nonlinear}

The second phase is the standard nonlinear De Giorgi recursion, run on the levels $N+j$, $j\geq1$. This phase is driven by the coercive bound of Lemma~\ref{l:bound_Jr}.

Plugging \eqref{eq:shrinking-cutoff-Jphi} into \eqref{eq:main_ineq_simplified}, dropping $J_{gap,N+j}\geq0$ and using Lemma~\ref{l:bound_Jr} and $\eta_{N+j}\leq 1$  we obtain the following energy inequality
\begin{equation} \label{eq:combined_energy_ineq}
\begin{aligned}
&\frac{1}{2}\frac{\dd}{\dd t} \int_{B_1} v_{N+j}^2\dd x
+ c\, a_{N+j}(t)^2 2^{-2j} |A_{N+j+1}(t)||A_{N+j}(t)|^{-\gs}
\\
&\quad\leq C 2^{4j} a_{N+j}(t)^2 |A_{N+j}(t)| 
 + \big(a_{N+j}'(t)+C_T a_{N+j}(t)\big)\!\int_{B_1}\! v_{N+j} \dd x .
\end{aligned}
\end{equation}

\begin{lemma} \label{l:omega-decay-from-start}
For any integer $j\geq 1$, we have
\begin{equation} \label{eq:omega-recursion-shifted}
    \omega_{N+j+1} \leq C 2^{6j} \omega_{N+j}^{1+\beta},
\end{equation}
for some constant $C$, depending on $d$, $s$, $p$, and $|t_b|$, and where $\beta := \frac{\gs}{1+\gs}>0$ with $\gs = \frac{2-p(1-s)}{d} > 0$.
\end{lemma}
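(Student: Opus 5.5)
We integrate the combined energy inequality \eqref{eq:combined_energy_ineq} in time over $(t_j,\tau)$ for arbitrary $\tau\in(t_{j},0]$. Recall that $A_{N+j}(t)=\emptyset$ for $t\le t_j$, so $\int v_{N+j}^2(t_j)=0$. The goal is to control two quantities by $C2^{cj}\omega_{N+j}$ (times powers of $\delta_0$): the energy $\sup_\tau\int v_{N+j}^2(\tau)$ and the dissipation $\int a_{N+j}^2 2^{-2j}|A_{N+j+1}||A_{N+j}|^{-\gs}\dd t$. Once we have both, Chebyshev (Lemma~\ref{l:chebi}) and H\"older in time close the recursion.

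\textbf{Step 1: bounding the right-hand side.} The first term is at most $C2^{4j}a_{N+j}^2|A_{N+j}|\le C2^{4j}\delta_0^2|A_{N+j}(t)|$ by Lemma~\ref{l:a-bounds}(2). For the second term we use $\int v_{N+j}\le a_{N+j}|A_{N+j}|$ (Lemma~\ref{l:chebi}) together with $(a'_{N+j})_+a_{N+j}\le C2^j\delta_0^2/|t_b|$ and $a_{N+j}^2\le \tfrac94\delta_0^2$. Together these give $(a'+C_Ta)\int v\le C2^{j}\delta_0^2|A_{N+j}|$; the negative part of $a'$ can simply be dropped since $\int v\ge0$. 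Integrating over time, the full right-hand side is at most $C2^{4j}\delta_0^2\,\omega_{N+j}$. This yields simultaneously
\[
\sup_{\tau}\int_{B_1} v_{N+j}^2(\tau)\dd x \le C2^{4j}\delta_0^2\omega_{N+j},\qquad \int_{-1}^0 a_{N+j}^2|A_{N+j+1}||A_{N+j}|^{-\gs}\dd t\le C2^{6j}\delta_0^2\omega_{N+j}.
\]

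\textbf{Step 2: a pointwise-in-time bound.} By \eqref{eq:prop-AN}, at any time where $A_{N+j+1}(t)\ne\emptyset$ we have $a_{N+j}(t)=\theta_j\delta_0\ge\delta_0$. Combining this with Lemma~\ref{l:chebi}, we get for every $t$
\[
|A_{N+j+1}(t)|\le C2^{2j}\delta_0^{-2}\!\int v_{N+j}^2(t) \le C2^{6j}\omega_{N+j}.
\]

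\textbf{Step 3: interpolation.} The dissipation bound, with $a_{N+j}\ge\delta_0$ on the relevant set, gives $\int |A_{N+j+1}||A_{N+j}|^{-\gs}\dd t\le C2^{6j}\omega_{N+j}$. Using $|A_{N+j+1}|\le|A_{N+j}|$, we write $|A_{N+j+1}|^{1+\gs}\le |A_{N+j+1}|\,|A_{N+j}|^{-\gs}\cdot |A_{N+j}|^{\gs}\cdot|A_{N+j+1}|^{\gs}|A_{N+j}|^{-\gs}$. Rather than follow this route, it is cleaner to use H\"older in time:
\[
\omega_{N+j+1}=\int |A_{N+j+1}|^{\frac1{1+\gs}}|A_{N+j}|^{-\frac{\gs}{1+\gs}}\cdot |A_{N+j+1}|^{\frac{\gs}{1+\gs}}|A_{N+j}|^{\frac{\gs}{1+\gs}}\dd t.
\]
We apply H\"older with exponents $1+\gs$ and $(1+\gs)/\gs$. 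The first factor contributes $(C2^{6j}\omega_{N+j})^{1/(1+\gs)}$. For the second factor, we bound $|A_{N+j+1}|\le|A_{N+j}|$, so it is at most $\bigl(\int|A_{N+j}|^2\bigr)^{\gs/(1+\gs)}\le (C\omega_{N+j})^{\gs/(1+\gs)}$ because $|A_{N+j}|\le |B_1|$. This only gives exponent one, so a refinement is needed: in the second factor we instead keep $|A_{N+j+1}|\le C2^{6j}\omega_{N+j}$ from Step 2. That yields $\int |A_{N+j+1}||A_{N+j}|\le C2^{6j}\omega_{N+j}\,\omega_{N+j}$, so the second factor is at most $(C2^{6j}\omega_{N+j}^2)^{\gs/(1+\gs)}$. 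Multiplying the two factors gives $\omega_{N+j+1}\le C2^{6j}\omega_{N+j}^{(1+2\gs)/(1+\gs)}=C2^{6j}\omega_{N+j}^{1+\beta}$.

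\textbf{Main obstacle.} The delicate point is the time-dependence of the heights. Controlling the $a'$ term requires Lemma~\ref{l:a-bounds}(2), which costs $2^j/|t_b|$ from the shrinking time intervals. Converting energy into measure requires $a_{N+j}$ to be bounded below precisely where $A_{N+j+1}$ is nonempty, which is \eqref{eq:prop-AN}. The powers of $2^j$ must be tracked so that the total stays at $2^{6j}$; if the bookkeeping gives a larger power, any fixed geometric factor $C^j$ suffices for the subsequent iteration.
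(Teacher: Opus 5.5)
Your proposal is correct and follows the paper's proof essentially step for step. You use the same bound $C2^{4j}\delta_0^2|A_{N+j}(t)|$ on the right-hand side, the same sup-in-time bound $|A_{N+j+1}(t)|\le C2^{6j}\omega_{N+j}$ via Chebyshev and \eqref{eq:prop-AN}, and the same H\"older interpolation with exponents $1+\gs$ and $(1+\gs)/\gs$. The only cosmetic difference is the order of operations: you insert the pointwise bound on $|A_{N+j+1}|$ inside $\int|A_{N+j+1}||A_{N+j}|\dd t$ after applying H\"older, whereas the paper pulls out $\sup_t|A_{N+j+1}|^{\gs/(1+\gs)}$ before applying it. The resulting exponent $1+\beta$ and the factor $2^{6j}$ are identical.
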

\begin{proof}
    Set $m_j(t):=|A_{N+j}(t)|$, and $E_j(t):=\int_{B_1}v_{N+j}(t,x)^2\dd x$. Then $\omega_{N+j} = \int_{-1}^0 m_j(t) \dd t$.
    By Lemma~\ref{l:a-bounds}, for a.e.\ $t$, we have
    \begin{equation}
     \big(a_{N+j}'(t)+C_T a_{N+j}(t)\big)\!\int_{B_1}\! v_{N+j} \dd x \leq \big(a_{N+j}'(t)+C_T a_{N+j}(t)\big) a_{N+j}(t) m_j(t) \leq C \delta_0^2 2^j m_j(t)
    \end{equation}
    where $C$ depends also on $|t_b|$.
Thus we can estimate the right-hand side of \eqref{eq:combined_energy_ineq} by $C 2^{4j} \delta_0^2 m_j(t)$. For the left-hand side of \eqref{eq:combined_energy_ineq} we use that for all $t$
\begin{equation} \label{eq:ak-lower-on-next-set-shifted}
a_{N+j}(t)^2\, m_{j+1}(t)\geq \delta_0^2\, m_{j+1}(t),
\end{equation}
which follows from \eqref{eq:prop-AN}.
These bounds reduce \eqref{eq:combined_energy_ineq} to
\begin{equation} \label{eq:DG-differential-recursive-shifted}
\frac12 E_j'(t)
+c\,\delta_0^2\, 2^{-2j}\,m_{j+1}(t)\,m_j(t)^{-\gs}
\leq C 2^{4j}\,\delta_0^2\,m_j(t) \quad \text{for a.e.\ } t .
\end{equation}

Since $v_{N+j}(-1,\cdot)=0$, integrating the above inequality from $-1$ to $t$ and dropping the coercive term yields
\begin{equation} \label{eq:Ek-sup-bound-shifted}
E_j(t)
\leq C\,2^{4j}\,\delta_0^2\,\omega_{N+j}.
\end{equation}
Combining the Chebyshev estimate of Lemma~\ref{l:chebi} with \eqref{eq:ak-lower-on-next-set-shifted} gives
$c2^{-2j}\delta_0^2 m_{j+1}(t)\leq E_j(t)$, which, coupled with \eqref{eq:Ek-sup-bound-shifted}, leads to
\begin{equation} \label{eq:est_point_mj+1}
    \sup_{t \in (-1,0]} m_{j+1}(t) \leq C 2^{6j} \omega_{N+j}.
\end{equation}
On the other hand integrating \eqref{eq:DG-differential-recursive-shifted} over $(-1,0)$ and using $E_j(0)\geq0$ yields
\begin{equation} \label{eq:weighted-measure-bound-shifted}
\int_{-1}^0 m_{j+1}(t)\,m_j(t)^{-\gs}\,\dd t
\leq C\,2^{6j}\,\omega_{N+j}.
\end{equation}
On the set where $m_{j+1}(t)>0$, we split $m_{j+1}$ as follows:
\[
m_{j+1}
=m_{j+1}^{\gs/(1+\gs)}
\big(m_{j+1}\,m_j^{-\gs}\big)^{1/(1+\gs)}
\,m_j^{\gs/(1+\gs)},
\]
and estimate $\omega_{N+j+1}$ by using \eqref{eq:est_point_mj+1} in the first factor above and applying H\"older's inequality with exponents $1+\gs$ and $ \tfrac{1+\gs}{\gs}$ to the remaining two factors. Using \eqref{eq:weighted-measure-bound-shifted}, this leads to
\begin{align*}
\omega_{N+j+1}
&\leq
\left(\sup_{t\in (-1,0]} \,m_{j+1}\right)^{\frac{\gs}{1+\gs}}
\left(\int_{-1}^0 m_{j+1}\,m_j^{-\gs}\,\dd t\right)^{\frac{1}{1+\gs}}
\left(\int_{-1}^0 m_j\,\dd t\right)^{\frac{\gs}{1+\gs}} \\
&\leq
\big(C2^{6j}\,\omega_{N+j}\big)^{\frac{\gs}{1+\gs}}
\big(C2^{6j}\,\omega_{N+j}\big)^{\frac{1}{1+\gs}}
\;\omega_{N+j}^{\frac{\gs}{1+\gs}}
= C\,2^{6j}\,\omega_{N+j}^{1+\beta},
\end{align*}
which proves \eqref{eq:omega-recursion-shifted}.

\end{proof}

The inequality \eqref{eq:omega-recursion-shifted} in Lemma \ref{l:omega-decay-from-start} is the typical recurrence relationship in De Giorgi's iteration. Together with the smallness of $\omega_{N+1}$ given by Proposition \ref{p:linear-preliminary-contraction}, it implies that $\omega_k \to 0$ as $k \to \infty$. This is the result of an elementary convergence result for sequences that we recall below.

\begin{lemma}\label{l:fast_convergence}
    Let $(Y_j)_{j}$ be a sequence of positive real numbers satisfying the recursive inequalities
    \[
        Y_{j+1}\leq Cb^{j} Y_j^{1+\beta},
    \]
    where $C,b>1$ and $\beta>0$ are given numbers. If
    \[
        Y_0\leq C^{-1/\beta}b^{-1/\beta^2},
    \]
    then $Y_j\to 0$ as $j\to\infty$.
\end{lemma}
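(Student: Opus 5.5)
We prove the standard fast-geometric-convergence lemma by induction, showing that the sequence decays geometrically. The plan is to prove that
\[
Y_j \leq Y_0\, b^{-j/\beta} \qquad \text{for all } j\geq 0.
\]
This gives $Y_j\to0$ because $b>1$ and $\beta>0$. The case $j=0$ is trivial.

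For the inductive step, assume $Y_j\leq Y_0 b^{-j/\beta}$. The recursion gives
\[
Y_{j+1}\leq C b^{j} Y_j^{1+\beta} \leq C b^{j}\, Y_0^{1+\beta}\, b^{-j(1+\beta)/\beta} = \big(C Y_0^{\beta}\big)\, Y_0\, b^{-j/\beta}\, b^{j - j} .
\]
Indeed, the exponent of $b$ is $j - j/\beta - j = -j/\beta$, so
\[
Y_{j+1}\leq (C Y_0^\beta)\, Y_0\, b^{-j/\beta}.
\]
To close the induction we need $C Y_0^\beta \leq b^{-1/\beta}$, since then
\[
Y_{j+1}\leq Y_0 b^{-(j+1)/\beta}.
\]
The hypothesis $Y_0\leq C^{-1/\beta} b^{-1/\beta^2}$ raised to the power $\beta$ gives exactly $Y_0^\beta\leq C^{-1}b^{-1/\beta}$, that is, $CY_0^\beta\leq b^{-1/\beta}$. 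This completes the induction.

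The only delicate point is choosing the correct geometric rate $b^{-1/\beta}$ so that the factor $b^j$ from the recursion is absorbed exactly. Matching exponents in $j$ forces the ansatz $Y_j\le Y_0 q^{j}$ with $q^{\beta}=b^{-1}$. Once this is fixed, the hypothesis on $Y_0$ is precisely what makes the constant term close. Positivity of $Y_j$ is used only so that the powers are well defined and monotone.
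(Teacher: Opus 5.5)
Your proof is correct: the ansatz $Y_j\le Y_0\,b^{-j/\beta}$ absorbs the factor $b^j$ exactly, and the hypothesis on $Y_0$ is precisely $CY_0^\beta\le b^{-1/\beta}$, which closes the inductive step. The paper gives no proof of this lemma and simply recalls it as an elementary standard fact; your induction is the usual argument for it, and the stray factor $b^{j-j}=1$ in your display is harmless.
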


The proof of Lemma \ref{l:DGengine} follows by simply putting together our lemmas above.

\begin{customproof}{Lemma \ref{l:DGengine}}

Fix the constants as in Remark~\ref{r:constants-order} and let $\delta_0$ be given by \eqref{def:delta-from-aN}. 
Let $Y_j := \omega_{N+1+j}$ for $j \geq 0$. By Lemma~\ref{l:omega-decay-from-start}, this sequence satisfies $Y_{j+1} \leq C b^{\,j} Y_j^{1+\beta}$ with $b := 2^6$. 
By setting $$\omega_\star:=\min\{C^{-1/\beta} b^{-1/\beta^2},\mu_0/2\},$$ from Lemma~\ref{l:fast_convergence}, if $Y_0 \leq \omega_\star$, then $Y_j \to 0$. This condition is precisely the threshold given by Proposition ~\ref{p:linear-preliminary-contraction}. 
Thus we have $\omega_{N+j} \to 0$ as $j \to \infty.$

 For $t\geq t_b/2$ we have  $a_{N+j}(t)=\theta_j\delta_0 \geq \delta_0$ by \eqref{def:nonlinear-heights} and $\eta_{N+j} \equiv 1$ on $B_{1/16}(x_1)$ for $j\geq N+1$. Combining this with $\omega_{N+j} \to 0$ as $j \to \infty$, by \eqref{eq:eps2-small}, leads to 
\[
v_e(t,x) \leq 1-\delta_0+\eps_2 \leq 1-\frac{\delta_0}{2}
\qquad\text{for } (t,x) \in (t_b/2,0]\times B_{1/16}(x_1).
\]
This proves the lemma with $\delta_2:=\delta_0/2$.
\end{customproof}

The proof of Lemma~\ref{l:iteration} follows from subsequent applications of Lemma~\ref{l:DGengine}. The argument is similar to that in \cite[Lemma~4.1]{GJS}, except that we move forward in time with each application of Lemma~\ref{l:DGengine}.

\begin{customproof}{Lemma \ref{l:iteration}}
    
We may assume $r\leq1/15$, since decreasing $r$ enlarges $\A_r(t)$.
Let $x_1\in\{e/2,-e/2\}$ be given by Lemma~\ref{l:pickaball}.
Applying Lemma~\ref{l:DGengine} with $(t_a,t_b)=(-1,-2\tau)$ and
mass $\mu/2$, we obtain, for some $0<\delta_2\leq1/15$,
\begin{equation*}
 e\cdot\nabla u\leq1-\delta_2
 \quad\text{in }(-\tau,0]\times B_{1/16}(x_1).
\end{equation*}
In particular, $B_{1/16}(x_1)\subset\A_{\delta_2}(t)$ for $t> -\tau$.
Since $B_{1/16}(x_1)\subset C^*(-x_1)$, it follows that
\[
 \int_{-\tau}^{-\tau/2}
 |\A_{\delta_2}(t)\cap C^*(-x_1)|\,\dd t
 \geq\frac\tau2|B_{1/16}|.
\]
We apply Lemma~\ref{l:DGengine} again, now with center $-x_1$,
$r=\delta_2$ and $(t_a,t_b)=(-\tau,-\tau/2)$.
Decreasing $\delta_2$ if necessary, we obtain
\begin{equation}\label{eq:second-ball}
 e\cdot\nabla u\leq1-\delta_2
 \quad\text{in }(-\tau/4,0]\times
 \bigl(B_{1/16}(x_1)\cup B_{1/16}(-x_1)\bigr).
\end{equation}

Now let $z\in\overline B_{1/2}$. If $z\cdot x_1\leq0$, then for
$x\in B_{1/8}(z)$ and $y\in B_{1/16}(x_1)$ we have
\[
 |(y-x)\cdot e|>\frac5{16},
 \qquad |y-x|<2.
\]
This gives $B_{1/16}(x_1)\subset C^*(z)$.
If $z\cdot x_1>0$, the same argument applies with $-x_1$. Thus
\begin{equation*}
 B_{1/16}(x_1)\subset C^*(z)
 \quad\text{or}\quad B_{1/16}(-x_1)\subset C^*(z).
\end{equation*}
Combining this with \eqref{eq:second-ball}, we obtain
\[
 \int_{-\tau/4}^{-\tau/8}
 |\A_{\delta_2}(t)\cap C^*(z)|\,\dd t
 \geq\frac\tau8|B_{1/16}|.
\]
A final application of Lemma~\ref{l:DGengine}, with $r=\delta_2$ and
$(t_a,t_b)=(-\tau/4,-\tau/8)$, gives, after decreasing $\delta_2$,
\[
 e\cdot\nabla u\leq1-\delta_2
 \quad\text{in }(-\tau/16,0]\times B_{1/16}(z).
\]
The constants are independent of $z$. Taking a finite cover of
$\overline B_{1/2}$ by these balls, all with the same interval $(-\tau/4,-\tau/8)$, proves the desired estimate.

\end{customproof}

\section{Controlling the oscillation in time}\label{s:oscillation_time}

Our plan is to apply Lemma \ref{l:iteration} in shrinking cylinders, with the goal of reducing the size of the gradient at smaller scales. If we succeed in applying the lemma for every vector $e \in S^{d-1}$, we will have reduced the size of the gradient in a smaller cylinder. We will then rescale and repeat the argument. The conclusion of the lemma immediately gives the appropriate bound for $|\nabla u|$ after rescaling. 
 The gradient bound controls $w(t,x)=u(t,x)-u(t,0)$ in space. We now use the equation to control its oscillation in time, with constants depending only on its tail.
As before, we assume $u$ is a weak solution which is smooth in $x$.

\begin{lemma}\label{l:centered-time}
Let $u$ be a solution of \eqref{eq:fracplap} in $Q_{1,1}$, with $|\nabla u|\leq1$  in $Q_{1,1}$ and
\[
\sup_{t\in(-1,0)}{\Tail}_{p-1,sp+1}(w(t,\cdot);1)\leq T_0.
\]
Then, for $t,\tau\in(-1,0]$, $x\in B_{3/4}$ and $0<|t-\tau|\leq1/8$,
\begin{equation}\label{eq:centered-time}
|w(t,x)-w(\tau,x)|\leq C(1+T_0^{p-1})
\begin{cases}
|t-\tau|,&0<\sigma<1,\\
|t-\tau|\big(1+\big|\log|t-\tau|\big|\big),&\sigma=1,\\
|t-\tau|^{1/\sigma},&1<\sigma<2.
\end{cases}
\end{equation}
Moreover, for any $0<r<R\leq 1$ the function
\[
F(t,x):=\int_{B_R^c}\frac{J_p(u(t,x)-u(t,y))}{|x-y|^{d+sp}}\dd y
\]
satisfies
\begin{equation}\label{eq:exterior-lipschitz}
[F(t,\cdot)]_{Lip(B_{r})}\leq C(1+T_0^{p-1}).
\end{equation}
Here $C$ depends only on $d,s,p$ as well as $r$ and $R$.
\end{lemma}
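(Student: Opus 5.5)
The plan is the usual time-regularity argument via comparison with a smooth spatial test function. The equation gives $\p_t u(t,x) = -(-\Delta_p)^s u(t,x)$ pointwise, since $u$ is assumed smooth in $x$. Fix $x\in B_{3/4}$, and let $h:=|t-\tau|^{1/\sigma}$ when $\sigma>1$, or a fixed small radius (say $h=1/8$) when $\sigma\le1$.

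First split $(-\Delta_p)^s u(t,x)$ into the part over $B_h(x)$, the part over $B_{1/4}(x)\setminus B_h(x)$, and the exterior part over $B_{1/4}(x)^c$. On the intermediate annulus, $|\nabla u|\le1$ gives $|J_p(u(t,x)-u(t,y))|\le |x-y|^{p-1}$. The integral is therefore bounded by $\int_h^{1/4}\rho^{p-1-sp-1}\dd\rho=\int_h^{1/4}\rho^{-\sigma}\dd\rho$. This is $O(1)$ for $\sigma<1$, $O(|\log h|)$ for $\sigma=1$, and $O(h^{1-\sigma})$ for $\sigma>1$. The exterior part is bounded by $C(1+T_0^{p-1})$. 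To see this, write $u(t,x)-u(t,y)=w(t,x)-w(t,y)$ with $|w(t,x)|\le1$, use $|x-y|\gtrsim 1+|y|$, and use $|J_p(a-b)|\le C(|a|^{p-1}+|b|^{p-1})$. The far piece is then controlled by ${\Tail}_{p-1,sp}$, which on $B_1^c$ is dominated via $|y|^{-d-sp}\le |y|\cdot|y|^{-d-sp-1}$. This needs care: the hypothesis is ${\Tail}_{p-1,sp+1}$, the weaker tail. So instead of the plain kernel, I would bound the exterior part using the antisymmetry trick. Because the exterior integrand involves $u(t,x)$ only through a bounded quantity, and the relevant quantity is $\p_t w$, not $\p_t u$, it is better to work with differences in $x$.

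This suggests the actual route: estimate $\p_t w(t,x)=\p_t u(t,x)-\p_t u(t,0)$. That is the difference $(-\Delta_p)^s u(t,0)-(-\Delta_p)^s u(t,x)$, and for it the exterior contributions have kernels whose difference decays like $|y|^{-d-sp-1}$. I would bound this by ${\Tail}_{p-1,sp+1}^{p-1}$ plus terms of order $|x|$. This is exactly the Lipschitz bound \eqref{eq:exterior-lipschitz} on $F$. I prove that bound first by differentiating under the integral in $x$ for $|x|<r<R$. The derivative of $J_p(u(t,x)-u(t,y))|x-y|^{-d-sp}$ splits into two terms. One is $(p-1)|u(t,x)-u(t,y)|^{p-2}\nabla u(t,x)|x-y|^{-d-sp}$, controlled by ${\Tail}_{p-2,sp}^{p-2}\lesssim {\Tail}_{p-1,sp+1}^{p-2}+1$ via Lemma~\ref{l:Tail_inequalities}. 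The other is $J_p(\cdot)|x-y|^{-d-sp-1}$, controlled by ${\Tail}_{p-1,sp+1}^{p-1}+1$. Both use $|x-y|\ge (R-r)$ and comparability of $|x-y|$ with $1+|y|$.

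With $F$ Lipschitz, write $\p_t w(t,x)=I_h(t,0)-I_h(t,x)+(\text{annulus})+F(t,0)-F(t,x)$ with $R=1/4$. The annulus terms are bounded as above, and the inner pieces $I_h$ over $B_h$ are not used pointwise. Integrating in time from $\tau$ to $t$ gives $|w(t,x)-w(\tau,x)|\le |t-\tau|\,C(1+T_0^{p-1})(1+h^{1-\sigma}+|\log h|)$ plus the inner contributions.

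The inner $B_h$ part is the main obstacle, because pointwise it is uncontrolled without second derivatives. I handle it by regularizing in space: compare $w(t,x)$ with the average $\bar w_h(t):=\fint_{B_h(x)}w(t,z)\dd z$. The Lipschitz bound gives $|w-\bar w_h|\le h$ at each time. The time derivative of the average is $-\fint_{B_h(x)}\big((-\Delta_p)^s u(t,z)-(-\Delta_p)^s u(t,0)\big)\dd z$, and in it the singular inner part integrates out by antisymmetry. More precisely, one tests against a smooth bump $\psi_h$ of scale $h$ with $|\nabla\psi_h|\lesssim h^{-1}$ in the weak formulation. This yields a bound $\lesssim h^{p-sp-1}\cdot h^{-1}\cdot h^{\,0}\sim h^{1-\sigma}$ for the near-diagonal part: the difference bound $|x-y|^{p-1}$, paired with $|\psi_h(z)-\psi_h(y)|\le h^{-1}|z-y|$, integrates to $h^{-1}\int_0^{h}\rho^{p-sp-1}\dd\rho\sim h^{1-\sigma}$. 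Combining, $|w(t,x)-w(\tau,x)|\lesssim h+|t-\tau|(1+T_0^{p-1})\,\Phi(h)$, where $\Phi(h)=1$, $1+|\log h|$, or $h^{1-\sigma}$ according to the sign of $\sigma-1$. Choosing $h=|t-\tau|$ when $\sigma\le1$ and $h=|t-\tau|^{1/\sigma}$ when $\sigma>1$ gives the three cases of \eqref{eq:centered-time}.
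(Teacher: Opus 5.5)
Your architecture matches the paper's: a Lipschitz bound for the exterior part $F$ (your differentiation-under-the-integral argument with Lemma~\ref{l:Tail_inequalities} is fine), a test function of scale $h$ in the weak formulation, and an optimization in $h$. \textbf{But the regularization you actually write down is one-sided, and it fails exactly in the cases where regularization is needed.}

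With $\bar w_h(t)=|B_h|^{-1}\int_{B_h(x)}u(t,z)\dd z-u(t,0)$, the time derivative contains the point value $\p_t u(t,0)=-\dsp u(t,0)$. Its near-diagonal part is
\[
\mathrm{P.V.}\int_{B_h}J_p(u(t,0)-u(t,y))|y|^{-d-sp}\dd y,
\]
and the Lipschitz bound only gives an integrand of size $|y|^{1-\sigma-d}$.

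\begin{itemize}
\item For $\sigma<1$ this is absolutely integrable, but then no regularization is needed at all.
\item For $\sigma\geq1$ (the logarithmic and H\"older cases of \eqref{eq:centered-time}) it is a genuine principal value. It is finite for $u$ smooth in $x$, but it cannot be bounded using only $|\nabla u|\leq1$ and the tail. It needs second-order spatial information, and the estimate must be independent of that.
\end{itemize}

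Your sentence ``the singular inner part integrates out by antisymmetry'' is true only for the averaged term at $x$, not for the term at $0$.

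The fix is to mollify at both points. Take $M_h(t)=\int u(t,z)\psi(z)\dd z$ with $\psi=\zeta_h(\cdot-x)-\zeta_h$, which is what the paper does. Then:
\begin{itemize}
\item The Lipschitz bound still gives $|M_h(t)-w(t,x)|\leq Ch$.
\item Both near-diagonal contributions are controlled through $\|\psi(\cdot+a)-\psi\|_{L^1}\leq C\min\{|a|/h,1\}$.
\item Since $\int\psi=0$, the exterior contribution can be written as $\int(F(t,z)-F(t,0))\psi(z)\dd z$, and \eqref{eq:exterior-lipschitz} applies.
\end{itemize}

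Your choice $R=1/4$ also does not work as stated. The bound \eqref{eq:exterior-lipschitz} on $B_r$ with $r<R=1/4$ does not reach points $x\in B_{3/4}$. Moreover, the near/far split must be taken with respect to a fixed ball centered at $0$, not $B_{1/4}(x)$, so that the far parts at the two points combine into a difference of $F$. Splitting the double integral into $B_1\times B_1$ and the exterior, i.e.\ using $R=1$ and $r=7/8\supset\supp\psi$, resolves this.

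Minor: the expression $h^{p-sp-1}\cdot h^{-1}$ you wrote equals $h^{-\sigma}$, not $h^{1-\sigma}$. Your subsequent integral computation giving $h^{1-\sigma}$ is the correct one.
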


\begin{proof}
Estimate \eqref{eq:exterior-lipschitz} is essentially \cite[Lemma 6.9]{GJS}, so we skip its proof.

We focus on \eqref{eq:centered-time}. Fix a smooth nonnegative mollifier $\zeta$ supported in $B_1$, of integral one, and let $\zeta_r(z)=r^{-d}\zeta(z/r)$, with $0<r<1/8$. For fixed $x\in B_{3/4}$ set
\[
\psi(z):=\zeta_r(z-x)-\zeta_r(z),\qquad
M_r(t):=\int_{\R^d} u(t,z)\psi(z)\dd z.
\]
The spatial Lipschitz bound gives $|M_r(t)-w(t,x)|\leq Cr$. Testing the weak equation, we obtain
\begin{align*}
M_r'(t)={}&-\frac12\iint_{B_1\times B_1}
\frac{J_p(u(t,z)-u(t,y))(\psi(z)-\psi(y))}{|z-y|^{d+sp}}\dd y\dd z-\int_{B_1}F(t,z)\psi(z)\dd z.
\end{align*}
Since $\int\psi=0$, we may subtract $F(t,0)$ in the last term and use \eqref{eq:exterior-lipschitz}. Also
$\|\psi(\cdot+a)-\psi\|_{L^1}\leq C\min\{|a|/r,1\}$, so
\[
|M_r'|\leq C(1+T_0^{p-1})+C\int_0^2\rho^{-\sigma}\min\{\rho/r,1\}\dd\rho
\leq C(1+T_0^{p-1})
\begin{cases}
1,&\sigma<1,\\
1+|\log r|,&\sigma=1,\\
r^{1-\sigma},&\sigma>1.
\end{cases}
\]
Integrate between $t$ and $\tau$. For $\sigma<1$ let $r\to0$, for $\sigma=1$ take $r=|t-\tau|/16$ and for $\sigma>1$ take $r=|t-\tau|^{1/\sigma}/16$. 
\end{proof}

\section{First stage: iterating the reduction of the gradient} \label{s:iteration}

We obtain an improvement of flatness for the gradient by iterating Lemma~\ref{l:iteration} while the degenerate assumption (4) holds. Here we assume $u$ is a weak solution which is smooth in $x$.

\begin{lemma}[Iterated reduction of the gradient]\label{l:iterated-DG}
Fix $r,\mu,\tau\in(0,1/2)$. There exist constants $T_0\geq1$, $\delta,\eps_1\in(0,1/2)$, $K_0\in\mathbb N$ and $\rho_1,\rho_2\in(0,1/2)$, depending only on $r,\mu,\tau,d,s,p$, with the following property. 
Set
\begin{equation}\label{eq:rho1-def}
\rho_1:=\rho_2^\sigma(1-\delta)^{2-p},
\end{equation}
and
\begin{equation}\label{eq:un-def}
\lambda:=\rho_2(1-\delta),\qquad
u_n(t,x):=\lambda^{-n}u(\rho_1^nt,\rho_2^nx).
\end{equation}
Assume that
\begin{enumerate}
\item $u$ is a solution of \eqref{eq:fracplap} in $Q_{1,2^{K_0+1}}$;
\item $\|\nabla u\|_{L^\infty(Q_{1,2^k})}\leq(1-\delta)^{-k}$ for $k=0,\ldots,K_0$;
\item $\sup_{t\in(-1,0]}{\Tail}_{p-1,sp+1}(w(t,\cdot);1)\leq T_0$;
\item $\sup_{t\in(-1,0]}{\Tail}_{p-1,sp+1}(w(t,\cdot);2^{K_0-1})\leq\eps_1$.
\end{enumerate}
For $e\in S^{d-1}$, define
\[
\A_{r,n,e}(t):=\{x\in B_1:\nabla u_n(t,x)\notin B_r(e)\}.
\]
Let $N\geq1$ and assume that
\begin{equation}\label{eq:iterated-reservoir}
\int_{-1}^{-2\tau}|\A_{r,n,e}(t)|\dd t\geq\mu
\end{equation}
for every $e\in S^{d-1}$ and $n=0,\ldots,N-1$. Then, for $n=0,\ldots,N$,
\begin{align}
\|w_n\|_{L^\infty(Q_{1,1})}&\leq 1,\label{eq:un-linfty}\\
\|\nabla u_n\|_{L^\infty(Q_{1,2^k})}&\leq(1-\delta)^{-k},\qquad k=0,\ldots,K_0,\label{eq:un-gradient-growth}\\
\sup_{t\in(-1,0]}{\Tail}_{p-1,sp+1}(w_n(t,\cdot);1)&\leq T_0,\label{eq:un-sp-tail}\\
\sup_{t\in(-1,0]}{\Tail}_{p-1,sp+1}(w_n(t,\cdot);2^{K_0-1})&\leq\eps_1.\label{eq:un-sp1-tail}
\end{align}
Consequently,
\begin{equation}\label{eq:original-gradient-decay}
\|\nabla u\|_{L^\infty(Q_{\rho_1^n,\rho_2^n})}\leq(1-\delta)^n,
\qquad n=0,\ldots,N.
\end{equation}
\end{lemma}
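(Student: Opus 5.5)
The proof is an induction on $n$. For $n=0$, \eqref{eq:un-gradient-growth}, \eqref{eq:un-sp-tail} and \eqref{eq:un-sp1-tail} are assumptions (2)--(4). Then \eqref{eq:un-linfty} follows from the case $k=0$ of (2), since $w_0(t,0)=0$, so that $|w_0(t,x)|\leq |x|\leq 1$ in $Q_{1,1}$.

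For the inductive step, assume the four bounds hold for $u_n$ with some $n\leq N-1$. The function $u_n$ is again a solution of \eqref{eq:fracplap}: by Subsection~\ref{ss:scaling}, $u_n=u_{\rho,\lambda'}$ with $\rho=\rho_2^n$ and $\lambda'=\lambda^n$. Its time scale is $\lambda^{n(2-p)}\rho_2^{nsp}=\rho_2^{n(2-p+sp)}(1-\delta)^{n(2-p)}=\rho_1^n$, which is exactly \eqref{eq:rho1-def}. Since $\rho_1,\rho_2<1/2$, it solves the equation in $Q_{1,2^{K_0+1}}$. Together with \eqref{eq:iterated-reservoir}, this means $u_n$ satisfies all the hypotheses of Lemma~\ref{l:iteration} for every $e$, with $\delta\leq\min(\delta_1,\delta_2)$ as in Remark~\ref{r:statement-comments}. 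Applying the lemma to $e$ and to $-e$ gives $|e\cdot\nabla u_n|\leq 1-\delta$ in $(-\tau/16,0]\times B_{1/2}$ for every unit $e$, hence $|\nabla u_n|\leq 1-\delta$ there.

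Now write $u_{n+1}(t,x)=\lambda^{-1}u_n(\rho_1 t,\rho_2 x)$, so $\nabla u_{n+1}(t,x)=(1-\delta)^{-1}\nabla u_n(\rho_1t,\rho_2x)$. We require $\rho_1\leq\tau/16$ and $\rho_2 2^{K_0}\leq 1/2$; this is why $\rho_2$ is chosen small depending on $K_0$ and $\tau$, and then $\rho_1=\rho_2^\sigma(1-\delta)^{2-p}$ is small as well. With these choices, $|\nabla u_{n+1}|\leq 1$ on $Q_{1,2^{K_0}}$. This gives \eqref{eq:un-gradient-growth} with room to spare, and \eqref{eq:un-linfty} follows as before.

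The main obstacle is propagating the tail bounds \eqref{eq:un-sp-tail} and \eqref{eq:un-sp1-tail}. Here one must note that $w_{n+1}(t,x)=\lambda^{-1}w_n(\rho_1t,\rho_2x)$. By \eqref{scale_tails},
\[
{\Tail}_{p-1,sp+1}(w_{n+1};R)^{p-1}=\lambda^{1-p}\rho_2^{sp+1}{\Tail}_{p-1,sp+1}(w_n;\rho_2R)^{p-1},
\]
and $\lambda^{1-p}\rho_2^{sp+1}=(1-\delta)^{1-p}\rho_2^{\sigma}$. Split the tail of $w_n$ at radius $\rho_2R$ into three regions.

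\textbf{Annulus} $\rho_2R<|y|<2^{K_0-1}$. Use \eqref{eq:un-gradient-growth} dyadically, $|w_n(y)|\leq C|y|^{1+\alpha_0}$, exactly as in the proof of Lemma~\ref{l:tails_of_kernel}. This contributes $C(\rho_2R)^{-\sigma+(p-1)\alpha_0}$, which after multiplication by $\rho_2^\sigma$ becomes $CR^{-\sigma}\rho_2^{(p-1)\alpha_0}$ and is also bounded by $C R^{-\sigma}$.

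\textbf{Far region} $|y|>2^{K_0-1}$. This is bounded by $\eps_1^{p-1}$ by \eqref{eq:un-sp1-tail}, hence gets multiplied by the small factor $\rho_2^\sigma$.

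\textbf{Bounded region.} Only the annulus contributes when $\rho_2R\geq 2^{K_0-1}$ is not the case.

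Taking $R=1$, the result is at most $C(1-\delta)^{1-p}(1+\rho_2^\sigma\eps_1^{p-1})$. Since $C$ is universal, fixing $T_0^{p-1}\geq 2C$ gives \eqref{eq:un-sp-tail} for $n+1$.

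Taking $R=2^{K_0-1}$ is the delicate case. The bound becomes $(1-\delta)^{1-p}\rho_2^\sigma\bigl(C2^{-K_0\sigma}\rho_2^{-\sigma}\cdot(\text{annulus})+\eps_1^{p-1}\bigr)$. Here I would instead bound the annulus $\rho_2 2^{K_0-1}<|y|<2^{K_0-1}$ directly using $|w_n|\leq C2^{K_0(1+\alpha_0)}$. This yields
\[
C2^{K_0((p-1)(1+\alpha_0)-sp-1)}\rho_2^{-sp-1}\cdot\rho_2^{\sigma}=C\rho_2^{-(p-1)}\,2^{-K_0(\sigma-(p-1)\alpha_0)}.
\]
The order of choices is then: fix $\rho_2$ small, with $(1-\delta)^{1-p}\rho_2^\sigma\leq 1/2$, and afterwards enlarge $K_0$ so that this term is at most $\eps_1^{p-1}/2$. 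The trouble is that $\rho_2$ depends on $K_0$ through $\rho_2 2^{K_0}\leq 1/2$. To break this circularity, I would use the gradient-growth bound only on $B_{2^{K_0}}$ rather than requiring $\rho_2 2^{K_0}$ small, by noting that $|\nabla u_{n+1}|\leq(1-\delta)^{-1}(1-\delta)^{-k'}$ comes from \eqref{eq:un-gradient-growth} for $u_n$ at the scale $2^{k'}\geq\rho_22^k$. This gives $(1-\delta)^{-k}$ for $u_{n+1}$ as long as $\rho_2\leq 1/2$ absorbs the extra factor $(1-\delta)^{-1}$: choosing $\rho_2=2^{-m}$ shifts the index by $m$, and the loss $(1-\delta)^{-1}\leq(1-\delta)^{-m}$ is absorbed. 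Only the scales inside $B_{1/2}$ use Lemma~\ref{l:iteration}. With this, $\rho_2$ depends only on $\tau$ and $\delta$, and $K_0$ can be taken large afterwards, consistently with Remark~\ref{r:constants-order}.

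Finally, \eqref{eq:original-gradient-decay} follows by unscaling: $\nabla u(\rho_1^nt,\rho_2^nx)=(1-\delta)^n\nabla u_n(t,x)$ and $|\nabla u_n|\leq 1$ on $Q_{1,1}$.
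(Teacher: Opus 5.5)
Your proof is correct, and its skeleton matches the paper's. You rescale to $u_n$, apply Lemma~\ref{l:iteration} for every $e$ to get $|\nabla u_n|\le 1-\delta$ in $Q_{\tau/16,1/2}$, and recover \eqref{eq:un-gradient-growth} by the index shift $B_{2^k\rho_2}\subset B_{2^{k-1}}$. That shift needs only $\rho_2\le 1/2$, so your initial requirement $\rho_2 2^{K_0}\le 1/2$ is unnecessary.

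Where you genuinely differ is the propagation of the tails.

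\textbf{The paper's route.} It splits at radius one and uses only $|w_n(z)|\le |z|$ in $B_1$. The region $|z|>1$ is controlled by the \emph{inductive} bound $\Tail_{p-1,sp+1}(w_n;1)\le T_0$. This gives the contraction $S_{n+1}\le \frac12 S_n+C$. For $1\le R\le\rho_2^{-1}$ it also gives the bound $(1-\delta)^{1-p}\bigl(\rho_2^\sigma T_0^{p-1}+|S^{d-1}|R^{-\sigma}/\sigma\bigr)$. So $K_0$ is fixed first, and then $\rho_2\le 2^{-K_0}$ is taken small.

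\textbf{Your route.} You use the dyadic growth $|w_n(y)|\le C|y|^{1+\alpha_0}$ on $1\le|y|\le 2^{K_0-1}$, together with the $\eps_1$-tail of $w_n$. This needs $(p-1)\alpha_0<\sigma$, which the choice of $\delta_1$ in Lemma~\ref{l:tails_of_kernel} guarantees. What this buys you is that the $T_0$-bound is not really an inductive hypothesis at all: it follows, with a universal $T_0$, from \eqref{eq:un-gradient-growth} and \eqref{eq:un-sp1-tail}. Only the $\eps_1$-tail has to be propagated. The price is the reversed order of choices: $\rho_2$ from $\tau,\delta$, then $K_0$ large depending on $\rho_2$. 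This is legitimate because Lemma~\ref{l:iteration} survives enlarging $K_0$ with $\delta,\eps_1$ fixed (Remark~\ref{r:statement-comments}), which the paper also uses.

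\textbf{The circularity and two slips.} The circularity you ran into comes from the crude sup bound $|w_n|\le C2^{K_0(1+\alpha_0)}$ on the annulus. Use instead the pointwise bounds $|w_n(y)|\le|y|$ in $B_1$ and $|w_n(y)|\le C|y|^{1+\alpha_0}$ for $1\le|y|\le 2^{K_0-1}$. Then the region $\rho_2 2^{K_0-1}<|y|<2^{K_0-1}$ contributes at most $C\bigl(2^{-K_0(\sigma-(p-1)\alpha_0)}+\rho_2^\sigma\bigr)$ after the factor $\rho_2^\sigma$, and any order of choices works. Relatedly, your first annulus estimate is slightly off. The bound $|w_n(y)|\le C|y|^{1+\alpha_0}$ fails for $|y|<1$, where you only have $|y|$. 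So for $\rho_2R<1$ the contribution is $CR^{-\sigma}+C\rho_2^\sigma$, not $CR^{-\sigma}\rho_2^{(p-1)\alpha_0}$; this still suffices for $R=1$. Finally, the ``Bounded region'' sentence is garbled and can simply be dropped.
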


\begin{proof}
Let
\[
T_0^{p-1}:=\max\left\{1+\frac{|S^{d-1}|}{\sigma},\frac{2^p|S^{d-1}|}{\sigma}\right\}
\]
 and choose $\delta=\min\{\delta_1,\delta_2\}$, $\eps_1$ and $K_0$ from Lemma~\ref{l:iteration}. Increase $K_0$ so that
\[
\frac{|S^{d-1}|}{\sigma(1-\delta)^{p-1}}2^{-\sigma(K_0-1)}\leq\frac{\eps_1^{p-1}}2.
\]
Then choose $\rho_2$ sufficiently small that
\[
\rho_1\leq\tau/16,\qquad \rho_2\leq2^{-K_0},\qquad
\frac{\rho_2^\sigma}{(1-\delta)^{p-1}}\leq\frac12,\qquad
\frac{\rho_2^\sigma T_0^{p-1}}{(1-\delta)^{p-1}}\leq\frac{\eps_1^{p-1}}2.
\]
These choices are possible for every $\sigma>0$.

We prove \eqref{eq:un-linfty}--\eqref{eq:un-sp1-tail} by induction. They hold for $n=0$. Suppose they hold at some $n<N$. Since $\rho_1=\rho_2^{sp}\lambda^{2-p}$, the function $u_{n+1}$ solves the same equation. Lemma~\ref{l:iteration}, applied to $u_n$ for every $e$, gives
\begin{equation}\label{eq:un-small-cylinder-gradient}
|\nabla u_n|\leq1-\delta\quad\text{in }Q_{\tau/16,1/2}.
\end{equation}
Thus $|\nabla u_{n+1}|\leq1$ in $Q_{1,1}$. For $k\geq1$, the inclusion $B_{2^k\rho_2}\subset B_{2^{k-1}}$ and the induction hypothesis give
\[
\|\nabla u_{n+1}\|_{L^\infty(Q_{1,2^k})}
\leq(1-\delta)^{-1}\|\nabla u_n\|_{L^\infty(Q_{1,2^{k-1}})}
\leq(1-\delta)^{-k}.
\]
This proves \eqref{eq:un-gradient-growth}. Since $w_{n+1}(t,0)=0$, it also proves \eqref{eq:un-linfty}.

Set $S_n(t):={\Tail}_{p-1,sp+1}(w_n(t,\cdot);1)^{p-1}$. Changing variables gives
\begin{equation}\label{eq:Sn-scaling}
S_{n+1}(t)=\frac{\rho_2^\sigma}{(1-\delta)^{p-1}}
\int_{B_{\rho_2}^c}\frac{|w_n(\rho_1t,z)|^{p-1}}{|z|^{d+sp+1}}\dd z.
\end{equation}
In $B_1$ we have $|w_n(t,z)|\leq|z|$. Splitting at radius one, we obtain
\begin{equation}\label{eq:Sn-recursion}
S_{n+1}(t)\leq\frac{\rho_2^\sigma}{(1-\delta)^{p-1}}S_n(\rho_1t)
+\frac{|S^{d-1}|(1-\rho_2^\sigma)}{\sigma(1-\delta)^{p-1}}
\leq T_0^{p-1}.
\end{equation}
For $1\leq R\leq\rho_2^{-1}$, the same split gives
\begin{equation}\label{eq:large-centered-tail}
{\Tail}_{p-1,sp+1}(w_{n+1}(t,\cdot);R)^{p-1}
\leq\frac{\rho_2^\sigma T_0^{p-1}}{(1-\delta)^{p-1}}
+\frac{|S^{d-1}|R^{-\sigma}}{\sigma(1-\delta)^{p-1}}.
\end{equation}
Taking $R=2^{K_0-1}$ proves \eqref{eq:un-sp1-tail} by our choices of $K_0$ and $\rho_2$. The induction is complete, and \eqref{eq:original-gradient-decay} follows by rescaling.
\end{proof}

We begin the next stage of this paper by considering the rescaling from Lemma~\ref{l:iterated-DG}. We collect its properties in the following corollary.

\begin{corollary}\label{c:final_first_stage}
Take $\tau=\mu/(2|B_1|)$ with $\mu<|B_1|$. Let $u$ satisfy the assumptions of Lemma~\ref{l:iterated-DG}. Let $\kmax<\infty$ be the first nonnegative integer for which \eqref{eq:iterated-reservoir} fails for some $e\in S^{d-1}$, and set
\[
\bar u(t,x):=\lambda^{-\kmax}u(\rho_1^{\kmax}t,\rho_2^{\kmax}x).
\]
Then $\bar u$ satisfies:
\begin{enumerate}
\item $\bar u$ solves \eqref{eq:fracplap} in $Q_{\rho_1^{-\kmax},2^{K_0+1}\rho_2^{-\kmax}}$;
\item $\|\bar u(t,x)-\bar u(t,0)\|_{L^\infty(Q_{1,1})}\leq 1$;
\item $\|\nabla\bar u\|_{L^\infty(Q_{1,2^k})}\leq(1-\delta)^{-k}$ for $k=0,\ldots,K_0+\kmax$;
\item $\sup_{t\in(-1,0]}{\Tail}_{p-1,sp+1}(\bar u(t,\cdot)-\bar u(t,0);1)\leq T_0$;
\item for some $e\in S^{d-1}$,
\[
|\{(t,x)\in Q_{1,1}:\nabla\bar u(t,x)\in B_r(e)\}|>|Q_{1,1}|-2\mu.
\]
\end{enumerate}
\end{corollary}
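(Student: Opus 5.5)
The corollary should follow by applying Lemma~\ref{l:iterated-DG} with $N=\kmax$ and reading off the inductive bounds at level $n=\kmax$, since $\bar u = u_{\kmax}$ by \eqref{eq:un-def}.

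\textbf{Finiteness of $\kmax$.} By \eqref{eq:original-gradient-decay}, if \eqref{eq:iterated-reservoir} held for all $n$ and all $e$, then $\nabla u(0,0)=0$ would be forced at every scale. That alone is no contradiction. Instead I would use that \eqref{eq:un-gradient-growth} gives $|\nabla u_n|\le 1$, while \eqref{eq:un-small-cylinder-gradient} applied in every direction shows $|\nabla u_n|\le 1-\delta$ on $Q_{\tau/16,1/2}$. If the reservoir held for all $n$, then $u_n$ converges, up to subsequences, to a limit profile with vanishing gradient. Its set where $\nabla \notin B_r(e)$ would then be all of $Q_{1,1}$, which is consistent, so this also gives no contradiction. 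I therefore expect finiteness to be part of the hypothesis: the statement writes $\kmax<\infty$, so I would treat it as the case under consideration, with the complementary case $\kmax=\infty$ giving $\nabla u(0,0)=0$ and $C^{1,\alpha}$ decay directly from \eqref{eq:original-gradient-decay}.

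\textbf{Items (1)--(4).} Given $\kmax<\infty$, the reservoir \eqref{eq:iterated-reservoir} holds for $n=0,\dots,\kmax-1$ and every $e$. Lemma~\ref{l:iterated-DG} then yields \eqref{eq:un-linfty}--\eqref{eq:un-sp1-tail} at $n=\kmax$, which are items (2) and (4) together with item (3) for $k\le K_0$.

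Item (1) is the scaling of Section~\ref{ss:scaling}. Since $\rho_1=\rho_2^{sp}\lambda^{2-p}$, $\bar u$ solves the equation wherever $(\rho_1^{\kmax}t,\rho_2^{\kmax}x)\in Q_{1,2^{K_0+1}}$.

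For item (3) with $K_0<k\le K_0+\kmax$, I would use the original hypothesis (2) together with the decay at intermediate levels. Write $|\nabla\bar u(t,x)|=(1-\delta)^{-\kmax}|\nabla u(\rho_1^{\kmax}t,\rho_2^{\kmax}x)|$. Given $x$ with $|x|<2^k$, choose $m\le\kmax$ minimal so that $\rho_2^{\kmax}|x|\le\rho_2^{m}$. Then apply \eqref{eq:original-gradient-decay} at level $m$, noting $\rho_1^{\kmax}\le\rho_1^m$ for time. If $m=0$, use hypothesis (2) instead. Since $\rho_2\le 2^{-K_0}\le 1/2$, the lost factor $(1-\delta)^{m-\kmax}$ is at most $(1-\delta)^{-k}$, because $\kmax-m\le k$ once $\rho_2\le 1/2$.

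\textbf{Item (5).} This is the main point and uses the choice of $\tau$. Failure of \eqref{eq:iterated-reservoir} at $n=\kmax$ for some $e$ gives
\[
\int_{-1}^{-2\tau}|\A_{r,\kmax,e}(t)|\dd t<\mu .
\]
On the remaining interval $(-2\tau,0]$ the bad set has measure at most $2\tau|B_1|=\mu$. Summing, the bad set in $Q_{1,1}$ has measure less than $2\mu$, and taking the complement gives item (5). I expect the only real subtlety to be the bookkeeping of dyadic growth in item (3), and the interpretation of $\kmax<\infty$ as an assumption rather than a derived fact.
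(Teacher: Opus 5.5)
Your proposal is correct and follows the paper's route: the corollary is read off from Lemma~\ref{l:iterated-DG} with $N=\kmax$. Item (5) comes from the failed reservoir together with the bound $2\tau|B_1|=\mu$ on $(-2\tau,0]$, and your extension of item (3) to $K_0<k\leq K_0+\kmax$ via \eqref{eq:original-gradient-decay} and hypothesis (2) supplies a detail the paper leaves implicit. (In that step you want the \emph{largest} admissible level $m$, not the smallest, and the case $\kmax=0$ is trivial since then $\bar u=u$.)
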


The proof of the corollary follows immediately from Lemma~\ref{l:iterated-DG}.

\section{Second stage: the Ishii-Lions method} \label{s:stage2}

In this section we deal with the second stage of the proof. As before, we assume $u$ is smooth in $x$, whereas time derivatives are to be understood in the viscosity sense. We aim at proving that, if $\nabla u$ is very close to a unit vector in most of the cylinder $Q_{1,1}$, then $\nabla u$ is close to that vector for every point in $Q_{1/2,1/2}$, resulting in some form of nondegeneracy for the linearized kernel $K_u$. The following lemma is the objective of this section. 

\begin{lemma} \label{l:stage2}
    Let $L\in(0,1/4)$ and $ T_0>0$.  There exist $\mu$ and $r$ sufficiently small, depending on  $L$, and $T_0$, such that the following statement holds.
    Assume that $u\in C(-1,0;L^{p-1}_{sp}(\R^d))$ is a solution of \eqref{eq:fracplap} in $Q_{1,2}$ with $p \in [2, \frac{2}{1-s})$, satisfying  $|\nabla u| \leq 1$ in $Q_{1,1}$, $\sup_{t \in(-1,0)}{\Tail}_{p-1,sp+1}(w(t,\cdot);1)\leq T_0$. 
    Assume further that, for some $e \in S^{d-1}$,
    \[ 
    |\{(t,x) \in Q_{1,1} :\nabla u(t,x)\in B_r(e)\}| \geq |Q_{1,1}|-2\mu.
    \]
     Then
     \[
     |\nabla u(t,x) - e| \leq L \quad \text{ for all } (t,x)\in  Q_{1/2,1/2}.
     \]
\end{lemma}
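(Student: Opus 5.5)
We will apply the Ishii--Lions method to $v(t,x):=u(t,x)-e\cdot x$, in the spirit of the Lipschitz argument of \cite{JSU}. By the spatial Lipschitz bound and Lemma~\ref{l:centered-time}, $w$ has controlled oscillation in $Q_{1,1}$ in both space and time.

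\textbf{Step 1: smallness of the oscillation of $v$.} Most of $Q_{1,1}$ has $\nabla u\in B_r(e)$, so most time slices $t$ satisfy $|\{x\in B_1:\nabla u(t,x)\notin B_r(e)\}|\le C\sqrt\mu$. On such a slice, since $|\nabla v|\le 2$ everywhere and $|\nabla v|\le r$ off a set of small measure, a Morrey/Poincar\'e-type argument (integrate $\nabla v$ along segments and average over directions) gives
\[
\osc_{B_{3/4}} \bigl(v(t,\cdot)-v(t,0)\bigr)\le \omega(r,\mu),
\qquad \omega\to0 \text{ as } r,\mu\to0 .
\]
The remaining bad times form a set of measure at most $C\sqrt\mu$. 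Each bad time is within $C\sqrt\mu$ of a good time, and the time-modulus \eqref{eq:centered-time} of $w$ (with constants depending on $T_0$), applied to $w(t,x)-e\cdot x$, then yields $|v(t,x)-v(t,y)|\le\omega'$ for all $t\in(-3/4,0]$ and $x,y\in B_{3/4}$, with $\omega'$ small.

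\textbf{Step 2: Ishii--Lions doubling.} Fix $L$ and suppose $|\nabla u(t_0,x_0)-e|>L$ at some $(t_0,x_0)\in Q_{1/2,1/2}$. Consider
\[
\Phi(t,x,y)=v(t,x)-v(t,y)-\theta\,\phi(|x-y|)-\Lambda|x-x_0|^2-\Lambda(t-t_0)^2 ,
\]
where $\phi(\rho)=\rho-\rho^{1+\beta}/(2(1+\beta))$, $\theta$ is small (of order $L$), and $\Lambda$ is large. Step~1 gives $\sup\Phi>0$, which we should obtain by choosing the direction of $\nabla v(t_0,x_0)$ at a nearby pair, once $\theta\ll L$. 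Step~1 also shows the localization terms confine the maximum to the interior, provided $\Lambda\gg\omega'$. At the maximum $(\bar t,\bar x,\bar y)$, with $a=|\bar x-\bar y|>0$, the viscosity inequalities for $u$ at $\bar x$ and $\bar y$ are subtracted, and the time derivatives cancel up to $O(\Lambda)$.

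The nonlocal difference is split as in \cite{JSU}. The concave gain along the direction $\bar x-\bar y$ in a cone of size $a$ is of order $\theta\,a^{\beta-1}\cdot a^{-\sigma+1}$, and it is nondegenerate because $\nabla u\approx e+\theta\phi'\,\hat n$ is bounded away from $0$ (that is where $|e|=1$ matters). The remaining contributions are bounded using the Lipschitz bound, the tails (through $T_0$ and Lemma~\ref{l:Tail_inequalities}), the monotonicity of $J_p$ away from the cone, and Lemma~\ref{l:appendix}. Choosing $\beta$ and $a$ small makes the gain dominate, which gives a contradiction.

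\textbf{Main obstacle.} The hard step is making the cone gain beat all error terms when $a$ is not small: the doubling only yields contradictions at small separations, and large separations must be excluded via the smallness $\omega'$ from Step~1. I would tie $\theta$ to $\omega'$ so that the maximum forces $a\lesssim\omega'/\theta$, which is small once $r,\mu$ are small depending on $L$ and $T_0$.
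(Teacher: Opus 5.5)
Your Step~1 is the paper's Lemma~\ref{l:eps_osc}, and your overall plan (small oscillation of $v=u-e\cdot x$, then Ishii--Lions) is the paper's plan for Lemma~\ref{l:IL}. The gap is in the error bookkeeping of Step~2.

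The concave gain is not of order $\theta a^{\beta-\sigma}$. The cone integral is $\int_{|z|\lesssim a}|z|^{2}\,|z|^{p-2}\,|z|^{-d-sp}\dd z\approx a^{2-\sigma}$, not $a^{1-\sigma}$. So with $\phi''(a)\approx -\beta a^{\beta-1}$ the gain is at most of order $\theta a^{1+\beta-\sigma}$. Even the logarithmic modulus of the paper only gives $a^{1-\sigma}$ up to logarithms (cf.\ the bound for $I_1$ in Lemma~\ref{l:I123}).

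For $\sigma\le 1$, i.e.\ $p\ge 1/(1-s)$, this gain tends to $0$ as $a\to0$, and nothing prevents the maximizing separation $a$ from being arbitrarily small. So the errors you budget as $O(\Lambda)$ cannot be absorbed. These are the time derivative and, implicitly, the contribution of $\Lambda|x-x_0|^2$ to the nonlocal operator. Every error must instead be shown to be $o(a^{1-\sigma})$ as $a\to0$.

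Your quadratic time penalty fails this even with sharp bookkeeping. Positivity at the maximum only gives $\Lambda(\bar t-t_0)^2\le 2a$. Hence the time error is $2\Lambda|\bar t-t_0|\lesssim\sqrt{\Lambda a}$, which is not $o(a^{1-\sigma})$ once $\sigma\le 1/2$.

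The paper's remedy is the time penalty $\kappa\eps_0(t_0-t)^{1+\gamma}$ with $\gamma>\sigma^{-1}-1$. The same positivity argument then bounds the time error by $Ca^{\gamma/(1+\gamma)}=o(a^{1-\sigma})$ (Lemma~\ref{l:estimate_time}). The spatial localization is controlled through the simultaneous shift $(x,y)\mapsto(x+z,y+z)$, splitting at $|z|=a^{1/2}$ and using $|\nabla\psi|\le C\psi^{(m-1)/m}$ together with $\psi(\bar x)\le Ca/\eps_0$. This gives $I_3\ge -Ca^{1-\sigma/2}$. Your quadratic spatial localization would survive this shift argument, but not a crude $O(\Lambda)$ bound. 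With the exterior part of size $O(a)$, dividing by $a^{1-\sigma}$ gives the contradiction for every $\sigma\in(0,2)$.

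Accordingly, the large-separation issue you single out as the main obstacle is disposed of at once by $\theta\phi(a)\le\omega'$, as you yourself note; the real difficulty is small $a$.

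Finally, with a single time variable you cannot invoke the viscosity inequalities at $\bar x$ and $\bar y$ separately. You must double time with a penalty $(t-\tau)^2/(2\delta)$ and let $\delta\to0$, which is where the hypothesis $u\in C(-1,0;L^{p-1}_{sp}(\R^d))$ is used.
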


The next lemma turns the measure estimate into a pointwise one.

\begin{lemma} \label{l:eps_osc}
    Given any $\eps_0>0$, there are small values for $r>0$ and $\mu>0$, depending on \(\varepsilon_0,T_0,d,s,p\), so that if $u$ satisfies the assumptions of Lemma \ref{l:stage2} then 
\begin{align*}
 \underset{x \in B_{3/4}}{\osc}\,(u(t,x) -  e \cdot x)\leq \eps_0 \quad \text{for all }t \in (-3/4, 0].
\end{align*}
\end{lemma}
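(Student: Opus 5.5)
The idea is to work with $h(t,x) := u(t,x) - e\cdot x$. It is $2$-Lipschitz in $x$ on $Q_{1,1}$, since $|\nabla u|\le 1$. We have $\nabla h = \nabla u - e$, so the hypothesis says that $|\nabla h(t,x)| < r$ on all of $Q_{1,1}$ except a set of space--time measure at most $2\mu$. The oscillation of $h(t,\cdot)$ on $B_{3/4}$ must be controlled at \emph{every} time $t\in(-3/4,0]$. The measure hypothesis only controls almost every time slice in an averaged sense, so the time regularity from Lemma~\ref{l:centered-time} will be used to pass from "good" times to all times.

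The first step is the good time slices. By Fubini, $\int_{-1}^0 |\{x\in B_1: |\nabla h(t,x)|\ge r\}|\dd t \le 2\mu$. Hence the set $G$ of times $t\in(-1,0]$ with $|\{x\in B_1: |\nabla h(t,x)|\ge r\}| \le \sqrt{\mu}$ has complement of measure at most $2\sqrt\mu$.

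At a time $t\in G$, I bound $|h(t,x)-h(t,y)|$ for $x,y\in B_{3/4}$ by averaging over segments. Pick an intermediate point $z$ in a small ball $B_\rho(\frac{x+y}{2})$ and integrate $|\nabla h|$ along the broken path $x\to z\to y$. The standard estimate $\int_{B_\rho} \int_0^1 |\nabla h(x+s(z-x))|\,|z-x| \dd s\dd z \le C\int_{B_1}|\nabla h|\,|x-\xi|^{1-d}\dd\xi$ (the Riesz-potential bound) applies. Combined with $|\nabla h|\le 2$ on the bad set and $|\nabla h|<r$ elsewhere, it gives
\[
\osc_{B_{3/4}} h(t,\cdot) \le C\bigl(r + (\sqrt{\mu})^{1/d}\bigr),
\]
because $\int_E |x-\xi|^{1-d}\dd\xi \le C|E|^{1/d}$. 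Note that $h(t,\cdot)$ is smooth in $x$ by our standing assumption, so the pointwise argument is legitimate.

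The second step extends the bound to all times. Set $w=u-u(t,0)$, and note that $h(t,x)-h(t,0)=w(t,x)-e\cdot x$. For arbitrary $t\in(-3/4,0]$, choose $t'\in G$ with $|t-t'|\le 2\sqrt\mu$ and $t'>-1$; this is possible since $|G^c|\le 2\sqrt\mu$. Then
\[
|h(t,x)-h(t,0)| \le |h(t',x)-h(t',0)| + |w(t,x)-w(t',x)|.
\]
The hypotheses of Lemma~\ref{l:centered-time} hold: $|\nabla u|\le1$ in $Q_{1,1}$ and the tail of $w$ is bounded by $T_0$. That lemma bounds the last term by $C(1+T_0^{p-1})\,\omega(2\sqrt\mu)$, where $\omega(\ell)$ is $\ell$, $\ell(1+|\log \ell|)$, or $\ell^{1/\sigma}$ according to the three regimes of $\sigma$. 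Combining,
\[
\osc_{B_{3/4}} h(t,\cdot)\le C\bigl(r+\mu^{1/(2d)}\bigr)+C(1+T_0^{p-1})\,\omega(2\sqrt\mu),
\]
and choosing $r,\mu$ small in terms of $\eps_0,T_0,d,s,p$ makes the right side at most $\eps_0$.

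I expect the main obstacle to be the first step: converting smallness of $\nabla h$ off a small set into a uniform pointwise oscillation bound. A naive integration along a single segment fails, since the segment could lie entirely in the bad set. The averaging over intermediate points $z$ is the fix I would try first. An alternative would be a Morrey/Poincaré-type estimate for $(|\nabla h|-r)_+$ in $L^q$ with $q>d$, which is small because $|\nabla h|$ is bounded and the bad set is small. The time step is routine once Lemma~\ref{l:centered-time} is available; one only needs to check that the tail hypothesis there is the one assumed in Lemma~\ref{l:stage2}, which it is.
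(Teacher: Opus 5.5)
Your proposal is correct and follows essentially the same route as the paper: Chebyshev in time to isolate the slices where the bad set has measure at most $\sqrt\mu$, a Morrey-type oscillation bound on those slices, and Lemma~\ref{l:centered-time} applied to $w$ to reach the remaining times. The differences are cosmetic. The paper simply cites Morrey's inequality with $\|\nabla u(t,\cdot)-e\|_{L^{2d}(B_1)}\leq C(r+\mu^{1/(4d)})$, whereas you reprove that step via the Riesz potential bound. The paper also picks the nearby good time within $3\sqrt\mu$ rather than $2\sqrt\mu$, which avoids the borderline case in your choice of $t'$.
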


\begin{proof}
    Let $f(t):=|\{x \in B_1 \, : \, \nabla u (t,x)\notin B_r(e)\}|$. By assumption, we know $\int_{-1}^{0} f(t) \dd t \leq 2\mu$. Define $E:=\{ t \in (-3/4, 0] \, : \, f(t) \geq \sqrt{\mu}\}$. Then
    \[
    |E| \leq \frac{1}{\sqrt{\mu}}\int_E f(t) \dd t  \leq \frac{1}{\sqrt{\mu}}\int_{-1}^{0} f(t) \dd t \leq  2\sqrt{\mu},
    \]
  and for $t \in (-3/4,0) \setminus E$ we have \
  \[
  |\{x \in B_1 \, : \, \nabla u (t,x)\notin B_r(e)\}| \leq \sqrt{\mu}. 
  \]
  Combining this with Morrey's inequality (see for instance \cite[$\S$  5.6.2]{Evans})  for $t \in (-3/4,0) \setminus E$ we obtain
 \begin{equation} \label{eq:morrey}
     \osc_{x \in B_{1}} (u(t,\cdot )-e\cdot x) \leq C \| \nabla u(t,\cdot) - e\|_{L^{2d}(B_1)} \leq C \left( r + \mu^{\frac{1}{4d}}\right)
 \end{equation}
 where $C$ is a positive dimensional constant.

For $t\in E$ choose $t_0\in(-3/4,0)\setminus E$ with $|t-t_0|\leq3\sqrt\mu$. By Lemma~\ref{l:centered-time}, the difference $|w(t,x)-w(t_0,x)|$ tends to zero uniformly for $x\in B_{3/4}$ as $\mu\to0$, with a bound depending only on $T_0,d,s,p$. Spatial oscillations are unchanged by subtracting $u(t,0)$. Combining this with \eqref{eq:morrey}, and taking first $\mu$ and then $r$ sufficiently small, proves the lemma.

\end{proof}

Lemma \ref{l:stage2} follows from the next result together with Lemma \ref{l:eps_osc}. 
Here we employ an Ishii-Lions argument in the spirit of \cite[Section 6]{GJS} with the techniques introduced in \cite[Section 4]{JSU} for the parabolic setting to show that the smallness of oscillation in space at each time-slice is enough to ensure the gradient of $u$ is close to the direction $e$ pointwise.

\begin{lemma}\label{l:IL}
Let $0<L\leq 1/4$, and $T_0> 0$. There exists some $\eps_0>0$ small enough so that the following statement is true.
    Assume $u$ is a solution of \eqref{eq:fracplap} in $Q_{1,2}$ for $p \in [2, \frac{2}{1-s})$, $u\in C(-1,0;L^{p-1}_{sp}(\R^d))$,  $ |\nabla u| \leq 1 $  in $Q_{1,1}$ and $\sup_{t \in (-1,0]}{\Tail}_{p-1,sp+1}(w(t,\cdot);1) \leq T_0$.  Furthermore for some $e \in S^{d-1}$ we have 
    \begin{equation} \label{eq:osc_small}
        \underset{x \in B_{3/4}}{\osc}\,(u(t,x) -  e \cdot x)\leq \eps_0 \quad \text{for any }t  \in (-3/4, 0].
    \end{equation} 
Then, it holds that
\begin{equation*} 
    |\nabla u - e| \leq L \quad \text{in }Q_{1/2,1/2}.
\end{equation*}
Here $\eps_0$ depends only on $L$, $T_0$, $d$, $s$ and $p$.
\end{lemma}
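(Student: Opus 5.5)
We run an Ishii--Lions doubling-of-variables argument on $v(t,x):=u(t,x)-e\cdot x$, following \cite[Section 6]{GJS} for the space part and \cite[Section 4]{JSU} for time. By the symmetry $x\mapsto$ direction, it suffices to fix a unit vector $\nu$ with $\nu\cdot e$ arbitrary and show $\nabla u\cdot \nu - e\cdot\nu\le L$ in $Q_{1/2,1/2}$. Equivalently, we show a one-sided Lipschitz bound of size $L$ for $v$ at small scales. To this end fix $(t_0,x_0)\in Q_{1/2,1/2}$ and consider
\[
\Phi(t,x,y):=v(t,x)-v(t,y)-L_1\,\phi(|x-y|)-\Lambda|x-x_0|^2-\Lambda(t-t_0)^2 \quad\text{on } (-3/4,0]\times \overline B_{3/4}^2,
\]
where $\phi(r)=r-\kappa r^{1+\beta}$ is the usual concave modulus with small $\beta>0$ and $L_1\simeq L/2$. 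The goal is $\sup\Phi\le 0$, since this yields $v(t,x)-v(t,y)\le L|x-y|$ near $x_0$ and hence $|\nabla u-e|\le L$ after taking all directions.

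\textbf{Localization.} Suppose instead that $\sup\Phi>0$, attained at $(\bar t,\bar x,\bar y)$. By \eqref{eq:osc_small}, $v(\bar t,\bar x)-v(\bar t,\bar y)\le\eps_0$, which forces $\Lambda|\bar x-x_0|^2\le\eps_0$ and $L_1\phi(|\bar x-\bar y|)\le \eps_0$. Taking $\Lambda$ large and $\eps_0\ll L$, the maximum is therefore interior, with $a:=|\bar x-\bar y|\le C\eps_0/L$ small and $\bar x\neq\bar y$. Two consequences follow. First, the test gradients at $\bar x$ and $\bar y$ are both $e+L_1\phi'(a)\hat n+O(\Lambda\sqrt{\eps_0})$, so they stay within distance $\approx L$ of $e$; in particular they are nondegenerate, with size $\approx1$. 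Second, the time penalization term has derivative $2\Lambda(\bar t-t_0)$, bounded by $C\Lambda$. Because the equation for $u$ is translation invariant in $t$ and the time derivatives of the two copies cancel in the doubled variables (as in \cite{JSU}), the parabolic viscosity inequalities combine to
\[
(-\Delta_p)^s\psi_x(\bar t,\bar x)-(-\Delta_p)^s\psi_y(\bar t,\bar y)\le C\Lambda,
\]
where $\psi_x,\psi_y$ are the touching functions, extended by $u$ outside a small ball.

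\textbf{Contradiction.} We split the nonlocal difference into regions as in \cite[Section 6]{GJS}. (i) The cone $\{z:|z\cdot\hat n|\ge(1-\eta_0)|z|,\ |z|<\delta a\}$ along $\hat n=(\bar x-\bar y)/a$. There the concavity of $\phi$ produces a negative contribution of order $-c\,L\,\kappa\, a^{\beta-1}\cdot a^{\,1-\sigma}\cdot|\nabla\psi|^{p-2}\simeq -cL a^{\beta-\sigma}$; it is crucial here that $p\ge2$ and that the gradient is of size $\approx1$, so the linearized kernel is nondegenerate. (ii) The remaining part of $B_{\delta a}$. This is controlled by $C a^{2-\sigma}$ times second derivatives of the test function, together with the Lipschitz bound $|\nabla u|\le1$. (iii) The intermediate region $B_{1/4}\setminus B_{\delta a}$. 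Here we use $|J_p(A)-J_p(B)|\le C(|A|^{p-2}|A-B|+|A-B|^{p-1})$ from Lemma~\ref{l:appendix}. The comparison of increments at $\bar x$ and $\bar y$ is made via the maximality of $\Phi$, which gives a one-sided bound on the difference. The key point is that the increments $u(\bar x+z)-u(\bar x)$ differ from $e\cdot z$ by at most $\min(2|z|,\eps_0)$, so this region contributes at most $C(\eps_0^{\theta}a^{-\sigma}+1)$, with the $\eps_0$ gain coming from \eqref{eq:osc_small}. (iv) The far region. This is bounded by $C(1+T_0^{p-1})$ using the Lipschitz estimate \eqref{eq:exterior-lipschitz} of Lemma~\ref{l:centered-time}, since $|\bar x-\bar y|$ is small.

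Collecting these, we get $cLa^{\beta-\sigma}\le C\Lambda+C(1+T_0^{p-1})+C\eps_0^{\theta}a^{-\sigma}$. Since $a\le C\eps_0/L$ and $\beta<\sigma$, taking $\eps_0$ small relative to $L$ and $T_0$ gives a contradiction.

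\textbf{Main obstacle.} The hardest step is region (iii). One must show that the mixed $p$-Laplacian difference is genuinely small there and does not swamp the cone gain. The plan is to write $u(\bar x+z)-u(\bar x)=e\cdot z+O(\eps_0)$, linearize $J_p$ around $e\cdot z$, and exploit the cancellation of the leading $J_p(e\cdot z)$ terms between $\bar x$ and $\bar y$. If the bare $\eps_0$ gain is insufficient, the fallback is to interpolate $\min(|z|,\eps_0)$ against the kernel $|z|^{-d-sp}$, which yields an $\eps_0^{\sigma'}$ factor.
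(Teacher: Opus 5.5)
Your architecture (Ishii--Lions on $v=u-e\cdot x$, doubling in time, then a cone/near/intermediate/far split with the Lipschitz bound on the far field) is the paper's. The gap is in the final step: none of your error terms is shown to be of lower order than the gain as $a\to0$, so no contradiction follows.

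First, the gain is miscomputed. Since $\int_{B_{\delta a}}|z|^{p-2}|z|^{2}|z|^{-d-sp}\dd z\sim(\delta a)^{2-\sigma}$, region (i) yields at most $L\kappa a^{\beta-1}a^{2-\sigma}=L\kappa a^{1+\beta-\sigma}$. Compare the paper's $I_1\geq cL\eta^{d+p-1}|a|^{1-\sigma}$. Your gain tends to $0$ whenever $\sigma<1+\beta$, in particular for every $p\geq 1/(1-s)$.

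Second, $a\leq C\eps_0/L$ is only an upper bound. The maximizer can have $a$ arbitrarily small compared with $\eps_0$, so the contradiction must hold uniformly as $a\to0$ with $\eps_0$ fixed; shrinking $\eps_0$ cannot help. Your terms $C\Lambda$ and $C(1+T_0^{p-1})$ are $O(1)$, and $C\eps_0^{\theta}a^{-\sigma}$ is more singular than any gain. Even your own final inequality, multiplied by $a^{\sigma}$, reads $cLa^{\beta}\leq C(\Lambda+1+T_0^{p-1})a^{\sigma}+C\eps_0^{\theta}$, which is consistent once $a\ll\eps_0^{\theta/\beta}$.

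Using \eqref{eq:osc_small} in region (iii) is the wrong mechanism. The best it gives is $\int_{\delta a}^{1/4}\min(r,\eps_0)\,r^{-1-\sigma}\dd r$, which is of order $\eps_0^{1-\sigma}$ for $\sigma<1$ and $a^{1-\sigma}$ for $\sigma>1$; neither is small relative to the gain.

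The split (i)/(ii) also fails as written. With a fixed aperture and fixed $\delta$, the tangential curvature $L_1\phi'(a)/a\sim L/a$ of $|x-y|$ costs about $L\delta^{2-\sigma}a^{1-\sigma}$ in region (ii), and similarly inside the cone. This loss dominates $L\kappa a^{1+\beta-\sigma}$.

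The missing idea is that every error must carry a power of $|a|$ strictly larger than $1-\sigma$; the paper's construction is built for exactly this.
\begin{itemize}
\item \emph{Time and localization.} Positivity of the maximum ties the penalizations to $a$. From $\kappa\eps_0(t_0-t)^{1+\gamma}<v(t,x)-v(t,y)\leq 2|a|$, the time term is at most $C|a|^{\gamma/(1+\gamma)}$. This is $o(|a|^{1-\sigma})$ only if $\gamma>\sigma^{-1}-1$; a quadratic time penalty gives at best $\sqrt{\Lambda a}$, which fails for $\sigma\leq 1/2$. Likewise $\kappa\eps_0\psi(x)<2|a|$ with $\psi=\psi_0^m$ gives $|\nabla\psi(x)|\leq C(|a|/\eps_0)^{(m-1)/m}$.
\item \emph{Intermediate region.} The paper does not use the oscillation bound here. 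For $|z|<|a|^{1/2}$ it uses maximality under the simultaneous shift $(x,y)\mapsto(x+z,y+z)$, which leaves $\omega(|x-y|)$ unchanged so only the localization error enters, together with monotonicity of $J_p$. For $|z|\geq|a|^{1/2}$ it uses $|[u(x)-u(x+z)]-[u(y)-u(y+z)]|\leq 2|a|$. This gives $I_3\geq -C|a|^{1-\sigma/2}$.
\item \emph{Far region.} It is $O(|a|)$, not $O(1)$, because \eqref{eq:exterior-lipschitz} is applied to a difference between $x$ and $y$.
\item \emph{Cone versus off-cone.} The logarithmic modulus $\omega$ is used with aperture $\eta(a)=c_0|\log|a||^{-1}$ and test-ball radius $\eps_1(a)|a|$, where $\eps_1(a)=c_1|\log|a||^{-q}$ and $q>(d+p-1)/(2-\sigma)$. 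This makes the cone gain beat the off-cone loss $CL\eps_1^{2-\sigma}|a|^{1-\sigma}$.
\end{itemize}
Dividing by $|a|^{1-\sigma}$ then gives $cL\eta^{d+p-1}\leq C|a|^{\sigma}+C|a|^{\sigma-1/(1+\gamma)}$, a contradiction for small $|a|$. The hypothesis \eqref{eq:osc_small} is used only to keep the maximum interior and to force $L\omega(|a|)\leq\eps_0$, hence $|a|$ small.
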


The purpose of the rest of this section is to prove Lemma \ref{l:IL}. The idea is to apply the Ishii-Lions method to the function $v(t,x) = u(t,x) - e \cdot x$, which has a small oscillation in $B_{3/4}$ at each time by Lemma \ref{l:eps_osc}.

\subsection{Setting up the Ishii-Lions method}

Our aim is to prove that for $v(t,x)=u(t,x)-e\cdot x$,
\begin{equation}\label{thesis_IL}
\|\nabla v\|_{L^\infty(Q_{1/2,1/2})}\leq L.
\end{equation}
Fix $t_0\in(-1/2,0)$ and consider
\[
g(t,x,y):=v(t,x)-v(t,y)-L\omega(|x-y|)-\kappa\eps_0\psi(x)
-\kappa\eps_0(t_0-t)^{1+\gamma}
\]
on $[-3/4,t_0]\times\overline B_{3/4}\times\overline B_{3/4}$. We use the same strictly concave modulus as in the elliptic case,
\[
\omega(r)=r+\frac{r}{20\log(r/4)}.
\]
For $r\in(0,1/4]$,
\[
-\frac{C}{r\log^2r}\leq\omega''(r)\leq-\frac{c}{r\log^2r}.
\]
Let $\psi=\psi_0^m$, where $\psi_0$ is smooth, vanishes on $\overline B_{1/2}$, is positive outside it, and equals one outside $B_{9/16}$. Choose
\[
m>\max\{4,2/(2-\sigma)\},\qquad
\gamma>\max\{1,\sigma^{-1}-1\}.
\]
In particular,
\begin{equation}\label{eq:inequality_psi}
|\nabla\psi(x)|\leq C\psi(x)^{(m-1)/m}.
\end{equation}
We take $\kappa$ large depending on $m,\gamma$. By \eqref{eq:osc_small}, a positive maximum of $g$ then has $t>-3/4$ and $x\in B_{9/16}$. Moreover, $L\omega(|x-y|)\leq\eps_0$, so taking $\eps_0$ small forces $|x-y|<1/16$ and $y\in B_{5/8}$.

Suppose, for a contradiction, that the maximum is positive. For $\delta>0$, double the time variable and set
\begin{align*}
\Phi_\delta(t,\tau,x,y):={}&v(t,x)-v(\tau,y)-L\omega(|x-y|)-\kappa\eps_0\psi(x)-\kappa\eps_0(t_0-t)^{1+\gamma}-\eps_0\frac{(t-\tau)^2}{2\delta}.
\end{align*}
Let $(t_\delta,\tau_\delta,x_\delta,y_\delta)$ be a maximizer and write $a_\delta=x_\delta-y_\delta$. We have
\begin{equation}\label{eq:M_delta}
M_\delta:=\Phi_\delta(t_\delta,\tau_\delta,x_\delta,y_\delta)
\geq\max g=g(t,x,y)>0.
\end{equation}
The local uniform continuity of $u$ gives, along a subsequence,
\[
(t_\delta,\tau_\delta,x_\delta,y_\delta)\longrightarrow(t,t,x,y),
\qquad a:=x-y\ne0.
\]
The limiting point maximizes $g$. Thus the contact points are spatially interior and away from the initial time for small $\delta$.

Set
\[
\phi(x,y):=e\cdot(x-y)+L\omega(|x-y|)+\kappa\eps_0\psi(x).
\]
As in \cite[Section 4]{JSU}, use the tests

\begin{align*}
&w_{1}(t,x) := \begin{cases} \Lambda_1+\phi(x,y_\delta) + \kappa \eps_0 (t_0-t)^{1+\gamma} + \varepsilon_0\frac{(t - \tau_\delta)^2}{2\delta} & \text{if }x \in   B_{\eps_1(a_\delta)|a_\delta|}(x_\delta) \\ u(t,x) & \text{otherwise} \end{cases}\\
&w_{2}( \tau,y) := \begin{cases} \Lambda_2-\phi(x_\delta,y) -  \varepsilon_0\frac{(t_\delta - \tau)^2}{2\delta}  &\hspace{1in} \text{if } y \in B_{\eps_1(a_\delta)|a_\delta|}(y_\delta) \\ u(\tau,y) &\hspace{1in} \text{otherwise} \end{cases}
   \end{align*}
with the constants $\Lambda_1, \Lambda_2$ chosen to give equality at the contact points. We take $\eps_1(a)=c_1|\log|a||^{-q}$ and $q$ is fixed below.  The derivatives of the doubling time penalty cancel, giving
\begin{equation}\label{eq:master_ineq_IL1}
\dsp w_1(t_\delta,x_\delta)-\dsp w_2(\tau_\delta,y_\delta)
\leq\kappa\eps_0(1+\gamma)(t_0-t_\delta)^\gamma.
\end{equation}

We now let $\delta\to0$ in \eqref{eq:master_ineq_IL1}, before estimating the spatial integrals. Since $\eps_1(a_\delta)\leq1/2$ we have  $y_\delta\notin B_{\eps_1(a_\delta)|a_\delta|}(x_\delta)$ and thus $w_1(t,x_\delta)-w_1(t,x_\delta+z)$ is smooth in $z\in B_{\eps_1(a_\delta)|a_\delta|}$ and constant in time. 
Since $u\in C(-1,0;L^{p-1}_{sp}(\R^d))$, we can argue as in \cite[Lemma 3.3]{JSU} to take the limit $\delta\to 0$ in \eqref{eq:master_ineq_IL1} and we get
\begin{equation}\label{eq:master_ineq_IL}
\mathcal{I}:=\dsp w_1(t,x)-\dsp w_2(t,y)
\leq\kappa\eps_0(1+\gamma)(t_0-t)^\gamma.
\end{equation}

The following lemma bounds the right-hand side of this inequality.

\begin{lemma}\label{l:estimate_time}
At the limiting contact point,
\[
\mathcal I\leq C|a|^{\gamma/(1+\gamma)},
\]
where $C$ depends on $\kappa,\gamma$ and universal constants.
\end{lemma}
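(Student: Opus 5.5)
By \eqref{eq:master_ineq_IL} we already have $\mathcal I\leq\kappa\eps_0(1+\gamma)(t_0-t)^\gamma$. The plan is therefore to show that $t_0-t$ is controlled by a power of $|a|$, namely $t_0-t\leq C|a|^{1/(1+\gamma)}$. Substituting this gives $\mathcal I\leq C|a|^{\gamma/(1+\gamma)}$ at once. The bound on $t_0-t$ should come from the positivity of the maximum of $g$ at the limiting point $(t,x,y)$. There $g(t,x,y)>0$, so
\[
\kappa\eps_0(t_0-t)^{1+\gamma}< v(t,x)-v(t,y)-L\omega(|x-y|)-\kappa\eps_0\psi(x)\leq v(t,x)-v(t,y),
\]
since $\omega\geq0$ on the relevant range and $\psi\geq0$.

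The next step is to bound $v(t,x)-v(t,y)$ linearly in $|a|$. Because $|\nabla u|\leq1$ in $Q_{1,1}$ and $x,y\in B_{5/8}$, the function $v=u-e\cdot x$ satisfies $|\nabla v|\leq2$ there. Hence $v(t,x)-v(t,y)\leq2|a|$ by the mean value theorem along the segment, which lies in the convex ball. This gives $\kappa\eps_0(t_0-t)^{1+\gamma}\leq2|a|$, that is,
\[
(t_0-t)^{\gamma}\leq\Bigl(\frac{2|a|}{\kappa\eps_0}\Bigr)^{\gamma/(1+\gamma)}.
\]
Multiplying by $\kappa\eps_0(1+\gamma)$ yields
\[
\mathcal I\leq(1+\gamma)(\kappa\eps_0)^{1/(1+\gamma)}2^{\gamma/(1+\gamma)}|a|^{\gamma/(1+\gamma)}\leq C|a|^{\gamma/(1+\gamma)},
\]
using $\eps_0\leq1$. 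The constant then depends only on $\kappa$ and $\gamma$.

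The main point needing care is whether the crude Lipschitz bound is enough, or whether the statement intends a sharper use of $L\omega(|a|)$. Since $\omega(r)\approx r$, one could even absorb, but the statement only asks for $C|a|^{\gamma/(1+\gamma)}$ with $C$ depending on $\kappa,\gamma$. So the Lipschitz estimate suffices, and I would keep the argument at that level. One further check: the limiting contact point inherits $g(t,x,y)>0$ and $t\leq t_0$. Both follow from the convergence of the doubled-variable maximizers to a maximizer of $g$, as established right before the statement.
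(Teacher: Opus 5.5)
Your proposal is correct and follows the paper's proof exactly: positivity of the maximum at the limiting point gives $\kappa\eps_0(t_0-t)^{1+\gamma}<v(t,x)-v(t,y)\leq 2|a|$ by the spatial Lipschitz bound, and inserting this into \eqref{eq:master_ineq_IL} yields $\mathcal I\leq C|a|^{\gamma/(1+\gamma)}$. Your extra checks, that $\omega\geq0$ and $\psi\geq0$ on the relevant range and that the limiting point inherits $g>0$, are details the paper leaves implicit.
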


\begin{proof}
The positive maximum \eqref{eq:M_delta} and the spatial Lipschitz bound give
\[
\kappa\eps_0(t_0-t)^{1+\gamma}<v(t,x)-v(t,y)\leq2|a|.
\]
Combining this with  \eqref{eq:master_ineq_IL} proves the lemma.
\end{proof}

For $D\subset\R^d$, introduce the notation
\[
\mathcal L[D]w(t,x):=\int_DJ_p(w(t,x)-w(t,x+z))|z|^{-d-sp}\dd z.
\]
Let
\[
\mathcal K(a):=\{z\in B_{|a|/2}:|a\cdot z|\geq\sqrt{1-\eta(a)^2}|a|\,|z|\},
\qquad \eta(a)=c_0|\log|a||^{-1},
\]
and
\[
\mathcal D_1=B_{\eps_1(a)|a|}\cap\mathcal K^c,\qquad
\mathcal D_2=B_{1/16}\setminus(\mathcal D_1\cup\mathcal K).
\]
As before, we split the operator difference into
\begin{equation}\label{contrad2}
\begin{aligned}
\mathcal I={}&\underbrace{\mathcal L[\mathcal K]w_1(t,x)-\mathcal L[\mathcal K]w_2(t,y)}_{I_1}
+\underbrace{\mathcal L[\mathcal D_1]w_1(t,x)-\mathcal L[\mathcal D_1]w_2(t,y)}_{I_2}\\
&+\underbrace{\mathcal L[\mathcal D_2]w_1(t,x)-\mathcal L[\mathcal D_2]w_2(t,y)}_{I_3}
+\underbrace{\mathcal L[B_{1/16}^c]w_1(t,x)-\mathcal L[B_{1/16}^c]w_2(t,y)}_{I_4}.
\end{aligned}
\end{equation}

\begin{lemma}[Estimate of $I_1,\,I_2$ and $I_3$]\label{l:I123}
For every $L\in(0,1/4]$ and $\eps_0$ sufficiently small,
\begin{equation}\label{eq:bound123}
\begin{aligned}
I_1&\geq cL\eta^{d+p-1}|a|^{p(1-s)-1},\\
I_2&\geq-CL\eps_1^{p(1-s)}|a|^{p(1-s)-1},\\
I_3&\geq-C|a|^{p(1-s)/2}.
\end{aligned}
\end{equation}
The constants depend on $d,s,p$ and the chosen exponent $q$, but not on $a$.
\end{lemma}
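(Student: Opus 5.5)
The plan follows the elliptic computation in \cite[Section 6]{GJS}. The only parabolic feature, the time penalty, has already been moved to the right-hand side of \eqref{eq:master_ineq_IL}. So $I_1$, $I_2$, $I_3$ are estimated at a fixed time $t$ using only the spatial structure of the test functions $w_1(t,\cdot)$, $w_2(t,\cdot)$. Write $\phi(x,\cdot)$ and $\phi(\cdot,y)$ for the spatial parts. Near the contact points the increments are
\[
w_1(t,x)-w_1(t,x+z)=\phi(x,y)-\phi(x+z,y),\qquad w_2(t,y)-w_2(t,y+z)=\phi(x,y+z)-\phi(x,y),
\]
on $B_{\eps_1|a|}$. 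Away from these balls, maximality of $g$ gives the one-sided comparisons $u(t,x+z)-u(t,y+z)\le$ the corresponding $\phi$-differences. Throughout, $\nabla_x\phi = e + L\omega'(|a|)\hat a + \kappa\eps_0\nabla\psi(x)$. Since $\omega'\approx 1$, $L\le 1/4$ and $\eps_0$ is small, this gradient is comparable to $1$ in modulus. This is the nondegeneracy that makes $J_p$ increments behave like $|\cdot|^{p-2}$ times linear terms.

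\textbf{Step 1: $I_1$ on the cone $\mathcal K$.} I would first split $\mathcal K$ into $\mathcal K\cap B_{\eps_1|a|}$ and $\mathcal K\setminus B_{\eps_1|a|}$. On the inner part we use the explicit tests. Symmetrizing $z\mapsto -z$ and using Lemma~\ref{l:appendix}, the combination
\[
J_p(\phi(x,y)-\phi(x+z,y))+J_p(\phi(x,y)-\phi(x-z,y))
\]
is controlled by $|\nabla\phi\cdot z|^{p-2}$ times the second difference. Along $\hat a$ this second difference equals $-L\omega''(|a|)(\hat a\cdot z)^2$ up to errors of size $\eta^2$. Subtracting the analogous expression for $w_2$ doubles this. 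The strict concavity $-\omega''\ge c/(|a|\log^2|a|)$ then gives a positive contribution of order
\[
L\,|a|^{-1}|\log|a||^{-2}\int_{\mathcal K\cap B_{\eps_1|a|}}|z|^{p-sp-d}\dd z .
\]
The outer part of the cone, up to radius $|a|/2$, is handled the same way, but with the comparison coming from the maximum of $g$ (i.e.\ $v(t,x+z)-v(t,y+z)\le L\omega(|a|)+\kappa\eps_0(\psi(x+z)-\psi(x))$). There the kernel is essentially $|z|^{p-2-d-sp}$ restricted to the cone. The cone has angular measure $\sim\eta^{d-1}$, and $\eta$ is chosen so that the deviation from $\hat a$ costs less than the concavity gain. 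This produces $cL\eta^{d+p-1}|a|^{p(1-s)-1}$; the powers of $\eta$ absorb the logarithms. The $\psi$ terms are lower order by \eqref{eq:inequality_psi} and $m>2/(2-\sigma)$.

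\textbf{Step 2: $I_2$.} On $\mathcal D_1=B_{\eps_1|a|}\setminus\mathcal K$ only the explicit smooth tests appear. Taylor expansion to second order gives an integrand bounded by $C|z|^{p-2}\,L|\omega''|\,|z|^2\,|z|^{-d-sp}$, plus the $\psi$ part. Integrating up to radius $\eps_1|a|$ yields $CL\eps_1^{p(1-s)}|a|^{p(1-s)-1}$, since $|\omega''|\lesssim |a|^{-1}$. No cancellation is needed here; it is only a size estimate.

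\textbf{Step 3: $I_3$, the main obstacle.} On $\mathcal D_2$ we have no explicit formula, only Lipschitz bounds and the one-sided inequality from maximality. For each $z$, the maximality of $g$ gives
\[
w_1(t,x)-w_1(t,x+z)\ \ge\ w_2(t,y)-w_2(t,y+z) - \bigl[\phi\text{-error}\bigr].
\]
Since $J_p$ is monotone, the difference of the two $J_p$ terms is bounded below by $-C(|\cdot|^{p-2}\,\text{error} + \text{error}^{p-1})$. Here the error is of size $L(\omega(|a+\cdot|)-\ldots)+\kappa\eps_0|z|$, which outside the cone I would bound by $\min(|z|,|a|)$-type quantities. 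Integrating against $|z|^{-d-sp}$ over $B_{1/16}\setminus B_{\eps_1|a|}$, and splitting at $|z|\sim|a|^{1/2}$, should give $-C|a|^{p(1-s)/2}$, using that $\sigma<2$. The delicate point is the region $\eps_1|a|<|z|<|a|$ near $\mathcal K^c$, where the increment of the test function is comparable to $|z|$ and not small. I expect to need exactly the bound $-C\eps_1^{p(1-s)-2}\ldots$ to be absorbed by choosing $q$ large relative to the exponent of $\eta$. This bookkeeping between $q$, $\eta$ and $\eps_1$ is where I expect the main difficulty, as in \cite[Lemma 6.5]{GJS}.
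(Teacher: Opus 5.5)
Your overall frame (the GJS splitting at a fixed time, with the time penalty already moved to the right-hand side of \eqref{eq:master_ineq_IL}) is the paper's. Step 2 is essentially right, although for $\sigma\ge1$ you do need the $z\mapsto-z$ symmetrization of each operator to remove the first-order term $\kappa\eps_0\nabla\psi(x)\cdot z$, so it is not a pure size estimate. Steps 1 and 3, however, use the wrong comparisons, and as written they do not give the stated bounds.

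\textbf{Step 3.} On $\mathcal D_2$ we have $|z|\ge\eps_1|a|$, so both test functions coincide with $u$. The comparison to use is the simultaneous shift $g(t,x+z,y+z)\le g(t,x,y)$. Since $|(x+z)-(y+z)|=|a|$, the $e\cdot(x-y)$ and $L\omega$ terms cancel identically, and only
\[
u(t,x)-u(t,x+z)\ \ge\ u(t,y)-u(t,y+z)-\kappa\eps_0\bigl(\psi(x+z)-\psi(x)\bigr)
\]
remains. Your proposed $\omega$-error fails. An error of size $L\min(|z|,|a|)$, integrated against $|z|^{p-2-d-sp}$ over $\eps_1|a|<|z|<|a|$, already produces a term of order $L|a|^{1-\sigma}$, and even $L(\eps_1|a|)^{1-\sigma}$ if $\sigma>1$. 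This is neither $O(|a|^{p(1-s)/2})$ nor absorbable into $I_1\approx L\eta^{d+p-1}|a|^{1-\sigma}$. Taking $q$ large makes it worse, since $\eps_1^{p(1-s)-2}=\eps_1^{-\sigma}$ grows with $q$. A $\psi$-error $\kappa\eps_0|z|$ is also too crude: it gives $\eps_0\int_{\eps_1|a|}^{1/16}r^{-\sigma}\dd r$, which is not small relative to $|a|^{1-\sigma}$.

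The missing idea is that positivity of $g$ gives $\kappa\eps_0\psi(x)<v(t,x)-v(t,y)\le2|a|$. Hence $|\nabla\psi(x)|\le C(|a|/\eps_0)^{(m-1)/m}$ by \eqref{eq:inequality_psi}; this is the reason for taking $\psi=\psi_0^m$. For $\eps_1|a|<|z|<|a|^{1/2}$ the error is then $\lesssim|a|^{1-1/m}|z|+|z|^2$. For $|z|\ge|a|^{1/2}$ no one-sided comparison is needed: the Lipschitz bound $\bigl|[u(t,x)-u(t,x+z)]-[u(t,y)-u(t,y+z)]\bigr|\le2|a|$ gives $|a|\int_{|a|^{1/2}}^{1/16}r^{-1-\sigma}\dd r\lesssim|a|^{1-\sigma/2}$. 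The $\eps_1^{1-\sigma}$ loss for $\sigma>1$ is beaten by the extra power of $|a|$ through $m>2/(2-\sigma)$, not by $q$.

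\textbf{Step 1.} The lower bound has to come from $\mathcal K\setminus B_{\eps_1|a|}$. By your own count the inner part gives only $L\eta^{d+1}\eps_1^{p(1-s)}|a|^{p(1-s)-1}$, which is smaller than the $I_2$ error. But the comparison you write on the outer part is again the shifted one, which contains no second difference of $\omega$ and hence no concavity gain. You need the two separate comparisons $g(t,x+z,y)\le g(t,x,y)$ and $g(t,x,y+z)\le g(t,x,y)$. These give $u(t,x)-u(t,x+z)\ge\ell_1:=\phi(x,y)-\phi(x+z,y)$ and $u(t,y)-u(t,y+z)\le\ell_2:=\phi(x,y+z)-\phi(x,y)$, so that $\ell_1-\ell_2=-L[\omega(|a+z|)+\omega(|a-z|)-2\omega(|a|)]-\kappa\eps_0(\psi(x+z)-\psi(x))$ carries the concavity.

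Moreover, $|\nabla_x\phi|\ge1/2$ does not make the kernel comparable to $|z|^{p-2-d-sp}$ on $\mathcal K$. With $\hat a=a/|a|$, we have $\hat a\cdot\nabla_x\phi=e\cdot\hat a+L\omega'(|a|)+O(\eps_0)$, which can vanish when $e\cdot\hat a\approx-L$. Then $|\ell_2|\approx|\nabla_x\phi\cdot z|\lesssim\eta|z|$ on the whole cone. The paper's point is that, because $\mathcal K$ has aperture $\eta$, a fixed fraction of it satisfies $|\nabla_x\phi\cdot z|\ge\eta|z|/4$. On that fraction $J_p(\ell_1)-J_p(\ell_2)\ge c|\ell_2|^{p-2}(\ell_1-\ell_2)\gtrsim\eta^{p-2}L|z|^p/(|a|\log^2|a|)$, and this is exactly where the factor $\eta^{p-2}$ in $\eta^{d+p-1}$ comes from. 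Your count $\eta^{d-1}\cdot\eta^2$ silently assumes the false nondegeneracy.
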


\begin{proof}
The bounds follow as in \cite[Lemmas 6.6--6.8]{GJS}, so we only sketch it. For $I_1$, the cone contains a positive dimensional fraction of directions where $|\nabla_x\phi\cdot z|\geq\eta|z|/4$, since $|\nabla_x\phi|\geq1/2$. The comparison with the spatial tests then gives the first two bounds.

For $I_3$, use the simultaneous shift $(x,y)\mapsto(x+z,y+z)$. Positivity of $g$ gives $\psi(x)\leq C|a|/\eps_0$, so \eqref{eq:inequality_psi} yields $|\nabla\psi(x)|\leq C(|a|/\eps_0)^{(m-1)/m}$. The same split at $|z|=|a|^{1/2}$ as in \cite[Lemma 6.8]{GJS} gives
\[
I_3\geq-C\left(|a|^{1-1/m}\int_{\eps_1|a|}^{|a|^{1/2}}r^{-\sigma}\dd r
+|a|^{1-\sigma/2}\right).
\]
The first term is bounded by
\[
C\begin{cases}
|a|^{(3-\sigma)/2-1/m},&\sigma<1,\\
|a|^{1-1/m}|\log|a||,&\sigma=1,\\
\eps_1^{1-\sigma}|a|^{2-\sigma-1/m},&\sigma>1.
\end{cases}
\]
Our choice of $m$ bounds all three by $C|a|^{1-\sigma/2}$, proving the last assertion.
\end{proof}

Take $q>(d+p-1)/(2-\sigma)$ to absorb $I_2$ into $I_1$. Also $-I_3\ll I_1$ as $|a|\to0$, since $\sigma>0$.

The following result is a direct consequence of \cite[Lemma 6.9]{GJS}.
\begin{lemma}\label{l:I4}
We have $|I_4|\leq C|a|$, where $C$ depends only on $T_0,d,s,p$.
\end{lemma}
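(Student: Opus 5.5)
We need to bound $I_4=\mathcal L[B_{1/16}^c]w_1(t,x)-\mathcal L[B_{1/16}^c]w_2(t,y)$. Since $\eps_1(a)|a|<1/16$, outside $B_{1/16}$ both test functions coincide with $u(t,\cdot)$. So
\[
I_4=\int_{B_{1/16}^c}\frac{J_p(u(t,x)-u(t,x+z))-J_p(u(t,y)-u(t,y+z))}{|z|^{d+sp}}\dd z .
\]
The plan is to follow \cite[Lemma 6.9]{GJS} and reduce this to the Lipschitz estimate \eqref{eq:exterior-lipschitz} of Lemma~\ref{l:centered-time}. That estimate is stated for the function $F$, which integrates over the fixed set $B_R^c$ in the variable $y$. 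Here the integration set $x+B_{1/16}^c$ moves with the base point.

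\emph{Step 1: recentre the integration region.} Change variables $\zeta=x+z$ and $\zeta=y+z$, so that
\[
I_4=\int_{B_{1/16}^c(x)}\frac{J_p(u(t,x)-u(t,\zeta))}{|x-\zeta|^{d+sp}}\dd\zeta-\int_{B_{1/16}^c(y)}\frac{J_p(u(t,y)-u(t,\zeta))}{|y-\zeta|^{d+sp}}\dd\zeta .
\]
Replace both domains by the common set $B_R^c$ with $R=3/4$; recall $x\in B_{9/16}$, $y\in B_{5/8}$ and $|a|<1/16$. This gives $I_4=F(t,x)-F(t,y)+E$, where $E$ collects the integrals over $B_R\setminus B_{1/16}(x)$ and $B_R\setminus B_{1/16}(y)$.

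\emph{Step 2: the main term.} Apply \eqref{eq:exterior-lipschitz} with $R=3/4$ and some $r\in(5/8,3/4)$. This gives $|F(t,x)-F(t,y)|\le C(1+T_0^{p-1})|a|$. The hypothesis of Lemma~\ref{l:centered-time} holds because $|\nabla u|\le1$ in $Q_{1,1}$ and the tail of $w$ is bounded by $T_0$.

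\emph{Step 3: the local remainder $E$.} Write $E$ as the difference of two integrals over bounded regions. The integrand $G(\xi,\zeta)=J_p(u(t,\xi)-u(t,\zeta))|\xi-\zeta|^{-d-sp}$ is Lipschitz in $\xi$ with a uniform constant when $|\xi-\zeta|\ge 1/32$, using $|\nabla u|\le1$ in $B_1$ and $p\ge2$. The symmetric difference of the domains $B_{1/16}(x)$ and $B_{1/16}(y)$ has measure $\le C|a|$, and there $|G|\le C$. Splitting accordingly gives $|E|\le C|a|$.

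The main obstacle is justifying the Lipschitz estimate \eqref{eq:exterior-lipschitz}, since its proof was skipped. If it needs checking, differentiate $F$ in $x$. This produces $(p-1)\int_{B_R^c}|u(t,x)-u(t,\zeta)|^{p-2}\nabla u(t,x)|x-\zeta|^{-d-sp}\dd\zeta$ plus a term with $\nabla|x-\zeta|^{-d-sp-1}$. The first term is controlled by ${\Tail}_{p-2,sp}$ and the second by ${\Tail}_{p-1,sp+1}$, both of $w$. Lemma~\ref{l:Tail_inequalities} converts them into $C(1+T_0^{p-1})$, where one may add constants since only differences of $u$ appear.
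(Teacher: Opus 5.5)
Your proof is correct and follows the paper's route. The paper simply cites \cite[Lemma 6.9]{GJS}, which is the exterior Lipschitz estimate recorded in \eqref{eq:exterior-lipschitz}; your recentering to the fixed set $B_{3/4}^c$ and your bound $|E|\le C|a|$ on the moving local remainder just make explicit what that citation leaves implicit. In that step, the segment $[x,y]$ does stay at distance at least $\tfrac{\sqrt3}{2}\cdot\tfrac1{16}>\tfrac1{32}$ from every $\zeta$ outside both balls. One cosmetic slip: in your sketch of \eqref{eq:exterior-lipschitz}, the second term involves $\nabla_x|x-\zeta|^{-d-sp}$, whose size is $C|x-\zeta|^{-d-sp-1}$, and that is what pairs with ${\Tail}_{p-1,sp+1}$.
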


\subsection{Conclusion of the proof of Lemma~\ref{l:IL}}

\begin{customproof}{Lemma~\ref{l:IL}}
Combining Lemmas~\ref{l:estimate_time}, \ref{l:I123} and \ref{l:I4}, and dividing by $|a|^{1-\sigma}$, we obtain
\begin{equation}\label{eq:ineq_final_IL2}
cL\eta^{d+p-1}\leq C|a|^\sigma+C|a|^{\sigma-1/(1+\gamma)}.
\end{equation}
Both powers are positive by the choice of $\gamma$. Since $L\omega(|a|)\leq\eps_0$, taking $\eps_0$ sufficiently small gives a contradiction. Thus $g\leq0$. At $t=t_0$ and $x,y\in B_{1/2}$ the penalties vanish. Interchanging $x,y$ proves \eqref{thesis_IL}. The constants are independent of $t_0<0$, so the estimate also holds at $t_0=0$.
\end{customproof}

We may combine the results of Section \ref{s:iteration} with Section \ref{s:stage2} to obtain the following.

\begin{corollary}\label{c:stages1and2}

Fix $T_0$ as in the proof of Lemma~\ref{l:iterated-DG}. 
Choose $r,\mu\in(0,1/2)$ sufficiently small as in Lemma~\ref{l:stage2} with $L=1/8$, and set $\tau=\mu/(2|B_1|)\in(0,1/2)$. There exist $\delta, \eps_1,\rho_1,\rho_2 \in (0, 1/2)$ and $K_0 \in \mathbb{N}$, depending only on $r,\,\mu,\,\tau, \, d,\, s,\, p$, such that if the following conditions hold
\begin{enumerate}
    \item $u$ is a solution of \eqref{eq:fracplap} in $Q_{1,2^{K_0+1}}$,
   \item $\|\nabla u\|_{L^{\infty}(Q_{1,2^{n}})} \leq (1-\delta)^{-n}$ for $n=0,1,\dots,K_0$,
    \item $\sup_{t \in (-1,0]} {\Tail}_{p-1,sp+1}(u(t,\cdot)-u(t,0);1) \leq T_0$,
\item $\sup_{t\in(-1,0]}{\Tail}_{p-1,sp+1}(u(t,\cdot)-u(t,0);2^{K_0-1})\leq\eps_1$.

    \end{enumerate}
then one of the following must hold.
\begin{itemize}
\item There exists a nonnegative integer $\kmax$ such that 
    \[
    \| \nabla u \|_{L^{\infty}(Q_{\rho_1^n,\rho_2^n})} \leq (1-\delta)^n \quad \text{for } n=0,\dots, \kmax,
    \]
    and there exists $e \in S^{d-1}$ such that
\begin{equation}\label{eq:kmax_finite}
    |\nabla u - (1-\delta)^{\kmax} e| < \frac{(1-\delta)^{\kmax}}{4} \quad \text{in } Q_{\rho_1^\kmax/2,\rho_2^\kmax/2}, 
\end{equation}
    \item  For $\alpha_0>0$ so that $\rho_2^{\alpha_0} = (1-\delta)$ and $C_0 = (1-\delta)^{-1}$, we have
    \begin{equation} \label{eq:kmax_infinite}
        |\nabla u(t,x)| \leq C_0 (|t|^{\frac{1}{\sigma}}+ |x|)^{\alpha_0} \quad \text{ for } (t,x) \in  Q_{1,1}.
    \end{equation}
 \end{itemize}
\end{corollary}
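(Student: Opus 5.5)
We combine Lemma~\ref{l:iterated-DG}, Corollary~\ref{c:final_first_stage} and Lemma~\ref{l:stage2}, and split into cases according to whether the reservoir condition \eqref{eq:iterated-reservoir} fails at some finite step. First fix $T_0$ as in Lemma~\ref{l:iterated-DG}. Then take $r,\mu$ from Lemma~\ref{l:stage2} with $L=1/8$ and this $T_0$, and set $\tau=\mu/(2|B_1|)$. With $r,\mu,\tau$ fixed, Lemma~\ref{l:iterated-DG} provides $\delta,\eps_1,K_0,\rho_1,\rho_2$. Hypotheses (1)--(4) of the corollary are exactly those of Lemma~\ref{l:iterated-DG}, since $w=u-u(t,0)$.

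\emph{Case 1: \eqref{eq:iterated-reservoir} fails for some $n$ and $e$.} Let $\kmax$ be the first such $n$. Lemma~\ref{l:iterated-DG}, applied with $N=\kmax$, gives \eqref{eq:original-gradient-decay} for $n\le\kmax$. Form $\bar u$ as in Corollary~\ref{c:final_first_stage}. Failure of the reservoir condition means the set where $\nabla\bar u\notin B_r(e)$ inside $(-1,-2\tau)\times B_1$ has measure less than $\mu$. The slab $(-2\tau,0]\times B_1$ has measure $2\tau|B_1|=\mu$. Hence the good set has measure greater than $|Q_{1,1}|-2\mu$, which is item (5) of the corollary. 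Items (3)--(4) there give $|\nabla\bar u|\le1$ in $Q_{1,1}$ and the tail bound $T_0$. Item (1) ensures $\bar u$ solves the equation in $Q_{1,2}$, since $2\le 2^{K_0+1}\rho_2^{-\kmax}$. Lemma~\ref{l:stage2} then yields $|\nabla\bar u-e|\le1/8<1/4$ in $Q_{1/2,1/2}$. Undoing the scaling uses $\nabla\bar u(t,x)=(\rho_2/\lambda)^{\kmax}\nabla u(\rho_1^{\kmax}t,\rho_2^{\kmax}x)=(1-\delta)^{-\kmax}\nabla u(\cdot)$. This gives \eqref{eq:kmax_finite} on $Q_{\rho_1^{\kmax}/2,\rho_2^{\kmax}/2}$.

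\emph{Case 2: \eqref{eq:iterated-reservoir} holds for all $n$ and all $e$.} Lemma~\ref{l:iterated-DG} applies for every $N$, so $\|\nabla u\|_{L^\infty(Q_{\rho_1^n,\rho_2^n})}\le(1-\delta)^n$ for all $n\ge0$. Fix $(t,x)\in Q_{1,1}$ with $(t,x)\neq 0$. Choose the largest $n$ with $(t,x)\in Q_{\rho_1^n,\rho_2^n}$, so that $|t|\ge\rho_1^{n+1}$ or $|x|\ge\rho_2^{n+1}$. In the second case $(1-\delta)^{n+1}=\rho_2^{(n+1)\alpha_0}\le|x|^{\alpha_0}$. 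In the first case we use $\rho_1=\rho_2^\sigma(1-\delta)^{2-p}\le\rho_2^\sigma$, since $p>2$; hence $\rho_2^{n+1}\le\rho_1^{(n+1)/\sigma}\le|t|^{1/\sigma}$. Either way $(1-\delta)^{n}\le(1-\delta)^{-1}(|t|^{1/\sigma}+|x|)^{\alpha_0}$, which is \eqref{eq:kmax_infinite} with $C_0=(1-\delta)^{-1}$. The point $(0,0)$ is handled by taking $n\to\infty$.

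The only delicate step is the measure bookkeeping in Case 1. There the choice $\tau=\mu/(2|B_1|)$ is used to absorb the unobserved final slab $(-2\tau,0]$. One must also check that the normalized $\bar u$ meets the hypotheses of Lemma~\ref{l:stage2}, in particular the tail bound $T_0$ at radius $1$. This is exactly \eqref{eq:un-sp-tail} at $n=\kmax$.
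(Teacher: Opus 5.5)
Your proposal is correct and follows the paper's proof essentially step by step. The paper splits on whether \eqref{eq:iterated-reservoir} ever fails; in the finite case it uses Corollary~\ref{c:final_first_stage}, Lemma~\ref{l:stage2} and rescaling, and in the infinite case it uses the nested-cylinder argument. There is one slip in your infinite case: since $p>2$ you have $(1-\delta)^{2-p}\ge 1$, so $\rho_1\ge\rho_2^\sigma$ (as the paper states), not $\rho_1\le\rho_2^\sigma$. This correct direction is exactly what gives $\rho_2^{n+1}\le\rho_1^{(n+1)/\sigma}\le|t|^{1/\sigma}$, so your conclusion stands.
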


\begin{proof}

Choose the constants as in Lemma~\ref{l:iterated-DG}. Its four assumptions are precisely the hypotheses above.

    Let $\kmax$ be the first nonnegative integer for which \eqref{eq:iterated-reservoir} fails for some direction $e\in S^{d-1}$. The proof is a dichotomy. First, if $\kmax<\infty$, then we start by applying Corollary \ref{c:final_first_stage} and check that the rescaling $\bar u$ satisfies the assumptions of Lemma \ref{l:stage2} which in turn implies \eqref{eq:kmax_finite} after rescaling back to $u$.

The second alternative is $\kmax=\infty$, that is, when \eqref{eq:iterated-reservoir} holds for every $n$ and $e$. In this case, we recall 
\[
\rho_1
:=
\frac{\rho_2^\sigma}{(1-\delta)^{p-2}}.
\]
Let $(t,x)\in Q_{1,1}\setminus\{(0,0)\}$ and choose an integer $n\geq0$ such that $(t,x)\in Q_{\rho_1^n,\rho_2^n}\setminus Q_{\rho_1^{n+1},\rho_2^{n+1}}$. Then either $|t|\geq\rho_1^{n+1}$ or $|x|\geq\rho_2^{n+1}$. Since $\rho_1\geq\rho_2^\sigma$, in both cases $|t|^{1/\sigma}+|x|\geq\rho_2^{n+1}$, and hence
\[
    (1-\delta)^n=C_0\rho_2^{(n+1)\alpha_0}\leq C_0\left(|t|^{1/\sigma}+|x|\right)^{\alpha_0}.
\]
Combining this with \eqref{eq:original-gradient-decay} gives \eqref{eq:kmax_infinite}. At $(0,0)$, the same conclusion follows by letting $n\to\infty$ in \eqref{eq:original-gradient-decay}.

\end{proof}

\section{Third stage: the nondegenerate case} \label{s:stage3}

To conclude the third stage of the proof, we analyze the linearized equation establishing a H\"older modulus of continuity for $|\nabla u|$ jointly in space and time. The nondegeneracy of the gradient obtained in Section \ref{s:stage2} allows us to work in a regime where the linearized kernel is mildly uniformly elliptic. This implies coercivity and opens the way for regularity results using more standard techniques for parabolic integro-differential equations. Here, we assume $u$ is a weak solution of \eqref{eq:fracplap} which is smooth in $x$. The main result in this section is the following.

\begin{lemma} \label{l:stage3}

Let $u$ be a solution of \eqref{eq:fracplap} in $Q_{2,2}$ with $p\in[2,2/(1-s))$,
\begin{equation} \label{eq:hp_stage3}
    \|\nabla u\|_{L^\infty(Q_{1,1})}\leq1,\qquad
\sup_{t\in(-1,0]}{\Tail}_{p-1,sp+1}(u(t,\cdot)-u(t,0);1)\leq T_0.
\end{equation}

      Suppose further that there is a vector $e \in S^{d-1}$ such that
      \[
      |\nabla u(t,x) -e| < 1/4 \quad \text{ for } (t,x) \in Q_{1,1}.
      \]
    Then there exist positive constants $\alpha_1$ depending on $d$, $s$ and $p$, and $C$ depending on $d$, $s$, $p$ and $T_0$ such that $\nabla u \in C^{\alpha_1}(Q_{1/16,1/16})$ with the estimate 
    \[
    \|\nabla u\|_{C^{\alpha_1}(Q_{1/16,1/16})} \leq C.
    \]
\end{lemma}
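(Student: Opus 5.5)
We will treat each directional derivative $v=\partial_i u$ (or rather its localization $v_e$ from \eqref{eq:def_ve}, with $R$ of order one) as a solution of the linear nonlocal parabolic equation $\partial_t v+\LL_u v=f$ given by Lemma~\ref{l:eqforv_e_scaled}. The right-hand side $f$ is bounded by $C(1+T_0^{p-1})$, thanks to \eqref{eq:hp_stage3} and Lemma~\ref{l:Tail_inequalities}. The point of the hypothesis $|\nabla u-e|<1/4$ in $Q_{1,1}$ is that, for $x,y\in B_{1}$ on a segment inside $B_1$, we have $|u(t,x)-u(t,y)|\geq \tfrac34|(x-y)\cdot e|-\tfrac14|x-y|$ and $|u(t,x)-u(t,y)|\leq |x-y|$. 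Consequently the kernel satisfies
\[
K_u(t;x,y)\leq C|x-y|^{-d-\sigma}, \qquad K_u(t;x,y)\geq c|x-y|^{-d-\sigma}\ \text{ whenever } |(x-y)\cdot e|\geq \tfrac12|x-y|,\ x,y\in B_{1/2}.
\]
Thus, inside $B_{1/2}$, $K_u$ is a symmetric kernel of order $\sigma\in(0,2)$ with an upper bound and a lower bound on a fixed double cone of directions. This is exactly the setting of parabolic De Giorgi/Moser theory for kernels with cone-type coercivity (Kassmann--Schwab, Felsinger--Kassmann). Outside $B_{1/2}$ we only have the upper bound together with tail control: $v$ grows at most like $|x|^{\alpha_0}$, as in \eqref{eq:ve-growth}, and the kernel tail is controlled as in Lemma~\ref{l:tails_of_kernel}.

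The key steps, in order, are as follows. First, we verify the coercivity of the bilinear form on $B_{1/2}$: by the cone lower bound, $\iint (\phi(x)-\phi(y))^2K_u\geq c[\phi]^2_{H^{\sigma/2}}$, up to lower-order terms. For this we can either cite the Dyda--Kassmann cone comparability or repeat the argument of Lemma~\ref{l:cone-coercivity} with $S$ replaced by the whole ball. Second, we run the standard parabolic De Giorgi scheme for $\pm v_e$. This yields an $L^\infty$ bound from $L^2$, and then the measure-to-pointwise lemma (``if $v\leq 1-\theta$ on a fixed portion of the past cylinder, then $v\leq 1-\theta'$ in a later smaller cylinder''), in the style of Caffarelli--Chan--Vasseur. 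The right-hand side $f$ and the tail term, weighted by $\sup\int_{B_{1/2}^c}K_u$, both enter as small errors after scaling. Third, we iterate this reduction of oscillation on cylinders $Q_{\rho^{\sigma},\rho}$. Here we use the natural scaling: since $|\nabla u|\approx 1$, we can take $\lambda=\rho$ in Subsection~\ref{ss:scaling}, which keeps $\nabla u_{\rho,\rho}$ close to $e$, preserves the structural hypotheses, and makes the time scale $\rho^{sp}\rho^{2-p}=\rho^{\sigma}$. Iterating gives $\osc_{Q_{\rho^{k\sigma},\rho^k}}\nabla u\leq C(1-\theta)^k$, and hence a H\"older modulus at the origin. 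Translating the center over $Q_{1/16,1/16}$ (the hypotheses hold around any such point after shifting) gives $\nabla u\in C^{\alpha_1}$.

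The main obstacle is the tail in the oscillation iteration. At step $k$, the rescaled derivative $v_k$ is controlled outside $B_1$ only by the cumulative oscillations from previous scales, which grow like $|x|^{\alpha_1}$, and the rescaled tail of $u$ from outside $B_1$ also enters. We must choose $\alpha_1$ small relative to $\sigma$ so that $\int_{B_1^c}(|x|^{\alpha_1}-1)K\,dy$ is small, exactly as in Lemma~\ref{l:tails_of_kernel}. The tails $\Tail_{p-1,sp+1}$ must also be propagated under rescaling, as in \eqref{eq:Sn-recursion}. A second subtle point is that the time derivative of $v$ is only weak, so the energy estimates must be done through the approximation of Section~\ref{s:preliminaries}. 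Finally, when $p\leq 1/(1-s)$, $\LL_u v$ is understood only as a bilinear form; this is harmless, since every step above uses it only in that form.
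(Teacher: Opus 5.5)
Your proposal is correct in outline, but it takes a heavier route than the paper.

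\textbf{Common starting point.} Both arguments begin the same way. You localize $\omega\cdot\nabla u$ at a unit scale; the paper uses $v_\omega=\eta(8x)\,\omega\cdot\nabla u$, supported in $B_{7/32}$. Lemma~\ref{l:eqforv_e_scaled} gives a right-hand side bounded by $C(1+T_0^{p-1})$. The hypothesis $|\nabla u-e|<1/4$ then gives $K_u\le C|x-y|^{-d-\sigma}$ for $x,y\in B_{1/2}$, and $K_u\ge c|x-y|^{-d-\sigma}$ on the double cone $|(y-x)\cdot e|>\frac12|x-y|$ there.

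\textbf{Where the routes split.} From that point you plan to rerun the parabolic De Giorgi scheme by hand, using the $\lambda=\rho$ rescaling and growth-weighted tails. The paper avoids any iteration with a kernel-modification trick:
\begin{itemize}
\item It sets $K:=K_u$ on $B_{1/2}\times B_{1/2}$ and $K:=\widetilde K(y-x)=c|x-y|^{-d-\sigma}$ otherwise.
\item Since $v_\omega(t,y)=0$ for $y\notin B_{1/2}$, this changes the equation at $x\in B_{1/8}$ only by $v_\omega(t,x)\int_{B_{1/2}^c}(\widetilde K(y-x)-K_u(t;x,y))\,\dd y$. That term is bounded by $C(1+T_0^{p-1})$ using the tail hypothesis.
\item The modified kernel is symmetric, satisfies $K\le C|x-y|^{-d-\sigma}$ globally, and satisfies the cone-density condition on every ball.
\item Theorem~\ref{Thm:ChakerLuis} then gives (A3). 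Theorem~\ref{Thm:Kassman&co} applies directly, using only $\|v_\omega\|_{L^\infty((-1/8,0]\times\R^d)}\le 1$.
\end{itemize}

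\textbf{What each buys.} In the paper's route, the tail propagation you call the main obstacle is never done explicitly; it is absorbed into the black-box theorem, whose hypotheses now hold globally. Your route is self-contained and makes explicit that the structural hypotheses are invariant under the $\lambda=\rho$ scaling. However, it amounts to re-proving a known H\"older theorem for kernels that are only locally good. The paper's route replaces all of that with one line of kernel surgery, made possible by the compact support of $v_\omega$.

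\textbf{Two small corrections.}
\begin{itemize}
\item Lemma~\ref{l:cone-coercivity} bounds $\int_{S\cap C(x)}K_u\,\dd y$ from below. It is not a comparison of Dirichlet forms, so repeating its argument would not give your coercivity step. For that you need Dyda--Kassmann, or Chaker--Silvestre as the paper uses.
\item The growth bound $|v_e|\le(2|x|)^{\alpha_0}$ from \eqref{eq:ve-growth} is not available here, since this lemma has no dyadic gradient hypothesis. It is also not needed: with $R\le 1/8$, the localized derivative is supported in $B_1$ and bounded by $1$ on all of $\R^d$.
\end{itemize}
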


We recall the following results for nonlocal operators in divergence form in the literature, restated in a convenient way for the parabolic setting. The first one provides coercivity for our linearized kernel $K_u(t;\cdot, \cdot)$ under the nondegeneracy condition at each fixed time slice.  This result was obtained in \cite{chaker2020coercivity}.

\begin{theorem} \label{Thm:ChakerLuis}
    Assume there exist $\mu >0$ and $\lambda >0$ such that for every ball $B \subset \R^d$ and $x \in B$ the kernel $K$ satisfies the following hypothesis
     \begin{equation} \label{AssumptionA1}
        |\{ y \in B \, : \, K(x,y) \geq \lambda |x-y|^{-d-2\gamma}\}| \geq \mu |B|,
    \end{equation}
    for some $0<\gamma<1$ and uniformly in $t$.
    Then there exists $c$ depending only on $\mu$ and $d$ such that 
\begin{equation*}
        \int_{B_2} \int_{B_2}  K(x,y) (v(t, x)-v(t, y))^2 \dd y \dd x \geq c \lambda [ v(t, \cdot) ]_{W^{\gamma,2}(B_1)}^2.
    \end{equation*}
\end{theorem}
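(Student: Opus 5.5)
This is the coercivity theorem of Chaker--Silvestre, applied at a fixed time slice, so the plan is a time-independent argument. Since the time variable plays no role in either the hypothesis or the conclusion, I fix $t$ and write $K(x,y)$, $v(x)=v(t,x)$. After dividing by $\lambda$, I may assume $\lambda=1$. The goal is then to compare
\[
\iint_{B_2\times B_2} K(x,y)(v(x)-v(y))^2\,\dd y\,\dd x
\]
with the Gagliardo seminorm
\[
[v]^2_{W^{\gamma,2}(B_1)}=\iint_{B_1\times B_1}\frac{(v(x)-v(y))^2}{|x-y|^{d+2\gamma}}\,\dd y\,\dd x .
\]

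The first reduction is a dyadic decomposition. I write the seminorm as $\sum_k 2^{k(d+2\gamma)}\iint_{|x-y|\sim 2^{-k}}(v(x)-v(y))^2$. For each scale $2^{-k}$ I cover $B_1$ by balls $B$ of radius comparable to $2^{-k}$, with bounded overlap and contained in $B_2$. It then suffices to show the local estimate
\[
\frac{1}{|B|}\iint_{B\times B}(v(x)-v(y))^2\,\dd x\,\dd y\le C\sum_{j\ge 0} 2^{-\epsilon j}\,\mathcal{E}_K\bigl(\text{neighborhood of } B \text{ at scale } 2^{-j}r\bigr)\, r^{2\gamma}
\]
for some $\epsilon>0$. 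The geometric factor lets one re-sum over scales without loss.

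The core step is this local estimate, which I would prove by a chaining or "two-hop" argument. Fix $x,z\in B$. Hypothesis \eqref{AssumptionA1}, applied both in $B$ and in a comparable ball, provides sets $G_x=\{y\in B: K(x,y)\ge |x-y|^{-d-2\gamma}\}$ and $G_z$, each of measure at least $\mu|B|$. If $G_x\cap G_z$ had positive proportion of $B$, I would write $(v(x)-v(z))^2\le 2(v(x)-v(y))^2+2(v(y)-v(z))^2$ and average over $y$ in the intersection. However, the two sets need not intersect. So I would instead use a multiscale iteration. Apply the hypothesis on balls $B'\subset B$ of radius $\theta r$, with $\theta$ small depending on $\mu$, around a point $y$. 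This shows that the good sets from $x$ and from $y$ cover a fraction of $B'$. A pigeonhole argument over a bounded number of intermediate points, a number depending only on $\mu$ and $d$, then connects $x$ to $z$ through good pairs. Each step costs a factor $r^{d+2\gamma}$ times a kernel-weighted energy. The points at which hypothesis \eqref{AssumptionA1} is used must be averaged over, so that every intermediate integral stays controlled by $\iint K(v(x)-v(y))^2$ over a slightly larger ball.

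The main obstacle is exactly this lack of overlap between good sets. It forces either a Whitney or iterative chaining over scales, as Chaker--Silvestre do, or a covering argument in which errors at finer scales must be summable. I would handle the summability by comparing $v$ with its averages $\bar v_{B'}$ on nested balls, in the spirit of a Campanato/Poincaré argument. Pointwise, $|v(x)-\bar v_B|\le\sum_j|\bar v_{B_j(x)}-\bar v_{B_{j+1}(x)}|$, and each consecutive difference is bounded by the good-set averaging above. Cauchy--Schwarz with weights $2^{\epsilon j}$ then gives the geometric decay, using $\gamma<1$ only through the finiteness of the sum. Finally, summing over the covering balls and over dyadic scales yields the claimed inequality with $c$ depending only on $\mu$ and $d$, uniformly in $t$.
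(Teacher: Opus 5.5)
The paper does not prove this statement. It quotes the coercivity theorem of Chaker--Silvestre \cite{chaker2020coercivity} with $t$ frozen as a parameter, which is exactly your first sentence. The self-contained argument you then sketch, however, has a genuine gap at its only nontrivial step.

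The outer layer is fine: freeze $t$, set $\lambda=1$, decompose dyadically, and re-sum a local estimate carrying weights $2^{-\epsilon j}$. Your local estimate is also the right intermediate statement. What is missing is a proof of it.

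Write $G_x:=\{y: K(x,y)\geq|x-y|^{-d-2\gamma}\}$. Hypothesis \eqref{AssumptionA1} only says that $G_x$ has density $\mu$ in balls \emph{containing} $x$.

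\begin{itemize}
\item Applying the hypothesis on a ball $B'$ of radius $\theta r$ around an intermediate point says nothing about $G_x$ unless $x\in B'$.
\item When both points do lie in $B'$, two sets of density $\mu\leq 1/2$ still need not meet. Nothing in \eqref{AssumptionA1} forces them to: already for cone kernels with $x$-dependent axes in $\R^3$, two good sets can be disjoint.
\item Shrinking $\theta$ does not change this, and no pigeonhole over a bounded number of intermediate points produces a chain of good pairs. This is precisely the obstruction you named, and it is left unresolved.
\end{itemize}

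The Campanato fallback is circular. The bound $|\bar v_{B_j(x)}-\bar v_{B_{j+1}(x)}|^2\leq C|B_j|^{-1}\int_{B_j}|v-\bar v_{B_j}|^2$ is again a mean-square oscillation on a ball, which is the quantity the local estimate is supposed to control. Good-set averaging on a single ball only yields $\int_B|v-\bar v_B|^2\leq Cr^{2\gamma}\iint_{B\times B}K(x,y)(v(x)-v(y))^2\dd x\dd y+\frac{2}{\mu}\int_B|v-\bar v_B|^2$, and the last term cannot be absorbed. Single-scale information is genuinely insufficient. Split a ball into two halves and take the pairs lying in a common half: their fibers have density $1/2$ in the ball, yet the indicator of one half has zero energy on them.

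The missing idea is to chain \emph{points}, not ball averages, marching toward the target. It uses \eqref{AssumptionA1} in balls where the current point is not central.

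\begin{itemize}
\item Given $x_k$ and the target $y$, apply the hypothesis in the ball with diameter $[x_k,y]$. For some $q=q(\mu,d)<1$, all but a fraction $\mu/2$ of that ball lies within $q|x_k-y|$ of $y$.
\item Draw $x_{k+1}$ uniformly from the part of $G_{x_k}$ in that ball lying within $q|x_k-y|$ of $y$. This set has measure at least $c\mu|x_k-y|^d$.
\item Then each $(x_k,x_{k+1})$ is a good pair, $|x_k-y|\leq q^k|x-y|$, the transition densities are bounded by $C\mu^{-1}|x_k-y|^{-d}$, and $v(x)-v(y)=\sum_k\bigl(v(x_k)-v(x_{k+1})\bigr)$.
\item Apply Cauchy--Schwarz with weights $q^{\pm k\epsilon}$, $0<\epsilon<2\gamma$. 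Then integrate against $|x-y|^{-d-2\gamma}\dd x\dd y$ and change variables.
\end{itemize}

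This gives the estimate, with $c$ depending also on $\gamma$. That dependence is harmless in this paper, since $\gamma=\sigma/2$.
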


The next result follows mostly from \cite{imbert2019weak}. See also \cite{FelsingerKassmann,Kassman-Schwab,kassmannduke2024}. For more details we refer to the discussion after \cite[Theorem 7.3]{GJS}.

\begin{theorem} \label{Thm:Kassman&co}
    Let $v(t,x)$ be a bounded solution of 
      \[
      \partial_t v(t,x) + \int_{\R^d} K(t;x,y)\big( v(t,x) - v(t,y) \big) \dd y  = f(t,x) \quad \text{ for } (t,x)\in Q_{1,1},
      \]
    where $f$ is a bounded function in $Q_{1,1}$.
    Assume that there are $\gamma>0, \, \Lambda >1$ such that for every $x_0 \in B_1$ and uniformly in $t$, the following assumptions on the kernel are satisfied:
\begin{align*}
  \textit{(A1)} &\quad K(t;x,y)   =  K(t;y,x),  \\
  \textit{(A2)} &\quad \rho^{-2} \int_{B_\rho(x_0)} |x_0-y|^2 K(t;x_0,y) \dd y + \int_{B_\rho^{c}(x_0)} K(t;x_0,y) \dd y \leq \Lambda \rho^{-2\gamma}, \\
    \textit{(A3)} &\quad \text{Whenever $B_\rho(x_0) \subset B_1$ and $v(t, \cdot) \in H^{\gamma}(B_{\rho}(x_0))$, }  \\
    &\qquad \qquad \int_{B_{\rho}(x_0)} \int_{B_{\rho}(x_0)} K(t;x,y) \left(v(t,x)-v(t,y)\right)^2 \dd y \dd x   \geq \Lambda^{-1} [ v(t, \cdot) ]_{W^{\gamma,2}(B_{\rho/2}(x_0))}^2.
\end{align*}
    Then there exist positive constants $\alpha_1$ and $C$ depending on $\Lambda, \, \gamma$ and $d$ such that
    \[
    [v]_{C^{\alpha_1}(Q_{1/2,1/2}) } \leq C \left( \|v \|_{L^{\infty}((-1,0]\times\R^{d})} +  \|f\|_{L^{\infty}(Q_{1,1})} \right).
    \]
\end{theorem}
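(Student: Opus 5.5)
The plan is not to reprove this from scratch but to reduce it to the parabolic De Giorgi--Nash--Moser theory for nonlocal operators in divergence form, as developed in \cite{FelsingerKassmann,Kassman-Schwab} and in \cite{imbert2019weak}. The task is then to check that (A1)--(A3) are exactly the structural assumptions those works use, uniformly in $t$. Concretely: (A1) makes the operator the one associated with the symmetric bilinear form $\mathcal E_t(v,\varphi)=\frac12\iint K(t;x,y)(v(x)-v(y))(\varphi(x)-\varphi(y))$. The cutoff form of (A2) gives the upper bound for $\mathcal E_t(\varphi,\varphi)$ for Lipschitz cutoffs $\varphi$ and controls the nonlocal tails. (A3) is the localized coercivity, i.e.\ a Sobolev-type lower bound for the energy of order $\gamma$. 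All of these are invariant under the parabolic scaling $(t,x)\mapsto(\rho^{2\gamma}t,\rho x)$, which is what allows an iteration at dyadic scales.

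The steps I would follow are these.

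\textbf{Step 1 (Caccioppoli inequality).} Test the equation with $(v-k)_\pm\varphi^2$, where $\varphi$ is a space-time cutoff. Use (A1) to symmetrize, (A2) to bound the cutoff and tail contributions, and (A3) to extract the $W^{\gamma,2}$ seminorm. The bounded right-hand side $f$ enters only linearly.

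\textbf{Step 2 ($L^2\to L^\infty$ bound).} Combine Step~1 with the fractional Sobolev embedding for $W^{\gamma,2}$ in a De Giorgi iteration. This gives an $L^2\to L^\infty$ bound that carries a tail term. The tail is controlled by $\|v\|_{L^\infty((-1,0]\times\R^d)}$ through (A2).

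\textbf{Step 3 (measure-to-pointwise lemma).} Prove the parabolic second De Giorgi lemma: if $v\le 1$ and $v\le 0$ on a fixed fraction of a past cylinder, then $v\le 1-\theta$ in a later, smaller cylinder. In the nonlocal setting, the ``good term'' $\iint (v-k)_+(x)(k-v)_-(y)K$ in the energy can be used directly, in the same spirit as the gap term of Section~\ref{s:reduce_norm} and of \cite{chihinchan}. Alternatively one can use a logarithmic estimate, as in \cite{FelsingerKassmann}.

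\textbf{Step 4 (oscillation decay).} Iterate Step~3 on dyadic parabolic cylinders to obtain $\osc_{Q_{\rho^{k}}} v\le C(1-\theta)^k(\|v\|_\infty+\|f\|_\infty)$. The standard induction has to absorb the growth of the tails, using that outside $Q_{\rho^k}$ the function is controlled by the oscillations at the larger scales. This yields $C^{\alpha_1}$ with some $\alpha_1$ small compared to $\gamma$.

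I expect the main obstacle to be Step~3. Assumption (A3) is only an \emph{averaged} coercivity, not a pointwise lower bound on $K$, so the good term cannot be bounded below pointwise. I would first try to follow the route of \cite{imbert2019weak,Kassman-Schwab}: prove a weak Harnack inequality for supersolutions using only (A3) through the Sobolev/Poincaré inequality, via a Moser-type $\log v$ estimate together with a Bombieri--Giusti covering lemma. That route uses only energy quantities and therefore fits (A3). The time-dependence of $K$ is harmless, since every estimate is applied slice by slice in $t$ and then integrated.
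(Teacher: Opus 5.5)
Your proposal matches the paper's treatment. The paper gives no self-contained proof of this theorem; it imports it from the parabolic nonlocal De Giorgi--Nash--Moser literature (mainly \cite{imbert2019weak}, together with \cite{FelsingerKassmann,Kassman-Schwab,kassmannduke2024} and the discussion after \cite[Theorem 7.3]{GJS}), and your Steps 1--4 are a faithful sketch of how those works proceed, including the right caveat that (A3) gives only averaged coercivity, so the measure-to-pointwise step must rely on energy-based tools such as the weak Harnack inequality rather than a pointwise lower bound on $K$.
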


We will see that when $\nabla u$ stays sufficiently close to some unit vector $e$, the linearized kernel $K_u$ is nondegenerate in a cone of directions. This allows us to verify \eqref{AssumptionA1} at each time slice and get the coercivity bound through Theorem \ref{Thm:ChakerLuis}. As a consequence, we can apply Theorem \ref{Thm:Kassman&co} to get a H\"older modulus of continuity jointly in space and time. We detail this in the following proof.

\begin{proof}[Proof of Lemma~\ref{l:stage3}]
Fix $\omega\in S^{d-1}$ and define
\[
v_\omega(t,x):=\eta(8x)\,(\omega\cdot\nabla u(t,x)).
\]
Then
\[
v_\omega=\omega\cdot\nabla u \quad\text{in }B_{3/16},
\qquad
\supp v_\omega(t,\cdot)\subset B_{7/32}.
\]
Moreover,
\[
\sup_{t\in(-1,0]}
{\Tail}_{p-1,sp+1}(w(t,\cdot);1/8)^{p-1}
\leq C\bigl(1+T_0^{p-1}\bigr).
\]
Therefore, Lemma~\ref{l:eqforv_e_scaled}, applied with $R=1/8$, gives
\begin{equation}\label{eq:ve_RHS}
\left|
\partial_t v_\omega(t,x)+\LL_u v_\omega(t,x)
\right|
\leq
C\bigl(1+T_0^{p-1}\bigr)
\end{equation}
for every $(t,x)\in Q_{1/8,1/8}$.

We next record the nondegeneracy of the linearized kernel. If
$x,y\in B_{1/2}$ and $|(y-x)\cdot e|>\frac12|x-y|$
then the segment joining $x$ and $y$ is contained in $B_{1/2}$.
Writing $y=x+\rho\nu$, $\rho=|x-y|$ and $\nu=\frac{y-x}{|y-x|}$, we obtain
\[
\left|\nabla u(t,x+s\nu)\cdot\nu\right|
\geq |\nu\cdot e|
      -|\nabla u(t,x+s\nu)-e|
>\frac14
\]
for every $s\in[0,\rho]$. The sign of
$\nabla u(t,x+s\nu)\cdot\nu$ is constant along the segment, and hence
\[
|u(t,y)-u(t,x)|
=
\left|
\int_0^\rho \nabla u(t,x+s\nu)\cdot\nu\,\dd s
\right|
\geq \frac{\rho}{4}.
\]
Consequently,
\begin{equation}\label{eq:K_u_A3}
K_u(t;x,y)
\geq
c|x-y|^{-d-\sigma}.
\end{equation}
For the same $c$ as above, let
\[
\widetilde K(z):=c|z|^{-d-\sigma}.
\]
Define the symmetric kernel
\[
K(t;x,y):=
\begin{cases}
K_u(t;x,y),&x,y\in B_{1/2},\\[1mm]
\widetilde K(y-x),&\text{otherwise}.
\end{cases}
\]
For $x\in B_{1/8}$ and $y\in B_{1/2}^c$, we have
$v_\omega(t,y)=0$. Hence, using \eqref{eq:ve_RHS},
\begin{align*}
&\left|
\partial_t v_\omega(t,x)
+\int_{\R^d}
K(t;x,y)\bigl(v_\omega(t,x)-v_\omega(t,y)\bigr)\,\dd y
\right|
\\
&\quad\leq
\left|
\partial_t v_\omega(t,x)+\LL_u v_\omega(t,x)
\right|
+
|v_\omega(t,x)|
\int_{B_{1/2}^c}
\left(K_u(t;x,y)+\widetilde K(y-x)\right)\,\dd y .
\end{align*}
Using \eqref{eq:hp_stage3} and Lemma~\ref{l:Tail_inequalities}, we obtain that for every $(t,x)\in Q_{1/8,1/8}$,
\[
\int_{B_{1/2}^c}K_u(t;x,y)\,\dd y
\leq
C\bigl(1+T_0^{p-1}\bigr),
\]
while
\[
\int_{B_{1/2}^c}\widetilde K(y-x)\,\dd y\leq C.
\]
Since $\|v_\omega\|_{L^\infty((-1/8,0]\times\R^d)}\leq1$, it follows that for $(t,x) \in Q_{1/8,1/8}$
\begin{equation}\label{eq:modified-kernel-equation}
\left|
\partial_t v_\omega(t,x)
+\int_{\R^d}
K(t;x,y)\bigl(v_\omega(t,x)-v_\omega(t,y)\bigr)\,\dd y
\right|
\leq
C\bigl(1+T_0^{p-1}\bigr).
\end{equation}

We now verify the assumptions of Theorem~\ref{Thm:Kassman&co}.
Symmetry follows directly from the definition of $K$. Moreover, if
$x,y\in B_{1/2}$, then
\[
|u(t,x)-u(t,y)|\leq |x-y|,
\]
and therefore
\[
K_u(t;x,y)\leq (p-1)|x-y|^{-d-\sigma}.
\]
Together with the definition of $\widetilde K$, this gives the global bound
\begin{equation*}
K(t;x,y)\leq C|x-y|^{-d-\sigma}.
\end{equation*}
Consequently, for every $\rho>0$,
\[
\rho^{-2}
\int_{B_\rho(x_0)}
|x_0-y|^2K(t;x_0,y)\,\dd y
+
\int_{B_\rho^c(x_0)}
K(t;x_0,y)\,\dd y
\leq C\rho^{-\sigma}.
\]

It remains to verify the coercivity assumption. The double cone
\[
\left\{y\in\R^d:
|(y-x)\cdot e|>\frac12|x-y|\right\}
\]
occupies a fixed dimensional proportion of every ball containing $x$.
If $x\notin B_{1/2}$, then
\[
K(t;x,y)=\widetilde K(y-x)
\]
for every $y$. If $x\in B_{1/2}$, the lower bound
\[
K(t;x,y)\geq c|x-y|^{-d-\sigma}
\]
holds whenever either $y\notin B_{1/2}$ or
\[
y\in B_{1/2}
\quad\text{and}\quad
|(y-x)\cdot e|>\frac12|x-y|,
\]
by \eqref{eq:K_u_A3}. 
It follows that there exists  a dimensional constant $c_1>0$
such that, for every ball $B\subset\R^d$ and every $x\in B$,
\[
\left|
\left\{
y\in B:
K(t;x,y)\geq c|x-y|^{-d-\sigma}
\right\}
\right|
\geq c_1|B|.
\]
Theorem~\ref{Thm:ChakerLuis}, applied at each fixed time with
$\gamma=\sigma/2$, therefore yields Assumption \textit{(A3)}, with
constants depending only on $d,s,p$.

We may now apply Theorem~\ref{Thm:Kassman&co} after rescaling
$Q_{1/8,1/8}$ to $Q_{1,1}$. Since the rescaled right-hand side
is bounded by \eqref{eq:modified-kernel-equation}, we obtain
\[
[v_\omega]_{C^{\alpha_1}
 (Q_{1/16,1/16})}
\leq
C\left(
\|v_\omega\|_{L^\infty((-1/8,0]\times\R^d)}
+1+T_0^{p-1}
\right)
\leq C,
\]
where $\alpha_1>0$ depends only on $d,s,p$, and
$C$ depends only on $d,s,p,T_0$.

Finally, $\eta(8x)=1$ on $B_{3/16}$, and therefore
\[
v_\omega=\omega\cdot\nabla u
\quad\text{in }Q_{1/16,1/16}.
\]
Since $\omega \in S^{d-1}$ is arbitrary, we get
\[
\|\nabla u\|_{C^{\alpha_1}
(Q_{1/16,1/16})}
\leq C,
\]
as claimed.
\end{proof}

\section{Proof of the main theorem}\label{s:proof_main}

We now prove Theorem~\ref{t:main}. The argument is the parabolic analogue of the proof in the elliptic case, but we must keep track of the geometry of the cylinders used in the first stage.

\begin{proof}[Proof of Theorem~\ref{t:main}]

Fix \(L=1/8\). We first fix  \(T_0\) as in the proof of Lemma~\ref{l:iterated-DG}; these constants depend only on \(d,s,p\). We then choose \(r\) and \(\mu\) as in Lemma~\ref{l:stage2} and, decreasing \(\mu\) if necessary, set
\[
\tau:=\frac{\mu}{2|B_1|}\in(0,1/2).
\]
Finally, fix \(\delta,\eps_1,\rho_1,\rho_2\) and \(K_0\) as in Corollary~\ref{c:stages1and2}. Recall that
\[
\lambda=\rho_2(1-\delta),
\qquad
\rho_1=\rho_2^\sigma(1-\delta)^{2-p}.
\]

We first normalize the size of $u$ by replacing it with $M^{-1}u(M^{2-p}t,x)$, according to the scaling properties in Section \ref{ss:scaling}. Since $M\geq1$, the rescaled equation is valid in $Q_{2,2}$ and its local height and tail there are bounded by one. Moreover, by Theorem~\ref{t:JSU-Lip}, choosing $M$ larger if necessary we can further assume
\[
\|\nabla u\|_{L^\infty(Q_{1,1})}\leq1,
\qquad
\sup_{t\in(-1,0]}{\Tail}_{p-1,sp+1}(w(t,\cdot);1)^{p-1}\leq C,
\]
where $C$ depends only on $d,s,p$. 

Choose $\theta>0$ small and set
\[
u_0(t,x):=\theta^{-1}u(\theta^\sigma t,\theta x).
\]
Taking $\theta\leq2^{-K_0-1}$, we have $|\nabla u_0|\leq1$ in $Q_{1,2^{K_0}}$, $|u_0(t,x)-u_0(t,0)|\leq|x|$ in $B_1$. For $R\geq1$ with $\theta \leq R^{-1}$, a change of variables and splitting the integral at radius one, we get
\[
{\Tail}_{p-1,sp+1}(u_0(t,\cdot)-u_0(t,0);R)^{p-1}
\leq\frac{|S^{d-1}|}{\sigma}R^{-\sigma}+C\theta^\sigma.
\]
By the choice of $K_0$ in Lemma~\ref{l:iterated-DG}, the first term at $R=2^{K_0-1}$ is at most $\eps_1^{p-1}/2$. Choose $\theta$ small enough that the tail at radius one is at most $T_0$ and the tail at $2^{K_0-1}$ is at most $\eps_1$. All choices depend only on $d,s,p$. Thus $u_0$ satisfies the hypotheses of Corollary~\ref{c:stages1and2}. For notational simplicity, we replace $u_0$ by $u$.

We now distinguish the two alternatives in
Corollary~\ref{c:stages1and2}. If the second alternative holds, then
\[
|\nabla u(t,x)|
\leq
C_0\bigl(|t|^{1/\sigma}+|x|\bigr)^{\alpha_0}\leq C\bigl(|t|^{\alpha_0/\sigma}+|x|^{\alpha_0}\bigr)
\qquad\text{in }Q_{1,1},
\]
where $\alpha_0$ is given by $\rho_2^{\alpha_0}=1-\delta$.
In particular, \(\nabla u(0,0)=0\).

Suppose instead that the first alternative holds. Then there exists a finite \(\kmax\) such that
\[
\|\nabla u\|_{L^\infty(Q_{\rho_1^{n},\;
  \rho_2^{n}})}
\leq(1-\delta)^n,
\qquad n=0,\ldots,\kmax,
\]
and there exists \(e\in S^{d-1}\) such that
\[
\left|
\nabla u-(1-\delta)^{\kmax}e
\right|
<
\frac{(1-\delta)^{\kmax}}4
\quad\text{in }
Q_{\rho_1^{\kmax}/2,\rho_2^{\kmax}/2}.
\]

Choose a fixed $a\leq\min\{1/2,2^{-1/\sigma}\}$ and define
\[
\bar u(t,x):=a^{-1}\lambda^{-\kmax}
u(a^\sigma\rho_1^{\kmax}t,a\rho_2^{\kmax}x).
\]
The scaling relation shows that $\bar u$ solves the same equation in $Q_{2,2}$. Moreover,
\[
\nabla\bar u(t,x)=(1-\delta)^{-\kmax}
\nabla u(a^\sigma\rho_1^{\kmax}t,a\rho_2^{\kmax}x).
\]
Since $a\leq1/2$ and $a^\sigma\leq1/2$, we have
\[
\|\nabla\bar u\|_{L^\infty(Q_{1,1})}\leq1,
\qquad |\nabla\bar u-e|<1/4\quad\text{in }Q_{1,1}.
\]
Also $|\bar u(t,x)-\bar u(t,0)|\leq1$ there. Splitting the centered tail at radius one gives
\[
\sup_{t\in(-1,0]}
{\Tail}_{p-1,sp+1}(\bar u(t,\cdot)-\bar u(t,0);1)^{p-1}
\leq\frac{|S^{d-1}|}{\sigma}+a^\sigma T_0^{p-1}.
\]

Thus \(\bar u\) satisfies all the hypotheses of
Lemma~\ref{l:stage3}. Hence there exist
\(\alpha_1>0\) and \(C>0\), depending only on \(d,s,p\), such that
\[
[\nabla\bar u]_
 {C^{\alpha_1}(Q_{1/16,1/16})}
\leq C.
\]

Fix a positive integer \(m_\ast\), depending only on \(d,s,p\), such that
\[
\rho_1^{m_\ast}\leq a^\sigma/16,\qquad\rho_2^{m_\ast}\leq a/16.
\]
Then
\[
Q_{\rho_1^{\kmax+m_\ast},\;
  \rho_2^{\kmax+m_\ast}}
\subset
{Q_{a^\sigma\rho_1^{\kmax}/16,\;a\rho_2^{\kmax}/16}}.
\]
Rescaling the preceding estimate back to \(u\), we obtain, for
\(z=(t,x)\) and \(z'=(\tau,y)\) in
\(Q_{\rho_1^{\kmax+m_\ast},\;
  \rho_2^{\kmax+m_\ast}}\),
\[
\begin{aligned}
|\nabla u(z)-\nabla u(z')|
&\leq
C(1-\delta)^{\kmax}
\left[
\left(
\frac{|t-\tau|}{\rho_1^{\kmax}}
\right)^{\alpha_1}
+
\left(
\frac{|x-y|}{\rho_2^{\kmax}}
\right)^{\alpha_1}
\right].
\end{aligned}
\]

Choose
\[
0<\alpha\leq \min\{\alpha_0,\alpha_0/\sigma,\alpha_1\}.
\]
Since \(z,z'\in Q_{\rho_1^{\kmax+m_\ast},\;
  \rho_2^{\kmax+m_\ast}}\), the two quotients in the preceding estimate are bounded by fixed constants. Hence
\[
\begin{aligned}
|\nabla u(z)-\nabla u(z')|
&\leq
C(1-\delta)^{\kmax}
\left[
\left(
\frac{|t-\tau|}{\rho_1^{\kmax}}
\right)^\alpha+
\left(
\frac{|x-y|}{\rho_2^{\kmax}}
\right)^{\alpha}
\right].
\end{aligned}
\]
Furthermore,
\[
(1-\delta)^{\kmax}
=
\rho_2^{\alpha_0\kmax}
\leq
\rho_2^{\sigma\alpha\kmax}
\leq
\rho_1^{\alpha\kmax}.
\]

Also $(1-\delta)^{\kmax}=\rho_2^{\alpha_0\kmax}\leq\rho_2^{\alpha\kmax}$. It follows that
\[
|\nabla u(z)-\nabla u(z')|\leq C\bigl(|t-\tau|^\alpha+|x-y|^\alpha\bigr)
\]
for \(z,z'\in Q_{\rho_1^{\kmax+m_\ast},\;
  \rho_2^{\kmax+m_\ast}}\).

It remains to treat points outside
\(Q_{\rho_1^{\kmax+m_\ast},\;
  \rho_2^{\kmax+m_\ast}}\). Let
\(z=(t,x)\in Q_{1,1}\setminus Q_{\rho_1^{\kmax+m_\ast},\;
  \rho_2^{\kmax+m_\ast}}\), and choose
\(k\in\{0,\ldots,\kmax+m_\ast-1\}\) such that
\[
z\in Q_{\rho_1^{k},\;
  \rho_2^{k}}\setminus Q_{\rho_1^{k+1},\;
  \rho_2^{k+1}}.
\]
If \(k\leq\kmax\), the first-stage estimate and the finite alternative give
\[
|\nabla u(z)-\nabla u(0,0)|
\leq C(1-\delta)^k.
\]
Since \(z\notin Q_{\rho_1^{k+1},\;
  \rho_2^{k+1}}\), either
\[
|x|\geq\rho_2^{k+1}
\qquad\text{or}\qquad
|t|\geq\rho_1^{k+1}.
\]
The choice of \(\alpha\) gives $1-\delta\leq\min\{\rho_2^\alpha,\rho_1^\alpha\}$,
and therefore
\[
|\nabla u(z)-\nabla u(0,0)|
\leq
C\bigl(|x|^\alpha+|t|^\alpha\bigr).
\]

If \(\kmax<k<\kmax+m_\ast\), then
\(z\in Q_{\rho_1^{\kmax},\;
  \rho_2^{\kmax}}\), and hence
\[
|\nabla u(z)-\nabla u(0,0)|
\leq C(1-\delta)^{\kmax}.
\]
Since \(m_\ast\) is fixed, the same argument gives
\[
\begin{aligned}
|\nabla u(z)-\nabla u(0,0)|
&\leq
C(1-\delta)^{-m_\ast}
(1-\delta)^{\kmax+m_\ast}\leq
C\bigl(|x|^\alpha+|t|^\alpha\bigr).
\end{aligned}
\]
Thus the finite alternative also gives
\[
|\nabla u(t,x)-\nabla u(0,0)|
\leq
C\bigl(|x|^\alpha+|t|^\alpha\bigr)
\qquad\text{in }Q_{1,1}.
\]

In the infinite alternative, the choice
\(\alpha\leq\min\{\alpha_0,\alpha_0/\sigma\}\) gives
\[
\begin{aligned}
|\nabla u(t,x)-\nabla u(0,0)|
&=
|\nabla u(t,x)|\leq
C\bigl(|t|^{1/\sigma}+|x|\bigr)^{\alpha_0}\leq
C\bigl(|t|^\alpha+|x|^\alpha\bigr).
\end{aligned}
\]
We have therefore proved the same pointwise estimate at the origin in both alternatives.

Undoing the preliminary rescaling only changes the constant by a factor depending on \(d,s,p\). The argument can be centered at any
\(z_0=(t_0,x_0)\in Q_{1,1}\). For this one applies the preceding construction to
\[
u(t_0+t,x_0+x)-u(t_0,x_0).
\]

Finally, let
\((t,x),(\tau,y) \in Q_{1,1}\), and assume without loss of generality that \(\tau\leq t\). If
\[
|t-\tau|^{1/\sigma}+|x-y|
\]
is smaller than the fixed radius used above, apply the pointwise estimate centered at \((t,x)\) to \((\tau,y)\). If it is larger, the same estimate follows from the spatial Lipschitz bound after enlarging \(C\). Consequently, under the normalized assumptions,
\[
|\nabla u(t,x)-\nabla u(\tau,y)|
\leq
C\left(
|t-\tau|^\alpha+|x-y|^\alpha
\right).
\]

Undoing the initial  normalization, including the time rescaling dictated by the homogeneity of the equation, gives
\[
|\nabla u(t,x)-\nabla u(\tau,y)|
\leq
CM\left( M^{(p-2)\alpha}|t-\tau|^\alpha+
|x-y|^\alpha
\right).
\]
This proves the theorem.
\end{proof}

\section{Time regularity}

In this section we prove Theorem~\ref{t:time_reg}.
Using the proof of Lemma \ref{l:centered-time} combined with the $C^{1,\alpha}$ spatial regularity, we are able to prove a result which is slightly stronger.

\begin{proposition}\label{p:time-regularity}
Let $s\in(0,1)$, $2< p<2/(1-s)$ and $\alpha\in(0,1)$.
Let $u$ be a weak solution of \eqref{eq:fracplap} in $Q_{1,1}$,
with
\[
\sup_{t\in(-1,0]}
\left(
\|\nabla u(t,\cdot)\|_{L^\infty(B_1)}
+[\nabla u(t,\cdot)]_{C^\alpha(B_1)}
\right)\leq L,
\qquad L\geq1,
\]
and
\[
\sup_{t\in(-1,0]}
{\Tail}_{p-1,sp}(u(t,\cdot)-u(t,0);1)\leq T_0<\infty.
\]
Then, for $t,\tau\in(-1,0]$, $x\in B_{3/4}$ and
$0<|t-\tau|\leq1/8$,
\[
|u(t,x)-u(\tau,x)|
\leq C(L^{p-1}+T_0^{p-1})
\begin{cases}
|t-\tau|,&\sigma<1+\alpha,\\
|t-\tau|(1+|\log |t-\tau||),&\sigma=1+\alpha,\\
|t-\tau|^{(1+\alpha)/\sigma},&\sigma>1+\alpha,
\end{cases}
\]
where $\sigma=2-p(1-s)$ and $C$ depends only on $d,s,p,\alpha$.

If, in addition,
$u\in C(-1,0;L^{p-1}_{sp}(\R^d))$,
then $u$ is continuously differentiable in time in the interior
whenever $\sigma<1+\alpha$. 
\end{proposition}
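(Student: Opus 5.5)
We follow the mollified-test argument of Lemma~\ref{l:centered-time}, but exploit the extra $C^{1,\alpha}$ regularity in $x$ through a second-order cancellation in the singular part of the operator. Fix $x\in B_{3/4}$, a radius $r\in(0,1/8)$ and a mollifier $\zeta_r$, and set
\[
M_r(t):=\int u(t,z)\zeta_r(z-x)\dd z .
\]
We work with $u$ rather than $w$ here, since no subtraction is needed once the tail of $u(t,\cdot)-u(t,0)$ is controlled and spatial constants are harmless. The Lipschitz bound gives $|M_r(t)-u(t,x)|\leq CLr^{\,1+\alpha}$, and even better: because $\zeta_r$ is radial, the linear Taylor term integrates to zero, so the $C^{1,\alpha}$ bound yields $|M_r-u(t,x)|\leq CLr^{1+\alpha}$. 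Testing the weak formulation with $\zeta_r(\cdot-x)$ gives $M_r'(t)=-\int(-\Delta_p)^s u(t,z)\,\zeta_r(z-x)\dd z$, understood through the bilinear form.

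The key estimate is a bound on $|(-\Delta_p)^s u(t,z)|$, or rather on its average against $\zeta_r$. Split the integral at $|y-z|=\rho_0$ with $\rho_0=1/16$. The exterior part is bounded by $C(L^{p-1}+T_0^{p-1})$ using the tail hypothesis and Lemma~\ref{l:Tail_inequalities}, exactly as in \eqref{eq:exterior-lipschitz}.

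For the interior part, symmetrize: write
\[
J_p\bigl(u(z)-u(z+h)\bigr)+J_p\bigl(u(z)-u(z-h)\bigr)=J_p(a+b_+)+J_p(-a+b_-),
\]
where $a=-\nabla u(z)\cdot h$ and $|b_\pm|\leq L|h|^{1+\alpha}$. The last inequality of Lemma~\ref{l:appendix} bounds this by
\[
C\bigl(L^{p-2}|h|^{p-2}\,L|h|^{1+\alpha}+L^{p-1}|h|^{(1+\alpha)(p-1)}\bigr).
\]
Dividing by $|h|^{d+sp}$, the dominant term is $L^{p-1}|h|^{-d-\sigma+1+\alpha}$. This is integrable at the origin precisely when $\sigma<1+\alpha$, giving a pointwise bound $|(-\Delta_p)^s u(t,z)|\leq C(L^{p-1}+T_0^{p-1})$ and hence Lipschitz continuity in time after letting $r\to0$. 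The second term is more integrable, since $(1+\alpha)(p-1)\geq 1+\alpha$.

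When $\sigma\geq1+\alpha$, truncate the symmetrized integral at $|h|=r$ and handle $|h|<r$ by moving the difference onto the mollifier, as in Lemma~\ref{l:centered-time}. For the inner part I would estimate via the bilinear form, using $\|\zeta_r(\cdot+h)+\zeta_r(\cdot-h)-2\zeta_r\|_{L^1}\leq C|h|^2r^{-2}$ together with $|J_p(u(z)-u(z+h))|\leq L^{p-1}|h|^{p-1}$. After symmetrization this gives
\[
\int_{|h|<r}L^{p-1}|h|^{p-1-d-sp}\,\frac{|h|^{2}}{r^{2}}\,\dd h\leq CL^{p-1}r^{1-\sigma},
\]
which is not sharp enough. \textbf{The main obstacle is this inner region}: one must keep the first-order cancellation. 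I would handle it with the $C^{1,\alpha}$ increment, bounding the antisymmetric part by $L^{p-1}|h|^{p-2+1+\alpha}r^{-1}$ using $\|\nabla\zeta_r\|_{L^1}\leq Cr^{-1}$, which gives $CL^{p-1}r^{1+\alpha-\sigma}$. The outer part gives $\int_r^{1}\rho^{\alpha-\sigma}\dd\rho$, which is bounded, $|\log r|$, or $r^{1+\alpha-\sigma}$ in the three regimes. Integrating $M_r'$ from $\tau$ to $t$ and adding the $2Lr^{1+\alpha}$ approximation error, we choose $r\to0$, $r=|t-\tau|$, or $r=|t-\tau|^{1/\sigma}$ respectively, and balance $r^{1+\alpha}$ against $|t-\tau|\,r^{1+\alpha-\sigma}$ to obtain the three cases.

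For the $C^1$ statement when $\sigma<1+\alpha$: the symmetrized integral then converges absolutely and uniformly, so $(-\Delta_p)^s u(t,x)$ is a genuine function. Its time continuity follows from the continuity in time of $\nabla u$ (Theorem~\ref{t:main}), via dominated convergence on the inner integrals with majorant $|h|^{-d-\sigma+1+\alpha}$, together with the hypothesis $u\in C(-1,0;L^{p-1}_{sp})$ for the tail integrals. Since $\partial_tu=-(-\Delta_p)^su$ in the distributional sense and the right-hand side is continuous, $u$ is $C^1$ in $t$.
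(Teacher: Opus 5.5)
Your architecture matches the paper's: an even mollifier centered at $x$ so that $|M_r(t)-u(t,x)|\le CLr^{1+\alpha}$, a symmetrized middle region $r\le|h|<\rho_0$ bounded via Lemma~\ref{l:appendix} by $CL^{p-1}\int_r^{\rho_0}\rho^{\alpha-\sigma}\,\dd\rho$, a tail region bounded by $C(L^{p-1}+T_0^{p-1})$, and a balancing choice of $r$. The gap is the inner region $|h|<r$. You correctly flag it as the crux when $\sigma\ge1+\alpha$, but you do not resolve it.

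Read literally, a bound $L^{p-1}|h|^{p-1+\alpha}r^{-1}$ integrated against $|h|^{-d-sp}$ over $B_r$ gives $r^{-1}\int_0^r\rho^{\alpha-\sigma}\,\dd\rho$. This diverges exactly when $\sigma\ge1+\alpha$. Perhaps you meant the Taylor remainder $|J_p(u(z)-u(z+h))-J_p(-\nabla u(t,z)\cdot h)|\le CL^{p-1}|h|^{p-1+\alpha}$ multiplied by $\|\psi(\cdot+h)-\psi\|_{L^1}\le C|h|/r$. That remainder does contribute $r^{1+\alpha-\sigma}$. However, the principal term $\int J_p(-\nabla u(t,z)\cdot h)\,(\psi(z)-\psi(z+h))\,\dd z$ is then left untreated, and it does not vanish because $\nabla u(t,z)$ varies with $z$. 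Pairing $h$ with $-h$ only turns it into $\tfrac12 J_p(-\nabla u(t,z)\cdot h)\,(\psi(z-h)-\psi(z+h))$. This is of size $L^{p-1}|h|^p/r$ in $L^1_z$, which gives back the $r^{1-\sigma}$ you rejected. Shifting the difference onto $\nabla u$ instead gives $CL^{p-1}|h|^{p-1+\alpha}$ with no $r^{-1}$, but its $h$-integral again diverges for $\sigma\ge1+\alpha$.

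The missing idea is to freeze the gradient at the center $x$ of the mollifier, not at the moving point $z$. If $|h|<r$ and $\psi(z)\ne\psi(z+h)$, then both $z$ and $z+h$ lie in $B_{2r}(x)$. The $C^\alpha$ bound and Lemma~\ref{l:appendix} then give $|J_p(u(t,z)-u(t,z+h))-J_p(-\nabla u(t,x)\cdot h)|\le CL^{p-1}r^\alpha|h|^{p-1}$. The point is that the H\"older increment is measured at scale $r$, not $|h|$. The frozen term is independent of $z$, so it integrates to zero against $\psi(z)-\psi(z+h)$ by translation invariance. The error contributes at most
\[
CL^{p-1}r^{\alpha}\int_{|h|<r}|h|^{p-1}\,\frac{|h|}{r}\,\frac{\dd h}{|h|^{d+sp}}\le CL^{p-1}r^{\alpha-1}\int_0^r\rho^{1-\sigma}\,\dd\rho\le CL^{p-1}r^{1+\alpha-\sigma},
\]
which is finite since $\sigma<2$. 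This is exactly how the paper treats $|z-y|<r$, and it works uniformly in all three regimes.

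A minor point: for the $C^1$ statement you do not need Theorem~\ref{t:main}. Continuity of $u$ in $t$, which follows from the estimate itself, gives pointwise convergence of the symmetrized integrand, and the uniform $C^{1,\alpha}$ bound supplies the majorant.
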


\begin{proof}
Fix a smooth nonnegative even mollifier $\zeta$ supported in $B_1$, of integral one, and let $\zeta_r(z)=r^{-d}\zeta(z/r)$, with $0<r<1/16$.  For $x\in B_{3/4}$ , set
\[
\psi(z):=\zeta_r(z-x),\qquad
M_r(t):=\int_{\R^d}u(t,z)\psi(z)\dd z.
\]
Since $\zeta$ is even, the linear term in the Taylor expansion cancels and the spatial regularity gives
\begin{equation}\label{eq:spatial_bound}
    |M_r(t)-u(t,x)|\leq CLr^{1+\alpha}.
\end{equation}

Testing the weak equation, we obtain, for almost every $t$,
\[
 M_r'(t)
=-\frac12\iint_{\R^d\times\R^d}
\frac{J_p(u(t,z)-u(t,y))(\psi(z)-\psi(y))}
{|z-y|^{d+sp}}\dd y\dd z.
\]
We split the integral into the three regions
$|z-y|<r$, $r\leq|z-y|<1/8$ and $|z-y|\geq1/8$.

In the first region $|z-y|<r$, whenever $\psi(z)-\psi(y)\neq0$,
both points belong to $B_{2r}(x)$. Thus, using Lemma \ref{l:appendix}, we get 
\[
\left|
J_p(u(t,z)-u(t,y))
-J_p\bigl(\nabla u(t,x)\cdot(z-y)\bigr)
\right|
\leq CL^{p-1}r^\alpha|z-y|^{p-1}.
\]
The integral of the affine term against $\psi(z)-\psi(y)$
vanishes by translation invariance. Using
\[
\|\psi(\cdot+a)-\psi\|_{L^1}\leq C|a|/r,
\]
we bound the first region by
\[
\left|\int_{\R^d}\int_{|z-y|<r}
\frac{J_p(u(t,z)-u(t,y))(\psi(z)-\psi(y))}
{|z-y|^{d+sp}}\dd y\dd z\right|\leq CL^{p-1}r^{\alpha-1}
\int_0^r\rho^{1-\sigma}\dd\rho
\leq CL^{p-1}r^{1+\alpha-\sigma}.
\]

For the second region $r\leq|z-y|<1/8$, antisymmetry allows us to rewrite this contribution by pairing $y$ and $-y$
\[
-\frac12\int_{\R^d}\psi(z)
\int_{r\leq|y|<1/8}
\frac{J_p(u(t,z)-u(t,z+y)) + J_p(u(t,z)-u(t,z-y))}
{|y|^{d+sp}}\dd y\dd z.
\]
Since $J_p$ is an odd function, the principal affine parts cancel each other exactly. Using the spacial $C^{1,
\alpha}$ assumption and Lemma~\ref{l:appendix},
\[
\left|
J_p(u(t,z)-u(t,z+y))
+J_p(u(t,z)-u(t,z-y))
\right|
\leq CL^{p-1}|y|^{p-1+\alpha}.
\]
Consequently, this contribution is bounded by
\[
CL^{p-1}\int_r^{1/8}\rho^{\alpha-\sigma}\dd\rho.
\]

Finally, since $|u(t,x)-u(t,0)|\leq L$ in $B_1$, the tail assumption
gives
\[
\sup_{z\in B_{7/8}}
\int_{|y-z|\geq1/8}
\frac{|J_p(u(t,z)-u(t,y))|}
{|z-y|^{d+sp}}\dd y
\leq C(L^{p-1}+T_0^{p-1}).
\]
Combining the three bounds yields
\[
|M_r'(t)|
\leq C(L^{p-1}+T_0^{p-1})
\begin{cases}
1,&\sigma<1+\alpha,\\
1+|\log r|,&\sigma=1+\alpha,\\
r^{1+\alpha-\sigma},&\sigma>1+\alpha.
\end{cases}
\]
After integration between $t$ and $\tau$, we obtain, for $h=|t-\tau|,$
\[
|u(t,x)-u(\tau,x)|
\leq CLr^{1+\alpha}
+C(L^{p-1}+T_0^{p-1})h
\begin{cases}
1,&\sigma<1+\alpha,\\
1+|\log r|,&\sigma=1+\alpha,\\
r^{1+\alpha-\sigma},&\sigma>1+\alpha.
\end{cases}
\]
For $\sigma<1+\alpha$, let $r\to0$. In the remaining cases,
take $r=h^{1/\sigma}/32$. This proves the estimate.

Suppose now that $\sigma<1+\alpha$ and that $u$ is continuous
in time with values in $L^{p-1}_{sp}(\R^d)$. The symmetrized
local integral defining $\dsp u$ converges uniformly on
compact subsets, since
\[
\int_0^{1/8}\rho^{\alpha-\sigma}\dd\rho<\infty.
\]
The time estimate gives local continuity of $u$, while the
weighted continuity controls the exterior integral.
Therefore $\dsp u$ is continuous in the interior.
The weak equation then gives $\p_tu=-\dsp u$, proving the
last assertion.
\end{proof}

\section*{Acknowledgements}

\noindent D.G. is supported by University of Bologna funds for the project “Attività di collaborazione con università del Nord America” within the framework of “Free Boundaries Problems: Analytical and Numerical Aspects.” D.J. is partially supported by the European Research Council, through the ERC StG project NEW, No.~101220121, and King Abdullah University of Science and Technology (KAUST) under Award No. ORFS-CRG12-2024-6430.
L.S. is supported
by NSF grant DMS-2350263.
These funds supported the visit of D.G. and D.J. to the Department of Mathematics of the University of Chicago, where part of this project was carried out.
D.G. and D.J. are grateful to the Department of Mathematics of the University of Chicago for the warm hospitality.

\subsection*{Use of AI}
This work started shortly after the authors finished the elliptic paper \cite{GJS}. We quickly laid out the outline of the project and started working from the more challenging parts of the proof. We obtained the proof of the first stage (which is the heart of this paper) without any AI assistance. The remaining part of the paper was still technically demanding and took time.

We eventually used ChatGPT 6.0 Astra to help replace our original long and complicated proof of Lemma~\ref{l:centered-time} with a simpler and more elegant argument. Using this proof, we were also able to prove the regularity in time. We also used it to audit, revise and finish up the manuscript. All mathematical arguments were independently checked by the authors.

\bibliographystyle{plain}
\bibliography{bib}

\Addresses
\end{document}